\documentclass[preprint]{imsart}

\RequirePackage{amsthm,amsmath,amsfonts,amssymb}
\RequirePackage[numbers]{natbib}

\usepackage{booktabs}
\usepackage{multirow}
\usepackage{multicol}
\usepackage{mathrsfs}
\usepackage{enumitem}
\usepackage{bbm}
\usepackage{algpseudocode}
\usepackage{algorithm}
\newtheorem{theorem}{Theorem}
\newtheorem{lemma}{Lemma}[section]
\newtheorem{proposition}{Proposition}

\newtheorem{corollary}{Corollary}
\newtheorem{assumption}{Assumption}[section]
\theoremstyle{definition}
\newtheorem{example}{Example}
\newtheorem{remark}{Remark}

\usepackage[table]{xcolor}
\usepackage{adjustbox}
\usepackage{amsmath}

\newcommand{\mcse}[3]{%
    \ensuremath{#1\,(#2)\!\times\!10^{#3}}%
}

\newcommand{\bestmcse}[3]{%
    \ensuremath{\boldsymbol{#1\,(#2)\!\times\!10^{#3}}}%
}

\def\argmin{{\arg\min}}

\def\bE{\mathbf{E}}

\def\bM{\mathbf{M}}

\def\bY{\mathbf{Y}}

\def\bbD{\mathbb{D}}
\def\bbE{\mathbb{E}}

\def\bbP{\mathbb{P}}

\def\bbR{\mathbb{R}}
\def\bbS{\mathbb{S}}

\def\cE{\mathcal{E}}
\def\cF{\mathcal{F}}
\def\cG{\mathcal{G}}

\def\cL{\mathcal{L}}

\def\cN{\mathcal{N}}
\def\cO{\mathcal{O}}

\def\cR{\mathcal{R}}

\def\cU{\mathcal{U}}
\def\cV{\mathcal{V}}
\def\cW{\mathcal{W}}

\def\Cov{{\sf Cov}}

\def\det{{\sf det}}
\def\diag{{\sf diag}}

\def\ind{{\mathbbm{1}}}

\def\KL{{\sf KL}}

\def\norm#1{\left\|#1\right\|}

\def\rank{{\sf rank}}

\def\tr{{\sf tr}}

\def\TV{{\sf TV}}

\def\Var{{\sf Var}}

\def\vol{{\sf vol}}

\def\ind{\mathbbm{1}}
\def\tr{\text{tr}}

\RequirePackage[colorlinks,citecolor=blue,urlcolor=blue]{hyperref}
\RequirePackage{graphicx}

\startlocaldefs
\numberwithin{equation}{section}

\endlocaldefs

\begin{document}

\begin{frontmatter}
\title{Factor-Assisted Density Estimation: When Ambient Dimension Enables Oracle Rates}
\runtitle{Factor-Assisted Density Estimation}

\begin{aug}
\author[A]{\fnms{Sunrit}~\snm{Chakraborty}\ead[label=e1]{sunrit.chakraborty@duke.edu}}
\author[A]{\fnms{David}~\snm{Dunson}\ead[label=e2]{dunson@duke.edu}}

\address[A]{Department of Statistical Science, Duke University\printead[presep={,\ }]{e1,e2}}

\end{aug}

\begin{abstract}
We study density estimation in the high-dimensional factor model, whose fixed-dimensional latent factors have an unknown smooth density. We propose factor-assisted density estimation (FADE), a non-iterative procedure that applies kernel density estimation to spectral estimates of the latent scores. The score proxies are dependent and do not follow the latent density, so standard kernel-density arguments do not apply. A refined rowwise expansion separates error into a conditionally centered Gaussian perturbation and a uniformly controlled remainder. 
We show that FADE achieves the oracle rate in sufficiently high ambient dimension and
prove a minimax lower bound separating ordinary nonparametric difficulty from a Gaussian deconvolution obstruction. Together, the upper and lower bounds establish rate optimality of FADE in specified high-dimensional regimes.
We also derive $L^1$ upper and lower bounds for the induced observation density, distinguishing latent-distribution uncertainty from the cost of estimating a growing number of parameters. These results formalize a blessing of ambient dimension: sufficiently many observed coordinates can make a simple spectral reduction statistically lossless even for the nonparametric task of recovering an unobserved low-dimensional density.
\end{abstract}


\begin{keyword}
\kwd{Semiparametric density estimation}
\kwd{High dimensional factor models}
\kwd{Latent density estimation}
\kwd{Double asymptotics}
\kwd{Minimax lower bounds}
\end{keyword}

\end{frontmatter}

\section{Introduction}

Nonparametric density estimation is severely affected by ambient dimension. Under smoothness assumptions alone, the minimax rate deteriorates rapidly with the dimension of the observations. A natural response is to exploit lower-dimensional structure. We consider the centered factor model
$Y = \Lambda \eta + \epsilon$, with $\eta\sim g$ and $\epsilon \sim N_D(0, \Sigma)$, where $Y\in \bbR^D$, $\eta\in \bbR^d$, $\Lambda\in \bbR^{D\times d}$, $\Sigma$ is diagonal, and $\eta$ is independent of $\epsilon$. The latent density $g$ is otherwise unspecified, apart from smoothness and tail conditions, and may be multimodal, asymmetric, and dependent across coordinates. We study the low-intrinsic-dimensional regime in which $d$ is fixed while $n,D\to\infty$. 

Our primary target is the latent density $g$, which characterizes population heterogeneity in the intrinsic factor space. Since the observation density is invariant to simultaneous rotation of the scores and loading matrix, we take the orbit of $g$ under orthogonal transformations as the target, and we measure the error modulo rotation. A secondary target is the induced observation density
\begin{equation}\label{eq: observation density model}
    p_{g,\Lambda,\Sigma}(y) = \int_{\bbR^d} \phi_D(y\mid \Lambda z, \Sigma) \, g(z) \, dz,
\end{equation}
which is invariant under simultaneous rotation of $g$ and $\Lambda$ and hence fully identifiable. It provides a flexible, dimension-reduced representation of the high-dimensional observation distribution.

Estimating a latent distribution is related to classical nonparametric estimation of mixture \citep{kw1956}, random-effect \citep{beran1996nonparametric, comte2012nonparametric}, and measurement-error or multi-factor distributions \citep{bonhomme2010generalized, ben2018identification, kankanala2023quasi}. Spectral and simultaneous-diagonalization methods also recover finite latent structures from repeated measurements \citep{bonhomme2016non}. These important approaches typically exploit known or structured measurement operators, independence restrictions, characteristic-function inversion, or joint regularization of the inverse problem. Our setting instead has an unknown loading matrix and noise variances, potentially dependent factor coordinates, and an ambient dimension that itself increases the information available about each latent score.

Indeed, the $D$ coordinates of $Y$ are noisy measurements of the same low-dimensional vector $\eta$: $n$ governs learning the population distribution from latent samples, whereas $D$ governs recovery of those samples. Related blessing-of-dimension phenomena have been established for parametric Gaussian factor models \citep{li2018embracing, chattopadhyay2024blessing}, item-response models \citep{chen2019}, nonlinear factor models \citep{feng2023optimal}, latent-position network models \citep{rubin2022statistical}, and large Gaussian mixtures with favorable separation geometry \citep{anderson2014more}. We establish a different mechanism: ambient dimension can make a simple nonparametric estimator of a general latent factor density attain the oracle rate available when the latent scores are observed.

Figure \ref{fig: demonstration} illustrates this phenomenon. For fixed $n$ and $d$, a density estimate based on spectrally estimated scores is blurred when $D$ is small. As $D$ increases, recovery of latent scores improves and the density estimate approaches the oracle kernel density estimator (KDE) computed from the true scores. 
Motivated by this observation, we propose factor-assisted density estimation (FADE), a simple two-stage procedure. A rank $d$ singular value decomposition of the centered data matrix produces score proxies $\widehat \eta_1,\dots,\widehat \eta_n$. We then apply $d-$dimensional KDE to these proxies, and estimate the loadings and noise variances through the corresponding surrogate regressions by plugging in the score proxies in the factor model, and finally substitute the resulting estimates into \eqref{eq: observation density model} to obtain a simple plug-in observation density estimate. The resulting procedure is non-iterative and bypasses joint optimization. 

\begin{figure}
    \centering
    \includegraphics[width=1.0\linewidth, clip, trim=0 0 0 2.2em]{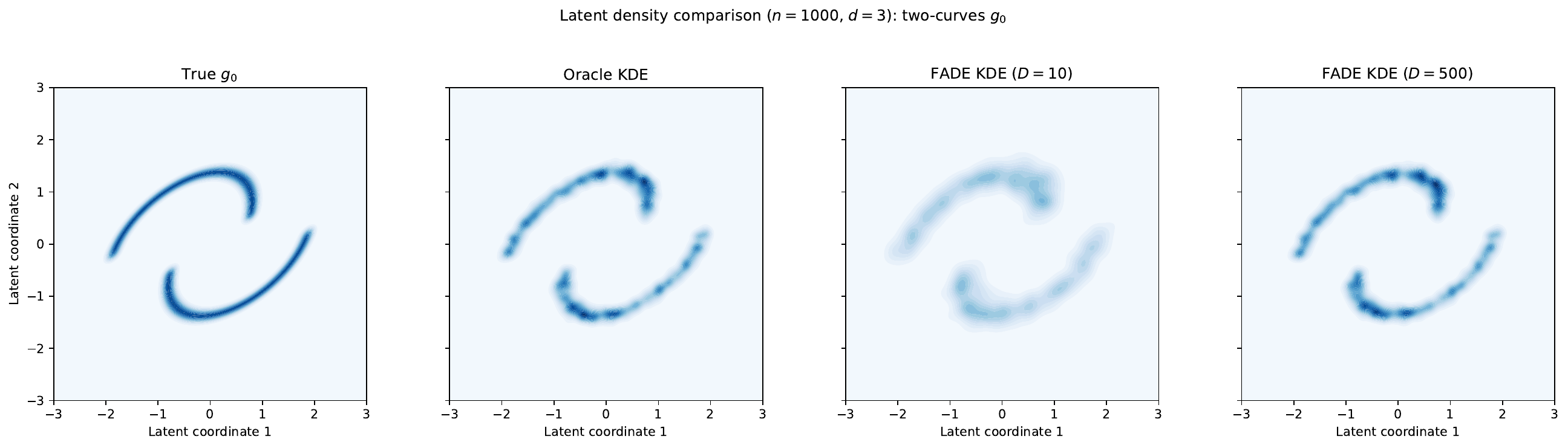}
    \caption{Illustration of the effect of ambient dimension on latent-density recovery with $n=1000$ and $d=3$. From left to right: the true latent density, the oracle kernel density estimator based on the latent scores, and FADE based on spectral score estimates for $D=10$ and $D=500$. Each panel displays the bivariate marginal density of the first two latent coordinates. Increasing $D$ sharpens recovery of the scores and brings FADE closer to the oracle estimator.}
    \label{fig: demonstration}
\end{figure}

This spectral preprocessing is related to the use of estimated factors in factor-augmented regression, forecasting, covariance estimation, and other downstream tasks \citep{stock2002forecasting, bernanke2005, bai2006confidence, chattopadhyay2024blessing, mauri2025factor, Mauri2026, mauri2025spectral}. It also connects to inference from estimated or noisy latent quantities \citep{jochmans2024inference} and to classical and vanishing-noise deconvolution \citep{carroll1988optimal, stefanski1990deconvolving, fan1991optimal, fan1992deconvolution, van2010asymptotic, durot2026minimax}. These literatures address important but different problems: our score proxies are jointly estimated from the same data matrix, dependent across observations, and affected by an unknown latent subspace.

The key technical step is a refined row-wise score expansion. After a common measurable rotation $R$, the score error admits a decomposition of the form 
$\widehat \eta_i - R^\top \eta_i = \widehat R^\top\xi_i + \Delta_i$,
into a conditionally centered Gaussian leading term with covariance of order $D^{-1}$ and a uniformly controlled remainder $\Delta_i$; see Theorem \ref{thm: spectral recovery}. Retaining this centered leading perturbation inside a kernel expansion, rather than treating each score error deterministically, produces stochastic cancellation across observations. Building on double-asymptotic principal-component theory \citep{bai2002determining, bai2003, bai2016, fan2013} and recent latent-variable results \citep{cui2026marginal}, this expansion makes it possible to analyze nonparametric density estimation from estimated scores.

If the scores were observed, a $\beta$-smooth density on $\bbR^d$ has squared-$L^2$ oracle rate $n^{-2\beta/(2\beta+d)}$. Our upper bounds show how this error competes with the cost of spectral score recovery. Above a dimension- and smoothness-dependent threshold, that cost is dominated by the oracle density-estimation error: FADE attains the oracle rate for $d\geq3$, and up to logarithmic factors for $d=2$ and $\beta\geq1/2$. A complementary minimax lower bound separates the ordinary nonparametric difficulty from a finite-$D$ Gaussian-deconvolution obstruction. Thus, in these sufficiently high-dimensional regimes, spectral reduction followed by ordinary KDE is rate-lossless for latent-density estimation. We do not obtain matching bounds throughout the full range of $D$.

As a secondary contribution, we study the observation density estimation in \eqref{eq: observation density model}. Gaussian convolution makes this density smooth, but its estimation is not automatically parametric when both the mixing density and a growing number of factor parameters are unknown. We derive $L^1$ upper and lower bounds that distinguish latent-density uncertainty from factor-parameter estimation. They match at the polynomial-rate level in several high-dimensional regimes, but we do not claim a complete minimax characterization.

FADE complements joint approaches that model the latent distribution and factor parameters together, including mixtures of factor analyzers \citep{mfa, mclachlan2003modelling, bouveyron2014model}, flexible methods for non-Gaussian item-response populations \citep{woods2006, woods2009item, li2026estimating, cui2026multidimensional}, and Bayesian nonparametric or identifiable nonlinear factor models \citep{Dunson2010, chandra2023escaping, xu2023identifiable}. These approaches offer richer joint modeling and uncertainty propagation and can be advantageous at low or moderate $D$, but generally require iterative optimization or sampling. FADE addresses the complementary question of when such coupling is unnecessary. It is also distinct from projection-pursuit, independent-component, low-rank product-density, and manifold methods \citep{friedman1984projection, samarov2004nonparametric, amato2010noisy, song2014nonparametric, vandermeulen2021beyond, chhor2025generalized, pelletier2005kernel, Berenfeld2021, Berenfeld2024}: rather than merely replacing ambient dimension by an intrinsic dimension, we exploit increasing ambient dimension to recover each latent observation more accurately.

In summary, the paper makes four contributions. First, it introduces FADE, a scalable spectral estimator of a general joint factor density, together with a plug-in estimator of the observation density. Second, it develops a row-wise score expansion that preserves first-order stochastic cancellation. Third, it gives latent-density upper and minimax lower bounds that characterize the transition from a factor-recovery-limited regime to an oracle nonparametric regime, with minimax optimality in the stated sufficiently high-dimensional cases. Fourth, it provides upper and lower bounds for observation-density estimation that separate uncertainty in the latent distribution from the cost of estimating growing-dimensional factor parameters. Together, these results show that sufficiently many observed coordinates can make a computationally elementary spectral reduction statistically lossless, at the rate level, for recovering an unobserved low-dimensional density.

The remainder of the paper is organized as follows. Section \ref{sec: method} introduces the model, identifiability conventions, and FADE. Section \ref{sec: latent theory} develops spectral recovery and the upper and lower bounds for the latent density estimation problem. Section \ref{sec: observation density theory} studies the observation density. Section \ref{sec: numerical experiments} presents numerical experiments to support the theoretical findings, and Section \ref{sec: discussion} concludes. Complete proofs and details regarding the numerical experiments are provided in the Appendix.

\section{Model and Factor-Assisted Density Estimation}\label{sec: method}

We formulate the model and its identifiable targets, then describe FADE. Practical selection of $d$ is discussed in Section~\ref{subsec: implementation}.

\subsection{Factor model and inferential targets}\label{subsec: model-targets}

We consider the centered factor model 
\begin{equation}\label{eq: factor model}
    Y_i = \Lambda \eta_i + \epsilon_i, \qquad \eta_i \overset{\rm iid}{\sim} g, \qquad \epsilon_i \overset{\rm iid}{\sim} N_D(0,\Sigma),
\end{equation}
where $Y_i\in \bbR^D$, $\eta_i\in \bbR^d$ is independent of the idiosyncratic noise $\epsilon_i\in \bbR^D$, $\Lambda\in \bbR^{D\times d}$ is the loading matrix, and $\Sigma = \diag(\sigma_1^2,\dots, \sigma_D^2)$. The unknown density $g$ may be multimodal, asymmetric, and dependent across coordinates. We impose $\bbE_g[\eta_i]=0$ and $\Cov_g(\eta_i)=I_d$ to fix location and scale; the remaining ambiguity is orthogonal rotation. For every $R\in \bbR^{d\times d}$ with $R^\top R=I_d$,

\begin{equation}\label{eq: rotation-equivalence}
    \Lambda \eta_i = (\Lambda R) (R^\top \eta_i).
\end{equation}
If $g^R(z)=g(R z)$ denotes the density of $R^\top \eta_i$, then $(g,\Lambda)$ and $(g^R, \Lambda R)$ generate the same observation distribution.

There are two inferential targets. The first is the latent density modulo rotation,
$[g] = \{g^R : R\in \cO(d)\}.$
Under the squared $L^2$ loss, we assess an estimator $\widehat{g}$ through 
\begin{equation}\label{eq: rotation-loss}
    d_{\cO}^2(\widehat{g}, g) := \inf_{R\in \cO(d)} \|\widehat g - g^R\|_{L^2}^2.
\end{equation}
Rotations preserve distances, density values, modes, and level-set geometry, so the relevant features are properties of $[g]$. The second target is the observation density in \eqref{eq: observation density model}, where $\phi_D(\cdot\mid m,V)$ denotes a $D$-dimensional Gaussian density. By \eqref{eq: rotation-equivalence}, $p_{g,\Lambda,\Sigma}=p_{g^R, \Lambda R, \Sigma}$ and is therefore unaffected by the latent orientation.

\begin{remark}[Unknown observation mean]\label{remark: unknown mean}
    Centering is for notational clarity. If $Y_i^{\rm raw}=\mu + \Lambda \eta_i + \epsilon_i$ with unknown $\mu\in\bbR^D$, we center by $\widehat\mu=\overline Y$. More generally, let $m_g=\bbE_g[\eta_i]$ and $V_g=\Cov_g(\eta_i)$ be positive definite. The transformation
$$\eta_i^* = V_g^{-1/2}(\eta_i - m_g), \qquad \Lambda^*=\Lambda V_g^{1/2}, \qquad \mu^*=\mu + \Lambda m_g$$
leaves the distribution unchanged and gives $\bbE[\eta_i^*]=0$ and $\Cov(\eta_i^*)=I_d$. Thus these normalizations are without loss of generality. With $H=I_n - n^{-1}\mathbf 1_n\mathbf 1_n^\top$, the centered data satisfy
    $$HY^{\rm raw} = (HM) \Lambda^\top + HE.$$
    Sample centering removes only the intercept direction and replaces $\eta_i$ by $\eta_i-\overline\eta$, and $\epsilon_i$ by $\epsilon_i-\overline \epsilon$. Appendix \ref{app: sample centering} shows that the spectral-recovery and density-estimation rates in Sections \ref{subsec: spectral recovery}, \ref{subsec: latent density upper bound}, and \ref{subsec: obs density upper bound} are unchanged. We therefore use the centered formulation throughout.
\end{remark}

\subsection{Spectral estimation of scores and parameters}\label{subsec: spectral-estimation}

Stacking observations as rows gives $Y=M\Lambda^\top+E$, where the $i$th rows of $M\in\bbR^{n\times d}$ and $E\in \bbR^{n\times D}$ are $\eta_i^\top$ and $\epsilon_i^\top$. Equivalently, for the $j$th columns $y^{(j)}$ and $\epsilon^{(j)}$ of $Y$ and $E$ respectively, and the $j$th row $\lambda_j^\top$ of $\Lambda$,
\begin{equation}\label{eq: surrogate regression population}
    y^{(j)} = M \lambda_j + \epsilon^{(j)}, \qquad \epsilon^{(j)}\sim N_n(0, \sigma_j^2 I_n), \qquad j=1,\dots,D.
\end{equation}

Let the singular value decomposition of the observation matrix be
\begin{equation*}
    Y = U S V^\top + U_\perp S_\perp V_\perp^\top,
\end{equation*}
where $S$ and $U\in\bbR^{n\times d}$ contain the $d$ largest singular values and corresponding left singular vectors. We estimate $M$ by
\begin{equation}\label{eq: score estimator}
    \widehat M = \sqrt{n} U
\end{equation}
and denote its $i$th row by $\widehat{\eta}_i^\top$. This scaling gives $n^{-1}\widehat M^\top \widehat M=I_d$, matching $\Cov_g(\eta_i)=I_d$.

Substituting $\widehat M$ into \eqref{eq: surrogate regression population} gives $D$ surrogate regressions with a common design matrix and closed-form least-squares estimates:
\begin{equation}\label{eq: factor parameter estimators}
    \begin{aligned}
        \widehat \lambda_j &= \left(\widehat M^\top \widehat M\right)^{-1} \widehat M^\top y^{(j)} = \frac{1}{\sqrt{n}} U^\top y^{(j)}\\
        \widehat \sigma_j^2 &= \frac1n \left\|y^{(j)} - \widehat M \widehat \lambda_j\right\|^2 = \frac1n \left\|\left(I_n - UU^\top\right) y^{(j)}\right\|^2.
    \end{aligned}
\end{equation}
Equivalently, $\widehat \Lambda=Y^\top U/\sqrt{n}$ and $\widehat\Sigma=\diag(\widehat{\sigma}_1^2,\dots,\widehat \sigma_D^2)$, yielding the rank-$d$ reconstruction $\widehat M\widehat\Lambda^\top=UU^\top Y=USV^\top$. Scores and loadings use the same latent coordinates: $U\mapsto UR$ maps $(\widehat M,\widehat\Lambda)$ to $(\widehat M R,\widehat\Lambda R)$ without changing their product. This approach of estimating $M$ and $\Lambda$ is commonly known as the method of asymptotic principal components.

\subsection{Density estimators}\label{subsec: density estimator}

Let $K:\bbR^d\to \bbR$ be a kernel satisfying $\int_{\bbR^d} K(u) du = 1$. For an isotropic bandwidth $h>0$, define
\begin{equation}\label{eq: fade latent kde}
    \widehat g_h(x) = \frac{1}{nh^d} \sum_{i=1}^n K\left(\frac{x-\widehat \eta_i}{h}\right), \qquad x\in \bbR^d.
\end{equation}
We use $\widehat g_h$ to estimate $g$. With a spherically symmetric kernel, rotating all estimated scores only rotates the density estimate. Higher-order kernels are generally signed \citep{wand1994kernel}, so $\widehat g_h$ need not be a probability density. When needed, we use the positive, renormalized version
\begin{equation}\label{eq: positive correction}
    \widetilde g_h(x) = \frac{\widehat g_h^+(x)}{\int \widehat g_h^+(z)\, dz}, \qquad \widehat g_h^+(x) = \max\{\widehat g_h(x), 0\}.
\end{equation}
For a non-negative $K$, $\widetilde g_h=\widehat g_h$. The later observation-density analysis shows that the correction preserves the required $L^1$ rate. Although we analyze KDE in this paper, the spectral preprocessing could be paired with another density estimator, such as a finite or Dirichlet-process Gaussian mixture fitted to $\widehat\eta_1,\dots,\widehat\eta_n$.

\medskip
\noindent
The induced estimator of the observation density is 
\begin{equation}\label{eq: fade observation density}
    \widehat p_h(y) = \int_{\bbR^d} \phi_D(y\mid \widehat \Lambda z,\widehat \Sigma) \widetilde g_h(z) \, dz,
\end{equation}
obtained by plugging the component estimators into \eqref{eq: observation density model}. It is invariant to simultaneous rotation of $\widehat g_h$ and $\widehat\Lambda$. For a Gaussian kernel, no positive correction is needed and the integral is explicit:
\begin{equation*}
    \widehat p_h(y) = \frac1n \sum_{i=1}^n \phi_D\big( y \, \big| \, \widehat \Lambda \widehat \eta_i,\; \widehat\Sigma + h^2 \widehat \Lambda \widehat \Lambda^\top\big).
\end{equation*}
Thus, $\widehat p_h$ is a Gaussian mixture centered at rank-$d$ reconstructions, with common low-rank-plus-diagonal covariance.

\subsection{Implementation}\label{subsec: implementation}

Algorithm \ref{alg: fade} summarizes the non-iterative procedure. Its dominant operation is a rank-$d$ truncated SVD; smoothing is standard KDE in $d$ dimensions, and the loading and variance estimates are closed form. In practice, $h$ may be selected by cross-validation in score space, and $d$ by the joint-likelihood information criterion of \cite{chen2019, chen2021} using spectrally estimated factors as in \cite{chattopadhyay2024blessing}.

\begin{algorithm}[t]
\caption{Factor-assisted density estimation (FADE)}\label{alg: fade}
\begin{algorithmic}
\Require Centered observation matrix $Y\in \bbR^{n\times D}$, latent dimension $d< D$, kernel bandwidth $h>0$.
\State 1. Compute the rank-$d$ SVD $USV^\top$ of $Y$.
\State 2. Set $\widehat M=\sqrt{n} U$, and extract its rows $\widehat \eta_i^\top$, $i=1,\dots,n$.
\State 3. Compute the parameter estimates $\widehat \Lambda$ and $\widehat \Sigma$ from \eqref{eq: factor parameter estimators}.
\State 4. Choose $h$ if not provided. Construct $\widehat g_h$ from \eqref{eq: fade latent kde}. Apply \eqref{eq: positive correction} when a signed kernel is used, and a probability density is required.
\State 5. For the observation density, construct $\widehat{p}_h$ from \eqref{eq: fade observation density}.
\end{algorithmic}
\end{algorithm}

\section{Latent density estimation}\label{sec: latent theory}

We establish spectral recovery, a row-wise score expansion, latent-density upper bounds and oracle transitions, and a minimax lower bound.

\medskip
\noindent
\textit{Notation:}
For $m\in\mathbb{N}$, let $[m]=\{1,\ldots,m\}$. Vector and matrix norms are generally Euclidean and spectral, respectively, with $\|\cdot\|_F$ denoting the Frobenius norm; $s_k(A)$ is the $k$th largest singular value of $A$. The orthogonal group is $\cO(d)$. We write $a_{n,D}\lesssim b_{n,D}$ when $a_{n,D}\leq Cb_{n,D}$ for a constant independent of $n,D$ but possibly depending on fixed model-class constants and $d$, and write $a_{n,D}\asymp b_{n,D}$ when both inequalities hold. Lastly, $a_{n,D}\ll b_{n,D}$ or $a_{n,D}=o(b_{n,D})$ denotes $a_{n,D}/b_{n,D}\to 0$. 

\subsection{Assumptions and asymptotic framework}\label{subsec: assumptions}
Suppose the observations satisfy
\begin{equation}
    Y = M_0 \Lambda_0^\top +E_0
\end{equation}
where the rows of $M_0\in\bbR^{n\times d}$ are iid from density $g_0$, the rows of $E_0$ are iid $N_D(0,\Sigma_0)$ independently of $M_0$, and $\Sigma_0=\diag(\sigma_{01}^2,\dots,\sigma_{0D}^2)$. Let $\bbP_0$ and $\bbP_0^{(n)}$ denote the laws of one pair $(\eta_i,\epsilon_i)$ and of $(M_0,E_0)$, respectively.

\begin{assumption}[Asymptotic regime]\label{assume: dimensions}
    The latent dimension $d$ is known and fixed, while $n,D\to\infty$. In addition, $\log (nD)\lesssim n$ and $\log n = o(D)$.
\end{assumption}

\begin{assumption}[Pervasive loadings]\label{assume: loadings}
    There exist fixed constants $0<c_\Lambda <C_\Lambda<\infty$ such that $c_\Lambda \sqrt{D} \leq s_d(\Lambda_0) \leq \|\Lambda_0\| \leq C_\Lambda\sqrt{D}$. Moreover, for a sequence $\ell_D\geq 1$, assume $\max_{j\in [D]} \|\lambda_{0j}\|^2 \leq C_\ell \ell_D$ for some fixed constant $C_\ell>0$, and $\ell_D\lesssim \sqrt{n\log(nD)}$, where $\lambda_{0j}^\top$ is the $j$th row of $\Lambda_0$.
\end{assumption}

\begin{assumption}[Idiosyncratic noise]\label{assume: noise}
    The noise covariance is diagonal and uniformly well-conditioned: for fixed constants $0<c_\sigma <C_\sigma <\infty$, we assume $c_\sigma \leq \min_{j\in [D]} \sigma_{0j}^2 \leq \max_{j\in[D]} \sigma_{0j}^2 \leq C_\sigma$.
\end{assumption}

\begin{assumption}[Latent distribution]\label{assume: latent tail}
    The latent vectors are centered and standardized, so that $\bbE_{g_0}[\eta_i]=0$ and $\Cov_{g_0}(\eta_i)=I_d$, and are uniformly sub-Gaussian: for a fixed $C_\eta<\infty$, $\|\eta_i\|_{\psi_2}:=\sup_{u\in \bbS^{d-1}} \|u^\top \eta_i\|_{\psi_2} \leq C_\eta$, where $\|X\|_{\psi_2}$ is the sub-Gaussian norm for random variable $X$. 
\end{assumption}

Assumption~\ref{assume: dimensions} specifies the central low-intrinsic-dimensional regime; no limiting ratio of $n$ and $D$ is imposed. The conditions hold, for example, when $D=n^{\gamma+o(1)}$, and ensure simultaneous concentration over coordinates and a vanishing maximum first-order score perturbation. The stated conditions are stronger than needed for some global spectral conclusions because the later KDE analysis requires a uniform row-wise expansion.

Assumption~\ref{assume: loadings} is the standard pervasive-loadings condition: aggregate signal grows with $D$ and every factor direction is represented across coordinates. Consequently, the signal singular values are of order $\sqrt{nD}$ whereas the noise norm is of order $\sqrt n+\sqrt D$, yielding improving score recovery as $D$ grows. The row bound governed by $\ell_D$ controls residual variances uniformly; it allows bounded loadings ($\ell_D=O(1)$), logarithmic growth under light-tailed random loadings, and coordinates carrying little factor signal.

For spectral recovery, only the upper noise bound is essential; the lower bound ensures a nondegenerate observation model for the later observation-density analysis. Diagonality permits coordinatewise residual-variance estimation. Assumption~\ref{assume: latent tail} allows non-Gaussian, dependent, multimodal factors. Its sub-Gaussian condition controls $n^{-1}M_0^\top M_0$ and $\max_i\|\eta_i\|$; it could be replaced by direct high-probability conditions on those quantities. Standard covariance concentration results are given by \cite{vershynin2012close, vershynin2019high}. Smoothness and kernel conditions are discussed later in Section \ref{subsec: latent density upper bound}, separating spectral recovery from the nonparametric analysis.

\subsection{Spectral recovery and score decomposition}\label{subsec: spectral recovery}

We quantify the spectral estimators under a common rotation. Part (i) controls scores, loadings, and noise variances; part (ii) gives the conditionally Gaussian leading score perturbation needed below. The loading and variance bounds are also used for observation-density estimation.

Let $Y_0=M_0\Lambda_0^\top=U_0S_0V_0^\top$ be the compact SVD of the signal, with $P_0=U_0U_0^\top$ and $P_0^\perp=I_n-P_0$. Write $M_0=\overline U_0\overline D_0\overline V_0^\top$, where $\overline U_0=U_0\overline R^\top$ for some $\overline R\in\cO(d)$; this is possible because $M_0$ and $Y_0$ have the same column space. For empirical left singular vectors $U$, choose the measurable Procrustes alignment
\begin{equation*}
    \widehat R \in \argmin_{\Omega\in \cO(d)} \|U\Omega^\top - U_0\|_F
\end{equation*}
and define the common rotation as
\begin{align}\label{eq: common rotation}
    R = \overline V_0 \overline R \widehat R.
\end{align}
This aligns both $\widehat M$ and $\widehat\Lambda$ with their population counterparts.

\begin{theorem}\label{thm: spectral recovery}
    Suppose Assumptions~\ref{assume: dimensions}--\ref{assume: latent tail} hold. There is a measurable event $\cE_{n,D}$ satisfying
    $$\bbP_0^{(n)}(\cE_{n,D}) \geq 1 - \frac{C}{n}$$
    for some constant $C>0$ such that the rotation $R$ in \eqref{eq: common rotation} can be chosen measurably and the following statements hold on $\cE_{n,D}$.
    \begin{enumerate}[label=(\roman*)]
        \item The factor spectral estimators satisfy
        \begin{equation}\label{eq: parameters high probability}
        \begin{aligned}
            \|\widehat \Sigma - \Sigma_0\| &\lesssim \sqrt{\frac{\log(nD)}{n}} + \frac{\ell_D}{D}, \\
            \|\widehat M - M_0 R\|_F &\lesssim \sqrt{n}\left(\frac{1}{\sqrt{D}} + \sqrt{\frac{\log n}{n}}\right),\\
            \|\widehat \Lambda - \Lambda_0 R\|_F &\lesssim \sqrt{D}\left(\frac1D + \sqrt{\frac{\log(nD)}{n}}\right).
        \end{aligned}
        \end{equation}
        \item For each $i\in[n]$, define
        \begin{equation}\label{eq: leading score noise}
            \xi_i = \sqrt{n} S_0^{-1} V_0^\top E_0^\top P_0^\perp e_i,
        \end{equation}
        where $e_i$ is the standard basis vector in $\bbR^n$ with $e_{ii}=1$. Then the rowwise score error admits the decomposition
        \begin{equation}\label{eq: structured score expansion}
        \begin{aligned}
            \widehat \eta_i - R^\top \eta_i = &\widehat R^\top \xi_i + \Delta_i \;\quad \text{for } i\in [n], \\ &\text{ where } \max_{i\in [n]} \|\Delta_i\|\lesssim \frac{\sqrt{\log (nD)}}{D} + \frac{\log n}{\sqrt{n}}.
        \end{aligned}
        \end{equation}
    \end{enumerate}
    The constants implicit in $\lesssim$ are independent of $n$ and $D$, but may depend on $d$ and the fixed constants in the assumptions.
\end{theorem}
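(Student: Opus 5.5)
We work throughout on a high-probability event $\cE_{n,D}$, built as an intersection of finitely many concentration events. Each is chosen to fail with probability $O(1/n)$, using the union bounds permitted by $\log(nD)\lesssim n$ and $\log n=o(D)$. The events are:
\begin{itemize}
\item[(a)] $\|n^{-1}M_0^\top M_0-I_d\|\lesssim\sqrt{\log n/n}$ and $\max_i\|\eta_i\|\lesssim\sqrt{\log n}$, by sub-Gaussian covariance concentration and a maximal inequality;
\item[(b)] $\|E_0\|\lesssim\sqrt n+\sqrt D$, by Gaussian matrix concentration;
\item[(c)] $\|M_0^\top E_0\|\lesssim\sqrt{nD}$ and $\max_j\|M_0^\top\epsilon^{(j)}\|\lesssim\sqrt{n\log(nD)}$, conditionally on $M_0$;
\item[(d)] $\max_i\|E_0V_0\|_{i\cdot}\lesssim\sqrt{\log n}$ and $\max_i\|e_i^\top E_0E_0^\top U_0\|\lesssim\sqrt{D\log n}+\sqrt{n}\cdots$, rowwise bounds;
\item[(e)] $\max_j|n^{-1}\|\epsilon^{(j)}\|^2-\sigma_{0j}^2|\lesssim\sqrt{\log(nD)/n}$.
\end{itemize}
By Assumption~\ref{assume: loadings} and (a), $s_d(S_0)\asymp\sqrt{nD}$. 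Weyl's inequality and Wedin's $\sin\Theta$ theorem then give $\|U\widehat R^{\top}-U_0\|\lesssim(\sqrt n+\sqrt D)/\sqrt{nD}$, the top $d$ singular values of $Y$ stay $\asymp\sqrt{nD}$, and the gap to $s_{d+1}(Y)\le\|E_0\|$ is of order $\sqrt{nD}$.

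For part (ii) we use the exact identity $U=YVS^{-1}=YY^\top US^{-2}$ and expand around $U_0$. Write $YY^\top=Y_0Y_0^\top+Y_0E_0^\top+E_0Y_0^\top+E_0E_0^\top$ and multiply by $U$. The term $P_0^\perp E_0Y_0^\top U S^{-2}$ is the first-order perturbation. After replacing $U\widehat R^{\top}$ by $U_0$ and $S^{-2}$ by the corresponding $S_0^{-2}$-type factor, it becomes $P_0^\perp E_0V_0S_0^{-1}$, which is exactly $\xi_i^\top/\sqrt n$ in row $i$. The projected part $P_0U$ relates to $U_0$ through the Procrustes rotation. The precise claim is
\begin{equation*}
\sqrt n\,e_i^\top U\widehat R^{\top}-\sqrt n\,e_i^\top U_0=\xi_i^\top+\text{remainder}_i .
\end{equation*}
Separately, $\sqrt n\,U_0=M_0\overline V_0\overline D_0^{-1}\sqrt n\,\overline R$, and $\overline D_0/\sqrt n=I+O(\sqrt{\log n/n})$ by (a). Combined with $\max_i\|\eta_i\|\lesssim\sqrt{\log n}$, this turns $\sqrt n e_i^\top U_0$ into $\eta_i^\top R\widehat R^{\top}$ up to an error of order $\log n/\sqrt n$. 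Transposing and multiplying by $\widehat R$ then yields \eqref{eq: structured score expansion}. Conditionally on $M_0$, the vector $\xi_i$ is Gaussian with mean zero and covariance of order $D^{-1}$.

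The main obstacle is bounding the rowwise remainders uniformly in $i$ at the sharp rate $\sqrt{\log(nD)}/D+\log n/\sqrt n$. A plain $\ell_{2\to\infty}$ bound from $\sin\Theta$ is too crude, since it only gives $n^{-1/2}$ times the Frobenius error. We would use the leave-one-out-free expansion of Abbe--Fan--Wang--Zhong or Cape--Tang--Priebe type.
\begin{itemize}
\item The terms $E_0E_0^\top US^{-2}$ split into the diagonal part $\mathrm{diag}(\|\epsilon_i\|^2)$, which is nearly $\tr(\Sigma_0)I$ and contributes a scalar multiple of $U$ of relative size $O(1/n)$ in $S^{-2}$, and the off-diagonal part. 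The off-diagonal part is controlled through $\max_i\|e_i^\top E_0E_0^\top U_0\|\lesssim\sqrt{nD\log n}$ conditional on $E_0$'s $i$th row, giving a contribution of order $\sqrt{\log n}/\sqrt D\cdot\sqrt n/\sqrt{nD}$.
\item Cross terms of the form $Y_0E_0^\top(\cdots)$ are handled with (c).
\item The difference $U\widehat R^{\top}-U_0$ that appears inside products is recycled into the $2\to\infty$ norm. This requires a self-bounding argument: solve the resulting inequality for $\|U\widehat R^{\top}-U_0\|_{2\to\infty}$, and use $\log n=o(D)$ to absorb the recycled term.
\end{itemize}

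Part (i) follows once the rowwise bounds are in place. The score bound is the Frobenius version of the above. Summing $\|\xi_i\|^2\approx d\,n\,\tr(\cdot)/(nD)$ over $i$ gives $n/D$, and the rotation error gives $\log n$, so $\|\widehat M-M_0R\|_F\lesssim\sqrt{n/D}+\sqrt{\log n}$. For the loadings, write $\widehat\Lambda=Y^\top U/\sqrt n=\Lambda_0M_0^\top U/\sqrt n+E_0^\top U/\sqrt n$. In the first term, $M_0^\top U/\sqrt n\approx R$ up to $O(1/D+\sqrt{\log n/n})$. The second term is split using $U_0$: the piece $E_0^\top U_0/\sqrt n$ has rows of size $\sqrt{\log(nD)/n}$ by (c), and the correction is $E_0^\top(U-U_0\widehat R)$, whose size is $\|E_0\|\,\|U-U_0\widehat R\|\lesssim\sqrt D/\sqrt D$ divided appropriately. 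Together these give the stated $\sqrt D(1/D+\sqrt{\log(nD)/n})$. For the variances, $\widehat\sigma_j^2=n^{-1}\|(I-UU^\top)(M_0\lambda_{0j}+\epsilon^{(j)})\|^2$. We expand it using (e) and the $\sin\Theta$ bound; the signal leakage contributes $\|\lambda_{0j}\|^2\|(I-UU^\top)M_0\|^2/n\lesssim\ell_D/D$, which yields the stated coordinatewise bound.
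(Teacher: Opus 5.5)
\textbf{Gap: the noise-variance bound in part (i).} Write $\widehat\sigma_j^2=n^{-1}\|(I-UU^\top)(M_0\lambda_{0j}+\epsilon^{(j)})\|^2$. You handle the signal leakage, which is $\lesssim\ell_D(n^{-1}+D^{-1})$, and the noise--noise part can be done as you indicate. The problem is the signal--noise cross term
$\frac{2}{n}(M_0\lambda_{0j})^\top(I-UU^\top)\epsilon^{(j)}=-\frac{2}{n}(M_0\lambda_{0j})^\top(P-P_0)\epsilon^{(j)}$.
Your tools (e) and the $\sin\Theta$ bound only give
$\frac{2}{n}\|P-P_0\|\,\|M_0\lambda_{0j}\|\,\|\epsilon^{(j)}\|\lesssim\sqrt{\ell_D}\,(n^{-1/2}+D^{-1/2})$.
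For $\ell_D\asymp1$ and, say, $n=D^3$, this is of order $D^{-1/2}$, whereas the claimed rate $\sqrt{\log(nD)/n}+\ell_D/D$ is of order $D^{-1}$ there.

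The missing idea is the first-order projector expansion $P-P_0=P_0^\perp E_0Y_0^+ +(Y_0^+)^\top E_0^\top P_0^\perp+\mathcal R(E_0)$, with $\|\mathcal R(E_0)\|\lesssim n^{-1}+D^{-1}$.
\begin{itemize}
\item Since $(M_0\lambda_{0j})^\top P_0^\perp=0$, the cross term equals $-\frac{2}{n}\sum_{k=1}^D w_{jk}\,\epsilon^{(k)\top}P_0^\perp\epsilon^{(j)}$, up to $O(\sqrt{\ell_D}(n^{-1}+D^{-1}))$. Here $w_j=V_0S_0^{-1}U_0^\top M_0\lambda_{0j}$ is $M_0$-measurable.
\item The $k=j$ summand is $O(|w_{jj}|)=O(\ell_D/D)$, by row incoherence $\|e_k^\top V_0\|\lesssim\sqrt{\ell_D/D}$. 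This is exactly where the $\ell_D/D$ term comes from.
\item Conditionally on $(M_0,\epsilon^{(j)})$, the $k\neq j$ sum is centered Gaussian with variance $\lesssim n\|w_j\|^2\lesssim n\|\lambda_{0j}\|^2/D\lesssim n$. A union bound over $j\in[D]$ then gives $\sqrt{\log(nD)/n}$.
\end{itemize}
Neither this expansion nor the column-wise quadratic-form event appears in your plan or in (a)--(e).

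\textbf{Part (ii): a different route that works.} The paper writes $U-U_0\widehat R=(P-P_0)U_0\widehat R-U(U^\top U_0\widehat R-I_d)$ and inserts the same projector expansion. It then needs a leave-one-row-out argument with Wedin's pseudoinverse identity, because the remainder row $\sqrt n\,\epsilon_i^\top(Y^+-Y_0^+)U_0$ is only $O(n^{-1/2}+D^{-1/2})$ under crude bounds.

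Your eigen-equation expansion avoids this. Use $U=P_0U+P_0^\perp E_0VS^{-1}$ with $VS^{-1}=(V_0S_0U_0^\top+E_0^\top)US^{-2}$. Every non-leading term is then one of:
\begin{itemize}
\item a Procrustes or normalization term;
\item $\sqrt n\,e_i^\top U_0=O(\sqrt{\log n})$ times a small global matrix;
\item the row $\sqrt n\,e_i^\top P_0^\perp E_0V_0S_0^{-1}=O(\sqrt{\log n/D})$ times $S_0^2U_0^\top US^{-2}-\widehat R$, which is $O(n^{-1}+D^{-1}+\sqrt{\log n/(nD)})$ by global bounds;
\item $\sqrt n\,\epsilon_i^\top E_0^\top U_0\widehat RS^{-2}$, which is conditionally Gaussian given $(M_0,\epsilon_i)$;
\item $\sqrt n\,\epsilon_i^\top E_0^\top(U-U_0\widehat R)S^{-2}$, which crude spectral bounds already make $O(n^{-1}+D^{-1})$.
\end{itemize}
So neither leave-one-out nor your proposed self-bounding $2\to\infty$ step is needed, and this is more elementary than the paper's argument.

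One correction: the off-diagonal row bound is $\max_i\|\sum_{k\neq i}(\epsilon_k^\top\epsilon_i)U_0^\top e_k\|\lesssim\sqrt{D\log n}$, not $\sqrt{nD\log n}$. With your figure, that term would contribute $O(\sqrt{\log n})$ to $\Delta_i$.
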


\begin{remark}
    Conditional on $M_0$, the vectors $\xi_1,\dots,\xi_n$ are jointly centered Gaussian with 
    \begin{equation}\label{eq: score noise covariance}
        \begin{aligned}
            \Cov(\xi_i, \xi_j \mid M_0) = (P_0^\perp)_{ij} \Sigma_{\xi}, \qquad \Sigma_\xi = nS_0^{-1} V_0^\top \Sigma_0 V_0 S_0^{-1}
        \end{aligned}
    \end{equation}
    and $\|\Sigma_\xi\|\lesssim D^{-1}$ with high probability. After intersecting $\cE_{n,D}$ with an additional concentration event without changing its probability order, this leads to $\max_i \|\xi_i\| \lesssim \sqrt{\frac{\log n}{D}}$.
\end{remark}

Part (i) displays the asymmetric roles of $n$ and $D$. Since $\|M_0\|_F\asymp\sqrt n$ and $\|\Lambda_0\|_F\asymp\sqrt D$, the normalized errors are
\begin{align*}
    \frac{\|\widehat M - M_0R\|_F}{\|M_0\|_F} &\lesssim \frac{1}{\sqrt{D}} + \sqrt{\frac{\log n}{n}}, \\
    \frac{\|\widehat \Lambda - \Lambda_0 R\|_F}{\|\Lambda_0\|_F} &\lesssim \frac1D + \sqrt{\frac{\log (nD)}{n}}.
\end{align*}
The score estimator inherits a first-order $D^{-1/2}$ perturbation of the left singular space. In contrast, projecting observations onto that space to estimate $\Lambda$ annihilates the first-order alignment error, yielding a $D^{-1}$ normalized loading contribution; the residual projection similarly benefits $\widehat\Sigma$. The remaining $n^{-1/2}$ terms reflect averaging over observations. These conclusions refine Davis--Kahan or Wedin-type subspace bounds, which do not retain this downstream projection geometry \citep{wedin1972perturbation}. The common rotation and conditionally centered row-wise expansion, rather than separate deterministic bounds on each error, are what the density analysis requires.

For intuition, the empirical and population projectors satisfy
\begin{equation}\label{eq: projector expansion sketch}
    P - P_0 = P_0^\perp E_0 Y_0^+ + (Y_0^+)^\top E_0^\top P_0^\perp + \cR(E_0),
\end{equation}
where $Y_0^+=V_0S_0^{-1}U_0^\top$ and $\cR(E_0)$ collects higher-order terms. Applying this expansion to $U_0$ and resolving Procrustes alignment yields \eqref{eq: leading score noise}; $\Delta_i$ collects the higher-order projector, alignment, and normalization errors. Full bounds are deferred to the proof.

The leading errors have scale $D^{-1/2}$ and are dependent through $P_0^\perp$. Although $\widehat R$ depends on the noise, after applying the common rotation,
$$R\widehat R^\top \xi_i = \overline V_0 \overline R \,\xi_i,$$
 $\overline V_0\overline R$ is fixed conditionally on $M_0$. The first-order term in the KDE expansion therefore remains conditionally centered Gaussian, allowing cancellation under averaging that maximum- or Frobenius-norm bounds would lose.

Observation-density analysis instead needs expected Frobenius errors. Direct moment calculations remove logarithmic factors and give the following consequence.

\begin{corollary}\label{cor: expected factor error}
    Under the conditions of Theorem \ref{thm: spectral recovery}, using the same measurable rotation $R$ and event $\cE_{n,D}$, we have the following.
    \begin{align*}
        \bbE_0 \left[\|\widehat \Lambda - \Lambda_0 R\|_F \mid \cE_{n,D}\right] &\lesssim \sqrt{\frac{D}{n}} + \frac{1}{\sqrt{D}}, \\
        \bbE_0\left[\|\widehat \Sigma - \Sigma_0\|_F\mid \cE_{n,D}\right] &\lesssim \sqrt{\frac{D}{n}} + \sqrt{\frac{\ell_D}{D}}.
    \end{align*}
\end{corollary}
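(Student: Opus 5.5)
We follow the perturbation expansion behind Theorem \ref{thm: spectral recovery}, but bound second moments directly instead of using union bounds over rows and coordinates. Since $\bbP_0^{(n)}(\cE_{n,D})\geq 1-C/n\geq 1/2$ for large $n$, we have $\bbE_0[X\mid\cE_{n,D}]\leq 2\,\bbE_0[X\ind_{\cE_{n,D}}]$ for any nonnegative $X$. By Cauchy--Schwarz it therefore suffices to bound $\bbE_0[\|\cdot\|_F^2\ind_{\cE_{n,D}}]$. On $\cE_{n,D}$ all the deterministic quantities from the theorem are available: $s_d(Y)\asymp\sqrt{nD}$, the Procrustes rotation is well conditioned, $\|E_0\|\lesssim\sqrt n+\sqrt D$, and $\max_i\|\eta_i\|\lesssim\sqrt{\log n}$.

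\textbf{Loadings.} Since $\widehat\Lambda=Y^\top U/\sqrt n$ and $Y=M_0\Lambda_0^\top+E_0$, we write
\[
\widehat\Lambda-\Lambda_0R=\Lambda_0\Big(\tfrac{1}{\sqrt n}M_0^\top U-R\Big)+\tfrac{1}{\sqrt n}E_0^\top U .
\]
Replace $U$ by $U_0\widehat R$ plus a correction. The main term $n^{-1/2}E_0^\top U_0\widehat R$ has $n^{-1/2}E_0^\top U_0$ with conditionally independent Gaussian entries of variance $\sigma_{0j}^2/n$, because $U_0$ depends only on $M_0$. Its expected squared Frobenius norm is therefore $\lesssim dD/n$, which gives the $\sqrt{D/n}$ term. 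For the correction, the projector expansion \eqref{eq: projector expansion sketch} gives $U-U_0\widehat R=P_0^\perp E_0V_0S_0^{-1}\widehat R+\text{higher order}$. Substituting, the cross term $n^{-1/2}E_0^\top P_0^\perp E_0V_0S_0^{-1}$ has Frobenius norm at most $n^{-1/2}\|E_0\|^2\|S_0^{-1}\|\sqrt d\lesssim (n+D)/(n\sqrt D)$. Its mean part $\propto \tr(P_0^\perp)\Sigma_0V_0S_0^{-1}$ has Frobenius norm about $\sqrt{n}\cdot\|S_0^{-1}\|\sqrt d\lesssim D^{-1/2}$, which gives the $1/\sqrt D$ term. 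The fluctuation is smaller. The first term $\Lambda_0(n^{-1/2}M_0^\top U-R)$ is where the first-order alignment error cancels: $M_0^\top P_0^\perp=0$, so only second-order terms of size $\|\Lambda_0\|\cdot O(D^{-1}+n^{-1})$ survive, which is $\lesssim D^{-1/2}+\sqrt{D}/n$. The higher-order remainders are bounded on $\cE_{n,D}$ by products of $\|E_0\|/s_d(Y_0)$ and are smaller still.

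\textbf{Variances.} Write $\widehat\sigma_j^2-\sigma_{0j}^2=\big(n^{-1}\|\epsilon^{(j)}\|^2-\sigma_{0j}^2\big)+n^{-1}\big(\|(I-P)y^{(j)}\|^2-\|\epsilon^{(j)}\|^2\big)$. The first piece has variance $\lesssim 1/n$ per coordinate, which contributes $D/n$ in total and hence the $\sqrt{D/n}$ term. The second piece equals
\[
n^{-1}\big(\|(I-P)M_0\lambda_{0j}\|^2+2\epsilon^{(j)\top}(I-P)M_0\lambda_{0j}-\epsilon^{(j)\top}P\epsilon^{(j)}\big).
\]
Using $\|(I-P)M_0\|\lesssim\|E_0\|/\sqrt D$ on the event, the first part is $\lesssim\|\lambda_{0j}\|^2(n+D)/(nD)$ and the cross part is of smaller order. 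The term $n^{-1}\epsilon^{(j)\top}P\epsilon^{(j)}$ has expectation about $d\sigma_{0j}^2/n$ plus contributions from the dependence of $P$ on $\epsilon^{(j)}$. Summing squares over $j$ gives $\sum_j\ell_D\|\lambda_{0j}\|^2/D^2\lesssim \ell_D\|\Lambda_0\|_F^2/D^2\lesssim \ell_D/D$, plus terms of order $D/n$. The two bounds then follow from $\sqrt{a+b}\leq\sqrt a+\sqrt b$.

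\textbf{Main obstacle.} The hardest step is the dependence between $U$ (equivalently $P$) and the noise column $\epsilon^{(j)}$ in the loading cross term and in $\epsilon^{(j)\top}P\epsilon^{(j)}$. We would handle it with the leading-order expansion of $P$: its linear part $P_0^\perp E_0Y_0^+$ produces explicit Gaussian quadratic forms whose moments we compute exactly via Wick's formula, conditional on $M_0$. The remainder $\cR(E_0)$ we would bound crudely by $\|E_0\|^2/s_d(Y_0)^2$ on $\cE_{n,D}$. Because restricting to $\cE_{n,D}$ keeps all Gaussian moments bounded, this introduces no logarithmic factors.
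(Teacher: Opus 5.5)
Your architecture matches the paper's. You pass to $\bbE_0[\,\cdot\,\ind(\cE_{n,D})]$ via $\bbP_0^{(n)}(\cE_{n,D})\geq 1/2$, bound second moments and then apply Jensen. You split $\widehat\Lambda-\Lambda_0R$ into signal-alignment and noise-projection parts, and $\widehat\sigma_j^2-\sigma_{0j}^2$ into chi-square, signal--signal, signal--noise and noise--noise pieces. Your treatment of $n^{-1/2}E_0^\top U$ is fine. However, the signal-alignment term is not purely second order. With $M_0=U_0\bar R^\top\bar D_0\bar V_0^\top$ and $R=\bar V_0\bar R\widehat R$,
\[
\Lambda_0\Big(\tfrac{1}{\sqrt n}M_0^\top U-R\Big)=\Lambda_0\bar V_0\Big[\tfrac{1}{\sqrt n}\bar D_0\bar R\,(U_0^\top U-\widehat R)+\big(\tfrac{1}{\sqrt n}\bar D_0-I_d\big)\bar R\widehat R\Big].
\]
Only the first piece is the $O(1/n+1/D)$ alignment error you describe. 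The second reflects the normalization $\widehat M^\top\widehat M=nI_d$ versus $M_0^\top M_0\neq nI_d$. It survives even when $E_0=0$, and the annihilation $M_0^\top P_0^\perp=0$ does nothing to it. Its Frobenius norm is of order $\sqrt D\,\|n^{-1}M_0^\top M_0-I_d\|\asymp\sqrt{D/n}$, which is not $\lesssim D^{-1/2}+\sqrt D/n$ (take $D=n^{0.8}$). It has the target order, so the corollary survives. But this holds only if you control the term through the unconditional moment bound $\bbE_0\|n^{-1}M_0^\top M_0-I_d\|^2\lesssim 1/n$, divided by $\bbP_0^{(n)}(\cE_{n,D})$. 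The on-event bound $\sqrt{(\log n)/n}$ would leave a $\sqrt{\log n}$ factor. This is the paper's term $A_2$.

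Second, your claim that $2n^{-1}\epsilon^{(j)\top}(I-P)M_0\lambda_{0j}$ is of smaller order is unsupported, and the estimate you set up cannot give it. The bounds $\|(I-P)M_0\|\lesssim\|E_0\|/\sqrt D$ and $\|\epsilon^{(j)}\|\lesssim\sqrt n$ yield $\|\lambda_{0j}\|(n^{-1/2}+D^{-1/2})$ per coordinate, so $\sum_j(\cdot)^2\asymp D/n+1$, which does not vanish. You need three ingredients: $(I-P)M_0=-(P-P_0)M_0$, the expansion $P-P_0=\cL(E_0)+\cR(E_0)$, and $(M_0\lambda_{0j})^\top P_0^\perp=0$. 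Together they reduce the linear part to $-\tfrac2n\sum_k w_{jk}\,\epsilon^{(k)\top}P_0^\perp\epsilon^{(j)}$ with $w_j=Y_0^+M_0\lambda_{0j}$. The diagonal term $k=j$ has size $|w_{jj}|\lesssim\|\lambda_{0j}\|^2/D$. This is the same order as the signal--signal term, not smaller, and together with it is the source of $\ell_D/D$. The off-diagonal sum is centered Gaussian given $(M_0,\epsilon^{(j)})$ with variance $\lesssim\|w_j\|^2\|P_0^\perp\epsilon^{(j)}\|^2/n^2$, contributing $O(1/n)$ in total. The $\cR(E_0)$ part contributes $O(D(1/n+1/D)^2)$. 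Your Wick-formula plan for $\epsilon^{(j)\top}P\epsilon^{(j)}$ is adequate. This cross term needs the same treatment, and it is where the real work of the variance bound lies.
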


The theorem supplies the common high-probability event for the score expansion; the corollary supplies the aggregate parameter control needed under the observation-density $L^1$ loss.

\subsection{Latent density upper bound}\label{subsec: latent density upper bound}

We study the squared-$L^2$ error of \eqref{eq: fade latent kde}. Although it has the form of an ordinary $d$-dimensional KDE, its inputs are dependent score proxies jointly estimated from the data matrix, rather than draws from $g_0$ or a standard additive-error sample. The oracle benchmark, using true scores under the same rotation, is
$$\widetilde g_h^R(x) = \frac{1}{nh^d}\sum_{i=1}^n K\left(\frac{R^\top \eta_i - x}{h}\right).$$
Define the corresponding smoothed rotated density by
$$\overline g_h^R(x) = (K_h \star g_0^R) (x), \qquad K_h(x) =h^{-d} K(x/h).$$
For observed scores, the usual bias--variance decomposition gives
$$\bbE_0[\|\widetilde g_h^R - g_0^R\|_{L^2}^2 ] \lesssim h^{2\beta} + \frac{1}{nh^d}.$$
We quantify the additional error from replacing $R^\top\eta_i$ by $\widehat\eta_i$. For an integrable $f$, let
$$\varphi_f(t) = \int_{\bbR^d} e^{-it^\top x} f(x) dx$$
denote its Fourier transform.

\begin{assumption}[Smoothness of $g_0$]\label{assume: smoothness of g_0}
    For some $\beta>0$ and $L<\infty$,
    $$\int_{\bbR^d} (1 + \|t\|^2)^{\beta} |\varphi_{g_0}(t)|^2 dt \leq L^2.$$
\end{assumption}
This standard Fourier characterization of an $L^2$-Sobolev ball yields the $h^{2\beta}$ squared-bias bound; see \cite{tsybakov2009introduction}. It concerns high-frequency regularity, whereas the earlier sub-Gaussian condition controls tails for spectral recovery. The class is rotation invariant since $\varphi_{g^R}(t)=\varphi_g(Rt)$. The score perturbation analysis requires additional kernel derivatives for a Taylor expansion.

\begin{assumption}[Kernel regularity]\label{assume: kernel}
    The kernel $K:\bbR^d\to \bbR$ is integrable, belongs to $C^3\cap L^2$, and satisfies $\int_{\bbR^d} K(u) du = 1$. It is spherically symmetric:
    $$K(Qu) = K(u) \qquad \text{for every } u\in \bbR^d, \; Q\in \cO(d).$$
    In addition:
    \begin{enumerate}
        \item $K$ is of order $\beta$ in the sense that, for some $C_K<\infty$,
        $$|\varphi_K(t) - 1| \leq C_K \min \{1, \|t\|^\beta\}, \qquad t\in \bbR^d.$$
        \item Its first two derivatives satisfy
        $$\|\nabla K\|_{L^p}<\infty, \qquad \int_{\bbR^d} \|\nabla^2 K(u) \|_F^p du<\infty, \qquad p\in \{1,2\}.$$
        \item Its third derivative satisfies
        $$\int_{\bbR^d} \|\nabla^3 K(u)\|_{\rm op}^2 du <\infty,$$
        where $\|\nabla^3 K(u)\|_{\rm op}$ denotes the operator norm of the trilinear form $\nabla^3 K(u)$.
    \end{enumerate}
    All kernel norms are allowed to depend on the fixed latent dimension $d$, but not on $n$ or $D$.
\end{assumption}

The Fourier-order condition gives the bias bound; for integer $\beta$, sufficiently many vanishing moments suffice. The derivative conditions control score substitution and are standard for kernel estimation with generated quantities \citep{samb2010nonparametric}. A Gaussian kernel satisfies them for $0<\beta\leq2$; for larger $\beta$, suitable higher-order radial kernels are available \citep{wand1994kernel} and are generally signed. This is harmless under $L^2$ loss. For observation-density estimation, the positive correction in \eqref{eq: positive correction} preserves the required $L^1$ rate (Section~\ref{sec: observation density theory}).

Let $\cE_{n,D}$ be the event in Theorem \ref{thm: spectral recovery}, augmented by the conditional Gaussian concentration event on which $\max_i\|\xi_i\|\lesssim\sqrt{(\log n)/D}$. This does not change $\bbP_0^{(n)}(\cE_{n,D}^c)\lesssim n^{-1}$. Define
$$\rho_{n,D} := \frac{\sqrt{\log (nD)}}{D} + \frac{\log n}{\sqrt n},$$
so that $\max_i\|\Delta_i\|\lesssim\rho_{n,D}$ on $\cE_{n,D}$.

\begin{theorem}\label{thm: new kde}
    Suppose the conditions of Theorem \ref{thm: spectral recovery}, Assumption~\ref{assume: smoothness of g_0}, and Assumption~\ref{assume: kernel} hold. Let $R=R(M_0, E_0)\in \cO(d)$ be the measurable rotation appearing in Theorem \ref{thm: spectral recovery}. Then for every bandwidth $h>0$,
    $$\bbE_0\left[\|\widehat g_h - g_0^R\|_{L^2}^2 \mid \cE_{n,D}\right] \lesssim \cR_{n,D}(h),$$
    where 
    \begin{equation}\label{eq: kde upper bound full}
        \begin{aligned}
            \cR_{n,D}(h) &= \underbrace{h^{2\beta} + \frac{1}{nh^d}}_{\text{oracle bias and variance}}   + \underbrace{\frac{1}{nDh^{d+2}} + \rho_{n,D}^2\left(\frac{1}{nh^{d+2}} + \frac{1}{h^2}\right)}_{\text{linear score perturbation}} \\
            &\qquad + \underbrace{\left(\frac{1}{D^2} + \rho_{n,D}^4\right) \left(\frac{1}{nh^{d+4}} + \frac{1}{h^4}\right)}_{\text{quadratic score perturbation}}  + \underbrace{\frac{\log^3 n}{D^3 h^{d+6}} + \frac{\rho_{n,D}^6}{h^{d+6}}}_{\text{third order remainder}}.
        \end{aligned}
    \end{equation}
    The implicit constant may depend on $d, \beta, L$, the constants in the factor-model assumptions, and the kernel norms, but not on $n, D$ or $h$.
\end{theorem}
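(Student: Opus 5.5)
The plan is to compare $\widehat g_h$ with the oracle $\widetilde g_h^R$ and to expand the difference in a third-order Taylor series in the score errors $\delta_i := \widehat\eta_i - R^\top\eta_i = \widehat R^\top \xi_i + \Delta_i$ from Theorem~\ref{thm: spectral recovery}. Write $K_h(x) = h^{-d}K(x/h)$. Pointwise in $x$,
\[
\widehat g_h(x) - \widetilde g_h^R(x) = -\frac1n\sum_i \nabla K_h(x - R^\top\eta_i)^\top \delta_i + \frac{1}{2n}\sum_i \nabla^2 K_h(x-R^\top\eta_i)[\delta_i,\delta_i] + T_3(x),
\]
where $T_3$ is the integral-form third-order remainder. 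The oracle term is handled by the usual bias--variance bound $h^{2\beta} + (nh^d)^{-1}$, using Assumption~\ref{assume: smoothness of g_0} and the Fourier order of $K$. By $(a+b+c+d)^2\lesssim a^2+b^2+c^2+d^2$, it then suffices to bound the conditional expected squared $L^2$ norms of the linear, quadratic and cubic pieces on $\cE_{n,D}$.

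\emph{Linear term.} Split $\delta_i = \widehat R^\top\xi_i + \Delta_i$. By rotation invariance of $K$ and the identity $R\widehat R^\top\xi_i = \overline V_0\overline R\,\xi_i$, the $\xi$-part equals $\frac1n\sum_i \nabla K_h(x-R^\top\eta_i)^\top \widehat R^\top \xi_i$. Its $x$-dependent factor involves $R$, which depends on $E_0$, so $\xi_i$ is not exactly independent of it. I would handle this by first writing the term with the conditionally fixed rotation $Q_0=\overline V_0\overline R$ in place of $R$, i.e.\ using kernels centered at $Q_0^\top\eta_i$. The discrepancy $R^\top\eta_i - Q_0^\top\eta_i$ is $(\widehat R^\top - I)$-type, and I would bound its contribution either by an $O(\rho_{n,D})$ alignment argument or, more simply, by taking a supremum over $\cO(d)$ via a net, which costs only logs. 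Conditional on $M_0$, the $\xi_i$ are centered Gaussian with covariance $(P_0^\perp)_{ij}\Sigma_\xi$. The second moment is therefore $n^{-2}\sum_{i,j}(P_0^\perp)_{ij}\langle \nabla K_h(\cdot-a_i), \Sigma_\xi \nabla K_h(\cdot -a_j)\rangle$. The diagonal part gives $\|\Sigma_\xi\|\,\|\nabla K_h\|_{L^2}^2/n \lesssim (nDh^{d+2})^{-1}$. The off-diagonal part comes from $-P_0$, a rank-$d$ matrix with entries $O(n^{-1})$, and contributes $D^{-1}\|n^{-1}\sum_i \nabla K_h(\cdot - a_i) u_i\|^2$ for bounded weights; this is of lower order, $D^{-1}(1 + (nh^{d+2})^{-1})$. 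The $\Delta$-part is deterministically small but uncentered. Writing $\frac1n\sum_i \nabla K_h(x-a_i)^\top\Delta_i$ and bounding it by Cauchy--Schwarz, splitting $\nabla K_h(\cdot - a_i)$ into its mean $\nabla(K_h\star g_0)$, of norm $\lesssim h^{-1}$, and a fluctuation of variance $(nh^{d+2})^{-1}$, gives $\rho_{n,D}^2\big((nh^{d+2})^{-1} + h^{-2}\big)$.

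\emph{Quadratic and cubic terms.} For the quadratic term, use $\|\delta_i\|^2 \lesssim \|\xi_i\|^2 + \rho_{n,D}^2$. The cross and $\xi\xi^\top$ pieces are handled the same way: replace $\xi_i\xi_i^\top$ by its conditional mean $(P_0^\perp)_{ii}\Sigma_\xi = O(D^{-1})$ plus a centered fluctuation. The mean part gives $D^{-2}$ times (mean $+$ fluctuation of $\nabla^2 K_h$), that is, $D^{-2}(h^{-4} + (nh^{d+4})^{-1})$. The $\rho^4$ piece uses the same mean/fluctuation split, and the centered fluctuation is smaller, using Gaussian fourth moments and weak dependence through $P_0^\perp$. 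For the cubic remainder, I would simply use Minkowski's inequality with $\|\nabla^3K_h\|_{L^2}^2 \asymp h^{-d-6}$ and $\max_i\|\delta_i\|^3 \lesssim (\log n/D)^{3/2} + \rho_{n,D}^3$, which gives the stated $\frac{\log^3 n}{D^3h^{d+6}}+\rho^6 h^{-d-6}$.

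\emph{Main obstacle.} The main obstacle is the linear $\xi$-term. Its dependence on $E_0$ enters through $R$ and through conditioning on $\cE_{n,D}$, which breaks exact conditional centering. I would first try to show that conditioning on an event of probability $1-O(n^{-1})$ costs only an additive $O(n^{-1}\cdot\text{crude bound})$, via $\bbE[X\mid\cE]\le \bbE[X]/\bbP(\cE)$ for the nonnegative squared norms, which is valid here. The $R$-dependence I would remove by replacing $R$ with $Q_0$ at cost $\|R - Q_0\| \lesssim \rho_{n,D} + \sqrt{\log n/D}$-order terms, which are absorbed into the $\rho$ and $D^{-2}$ contributions.
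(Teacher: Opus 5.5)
There is a genuine gap, and it sits in the step you yourself flag as the main obstacle. You justify replacing $R$ by $Q_0=\overline V_0\overline R$ by claiming that $\|R-Q_0\|=\|\widehat R-I_d\|$ is of order $\rho_{n,D}+\sqrt{\log n/D}$. This is false in general. $\widehat R$ is the Procrustes rotation between two separately computed singular bases $U$ and $U_0$. Each basis is defined only up to column signs. When the signal singular values are close, it is also defined only up to an arbitrary rotation inside the common subspace. For example, the admissible choice $\Lambda_0^\top\Lambda_0=DI_d$ gives $s_k(Y_0)=\sqrt D\,s_k(M_0)$, with relative gaps $O(\sqrt{\log n/n})$.

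Consequently, $\widehat R$ is generically $O(1)$ away from $I_d$, and the centres $R^\top\eta_i$ and $Q_0^\top\eta_i$ differ by $O(\sqrt{\log n})\gg h$. No perturbation argument in the kernel centres can work. The net alternative does not rescue the stated bound either. It needs uniform control of a Gaussian chaos over the net rather than second moments. It would also put $h$- and $n$-dependent $\log$ factors on the $1/(nDh^{d+2})$ term, which $\cR_{n,D}(h)$ does not contain uniformly in $h>0$.

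The missing idea is that the replacement you want is \emph{exact}. Substitute $x=R^\top y$ in the $L^2$ integral and use spherical symmetry in the form $\nabla K(Qu)=Q\nabla K(u)$ and $\nabla^2K(Qu)=Q\nabla^2K(u)Q^\top$. Then $\nabla K(z_i(x))^\top\delta_i$ becomes $\nabla K((\eta_i-y)/h)^\top(\Theta_0\xi_i+R\Delta_i)$, where $\Theta_0=R\widehat R^\top=\overline V_0\overline R$ is $\sigma(M_0)$-measurable. The leading term is therefore an exactly centred Gaussian linear form given $M_0$, with kernel centres at the $\eta_i$ themselves. The same substitution, through $\|\nabla^kK(R^\top u)\|=\|\nabla^kK(u)\|$, is what lets you use independence of $\eta_i$ and $\eta_j$ (Young's inequality for off-diagonal pairs) in the $\Delta$-, quadratic and oracle-variance terms. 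With data-dependent centres $R^\top\eta_i$, those pairs are not independent.

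Your handling of the conditioning also does not close as written. The inequality $\bbE[X\mid\cE_{n,D}]\le\bbE[X]/\bbP(\cE_{n,D})$ forces an unconditional computation. However, $\|\Sigma_\xi\|\lesssim D^{-1}$ needs $s_d(M_0)\gtrsim\sqrt n$; an unconditional version would require $\bbE[n/s_d(M_0)^2]\lesssim1$, which is not assumed. Likewise, $\|\Delta_i\|\le\rho_{n,D}$ and $\max_i\|\xi_i\|\lesssim\sqrt{\log n/D}$ hold only on the event.

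The paper isolates a score-only event $\cE_\eta\in\sigma(M_0)$ with $\cE_{n,D}\subseteq\cE_\eta$. For nonnegative $X$ it uses $\bbE[X\mid\cE_{n,D}]\le 2\,\bbE_\eta[\bbE_{E_0}(X\mid M_0)\ind(\cE_\eta)]$. This keeps exact conditional Gaussian moments of $\xi$ together with $\|\Sigma_\xi\|\lesssim D^{-1}$. For the $\Delta$-terms it keeps $\ind(\cE_{n,D})$, applies the pointwise bound $|\nabla K(u_i)^\top R\Delta_i|\le\rho_{n,D}\|\nabla K(u_i)\|$ first, and only then drops the indicator.

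Separately, in the leading linear term the $-P_0$ part of $P_0^\perp$ is nonpositive and should simply be discarded via $0\preceq P_0^\perp\preceq I_n$. Your estimate of it as $D^{-1}(1+(nh^{d+2})^{-1})$ misses the factor $n^{-1}$ coming from $(P_0)_{ij}=O(n^{-1})$. Moreover, a bare $D^{-1}$ term is not dominated by $\cR_{n,D}(h)$ for all $h$, for instance when $\beta$ is large, $D\ll n$ and $h$ is intermediate. Once these points are repaired, your quadratic term via $\|\delta_i\|^2\lesssim\|\xi_i\|^2+\rho_{n,D}^2$ (on the event) and your cubic remainder via Minkowski go through as in the paper.
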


For the rotation-invariant loss, this implies
$$\bbE_0\left[d_{\cO}^2 (\widehat g_h, g_0)\right] \lesssim \cR_{n,D}(h).$$
Indeed, the complement has probability $O(n^{-1})$ and contributes at most a constant multiple of $h^{-d}+1$. The unrestricted bound displays every score-recovery contribution before bandwidth optimization. Its proof decomposes
$$\widehat g_h - g_0^R = \left(\overline g_h^R - g_0^R\right) + \left(\tilde g_h^R - \overline g_h^R\right) + \left(\widehat g_h - \tilde g_h^R\right).$$
The first two terms give ordinary KDE bias and variance; the third is the score-substitution error.

If one uses only the average spectral recovery bound from Theorem \ref{thm: spectral recovery}(i), a mean-value argument gives
$$\|\widehat g_h - \tilde g_h^R\|_{L^2}^2 \lesssim \frac{1}{h^{d+2}} \sum_{i=1}^n \|\widehat \eta_i - R^\top \eta_i\|^2 \lesssim \frac{1}{Dh^{d+2}} + \frac{\log n}{nh^{d+2}}.$$
This treats score errors separately and loses their centering and covariance structure; the resulting weaker bound is recorded as Theorem \ref{thm: weak kde g0} in the Appendix.

Instead, we substitute $\widehat \eta_i-R^\top\eta_i=\widehat R^\top\xi_i+\Delta_i$ into a second-order Taylor expansion. With $z_i(x)=(R^\top\eta_i-x)/h$ and $\delta_i=\widehat\eta_i-R^\top\eta_i$,
\begin{align*}
    K\bigg(z_i(x) + \frac{\delta_i}{h}\bigg) - K(z_i(x)) = \frac1h \nabla K(z_i(x))^\top \delta_i + \frac{1}{2h^2}\delta_i^\top \nabla^2 K(z_i(x)) \delta_i + r_i(x),
\end{align*}
where $r_i(x)$ is controlled by $\nabla^3K$ and $\max_i\|\delta_i\|$. Spherical symmetry permits a rotation of the $L^2$ integral under which the leading errors remain conditionally centered Gaussian. Together with $0\preceq P_0^\perp\preceq I_n$ and $\|\Sigma_\xi\|\lesssim D^{-1}$, stochastic averaging yields $1/(nDh^{d+2})$, rather than the deterministic $1/(Dh^{d+2})$ term. The $D^{-2}$ term is the second-order effect of score noise, while the remaining terms are controlled by $\max_i\|\Delta_i\|$. The proof handles conditioning on $\cE_{n,D}$ through a score-only event so that exact conditional Gaussian moments remain available.

\begin{remark}[Uniformity of upper bound]\label{remark: uniform upper bound}
    Let the class of all parameters satisfying the stated assumptions be denoted as $\Theta_*=\{\theta=(g,\Lambda, \Sigma): \theta \text{ satisfies}$ $\text{Assumptions } \ref{assume: loadings}- \ref{assume: smoothness of g_0}\}$. Then, the results in Theorems \ref{thm: spectral recovery} and \ref{thm: new kde} can be stated uniformly over $\Theta_*$, with constants depending only on $d,\beta, L$, the fixed constants in the factor-model assumptions, and the kernel norms. In particular, for the latent density estimation results, there exists a constant $C>0$ such that for every $\theta\in\Theta_*$, there is a measurable rotation $R_\theta\in\cO(d)$ and an event $\cE_{n,D}(\theta)$ satisfying $\inf_{\theta\in \Theta_*} \bbP_\theta \{\cE_{n,D}(\theta)\}\geq 1-C/n$, for which, for any deterministic bandwidth $h>0$, we have
    $$\bbE_\theta [\|\widehat g_h - g^{R_\theta}\|_{L^2}^2 \mid \cE_{n,D}(\theta)] \leq C\cR_{n,D}(h),$$
    where $\cR_{n,D}(h)$ is given in Equation \eqref{eq: kde upper bound full}. Consequently, this implies the uniform bound $\sup_{\theta\in \Theta_*} \bbE_\theta d_{\cO}^2(\widehat g, g) \leq C \cR_{n,D}(h)$. To see this uniformity, note that although the event $\cE_{n,D}$ itself depends on $\theta$, its probability guarantee is uniform as all the concentration events in the proof of Theorem \ref{thm: spectral recovery} depend only on the constants in the assumptions and $d$, which are fixed. For the proof of Theorem \ref{thm: new kde}, all constants depend on the relevant concentration events, whose probabilities are controlled uniformly via the assumptions on the parameters, the kernel norms, and $\|g\|_{L^2}^2$, which is bounded by Parseval's inequality $\|g\|_{L^2}^2\lesssim \int |\varphi_g(t)|^2 dt \leq L^2$ by Assumption \ref{assume: smoothness of g_0}. Therefore, we shall henceforth write $\cR_{\rm FADE}(n,D)$ to denote the uniform bound $\sup_{\theta\in \Theta_*} \bbE_\theta [d_{\cO}^2(\widehat{g}_{h^*}, g)]\lesssim \cR_{\rm FADE}(n,D)$, where $h^*$ is the optimized bandwidth discussed in the next section and $\widehat{g}_h$ is the FADE density estimator constructed from $n$ observations $Y_1,\dots,Y_n$ in $\bbR^D$.
\end{remark}

\subsection{Optimized rate and oracle transition}\label{subsec: latent density optimized rate}

Theorem \ref{thm: new kde} separates ordinary nonparametric error from score-recovery error. For observed rotated scores, balancing $h^{2\beta}$ and $(nh^d)^{-1}$ gives
$$h_{\rm oracle} \asymp n^{-1/(2\beta+d)}, \qquad  \cR_{\rm oracle}^* \asymp n^{-2\beta/(2\beta+d)}.$$
For FADE, score perturbations require $h^*\asymp h_D^*\vee h_n^*$, where $h_D^*$ is determined by finite-$D$ error and $h_n^*$ by sampling and the $n$-dependent remainder. Once $h_D^*\lesssim h_n^*$, finite-$D$ uncertainty no longer controls the bound; whether the remaining rate is oracle depends on $d$ and $\beta$.

We impose $\log\log n=o(\log D)$ only to order slowly varying terms; it holds, for example, when $D=n^{\gamma+o(1)}$.

\begin{corollary}\label{cor: optimized latent kde rate}
    Under the conditions of Theorem \ref{thm: new kde} and the condition $\log \log n =o(\log D)$, using $q=2\beta+d+6$, define 
    \begin{align*}
        h_D^* \asymp \begin{cases}
            D^{-1/(\beta+2)}, &\beta>d, \\
            D^{-3/q} (\log n)^{3/q}, &0<\beta \leq d,
        \end{cases}
    \end{align*}
    and 
    \begin{align*}
        h_n^* \asymp \begin{cases}
            n^{-1/(2\beta+d)}, &d\geq 3,\\
            n^{-1/(2\beta+2)}(\log n)^{1/(\beta+1)}, &d\leq 2,\; \beta\geq d/4, \\
            n^{-3/q}(\log n)^{6/q}, &d\leq 2, \; 0<\beta<d/4.
        \end{cases}
    \end{align*}
    Then the bandwidth $h^*\asymp h_D^* \vee h_n^*$ satisfies
    $$\bbE_0[d_{\cO}^2(\widehat g_{h^*}, g_0)] \lesssim (h_D^*)^{2\beta} \vee (h_n^*)^{2\beta}.$$
\end{corollary}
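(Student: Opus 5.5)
The corollary follows from Theorem \ref{thm: new kde} together with the remark after it, which give $\bbE_0[d_{\cO}^2(\widehat g_h,g_0)]\lesssim \cR_{n,D}(h)+n^{-1}(h^{-d}+1)$. The plan is to substitute $h=h^*\asymp h_D^*\vee h_n^*$ and check that every term of $\cR_{n,D}(h^*)$ is at most $(h^*)^{2\beta}\asymp (h_D^*)^{2\beta}\vee(h_n^*)^{2\beta}$. Two monotonicity facts drive this. Each perturbation term has the form $a\,h^{-m}$ with $m>0$, so it is decreasing in $h$; it therefore suffices to bound it at whichever of $h_D^*$ or $h_n^*$ was designed to absorb it, since $h^*$ is at least as large. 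Conversely, $h^{2\beta}$ is increasing, so the bias at $h^*$ equals the claimed rate. The oracle variance satisfies $(nh^d)^{-1}\le (n(h_n^*)^d)^{-1}\lesssim (h_n^*)^{2\beta}$ in all three $h_n^*$ regimes. The complement term $n^{-1}h^{-d}$ is dominated by it.

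Next I would sort the terms into a pure-$D$ group and an $n$-dependent group. The pure-$D$ terms are:
\begin{itemize}
\item[(a)] $D^{-2}h^{-4}$, which balances against $h^{2\beta}$ at $h=D^{-1/(\beta+2)}$;
\item[(b)] $\log^3 n\,D^{-3}h^{-(d+6)}$, which balances at $h=(\log^3 n/D^3)^{1/q}$;
\item[(c)] the part of $\rho_{n,D}^2h^{-2}$ and $\rho_{n,D}^4h^{-4}$ coming from $\rho\approx\sqrt{\log(nD)}/D$, giving $\log(nD)D^{-2}h^{-2}$ and $\log^2(nD)D^{-4}h^{-4}$;
\item[(d)] $\rho^6h^{-(d+6)}$ restricted to its $D$-part.
\end{itemize}
Comparing exponents, $D^{-2\beta/(\beta+2)}$ is at least $D^{-6\beta/q}$ exactly when $q\ge 3(\beta+2)$, i.e.\ $\beta\le d$. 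So the larger of (a) and (b) determines $h_D^*$, which gives the case split $\beta>d$ versus $\beta\le d$. The hypothesis $\log\log n=o(\log D)$ lets the factor $\log^{3/q}n$ be absorbed against polynomial gaps in $D$ when the exponents differ strictly. The terms in (c) and (d) are lower order by direct exponent comparison: for instance $D^{-2}h^{-2}\le D^{-2}h^{-4}$, and $D^{-4}h^{-4}\ll D^{-2}h^{-4}$. The $\log(nD)$ factors are handled by $\log D$ versus polynomial-in-$D$ slack, together with $\log n=o(D)$.

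The $n$-dependent group consists of $(nDh^{d+2})^{-1}$, $D^{-2}(nh^{d+4})^{-1}$, and the $\rho_n=\log n/\sqrt n$ parts $\log^2n\,n^{-1}h^{-(d+2)}$, $\log^2 n\,n^{-1}h^{-2}$, $\log^4 n\,n^{-2}h^{-4}$, $\log^4n\,n^{-2}h^{-(d+4)}$, and $\log^6n\,n^{-3}h^{-(d+6)}$. Cross terms of $\rho^2$ split by $(a+b)^k\lesssim a^k+b^k$. The two terms carrying extra powers of $D^{-1}$ are dominated by $(nh^d)^{-1}$ as soon as $Dh^2\gtrsim1$ and $D^2h^4\gtrsim1$. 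That holds because $h\ge h_D^*\gg D^{-1/2}$.

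The case analysis in $d$ is where I expect the main difficulty. For $d\ge3$, the requirement is $n^{-1}h^{-2}\log^2 n\lesssim (nh^d)^{-1}$, i.e.\ $h^{d-2}\log^2 n\lesssim 1$, which holds as $h\to0$ polynomially. For $d\le2$ that comparison fails, and $\log^2 n\,n^{-1}h^{-2}$ becomes the dominant $n$-term; balancing it against $h^{2\beta}$ yields $h=n^{-1/(2\beta+2)}(\log n)^{1/(\beta+1)}$. Alternatively the third-order $\log^6 n\,n^{-3}h^{-q}$ dominates, with balance $n^{-3/q}(\log n)^{6/q}$. Comparing the exponents $1/(2\beta+2)$ and $3/q$ gives exactly the threshold $\beta=d/4$. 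The remaining terms ($n^{-2}h^{-4}$ and $n^{-2}h^{-(d+4)}$) are interpolants between these and should be dominated once the correct maximum is chosen; checking each against the extremes is routine but fussy.

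Finally, there is an interaction: whichever of $h_D^*,h_n^*$ is larger sets $h^*$, and the terms of the other group remain below $(h^*)^{2\beta}$ by monotonicity. This concludes the bound.
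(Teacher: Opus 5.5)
Your strategy matches the paper's. You bound each term of $\cR_{n,D}(h)$ against $h^{2\beta}$ at its balancing bandwidth, use monotonicity in $h$, and compare the balancing bandwidths through their exponents. Since the corollary consists of this bookkeeping, the list of terms has to be right, and yours is not.

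With $b_n=\log n/\sqrt n$, the pieces $b_n^2(nh^{d+2})^{-1}$ and $b_n^4(nh^{d+4})^{-1}$ equal $\log^2 n\,n^{-2}h^{-(d+2)}$ and $\log^4 n\,n^{-3}h^{-(d+4)}$. They are not $\log^2 n\,n^{-1}h^{-(d+2)}$ and $\log^4 n\,n^{-2}h^{-(d+4)}$, as you list. This matters. The requirement $\log^2 n\,n^{-1}h^{-(d+2)}\le h^{2\beta}$ would force $h\gtrsim n^{-1/(2\beta+d+2)}$ up to logarithms. That is polynomially larger than $n^{-1/(2\beta+d)}$, $n^{-1/(2\beta+2)}$ and $n^{-3/q}$, so if that term were really in $\cR_{n,D}(h)$ the corollary would fail whenever $h^*=h_n^*$.

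Your step ``the remaining terms are interpolants and should be dominated'' is therefore exactly where the argument breaks, and you never treat the first of these two terms at all. The fix, which is what the paper does, is to factor
$\rho^2(nh^{d+2})^{-1}=(nh^d)^{-1}(\rho/h)^2$, $\rho^4(nh^{d+4})^{-1}=(nh^d)^{-1}(\rho/h)^4$ and $\rho^4h^{-4}=(\rho/h)^2\,\rho^2h^{-2}$. Then observe that $\rho_{n,D}\lesssim h^*$, because $h^*\ge h_D^*\ge D^{-1/2}\gg\sqrt{\log(nD)}/D$ and $h_n^*\gg\log n/\sqrt n$ in all three regimes. Every such term is then at most $(nh^d)^{-1}$ or $\rho^2h^{-2}$, and those are the terms you treat directly.

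There are three smaller corrections.
\begin{itemize}
\item Your exponent comparison is stated backwards. In fact $D^{-2\beta/(\beta+2)}\ge D^{-6\beta/q}$ iff $q\le3(\beta+2)$, i.e.\ iff $\beta\ge d$. This is why $h_D^*=D^{-1/(\beta+2)}$ for $\beta>d$; at $\beta=d$ the factor $(\log n)^{6\beta/q}$ decides the tie.
\item The term $\log(nD)D^{-2}h^{-2}$ is not lower order than $D^{-2}h^{-4}$ at $h^*$ in general. That would need $(h^*)^2\log(nD)\lesssim1$, which fails for example when $\log D\asymp n$, $d\ge3$ and $h^*=h_{\rm or}$. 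The comparison must be made at $h_D^*$; equivalently, show $D^{-1/(\beta+1)}\log^{1/(2\beta+2)}(nD)\ll D^{-1/(\beta+2)}$. That requires $\log(nD)=D^{o(1)}$, which is exactly where the hypothesis $\log\log n=o(\log D)$ enters. The condition $\log n=o(D)$ alone does not suffice.
\item For $d\ge3$ you also need to check the third-order term, i.e.\ $n^{-3/q}(\log n)^{6/q}\ll n^{-1/(2\beta+d)}$. This holds since $2\beta+d>3$ gives $3/q>1/(2\beta+d)$.
\end{itemize}
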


Appendix \ref{app: bandwidth} compares all terms in $\cR_{n,D}(h)$ by balancing $c_{n,D}h^{-s}$ against $h^{2\beta}$. The two dominant finite-$D$ balances yield $D^{-1/(\beta+2)}$ for $\beta>d$ and for $\beta\leq d$, $D^{-3/q}(\log n)^{3/q}$. Among the $n$-dependent terms, the oracle balance dominates for $d\geq3$; for $d\leq2$, the uniform remainder can require a larger bandwidth, with boundary $\beta=d/4$.

\medskip
\noindent
\textbf{Ambient-dimension threshold} Define $D_c(n)$ by equating $h_D^*$ and $h_n^*$. This is the ambient dimension at which the optimized upper bound changes from being $D-$limited to being $n-$limited. With $q=2\beta+d+6$, we obtain

\begin{equation}\label{eq: D_c}
    \begin{aligned}
        D_c(n) \asymp \begin{cases}
            n^{(\beta+2)/(2\beta+d)}, &d\geq 3, \; \beta>d,\\
            n^{q/(6\beta+3d)} \log n, &d\geq 3,\; 0<\beta\leq d,\\
            n^{(\beta+2)/(2\beta+2)} (\log n)^{-(\beta+2)/(\beta+1)}, &d\leq 2, \; \beta > d,\\
            n^{q/(6\beta+6)} (\log n)^{(\beta-d-3)/(3\beta+3)}, &d\leq 2, \; d/4\leq \beta \leq d,\\
            n(\log n)^{-1}, & d\leq 2, \; 0<\beta <d/4.
        \end{cases}
    \end{aligned}
\end{equation}

If $D\lesssim D_c(n)$, finite-$D$ score recovery controls the bound; if $D\gtrsim D_c(n)$, its $n$-dependent component controls it.

\begin{corollary}
    Under the conditions of the preceding corollary, we have the following. For $d\geq 3$, 
    \begin{align*}
        \bbE_0[d_{\cO}^2(\widehat g_{h^*}, g_0)] \lesssim \begin{cases}
            n^{-2\beta/(2\beta+d)}, &D\gtrsim D_c(n),\\
            D^{-2\beta/(\beta+2)}, & D\lesssim D_c(n),\; \beta>d,\\
            D^{-6\beta/q} (\log n)^{6\beta/q}, &D\lesssim D_c(n), \; \beta \leq d.
        \end{cases}
    \end{align*}
    For $d\leq 2$,
    \begin{align*}
        \bbE_0[d_{\cO}^2(\widehat g_{h^*}, g_0)] \lesssim \begin{cases}
            n^{-\beta/(\beta+1) }(\log n)^{2\beta/(\beta+1)}, &D\gtrsim D_c(n), \; \beta \geq d/4,\\
            n^{-6\beta/q} (\log n)^{12\beta/q}, &D\gtrsim D_c(n), \; 0<\beta <d/4,\\
            D^{-2\beta/(\beta+2)}, &D\lesssim D_c(n), \; \beta>d,\\
            D^{-6\beta/q}(\log n)^{6\beta/q}, &D\lesssim D_c(n), \; 0<\beta\leq d.
        \end{cases}
    \end{align*}
    The statements at $D\asymp D_c(n)$ agree up to constants and the displayed logarithmic factors.
\end{corollary}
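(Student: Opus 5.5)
This corollary is a case-by-case rewriting of Corollary~\ref{cor: optimized latent kde rate}. That result gives $\bbE_0[d_{\cO}^2(\widehat g_{h^*}, g_0)] \lesssim (h_D^*)^{2\beta}\vee(h_n^*)^{2\beta}$, so it suffices to decide which of $h_D^*$ and $h_n^*$ is larger in each regime and then evaluate the corresponding power. Since $D_c(n)$ is defined by $h_D^*\asymp h_n^*$, and $h_D^*$ is decreasing in $D$ (up to the slowly varying $\log n$ factor, which does not depend on $D$), we get $h_D^*\lesssim h_n^*$ exactly when $D\gtrsim D_c(n)$. In that regime the bound is $(h_n^*)^{2\beta}$; when $D\lesssim D_c(n)$ it is $(h_D^*)^{2\beta}$.

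The key steps, in order, are as follows.
\begin{enumerate}
\item Compute the finite-$D$ rates. We have $(D^{-1/(\beta+2)})^{2\beta}=D^{-2\beta/(\beta+2)}$ for $\beta>d$, and $(D^{-3/q}(\log n)^{3/q})^{2\beta}=D^{-6\beta/q}(\log n)^{6\beta/q}$ for $\beta\le d$. These hold for every $d$, which produces the $D\lesssim D_c(n)$ lines in both displays.
\item Compute the $n$-limited rates.
\begin{itemize}
\item For $d\ge3$, $(n^{-1/(2\beta+d)})^{2\beta}$ gives the oracle rate $n^{-2\beta/(2\beta+d)}$.
\item For $d\le2$ and $\beta\ge d/4$, $(n^{-1/(2\beta+2)}(\log n)^{1/(\beta+1)})^{2\beta}=n^{-\beta/(\beta+1)}(\log n)^{2\beta/(\beta+1)}$.
\item For $\beta<d/4$, $(n^{-3/q}(\log n)^{6/q})^{2\beta}=n^{-6\beta/q}(\log n)^{12\beta/q}$.
\end{itemize}
\item Verify the threshold formulas \eqref{eq: D_c} by solving $h_D^*=h_n^*$ in each case. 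For example, with $d\ge3$ and $\beta>d$, the equation $D^{-1/(\beta+2)}=n^{-1/(2\beta+d)}$ gives $D=n^{(\beta+2)/(2\beta+d)}$. This confirms that the case split on $D$ coincides with the comparison of bandwidths.
\item At $D\asymp D_c(n)$ the two bandwidths agree up to constants. Hence the two expressions for the risk agree up to constants, except that substituting $D_c(n)$, which carries its own logarithmic factors, into the $D$-rate may redistribute powers of $\log n$. This accounts for the final sentence of the statement.
\end{enumerate}

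The main obstacle is monotonicity with logarithms present. In the regimes $\beta\le d$ the $D$-bandwidth contains $\log n$, so the comparison "$D\gtrsim D_c(n)$ iff $h_D^*\lesssim h_n^*$" must be read with $n$ fixed. It is still valid, because the dependence on $D$ is a pure negative power. The assumption $\log\log n=o(\log D)$ is needed only to ensure that the case boundaries, such as $\beta=d/4$, and the identification of the dominant term in Corollary~\ref{cor: optimized latent kde rate} are not overturned by slowly varying factors. I would check each of the five threshold cases explicitly, with the $d\le2$, $\beta<d/4$ case giving $D_c\asymp n(\log n)^{-1}$. Beyond that the argument is direct algebra.
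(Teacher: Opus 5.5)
Your proposal is correct and follows the paper's argument: the paper likewise takes $h^*\asymp h_D^*\vee h_n^*$ from Corollary~\ref{cor: optimized latent kde rate}, uses that $h_D^*$ decreases in $D$ so that the $n$-dependent bandwidth dominates exactly when $D$ exceeds the solution $D_c(n)$ of $h_D^*=h_n^*$, and reads off $(h^*)^{2\beta}$ in each regime; your exponent and threshold computations all check out.

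Two small corrections to your side remarks. The condition $\log\log n=o(\log D)$ is what lets powers of $D$ beat powers of $\log n$, for instance so that $D^{-1/(\beta+2)}$ dominates $D^{-3/q}(\log n)^{3/q}$ when $\beta>d$ strictly. It is not what settles the boundaries $\beta=d/4$ or $\beta=d$: there the comparison is exact, since at $\beta=d/4$ the two $n$-bandwidths coincide, and at $\beta=d$ the extra logarithm decides. Also, at $D\asymp D_c(n)$ the two rates coincide up to constants, with no redistribution of logarithmic factors.
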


The principal oracle conclusion is especially clean when $d\geq3$.

\begin{example}
    If $d\geq 3$ and $D\gtrsim D_c(n)$, where (obtained from \eqref{eq: D_c})
\begin{align}\label{eq: D_c for d >= 3}
    D_c(n) \asymp \begin{cases}
    n^{(\beta+2)/(2\beta+d)}, &\beta>d,\\
    n^{(2\beta+d+6)/(6\beta + 3d)}\log n, &0<\beta \leq d,
\end{cases}
\end{align}
then we have 
$\bbE_0[d_{\cO}^2(\widehat g_{h^*}, g_0)]\lesssim n^{-2\beta /(2\beta+d)}.$
For such cases, FADE attains the same squared-$L^2$ rate as the oracle KDE that directly observes the latent scores. Thus, in this regime, beyond the ambient dimension threshold as mentioned, spectral preprocessing incurs no additional statistical cost. By the uniformity of the upper bound as discussed in Remark \ref{remark: uniform upper bound}, we have
\begin{equation}\label{eq: uniform upper bound D >= 3}
    d\geq 3, \; D\gtrsim D_c(n)\;\Longrightarrow \sup_{(g,\Lambda,\Sigma)} \bbE_{g,\Lambda,\Sigma} d_{\cO}^2(\widehat g_{h^*}, g) \lesssim n^{-2\beta/(2\beta+d)},
\end{equation}
where the supremum is over all parameters $g,\Lambda, \Sigma$ satisfying the assumptions in the above corollary. For $d=2$ and $\beta\geq 1/2$, the corresponding conclusion holds up to a logarithmic factor
$$d=2, \beta\geq 1/2, \; D\gtrsim D_c(n)\;\Longrightarrow \sup_{(g,\Lambda,\Sigma)} \bbE_{g,\Lambda,\Sigma}d_{\cO}^2(\widehat g_{h^*}, g)\lesssim n^{-\beta/(\beta+1)} (\log n)^{2\beta/(\beta+1)}.$$
Since the oracle rate for $d=2$ is $n^{-\beta/(\beta+1)}$, the polynomial part is oracle-optimal, but our upper bound retains the displayed logarithmic loss.
\end{example} 

For $d=2$ and $0<\beta<1/2$, and for $d=1$, the present analysis does not recover the oracle polynomial rate. This does not establish intrinsic suboptimality: the gap may reflect uniform remainder control, the higher-order Taylor analysis, or a genuine cost of two-stage estimation. The comparatively favorable $d\geq3$ result does not mean that density estimation is easier in higher dimension; rather, the oracle bandwidth is larger and can absorb the $n$-dependent score-estimation remainder.

The upper bounds therefore reveal two qualitatively distinct regimes for FADE:
\begin{align*}
    D \lesssim D_c(n) \quad &: \quad \text{latent-score recovery contributes to the leading rate} \\
    D\gtrsim D_c(n) \quad &: \quad \text{finite-}D \text{ score uncertainty is asymptotically negligible.}
\end{align*}
For $d\geq3$, the second regime is exactly the oracle regime. We next compare these bounds with minimax risk based on full observations.

\subsection{Lower bound and optimality}\label{subsec: latent density lower bound}
The oracle comparison does not determine whether FADE's $D$-dependent error is intrinsic or arises from spectral preprocessing. We therefore derive a minimax lower bound over all estimators based on $Y_1,\dots,Y_n$, separating ordinary nonparametric difficulty from finite-$D$ Gaussian attenuation. For fixed $\beta,L>0$, define
$$\cW_\beta(L):=\left\{ g\in L^2(\bbR^d)\; : \; \int_{\bbR^d} (1+\|t\|^2)^\beta |\varphi_g (t)|^2 dt \leq L^2 \right\}.$$
We use the density subclass 
\begin{equation}
    \begin{aligned}
        \cG_{\beta}(L,C):=\left\{g\in \cW_\beta(L)\; : \; \right.& g\geq 0, \; \int_{\bbR^d} g(x) dx = 1, \; \bbE_g(\eta)=0, \; \bbE_g(\eta\eta^\top)=I_d, \\
        & \left. g(x) \leq C\exp\left(-c\|x\|^2\right) \text{ for every } x\in \bbR^d \right\},
    \end{aligned}
\end{equation}
where $c,C>0$ are fixed and $c$ is suppressed from notation. This Gaussian envelope is stronger than the earlier sub-Gaussian assumption, so a lower bound over $\cG_{\beta}(L,C)$ applies to the larger class considered above.

For the lower bound, specialize to bounded loadings $\ell_D\asymp1$. Let $\Theta_{D,\beta}(L,C)$ collect triples $(g,\Lambda,\Sigma)$ satisfying these density, loading, and noise conditions, and define
\begin{equation}
    \mathfrak R_{n,D}^{\rm lat}:= \inf_{\widehat g} \sup_{(g,\Lambda,\Sigma)\in \Theta_{D,\beta}(L,C)} \bbE_{g,\Lambda,\Sigma}\left[d_{\cO}^2(\widehat g, g)\right],
\end{equation}
where the infimum is over measurable estimators based on $Y_1,\dots,Y_n$.

\begin{theorem}\label{thm: latent lower bound}
    Let $d\geq 1$ and $\beta>0$ be fixed. Suppose $L, C$ are sufficiently large so that $\cG_{\beta}(L,C)$ contains local perturbations of the standard Gaussian density. Then, for all sufficiently large $n,D$, 
    $$\mathfrak R_{n,D}^{\rm lat} \gtrsim n^{-2\beta/(2\beta+d)} \vee (D\log n)^{-\beta}.$$
    The implicit constant depends on the fixed constants defining the model and density classes.
\end{theorem}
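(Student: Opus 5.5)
The lower bound is the maximum of two separate minimax lower bounds, so I will prove each one on its own and then take the larger. For both I fix a canonical loading structure inside $\Theta_{D,\beta}(L,C)$: $\Sigma_0=I_D$, and $\Lambda_0$ with orthogonal columns each of squared norm $D$ (for instance block-constant rows with bounded entries, so $\ell_D\asymp 1$ and Assumption~\ref{assume: loadings} holds). Since $\Lambda_0$ and $\Sigma_0$ are known in the subproblem, the supremum over the full class is at least the supremum over $g$ alone.

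\emph{Oracle term.} I give the statistician more information, namely the latent scores $\eta_1,\dots,\eta_n$ themselves. With $\Lambda_0,\Sigma_0$ fixed, $Y_i$ is $\eta_i$ plus independent noise, so this only helps. The task is then the classical density lower bound, modified for the rotation loss and for the moment constraints. I take perturbations $g_\omega=\phi_d+\sum_{k}\omega_k h^{\beta}\psi((x-x_k)/h)$ of the standard Gaussian, indexed by $\omega\in\{0,1\}^m$ with $m\asymp h^{-d}$ and bumps in a fixed ball. Each $\psi$ has vanishing moments of order at most $2$, so the mean and covariance constraints hold. The bumps are placed in an asymmetric configuration. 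Then for every $R\in\cO(d)$, $\|g_\omega-g_{\omega'}^R\|_{L^2}^2\gtrsim h^{2\beta}\,\mathrm{Ham}(\omega,\omega')h^{d}$ on a Varshamov--Gilbert subset. I plan to secure this by shrinking the bump support so that a nontrivial rotation either maps bumps away from all bump sites or lands on only a few of them. Assouad's lemma or Fano with $h\asymp n^{-1/(2\beta+d)}$ then gives $n^{-2\beta/(2\beta+d)}$. The Sobolev constraint and the Gaussian envelope hold once $L$ and $C$ are large, which the theorem allows.

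\emph{Deconvolution term.} With $\Lambda_0,\Sigma_0$ known, the statistic $\widetilde\eta_i=(\Lambda_0^\top\Lambda_0)^{-1}\Lambda_0^\top Y_i=\eta_i+N(0,D^{-1}I_d)$ is sufficient. Hence the problem is Gaussian deconvolution with noise variance $\sigma^2=1/D$, and the observed density is $g\star\phi_{\sigma}$. I build two families, or a Fano family, $g_\omega$ whose perturbations live at frequencies $\|t\|\asymp T$, using band-limited bumps (their Fourier transforms are supported in a shell) multiplied into the Gaussian envelope. Their $L^2$ separation is of order $T^{-2\beta}$, because the Sobolev budget allows amplitude $T^{-\beta}$. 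The $\chi^2$ or KL divergence between the convolved laws is at most $\exp(-cT^2/D)$ times a polynomial. Choosing $T^2\asymp D\log n$ makes the total divergence over $n$ samples $o(1)$. Le Cam's two-point bound then gives a loss $\gtrsim T^{-2\beta}=(D\log n)^{-\beta}$, again with rotation separation handled as in the first part.

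\emph{Main obstacle.} I expect the hardest step to be the deconvolution construction. The perturbation must be exactly band-limited in order to obtain the $e^{-T^2/(2D)}$ attenuation, yet it must also keep $g\ge 0$, satisfy the Gaussian tail envelope $g\le Ce^{-c\|x\|^2}$, and preserve the moment constraints. Band-limited functions are not compactly supported, and multiplying by a Gaussian spreads the spectrum. I would first try the standard Fan (1991) device: take perturbations $\cos(Tx_1)\,w(x)$ with $w$ a Gaussian of small variance, so that their spectrum is concentrated near $\pm T$ with Gaussian tails. The leakage to low frequencies is then of order $e^{-cT^2}$, which is negligible next to the attenuation. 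The chi-square divergence is controlled against the base $\phi_d\star\phi_\sigma$, whose Gaussian lower bound keeps the tail integral finite provided $w$ is narrower than the base. Rotation separation follows because a high-frequency plane wave in direction $e_1$ is nearly orthogonal in $L^2$ to its rotations unless $R$ fixes $e_1$ up to angle $O(1/T)$. The remaining case is handled by the sign choice in the two-point pair.
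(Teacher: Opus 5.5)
Your reduction to the known-$(\Lambda,\Sigma)$ submodel, with $\Lambda^\top\Lambda=DI_d$, $\Sigma=I_D$ and sufficient statistic $\eta_i+N(0,D^{-1}I_d)$, matches the paper. So does the split into an oracle term and a deconvolution term. The oracle term, however, has a genuine gap at exactly the step the quotient loss makes nontrivial. You assert $\|g_\omega-g_{\omega'}^R\|_{L^2}^2\gtrsim h^{2\beta+d}\,\mathrm{Ham}(\omega,\omega')$ for \emph{every} $R\in\cO(d)$ but do not prove it, and the mechanism you sketch does not deliver it.

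Because $\phi_d$ is rotation invariant, $g_\omega-g_{\omega'}^R=h^\beta(p_\omega-p_{\omega'}^R)$ with $p_\omega=\sum_k\omega_k\psi_{k,h}$. The danger is a rotation that approximately permutes a positive fraction of the $m\asymp h^{-d}$ sites. For a lattice-like placement and radial $\psi$, a rotation permuting the sites by $\pi$ gives $g^R_{\omega\circ\pi}=g_\omega$ exactly, and a Varshamov--Gilbert subset may contain both codewords. Your dichotomy is also false as stated for rotations near the identity, which send every bump onto its own site. That case is harmless, since self-overlaps alone give separation at least $\mathrm{Ham}(\omega,\omega')\,h^{2\beta+d}\|\psi\|_{L^2}^2$.

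What you actually need is that no rotation sends more than a small fraction of bumps onto \emph{other} sites, uniformly over the continuum $\cO(d)$ and as $h\to0$. ``Asymmetric configuration'' plus ``shrinking the support'' does not prove this. You would need something like random placement, a concentration bound for each fixed $R$, and a union bound over a $\mathrm{poly}(1/h)$-net of $\cO(d)$ with Lipschitz control in $R$.

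The paper avoids all of this geometry:
\begin{enumerate}
\item Keep the ordinary packing $\|g_\theta-g_{\theta'}\|_{L^2}\ge c_1h^\beta$.
\item Prove $\|g_\theta^R-g_\theta^S\|_{L^2}\lesssim(1+h^{\beta-1})\|R-S\|$.
\item Take a $\delta_h$-net of $\cO(d)$ with $\delta_h\asymp h^\beta/(1+h^{\beta-1})$ and $N_h=\mathrm{poly}(1/h)$ points.
\item For each $\theta$, at most one codeword per net point can lie within quotient distance $\gamma h^\beta$, since two such codewords would be less than $4\gamma h^\beta<c_1h^\beta$ apart in plain $L^2$.
\item Greedy thinning then costs only $\log N_h=O(\log(1/h))=o(h^{-d})$ in log-cardinality, and Fano applies.
\end{enumerate}
With separation only on a packing, it is Fano, not Assouad, that you can use. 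You can drop this step into your argument unchanged.

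Your deconvolution part is essentially the paper's argument. The paper takes $w=\phi_d$, i.e.\ $g_{\pm,T}=\phi_d\{1\pm aT^{-\beta}F_T\}$, and uses the exact attenuation $e^{-T^2/(2(D+1))}$ with $T^2=A(D+1)\log n$. Your narrower Gaussian $w$ of variance $s^2$ only replaces $D+1$ by $D+s^{-2}$. Your sign choice is the paper's observation that $g_{+,T}-g_{-,T}^R\approx aT^{-\beta}w\,[\cos(Tx_1)+\cos(Tv^\top x)]$ with $v=R^\top e_1$. Its cross term $\frac12[\varphi_{w^2}(T(e_1-v))+\varphi_{w^2}(T(e_1+v))]$ is nonnegative, which also covers the intermediate angles $\asymp1/T$ that your case split leaves implicit.

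What is missing is exact membership in $\cG_\beta(L,C)$. The function $\cos(Tx_1)w(x)$ has nonzero mass $\varphi_w(Te_1)$ and nonzero second-moment defects. ``Negligible leakage'' does not give $\int g=1$ and $\bbE_g[\eta\eta^\top]=I_d$. You need to subtract a correction $\phi_d\sum_k c_k(T)b_k$ with fixed bounded even $b_k$ and $c_k(T)=O((1+T^2)e^{-cT^2})$ solving a fixed nonsingular linear system, as in the paper's moment-correction lemma. This correction is low-frequency and is not attenuated by the convolution, so you must also check two things:
\begin{itemize}
\item its $\chi^2$ contribution $O(T^{-2\beta}(1+T^2)^2e^{-2cT^2})$ is $o(1/n)$, which holds because $T^2\asymp D\log n\gg\log n$;
\item its $L^2$ size is $o(T^{-\beta})$ uniformly over rotations, so the separation survives.
\end{itemize}
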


\textbf{Interpretation:} The first term is the ordinary nonparametric lower bound, including when scores are observed. The proof adapts the usual Assouad--Fano construction so that alternatives remain separated modulo the fixed-dimensional rotation group; quotienting by rotations does not change the polynomial rate.

The second term is the finite-$D$ inverse-problem obstruction. Restrict to the admissible submodel $\Sigma=I_D$, $\Lambda_D^\top\Lambda_D=DI_d$, with bounded loading rows, and reveal $(\Lambda_D,\Sigma)$ to the estimator. Then
$$X_i = \eta_i + D^{-1/2} Z_i, \qquad Z_i \sim N_d(0, I_d).$$
The component of $Y_i$ orthogonal to $\Lambda_D$ contains no information about $g$, so this favorable submodel is equivalent to
$$X_i \overset{\rm iid}{\sim} g \star \phi_{D^{-1} I_d}.$$
Any lower bound here applies to the original problem. Classical Gaussian deconvolution has logarithmic minimax rates because its characteristic function decays exponentially \citep{carroll1988optimal, stefanski1990deconvolving, fan1991optimal, fan1992deconvolution}. Here the noise variance is $D^{-1}$, with characteristic function $\exp\{-\|t\|^2/(2D)\}$. The proof uses moment-corrected oscillatory alternatives of the form
$$g_{\pm, T}(x) \approx \phi_d(x) \left\{1 \pm a T^{-\beta} \cos(Tx_1)\right\}.$$
Their squared-$L^2$ separation is $T^{-2\beta}$ and convolution attenuates the oscillation by $e^{-T^2/(2D)}$. Choosing $T^2\asymp D\log n$ makes the convolved experiments indistinguishable while retaining separation $(D\log n)^{-\beta}$; a two-point Le Cam argument gives the result. Thus ambient dimension progressively weakens, rather than removes, the supersmooth obstruction. This does not model FADE scores as independent additive-error observations: the deconvolution experiment is only a favorable submodel for the lower bound. The upper bound analyzes the actual dependent score proxies.

Equating the two terms in the lower bound in Theorem \ref{thm: latent lower bound}  gives
$n^{-2\beta/(2\beta+d)}\asymp (D\log n)^{-\beta}$. This yields the scale
\begin{equation}\label{eq: D IT}
    D_{\rm IT}(n):= \frac{n^{2/(2\beta+d)}}{\log n}.
\end{equation}
We call $D_{\rm IT}(n)$ the information-theoretic lower-bound threshold: it is the crossover in the proved lower bound, not a complete minimax transition. If $D\ll D_{\rm IT}(n)$, no estimator based on full observations can attain the oracle rate. If $D\gtrsim D_{\rm IT}(n)$, the lower bound reduces to the oracle term but does not establish its attainability. Thus, the $D$ dependence is not solely an artifact of spectral preprocessing.

\medskip
\noindent
\textbf{Optimality of FADE in high dimension} For $d\geq3$, \eqref{eq: uniform upper bound D >= 3} gives
$$\sup_{(g,\Lambda, \Sigma)} \bbE_{g,\Lambda,\Sigma} \left[d_{\cO}^2(\widehat g_{h^*}, g)\right] \lesssim n^{-2\beta/(2\beta+d)}$$
whenever $D\gtrsim D_c(n)$. Since the minimax lower bound always contains the oracle term,
$$\text{For }d\geq 3, \; D\gtrsim D_c(n): \qquad \mathfrak R_{n,D}^{\rm lat} \asymp n^{-2\beta/(2\beta+d)}, \quad \cR_{\rm FADE}(n,D) \asymp \mathfrak R_{n,D}^{\rm lat}.$$
Thus FADE is minimax-rate optimal and statistically lossless, at the rate level, in this sufficiently high-dimensional regime.

For $d=2$ and $\beta\geq1/2$, FADE is minimax-rate optimal up to a logarithmic factor when $D\gtrsim D_c(n)$; for $d=2,\beta<1/2$ and $d=1$, the current bounds establish neither oracle nor minimax optimality.

\medskip
\noindent
The lower and upper bounds yield different thresholds: $D_{\rm IT}(n)$ in \eqref{eq: D IT} and $D_c(n)$ in \eqref{eq: D_c}. In the oracle upper-bound regimes, $D_{\rm IT}(n)\ll D_c(n)$, up to boundary cases and logarithms. For example, if $d\geq3$ and $\beta>d$,
$$D_{\rm IT}(n) = n^{2/(2\beta+d)}(\log n)^{-1} \ll n^{(\beta+2)/(2\beta+d)}=D_c(n).$$
This leaves $D_{\rm IT}(n)\lesssim D\lesssim D_c(n)$, where the lower bound permits the oracle rate but the FADE upper bound does not attain it. The gap may reflect upper- or lower-bound looseness, a cost of spectral point estimates, an advantage of joint estimation, or some combination; the present results do not distinguish these possibilities.

For $d\geq 3$, the conclusions can be summarized as
\begin{align*}
    D\ll D_{\rm IT}(n) &\quad : \quad \text{the oracle rate is impossible for any estimator}, \\
    D_{\rm IT}(n) \lesssim D \lesssim D_c(n) &\quad : \quad \text{the optimal rate remains unresolved},\\
    D\gtrsim D_c(n) &\quad : \quad \text{FADE is minimax optimal and attains oracle rate}.
\end{align*}
Ambient dimension progressively weakens an intrinsic Gaussian inverse-problem obstruction; once it is sufficiently large, the simple two-stage FADE estimator is minimax optimal.

\section{Observation-density estimation}\label{sec: observation density theory}

We now study the induced observation density
$$p_0(y) = p_{g_0, \Lambda_0, \Sigma_0}(y):=\int_{\bbR^d} \phi_D(y \mid \Lambda_0 z, \Sigma_0)\, g_0(z) \, dz, \qquad y\in \bbR^D.$$
Unlike $g_0$, which is defined on fixed-dimensional intrinsic space and assessed only modulo rotation, $p_0$ is a fully identifiable density on growing-dimensional $\bbR^D$. We estimate it by the plug-in estimator $\widehat p_{h^*}$ in \eqref{eq: fade observation density}. Ambient dimension has competing effects: it improves latent-score recovery but increases the number of loadings and noise variances to estimate.

Although Gaussian convolution makes $p_0$ real analytic, estimation is not automatically parametric because the mixing density and a growing-dimensional factor parameter are unknown. We use $L^1$ loss,
$$\|p - q\|_{L^1} = \int_{\bbR^D} |p(y) - q(y)| dy.$$
This distance remains bounded as $D$ changes and contracts under the Gaussian observation kernel, yielding a clean decomposition into latent-density and factor-parameter error.

\subsection{Upper bound for the observation density}\label{subsec: obs density upper bound}

Two preliminary facts connect latent squared-$L^2$ bounds to observation-density $L^1$ loss. First, impose the kernel-tail condition
\begin{equation}\label{assume: K tail}
    \int_{\|u\|>t} |K(u)| \, du \leq C_K e^{-c_Kt^2}, \qquad t\geq 0,
\end{equation}
for fixed $c_K,C_K>0$. This holds for Gaussian, polynomially modified Gaussian, and compactly supported kernels. Appendix \ref{app: additional lemmas} combines it with sub-Gaussian tails of $g_0$ to show that if $\bbE_0[\|\widehat g_h-g_0^R\|_{L^2}^2\mid\cE_{n,D}]\lesssim r_{n,D}$, then
$$\bbE_0[\|\widehat g_h - g_0^R\|_{L^1} \mid \cE_{n,D}] \lesssim r_{n,D}^{1/2} \{\log_+ (1/r_{n,D})\}^{d/4},$$
where $\log_+x=1\vee\log x$; the logarithm follows from truncation and Cauchy--Schwarz. Second, Appendix \ref{app: additional lemmas} also shows that the positive correction $\widetilde g_h$ in \eqref{eq: positive correction} preserves this rate and has uniformly bounded first moment, which controls loading-estimation error.

Let $\cR_{n,D}^* = \cR_{n,D}(h^*)$ be the optimized squared $L^2$ upper bound from Section \ref{subsec: latent density optimized rate}, and define
$$\cL_{n,D}^* := (\cR_{n,D}^*)^{1/2} \{\log_+ (1 / \cR_{n,D}^*)\}^{d/4}.$$
Thus,
$$\bbE_0\left[\|\widetilde g_{h^*} - g_0^R\|_{L^1}\mid \cE_{n,D}\right] \lesssim \cL_{n,D}^*.$$

\begin{theorem}\label{thm: observation kde rate}
    Suppose the conditions of Theorem \ref{thm: new kde} and the kernel-tail condition \eqref{assume: K tail} hold. Construct the FADE observation density estimator $\widehat p(y):=\widehat p_{h^*}(y)$ from \eqref{eq: fade observation density} using the optimal $h^*$ from Section \ref{subsec: latent density optimized rate}. Then
    $$\bbE_0\|\widehat p - p_0 \|_{L^1} \lesssim \cL_{n,D}^* + \sqrt{\frac{D}{n}} + \sqrt{\frac{\ell_D}{D}} + \frac1n.$$
\end{theorem}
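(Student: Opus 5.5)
We split on the event $\cE_{n,D}$ from Theorem \ref{thm: spectral recovery}. On $\cE_{n,D}^c$ we use only $\|\widehat p - p_0\|_{L^1}\le 2$, since $\widetilde g_{h^*}$ is a probability density and $\widehat p$ is therefore a probability density. Because $\bbP_0^{(n)}(\cE_{n,D}^c)\lesssim n^{-1}$, this piece contributes the $1/n$ term. On $\cE_{n,D}$ we use the rotation invariance $p_0=p_{g_0^R,\Lambda_0R,\Sigma_0}$ and a telescoping triangle inequality:
\begin{align*}
\|\widehat p - p_0\|_{L^1} \le{}& \|p_{\widetilde g,\widehat\Lambda,\widehat\Sigma}-p_{\widetilde g,\Lambda_0R,\widehat\Sigma}\|_{L^1} + \|p_{\widetilde g,\Lambda_0R,\widehat\Sigma}-p_{\widetilde g,\Lambda_0R,\Sigma_0}\|_{L^1}\\
&+ \|p_{\widetilde g,\Lambda_0R,\Sigma_0}-p_{g_0^R,\Lambda_0R,\Sigma_0}\|_{L^1},
\end{align*}
where $\widetilde g=\widetilde g_{h^*}$.

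The last (latent) term is the easy one. Since $z\mapsto\phi_D(\cdot\mid\Lambda_0Rz,\Sigma_0)$ is a Markov kernel, Fubini gives the contraction bound $\|p_{f,\Lambda,\Sigma}-p_{g,\Lambda,\Sigma}\|_{L^1}\le\|f-g\|_{L^1}$. Its conditional expectation is therefore at most $\cL_{n,D}^*$, by the $L^1$ transfer and the positive-correction lemma of Appendix \ref{app: additional lemmas}.

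For the two parameter terms we mix pointwise Gaussian comparisons over $\widetilde g$. Note that $\widetilde g$ is data-dependent, but it is a fixed probability density once we condition on the data. Mixing gives the bound $\int \|\phi_D(\cdot\mid\widehat\Lambda z,\widehat\Sigma)-\phi_D(\cdot\mid\Lambda_0Rz,\widehat\Sigma)\|_{L^1}\widetilde g(z)\,dz$, and similarly for the covariance term. Pinsker's inequality then gives:
\begin{itemize}
\item mean shift: $\|\phi_D(\cdot\mid a,\Sigma)-\phi_D(\cdot\mid b,\Sigma)\|_{L^1}\le \|\Sigma^{-1/2}(a-b)\|$;
\item covariance change: for diagonal $\Sigma,\Sigma'$ with $\|\Sigma'-\Sigma\|$ small, the Gaussian KL divergence is $\tfrac12\sum_j\{r_j-1-\log r_j\}\lesssim\sum_j(r_j-1)^2$ with $r_j=\sigma_j'^2/\sigma_j^2$, so the $L^1$ distance is $\lesssim\|\Sigma'-\Sigma\|_F$.
\end{itemize}
On $\cE_{n,D}$, Theorem \ref{thm: spectral recovery}(i) gives $\|\widehat\Sigma-\Sigma_0\|\to0$. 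Hence $\widehat\sigma_j^2\ge c_\sigma/2$ uniformly, which makes both Gaussian bounds valid with uniform constants. The mean-shift term is then bounded by $c_\sigma^{-1/2}\|\widehat\Lambda-\Lambda_0R\|\int\|z\|\widetilde g(z)\,dz$. By the uniformly bounded first-moment property of $\widetilde g_{h^*}$ (Appendix \ref{app: additional lemmas}), it is $\lesssim \|\widehat\Lambda-\Lambda_0R\|_F$. The covariance term is $\lesssim\|\widehat\Sigma-\Sigma_0\|_F$.

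Taking conditional expectations and applying Corollary \ref{cor: expected factor error} bounds these terms by $\sqrt{D/n}+D^{-1/2}$ and $\sqrt{D/n}+\sqrt{\ell_D/D}$ respectively. Since $\ell_D\ge1$, the $D^{-1/2}$ is absorbed into $\sqrt{\ell_D/D}$. Combining, $\bbE_0\|\widehat p-p_0\|_{L^1}\le \bbE_0[\cdot\mid\cE_{n,D}]+2\bbP_0^{(n)}(\cE_{n,D}^c)$, which yields the stated bound.

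The main obstacle is the mean-shift step. One must ensure that the first moment of $\widetilde g_{h^*}$ is controlled in conditional expectation jointly with $\|\widehat\Lambda-\Lambda_0R\|_F$, since both are random and dependent. I would handle this with a deterministic bound on $\cE_{n,D}$. Because $\max_i\|\widehat\eta_i\|\lesssim\sqrt{\log n}$ there, crude bounds would lose only logarithmic factors. The cleaner route is to note that $\int\|z\|\,|\widehat g_h(z)|\,dz\le n^{-1}\sum_i\|\widehat\eta_i\|\,\|K\|_{L^1}+h\int\|u\|\,|K(u)|\,du$. Since $n^{-1}\sum_i\|\widehat\eta_i\|^2=d$ exactly, this is bounded by a constant. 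Together with $\int\widehat g_h^+\ge1$, this gives a deterministic bound on the first moment of $\widetilde g_{h^*}$, so the product expectation factorizes trivially.
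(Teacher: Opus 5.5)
Your overall route matches the paper's proof. The ingredients are Markov-kernel contraction for the latent term and the positive-correction and $L^2$-to-$L^1$ lemmas to get $\cL_{n,D}^*$. You mix pointwise Gaussian perturbation bounds over $\widetilde g_{h^*}$, using the deterministic first-moment bound from $n^{-1}\sum_i\|\widehat\eta_i\|^2=d$, and you apply Corollary~\ref{cor: expected factor error} to the parameter errors. The trivial bound $2$ on $\cE_{n,D}^c$ finishes the argument. Your covariance step, via the explicit diagonal-Gaussian KL and Pinsker, is a slightly more elementary substitute for the path-derivative argument in Lemma~\ref{lemma: L1 between normals}; both work.

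There is one step that does not follow as written. You assert that on $\cE_{n,D}$, Theorem~\ref{thm: spectral recovery}(i) gives $\|\widehat\Sigma-\Sigma_0\|\to0$, hence $\widehat\sigma_j^2\ge c_\sigma/2$. Both of your Gaussian bounds need this lower bound: the mean shift needs it through $\|\widehat\Sigma^{-1/2}\|$, and the covariance term needs $r_j$ bounded away from $0$ so that $r_j-1-\log r_j\lesssim (r_j-1)^2$.

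The bound in Theorem~\ref{thm: spectral recovery}(i) is only $\sqrt{\log(nD)/n}+\ell_D/D$. The assumptions impose $\log(nD)\lesssim n$ and $\ell_D\lesssim\sqrt{n\log(nD)}$, but neither $\log(nD)/n\to0$ nor $\ell_D/D\to0$. For instance, $\ell_D\asymp D$ is admissible when $D\lesssim\sqrt n$, and then the theorem gives no lower bound on $\lambda_{\min}(\widehat\Sigma)$.

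The missing observation is that the statement is vacuous in those regimes, so you need a case split. Set $b_{n,D}=\sqrt{D/n}+\sqrt{\ell_D/D}$.
\begin{itemize}
\item If $b_{n,D}\ge b_0$ for a small fixed $b_0$, then $\bbE_0\|\widehat p-p_0\|_{L^1}\le 2\lesssim b_{n,D}$.
\item Otherwise, $\log n=o(D)$ gives $\log(nD)\lesssim D$, so on $\cE_{n,D}$ we have $\|\widehat\Sigma-\Sigma_0\|\lesssim\sqrt{D/n}+\ell_D/D\le b_{n,D}+b_{n,D}^2$. This is at most $c_\sigma/2$ once $b_0$ is small, and Weyl's inequality then yields $\lambda_{\min}(\widehat\Sigma)\ge c_\sigma/2$.
\end{itemize}
With this case split inserted before your Gaussian comparisons, the rest of your argument goes through unchanged and coincides with the paper's proof.
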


The terms are, respectively, the latent nonparametric component, the growing-dimensional parametric component, the finite-$D$ spectral remainder, and the exceptional-event contribution. As with Remark \ref{remark: uniform upper bound}, the upper bound in the above theorem can be stated uniformly over parameters $\theta\in\Theta_*$ subject to the stated assumptions, with the constant depending on all the constants in the assumptions and $d$.

The proof uses the triangle inequality to decompose
$$\|\widehat p - p_0\|_{L^1} \leq \|\widehat p - \widetilde p\|_{L^1} + \|\widetilde p - p_0\|_{L^1}, \qquad \widetilde p(y):=\int_{\bbR^d} \phi_D(y|\Lambda_0Rz, \Sigma_0)\widetilde g_{h^*}(z) dz.$$
The Markov-kernel contraction gives $\|\widetilde p-p_0\|_{L^1}\leq\|\widetilde g_{h^*}-g_0^R\|_{L^1}$; a Gaussian perturbation bound controls the first term through parameter distances.

In particular, if $d\geq 3$ and $D\gtrsim D_c(n)$, then Section \ref{subsec: latent density optimized rate} gives $\cR_{n,D}^* \lesssim n^{-2\beta/(2\beta+d)}$. Consequently, assuming the bounded-loading regime $\ell_D\asymp 1$, up to the negligible $n^{-1}$ term, we have
$$\bbE_0 \|\widehat p - p_0\|_{L^1} \lesssim n^{-\beta/(2\beta+d)}(\log n)^{d/4} + \sqrt{\frac{D}{n}} + \frac{1}{\sqrt{D}}.$$

Increasing $D$ reduces latent-score error but increases parameter-estimation cost. Thus consistency requires sufficiently informative ambient dimension together with $D=o(n)$; more generally, the finite-$D$ remainder vanishes when $\ell_D=o(D)$.

\subsection{Lower bound and statistical regimes}\label{subsec: obs density lower bound}

We now identify which upper-bound components are fundamental. Although every admissible observation density is real analytic, the experiment changes with $D$: projection reveals finer features of $g$, while the number of unknown loadings and variances grows. The lower bound makes these competing difficulties explicit. Recall $\Theta_{D,\beta}$ and $\cG_{\beta}(L,C)$ from Section \ref{subsec: latent density lower bound}.

\begin{theorem}\label{thm: obs lower bound}
    Let $d\geq 1$ and $\beta>0$ be fixed. Suppose $L,C$ are sufficiently large so that $\cG_\beta(L,C)$ contains a neighborhood of the standard Gaussian density. Then there exists a constant $c>0$ independent of $n,D$ such that
    \begin{equation}\label{eq: obs density lower bound}
    \begin{aligned}
        \inf_{\widehat p} \sup_{(g,\Lambda, \Sigma)\in \Theta_{D,\beta}} &\bbE_{g,\Lambda, \Sigma} \left[\|\widehat p - p_{g,\Lambda, \Sigma}\|_{L^1}\right] \geq  \\ &\qquad c\max\left\{\min\left\{n^{-\beta/(2\beta+d)}, \frac{D^{d/4}}{\sqrt{n}}\right\}, \; \min \left\{1, \sqrt{\frac Dn}\right\}\right\}
    \end{aligned}
    \end{equation}
    where the infimum is over all estimators based on $Y_1,\dots,Y_n\sim p_{g,\Lambda,\Sigma}$.
\end{theorem}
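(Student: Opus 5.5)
The lower bound in \eqref{eq: obs density lower bound} is a maximum of two terms, so I will prove each term separately over a convenient submodel of $\Theta_{D,\beta}$. Throughout I fix $\Sigma=I_D$ and use the fact that a lower bound over a submodel applies to the full class. The first term measures latent-density uncertainty. For it I reveal $(\Lambda,\Sigma)$ with $\Lambda^\top\Lambda=DI_d$ and bounded rows, and I use a Fano/Assouad argument. The second term measures the cost of the loadings. For it I fix $g$ to be the standard Gaussian and use a Varshamov--Gilbert construction over the $Dd$ loading entries.

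\emph{Latent term.} With $\Lambda$ known, the sufficient statistic is $X_i=\eta_i+D^{-1/2}Z_i$, so the experiment is $g\star\phi_{D^{-1}I_d}$. The orthogonal component of $Y_i$ is pure noise and independent of $X_i$. The key identity is
\[
\|p_{g}-p_{g'}\|_{L^1(\bbR^D)}=\|(g-g')\star\phi_{D^{-1}I_d}\|_{L^1(\bbR^d)}.
\]
It holds because $Y\mapsto(X,\text{orthogonal part})$ is an invertible linear map. So I need alternatives whose smoothed versions are $L^1$-separated while their $n$-fold $\KL$ divergences stay bounded.

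I take $g_\omega=\phi_d\{1+\sum_{k}\omega_k a h^\beta\psi((x-x_k)/h)\}$, with $m\asymp h^{-d}$ bumps in a fixed ball and $\omega\in\{0,1\}^m$. Each $\psi$ has enough vanishing moments that the mean, covariance, Sobolev and envelope constraints hold. I then split into cases.
\begin{itemize}
\item If $h\gg D^{-1/2}$, smoothing barely changes the bumps. The $L^1$ separation is $\asymp h^\beta$ and the $\chi^2$ per bump is $\asymp h^{2\beta+d}$. The usual choice $h\asymp n^{-1/(2\beta+d)}$ gives $n^{-\beta/(2\beta+d)}$ via Assouad.
\item If instead $n^{-1/(2\beta+d)}\lesssim D^{-1/2}$, I take bumps at scale $h\asymp D^{-1/2}$, which smoothing only attenuates by a constant. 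The amplitude is then limited by information rather than smoothness. I choose per-bump amplitude $\epsilon$ with $n\epsilon^2h^d\asymp1$, i.e.\ $\epsilon\asymp D^{d/4}/\sqrt n$; this is admissible because $\epsilon\lesssim h^\beta$ in this case. The total $L^1$ separation summed over $\asymp h^{-d}$ bumps is then of order $\epsilon\asymp D^{d/4}/\sqrt n$.
\end{itemize}
Taking the better construction gives the $\min$ in the first term.

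\emph{Parametric term.} Fix $g=\phi_d$, so $p=N_D(0,\Lambda\Lambda^\top+I_D)$. I take $\Lambda_\omega=\Lambda_*+\delta W_\omega$, where $\Lambda_*$ has orthogonal columns of norm $\sqrt D$ and $W_\omega$ ranges over a Varshamov--Gilbert packing of sign matrices supported in the orthogonal complement of $\Lambda_*$'s column space. Keeping the perturbation orthogonal avoids the rotation ambiguity.

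The Gaussian covariance perturbation then has Frobenius size $\asymp\delta\sqrt{D}\cdot\sqrt{Dd}\,/\sqrt D$ after normalization. More usefully, it can be parametrised by rows $\lambda_j$: the $\KL$ between the alternatives is $\asymp n\delta^2 Dd$. For Gaussians the $L^1$ distance is comparable to $\min\{1,\text{Hellinger}\}$, giving a pairwise $L^1$ distance of order $\min\{1,\delta\sqrt{D}\}$. Choosing $\delta\asymp n^{-1/2}$ with the $\exp(cDd)$ packing makes Fano's $\KL$ budget $\lesssim\log|\text{packing}|$, and yields the lower bound $\min\{1,\sqrt{D/n}\}$.

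\emph{Main obstacle.} I expect the hardest step to be the lower bound on the Gaussian total-variation distance: showing $\|N(0,\Sigma_\omega)-N(0,\Sigma_{\omega'})\|_{L^1}\gtrsim\min\{1,\|\Sigma_\omega^{-1/2}\Sigma_{\omega'}\Sigma_\omega^{-1/2}-I\|_F\}$ uniformly in the packing. My plan is to use the known two-sided TV bounds for Gaussians, in the style of Devroye--Mehrabian--Reddad. Within the latent term, the delicate part is ensuring the moment constraints remain compatible in the regime where the bumps sit at scale $D^{-1/2}$.
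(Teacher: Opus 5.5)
Your proposal is correct. The latent-density term is essentially the paper's own argument. You reveal $(\Lambda_D,I_D)$ with $\Lambda_D^\top\Lambda_D=DI_d$ and reduce, through the invertible map $Y\mapsto(D^{-1}\Lambda_D^\top Y,\,W_D^\top Y)$, to observing $X_i=\eta_i+D^{-1/2}Z_i$. You then run Assouad with $M\asymp h^{-d}$ bumps at $h\asymp\max\{n^{-1/(2\beta+d)},D^{-1/2}\}$ and amplitude $\min\{h^\beta,(nh^d)^{-1/2}\}$.

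The paper makes your phrase ``smoothing only attenuates by a constant'' precise. It takes the constant in $h\asymp D^{-1/2}$ large enough that $\|\psi\star\phi_{r_D^2I_d}-\psi\|_{L^1}\le\tfrac12\|\psi\|_{L^1}$, where $r_D=(\sqrt D\,h)^{-1}$. It then uses the reverse triangle inequality to obtain $L^1$ separation additive in the Hamming distance, which is needed because the smoothed bumps are no longer disjoint.

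There is one small slip. With multiplicative bumps $\phi_d\{1+\dots\}$, the vanishing-moment conditions must hold for $\phi_d\psi_k$, not for $\psi$. The paper uses additive bumps $g_*+\epsilon_h\sum_j\theta_j\psi_{j,h}$ precisely so that moment conditions on $\psi$ suffice.

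The parametric term is where you take a genuinely different route.
\begin{itemize}
\item \emph{The paper's route.} It fixes $(g,\Lambda_D)$ and perturbs the noise, $\Sigma_\vartheta=I_D+\delta\,\diag(\vartheta)$ with $\vartheta\in\{0,1\}^D$. The projection $W_D^\top Y\sim N_{D-d}(0,W_D^\top\Sigma_\vartheta W_D)$ removes the latent part for arbitrary $g$, and the resulting covariance is linear in the parameter. The paper then proves its own local lemma, $\|p_1-p_0\|_{L^1}\gtrsim\|\Gamma_1-\Gamma_0\|_F$.
\item \emph{Your route.} You take $g=\phi_d$, so that $p=N_D(0,I_D+\Lambda\Lambda^\top)$ exactly. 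You perturb the loadings inside $\mathrm{col}(\Lambda_*)^\perp$ and cite the Devroye--Mehrabian--Reddad two-sided total-variation bound.
\end{itemize}

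Your route works. With $\Lambda_*=\sqrt D\,U$, the normalization $\Sigma_*^{-1/2}$ turns $\sqrt D\,\delta(UW^\top+WU^\top)$ into $\sqrt{D/(D+1)}\,\delta(UW^\top+WU^\top)$. So each of the $(D-d)d$ complementary directions carries order-one information. This gives pairwise separation $\asymp\min\{1,\delta\sqrt{Dd}\}$ against an $n$-sample $\KL$ of order $n\delta^2Dd$.

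What each approach buys:
\begin{itemize}
\item Yours shows that the $\sqrt{D/n}$ obstruction is already forced by the loadings alone, with the noise variances known. It also outsources the total-variation lower bound, even for non-local perturbations, to a published theorem.
\item The paper's does not need $g$ to be Gaussian, has no quadratic term, and is self-contained.
\end{itemize}

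Two adjustments make your version airtight.
\begin{itemize}
\item Take $\delta\asymp(n\vee D)^{-1/2}$ rather than $n^{-1/2}$. Otherwise, for $D\gg n$, three things fail. The perturbation is not local. The rows of $\Lambda_\omega$ need not stay bounded. The quadratic term $\delta^2(W_\omega W_\omega^\top-W_{\omega'}W_{\omega'}^\top)$, whose normalized Frobenius size can reach $2\delta^2\sqrt{Dd}\,\|W_\omega-W_{\omega'}\|_F$, stops being lower order in the $\KL$ bound.
\item Realize the ``sign matrices in the orthogonal complement'' as $W_\omega=W_DB_\omega$ with $B_\omega\in\{\pm1\}^{(D-d)\times d}$. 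Then Hamming separation transfers exactly to $\|W_\omega-W_{\omega'}\|_F$. Also $\Lambda_\omega^\top\Lambda_\omega=DI_d+\delta^2B_\omega^\top B_\omega$, which keeps $s_d(\Lambda_\omega)\ge\sqrt D$.
\end{itemize}
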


The first term fixes $(\Lambda,\Sigma)$ and varies $g$. The informative experiment reduces to $X_i=\eta_i+D^{-1/2}Z_i$, so an Assouad construction at resolution $n^{-1/(2\beta+d)}$ yields $D^{d/4}/\sqrt n$ until $D\gtrsim n^{2/(2\beta+d)}$, after which it reaches the oracle $L^1$ rate $n^{-\beta/(2\beta+d)}$. The second term fixes $(g,\Lambda)$ and varies diagonal $\Sigma$: estimating $D$ variances gives the growing-dimensional parametric obstruction and rules out uniform consistency when $D$ is of order $n$.

For $d=1,2$, the parametric component dominates, up to constants $\mathfrak R_{n,D}^{\rm obs}\gtrsim\min\{1,\sqrt{D/n}\}$, where $\mathfrak R_{n,D}^{\rm obs}$ is the left hand side in \eqref{eq: obs density lower bound}. For $d\geq3$, there are three consistent regimes.

\begin{equation}\label{eq: obs density lower bound regimes}
    \mathfrak R_{n,D}^{\rm obs} \gtrsim \begin{cases}
        D^{d/4}/\sqrt n, &D \lesssim n^{2/(2\beta+d)}, \\
        n^{-\beta/(2\beta+d)}, &n^{2/(2\beta+d)}\lesssim D \lesssim n^{d/(2\beta+d)}, \\
        \sqrt{D/n}, & n^{d/(2\beta+d)}\lesssim D \lesssim n,
    \end{cases}
\end{equation}
with risk bounded away from zero when $D\gtrsim n$. Unlike the latent target, increasing $D$ also changes the observation-density family: finer features of $g$ become visible. The middle regime is therefore nonparametric despite analyticity; only in the third does factor-parameter estimation dominate.

Comparing with Theorem \ref{thm: observation kde rate}, whenever
\begin{equation}
    \mathcal L_{n,D}^*
\vee
\sqrt{\frac{\ell_D}{D}}
\lesssim
\sqrt{\frac Dn},
\qquad D=o(n),
\end{equation}
the FADE plug-in estimator satisfies
$$\sup_{\theta\in \Theta_*} \,\bbE_\theta\|\widehat p - p_\theta\|_{L^1} \asymp \mathfrak R_{n,D}^{\rm obs} \asymp \sqrt{D/n}.$$
Thus FADE is minimax optimal once latent and score-recovery terms fall below growing-dimensional parametric error. For $d\geq3$, bounded loading rows, and $D\gtrsim D_c(n)$, $\cL_{n,D}^*\lesssim n^{-\beta/(2\beta+d)}(\log n)^{d/4}$, so the sufficient condition
\begin{equation}\label{eq: observation parametric condition}
    D\gtrsim D_c(n) \vee n^{d/(2\beta+d)}(\log n)^{d/2} \vee n^{1/2}
\end{equation}
places FADE in the minimax-optimal parametric regime. Analogous thresholds follow by substituting the corresponding $\cL_{n,D}^*$ values; the key point is that spectral preprocessing and plug-in estimation incur no rate loss there. The following example demonstrates an interesting regime, where the upper and lower bounds nearly match, but not at the $\sqrt{D/n}$ part.

\medskip
\noindent
\begin{example}
There is also a regime in which the nonparametric component dominates. Suppose $d\geq4$, $0<\beta<d-3$, and $\ell_D\asymp1$. Then
the interval $D_c(n)
\vee
\left[
n^{2\beta/(2\beta+d)}(\log n)^{-d/2}
\right]
\lesssim D
\lesssim
n^{d/(2\beta+d)}$ is nonempty at polynomial order. There the latent component dominates, while \eqref{eq: obs density lower bound regimes} gives the nonparametric lower bound. FADE matches it at polynomial order, with the additional logarithmic factor from the $L^2$--$L^1$ conversion. For $d=4$, $\beta=1/2$, the window is $n^{11/15}\log n\lesssim D\lesssim n^{4/5}$, and in this case,
$$n^{-1/10}\lesssim \mathfrak R_{n,D}^{\rm obs} \leq \sup_{\theta\in\Theta_*} \,\bbE_\theta\|\widehat p - p_\theta\|_{L^1} \lesssim n^{-1/10} \log n.$$
\end{example}

Thus the nonparametric term is not merely an artifact of the FADE decomposition: analyticity of each density does not imply a uniform parametric rate when the family grows. The effect of ambient dimension on observation density estimation in this model is complex; on one hand, it resolves latent structure, while on the other it enlarges the parameter space itself. For sufficiently large $D$, FADE attains the minimax parametric rate; for $d\geq3$, an intermediate regime remains nonparametric, and FADE matches it up to logarithms in some cases.

\section{Numerical experiments}\label{sec: numerical experiments}

In this section, we complement our theoretical findings with simulated numerical experiments. Simulation studies concerning spectral recovery, latent density estimation and observation density estimation are provided in Sections \ref{subsec: simulations spectral recovery}, \ref{subsec: simulations latent density} and \ref{subsec: simulations obs density} respectively, with details in Appendix \ref{app: simulation details}.

\subsection{Spectral recovery}\label{subsec: simulations spectral recovery}
\begin{figure}
    \centering
    \includegraphics[width=1.0\linewidth]
    {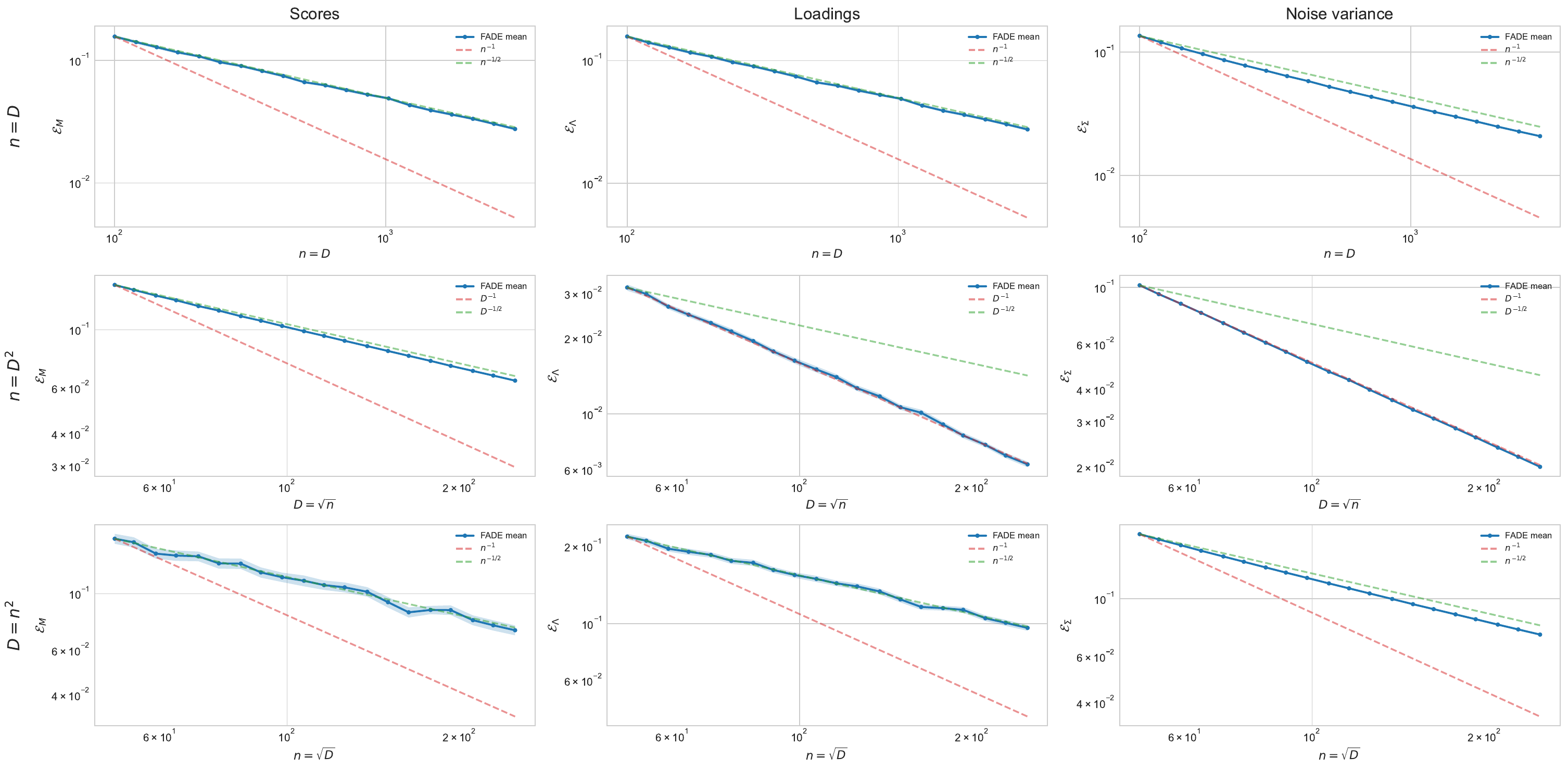}
    \caption{
    Spectral recovery under three joint-growth regimes. Rows correspond to
    $(n=D=M)$, $(n=M^2,D=M)$, and $(n=M,D=M^2)$; columns show relative
    score error, relative loading error, and average absolute noise-variance
    error. Solid blue curves give Monte Carlo means over \(100\) repetitions,
    and shaded bands show one Monte Carlo standard error. Dashed curves are
    reference rates with slopes \(M^{-1}\) and \(M^{-1/2}\), rescaled to
    agree with the empirical curve at the smallest value of \(M\). When
    \(n=M^2\) and \(D=M\), the score error follows the \(D^{-1/2}\)
    reference rate, whereas the loading and noise-variance errors follow the
    faster \(D^{-1}\) rate. When \(n=D\) or \(D=n^2\), the
    \(n^{-1/2}\) contribution is dominant.
    }
    \label{fig: parameter estimation simulation gmm truth}
\end{figure}

We first illustrate the asymmetric roles of $n$ and $D$, particularly the faster $D^{-1}$ contribution to loading and noise-variance estimation.

We use $d=5$, a standardized ten-component Gaussian mixture $g_0$, $\Sigma_0=I_D$, and bounded spherical loading rows ($\ell_D\asymp1$). The three regimes are
$$(n,D): \qquad  (M,M), \qquad (M^2,M), \qquad (M,M^2)$$
corresponding to $n=D$, $n=D^2$, and $D=n^2$. We report Monte Carlo averages over $100$ replications. Ignoring logarithmic factors, theory gives
\begin{align*}
    \cE_M &:= \frac{\|\widehat M-M_0\widehat R\|_F}
         {\|M_0\|_F} \lesssim \frac1{\sqrt n}+\frac1{\sqrt D},\\
         \cE_{\Lambda} &:= \frac{\|\widehat\Lambda-\Lambda_0\widehat R\|_F}
         {\|\Lambda_0\|_F} \lesssim \frac1{\sqrt n}+\frac1D, \\
         \cE_{\Sigma} &:=\frac1D\sum_{j=1}^D
    |\widehat\sigma_j^2-\sigma_{0j}^2| \lesssim \frac1{\sqrt n}+\frac1D.
\end{align*}

The resulting polynomial orders are
\[
\begin{array}{c|ccc}
(n,D)
&
\mathcal E_M
&
\mathcal E_\Lambda
&
\mathcal E_\Sigma
\\ \hline
(M,M)
&
M^{-1/2}
&
M^{-1/2}
&
M^{-1/2}
\\
(M^2,M)
&
M^{-1/2}
&
M^{-1}
&
M^{-1}
\\
(M,M^2)
&
M^{-1/2}
&
M^{-1/2}
&
M^{-1/2}.
\end{array}
\]

Figure~\ref{fig: parameter estimation simulation gmm truth} follows these slopes. In the informative middle row ($n=D^2$), score error is approximately $D^{-1/2}$ whereas loading and average noise-variance errors are approximately $D^{-1}$, supporting the projected-estimator cancellation underlying the density analysis.

\subsection{Latent density estimation}\label{subsec: simulations latent density}

We next compare FADE with an oracle KDE using true scores, isolating the cost of spectral score recovery. We use the preceding $d=5$ mixture, $\Sigma_0=I_D$, bounded spherical loadings, isotropic Gaussian kernels, and separate five-fold held-out likelihood bandwidth selection.

We use two nested designs: fix $D\in\{10,100,500\}$ and increase $n$ from $200$ to $5000$, or fix $n\in\{100,500,5000\}$ and increase $D$ from $10$ to $500$. We use $50$ replications and integrated squared error modulo rotation $\|\widehat g - g_0^{\widehat R}\|_{L^2}^2$, where $\widehat R$ is the Procrustes rotation used to align the scores. 

\begin{figure}
    \centering
    \includegraphics[width=1.0\linewidth]{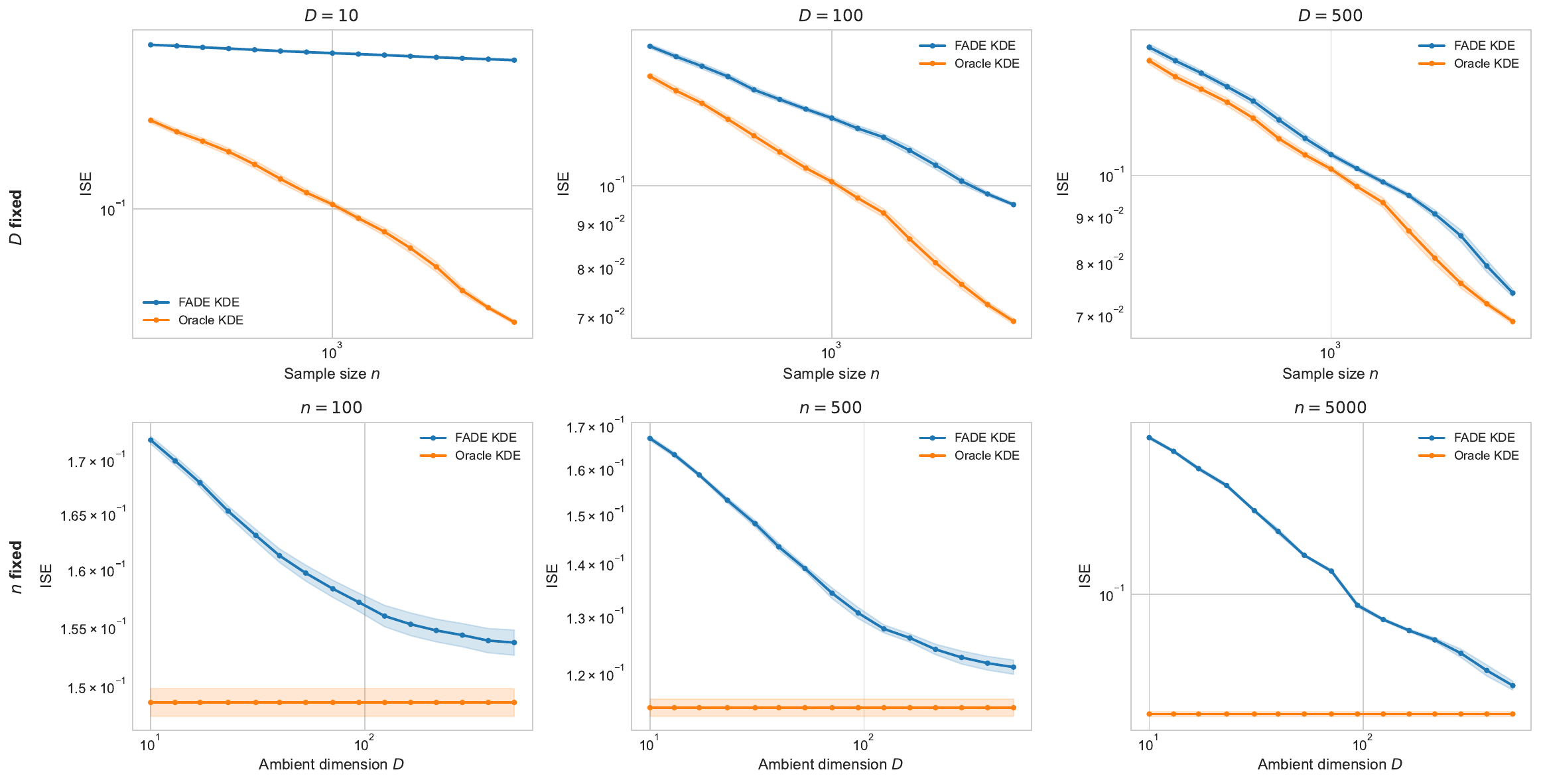}
    \caption{Latent-density estimation in $d=5$, measured via Integrated Squared Error (ISE) after Procrustes alignment of the scores. The top row fixes $D$ and
    increases $n$; the bottom row fixes $n$ and increases $D$.
    Curves show mean integrated squared error over $50$ replications, with
    shaded bands representing one Monte Carlo standard error. FADE uses
    spectrally estimated scores, whereas the oracle KDE observes the true
    latent scores. At fixed small $D$, increasing $n$ exposes the
    latent-score recovery bottleneck. At fixed $n$, increasing $D$
    reduces the FADE error toward the oracle benchmark. Vertical scales
    differ across panels.}
    \label{fig: latent density estimation simulation}
\end{figure}

In the top row of Figure~\ref{fig: latent density estimation simulation}, low $D$ produces an increasingly visible FADE--oracle separation as $n$ grows: more subjects reduce ordinary KDE error but do not improve each score. At $D=500$, FADE is close to the oracle. In the bottom row, the oracle is unchanged within a replication as $D$ varies, whereas FADE approaches it as added features improve score recovery. Thus increasing $n$ alone need not remove the latent-recovery bottleneck, while increasing $D$ does.

\subsection{Observation density estimation}
\label{subsec: simulations obs density}

Our final experiment studies observation-density estimation using a non Gaussian $d=3$ latent density obtained by smoothing two curved trajectories and standardizing to mean zero and identity covariance (Figure~\ref{fig: demonstration}). We compare FADE with an oracle KDE, a 10-component jointly estimated latent Gaussian mixture (J-GMM), a 10-component mixture of factor analyzers (MFA), and a standard Gaussian factor model. We consider
$$n\in \{100, 500, 5000\},\qquad D\in \{10, 100, 500\},$$
and evaluate per-coordinate excess log loss on $5000$ independent test observations.

\begin{table}[t]

\centering
\caption{
Observation-density estimation under per-coordinate excess held-out log
loss. Each entry is written as \(a(b)\times10^k\), where
\(a\times10^k\) is the Monte Carlo mean and \(b\times10^k\) is its Monte
Carlo standard error, computed over \(30\) independent replications. Lower
values are better. Bold denotes the best implementable estimator in each
setting. The shaded oracle KDE is an unattainable benchmark and is excluded
from this comparison.
}
\label{tab: observation density estimation k=10}

\begingroup
\small
\setlength{\tabcolsep}{5pt}
\renewcommand{\arraystretch}{1.1}

\begin{adjustbox}{
    max width=0.9\textwidth
}
\begin{tabular*}{\textwidth}{
    @{\extracolsep{\fill}}
    lccc
    @{}
}
\toprule
& \multicolumn{3}{c}{Ambient dimension \(D\)}\\
\cmidrule(lr){2-4}
Method & \(10\) & \(100\) & \(500\)\\
\midrule

\multicolumn{4}{@{}l}{\textit{Panel A: Training sample size \(n=100\)}}\\
\addlinespace[2pt]
\rowcolor{gray!10}
Oracle KDE
& \mcse{4.941}{0.510}{-3}
& \mcse{4.024}{0.200}{-3}
& \mcse{1.764}{0.085}{-3}\\
FADE KDE
& \mcse{9.444}{0.240}{-2}
& \bestmcse{3.084}{0.031}{-2}
& \bestmcse{2.383}{0.018}{-2}\\
J-GMM(10)
& \mcse{9.261}{0.650}{-2}
& \mcse{3.516}{0.042}{-2}
& \mcse{3.032}{0.022}{-2}\\
MFA(10)
& \mcse{5.981}{0.330}{-1}
& \mcse{4.720}{0.360}{-1}
& \mcse{2.403}{0.170}{-1}\\
Standard FA
& \bestmcse{6.991}{0.270}{-2}
& \mcse{4.968}{0.039}{-2}
& \mcse{3.372}{0.017}{-2}\\

\midrule
\multicolumn{4}{@{}l}{\textit{Panel B: Training sample size \(n=500\)}}\\
\addlinespace[2pt]
\rowcolor{gray!10}
Oracle KDE
& \mcse{1.092}{0.089}{-3}
& \mcse{8.669}{0.260}{-4}
& \mcse{5.166}{0.110}{-4}\\
FADE KDE
& \mcse{5.603}{0.130}{-2}
& \mcse{8.377}{0.080}{-3}
& \bestmcse{4.953}{0.036}{-3}\\
J-GMM(10)
& \bestmcse{1.208}{0.041}{-2}
& \bestmcse{6.822}{0.067}{-3}
& \mcse{5.722}{0.036}{-3}\\
MFA(10)
& \mcse{8.695}{0.160}{-2}
& \mcse{5.482}{0.039}{-2}
& \mcse{4.655}{0.024}{-2}\\
Standard FA
& \mcse{5.084}{0.230}{-2}
& \mcse{2.734}{0.013}{-2}
& \mcse{1.138}{0.0034}{-2}\\

\midrule
\multicolumn{4}{@{}l}{\textit{Panel C: Training sample size \(n=5000\)}}\\
\addlinespace[2pt]
\rowcolor{gray!10}
Oracle KDE
& \mcse{1.132}{0.140}{-4}
& \mcse{1.455}{0.058}{-4}
& \mcse{1.185}{0.020}{-4}\\
FADE KDE
& \mcse{4.249}{0.091}{-2}
& \mcse{3.092}{0.022}{-3}
& \bestmcse{7.531}{0.039}{-4}\\
J-GMM(10)
& \bestmcse{1.360}{0.057}{-3}
& \bestmcse{1.279}{0.020}{-3}
& \mcse{8.173}{0.045}{-4}\\
MFA(10)
& \mcse{5.538}{0.110}{-3}
& \mcse{6.609}{0.041}{-3}
& \mcse{5.339}{0.019}{-3}\\
Standard FA
& \mcse{4.787}{0.270}{-2}
& \mcse{2.290}{0.014}{-2}
& \mcse{6.732}{0.0085}{-3}\\
\bottomrule
\end{tabular*}
\end{adjustbox}

\endgroup
\end{table}


Table~\ref{tab: observation density estimation k=10} shows marked improvement in FADE as $D$ grows: for $n=5000$, excess loss falls from $0.04249$ at $D=10$ to $0.00075$ at $D=500$. At small $D$, J-GMM can be substantially more accurate, consistent with the potential value of jointly handling latent uncertainty in the unresolved intermediate-dimensional regime. At $D=500$, FADE is best among implementable methods for every $n$. The standard Gaussian factor model wins only at $n=100,D=10$, where its restrictive structure offers a favorable bias--variance tradeoff; MFA performs poorly in most settings.

The simulation uses per-coordinate excess log loss, whereas theory uses unnormalized $L^1$ loss; it therefore does not directly verify the rates and suppresses part of the growing-dimensional parameter cost. It nevertheless supports the central message: joint modeling can help at small or moderate $D$, while sufficiently accurate spectral score recovery makes simple two-stage FADE highly competitive at large $D$.

\section{Discussion}
\label{sec: discussion}
High ambient dimension can be a source of information rather than only complexity. In a linear factor model, the $D$ coordinates provide repeated noisy measurements of the same low-dimensional latent vector. FADE exploits this by combining spectral score recovery with ordinary $d$-dimensional density estimation. The score proxies are dependent and are not draws from the latent density, but, after a common rotation, their leading errors are conditionally centered with covariance of order $D^{-1}$. Retaining this structure in the kernel expansion yields sharper control than deterministic score-error bounds.

For latent-density estimation, when $d\geq3$ and $D$ exceeds the identified threshold, FADE attains the oracle squared-$L^2$ rate $n^{-2\beta/(2\beta+d)}$; for $d=2$ and $\beta\geq1/2$, it does so up to logarithmic factors. The lower bound shows that this conclusion is not merely algorithmic: finite $D$ introduces a Gaussian inverse-problem obstruction of order $(D\log n)^{-\beta}$, so no estimator can attain the oracle rate when $D$ is too small. In the sufficiently high-dimensional regimes, FADE is minimax-rate optimal.

Observation-density estimation gives a complementary picture. Although convolution with Gaussian noise makes each target density analytic, the problem need not be uniformly parametric: increasing $D$ reveals finer mixing-distribution features while increasing the number of factor parameters. The resulting lower bound has both nonparametric and growing-dimensional parametric components. In sufficiently high dimensions with $D=o(n)$, FADE attains the minimax parametric rate $\sqrt{D/n}$; in some intermediate regimes, the nonparametric component dominates and FADE matches it up to logarithmic factors.

Several questions remain. The current bounds leave gaps for low $d$ and moderate $D$; these may reflect the analysis, the lower bound, or information lost by spectral point estimates relative to joint procedures. A theory of data-dependent bandwidth selection is also needed, since the optimal bandwidth depends on both $n$ and $D$. More broadly, it is natural to ask whether mixture-based or Bayesian nonparametric estimators applied after spectral preprocessing exhibit analogous oracle transitions. Finally, allowing $d$ to grow requires a new density-estimation analysis, even if much of the spectral theory extends by tracking dimension-dependent constants.

The broader message is that the effect of dimension depends on what it represents. For an unrestricted density on $\bbR^D$, it produces the usual curse of dimensionality. When coordinates provide repeated information about a fixed-dimensional latent vector, however, ambient dimension can improve the effective quality of the data. FADE formalizes when a simple spectral reduction followed by low-dimensional nonparametric estimation is rate-equivalent to observing the latent variables themselves.

%
%

\begin{acks}[Declaration of AI assistance]
The authors used ChatGPT (GPT-6.1 Sol Light) to improve the manuscript's wording and grammar, after finalizing the results. All mathematical proofs, statistical methods, and conclusions were developed by the authors. The authors have reviewed the final text and accept full responsibility for its accuracy.
\end{acks}
\begin{funding}
This project was supported by the U.S. National Institutes of Health (RO1ES035625), the Office of Naval Research (Grant N00014-24-1-2626) and National Science Foundation under Grant Number 242676.
\end{funding}

\bibliographystyle{spr-ims-number} 
\bibliography{ref}       

@article{Berenfeld2024,
title={Estimating a density near an unknown manifold: A {B}ayesian nonparametric approach},
author={Berenfeld, C and Rosa, P and Rousseau, J},
journal={The Annals of Statistics},
volume={52},
issue={5},
pages={2081-2111},
year={2024}
}

@article{Berenfeld2021,
title={Density estimation on an unknown submanifold},
author={Berenfeld, C and Hoffmann, N},
journal={Electronic Journal of Statistics},
volume={15},
pages={2179-2223},
year={2021}
}

@article{Dunson2010, 
title={Compressive sensing on manifolds using a nonparametric mixture of factor analyzers: Algorithm and performance bounds},
author={Dunson, D and Chen, M and Silva, J and Paisley, J and Wang, C and Carin, L},
journal={IEEE Signal Processing},
volume={58},
pages={6140-6155},
year = {2010}
}

@article{chen2019,
title={Joint maximum likelihood estimation for high-dimensional exploratory item factor analysis},
author={Chen, Y and Li, X and Zhang, S},
journal={Psychometrika},
volume={84},
issue={1},
pages={124-146},
year={2019},
}

@article{chen2021,
title={Determining the number of factors in high-dimensional generalized latent factor models},
author={Chen, Y and Li, X},
journal={Biometrika},
volume={109},
issue={3},
pages={769-782},
year={2021}
}

@article{bai2003,
title={Inferential theory for factor models of large dimension},
author={Bai, J},
journal={Econometrica},
volume={71},
issue={1},
pages={135-171},
year={2003}
}

@article{bai2016,
title={Maximum likelihood estimation and inference for approximate factor models of high dimension},
author={Bai, J and Li, K},
journal={The Review of Economics and Statistics},
volume={98},
issue={2},
pages={298-309},
year={2016}
}

@article{fan2013,
title={Large covariance estimation by thresholding principal orthogonal complements},
author={Fan, J and Liao, Y and Mincheva, M},
journal={Journal of the Royal Statistical Society Series B},
volume={75},
issue={4},
pages={603-680},
year={2013}
}

@article{kw1956,
title={Consistency of the maximum likelihood estimator in the presence of infinitely many incidental parameters}, 
author={Kiefer, J and Wolfowitz, J},
journal={Annals of Mathematical Statistics},
volume={27},
issue={4},
pages={887-906},
year={1956}
}

@article{mfa,
title={Mixtures of factor analyzers},
author={McLachlan, GJ and Peel, D},
journal={Proceedings of the seventeenth international conference on machine learning},
pages = {599-606},
year={2000}
}

@article{woods2006,
title={Item response theory with estimation of the latent population distribution using spline-based densities},
author={Woods, CM and Thissen, D},
journal={Psychometrika}, 
volume={71},
issue={2},
pages={281-301},
year={2006}
}

@article{Mauri2026,
title={Inference on covariance structure in high-dimensional multiview data},
author={Mauri, L and Dunson, DB},
journal = {Biometrika}, 
volume = {to appear},
year = {2026}
}

@article{bernanke2005,
title={Measuring the effects of monetary policy: A factor-augmented vector regression},
author={Bernanke, BS and Boivin, J and Eliasz, P},
journal={The Quarterly Journal of Economics}, 
volume={120},
issue={1},
pages={387-422},
year={2005}
}

@article{xu2023identifiable,
  title={Identifiable and interpretable nonparametric factor analysis},
  author={Xu, Maoran and Winter, Steven and Herring, Amy H and Dunson, David B},
  journal={arXiv preprint arXiv:2311.08254},
  year={2023}
}

@article{chandra2023escaping,
  title={Escaping the curse of dimensionality in {B}ayesian model-based clustering},
  author={Chandra, Noirrit Kiran and Canale, Antonio and Dunson, David B},
  journal={Journal of Machine Learning Research},
  volume={24},
  number={144},
  pages={1--42},
  year={2023}
}

@article{chattopadhyay2024blessing,
  title={Blessing of dimension in {B}ayesian inference on covariance matrices},
  author={Chattopadhyay, Shounak and Zhang, Anru R and Dunson, David B},
  journal={arXiv preprint arXiv:2404.03805},
  year={2024}
}

@article{mauri2025spectral,
  title={Spectral decomposition-assisted multi-study factor analysis},
  author={Mauri, Lorenzo and Anceschi, Niccol{\`o} and Dunson, David B},
  journal={arXiv preprint arXiv:2502.14600},
  year={2025}
}

@article{mauri2025factor,
  title={Factor pretraining in {B}ayesian multivariate logistic models},
  author={Mauri, Lorenzo and Dunson, David B},
  journal={Biometrika},
  volume={112},
  number={4},
  pages={asaf056},
  year={2025},
  publisher={Oxford University Press}
}

@book{tsybakov2009introduction,
  title={Introduction to nonparametric estimation},
  author={Tsybakov, Alexandre B and Zaiats, Vladimir},
  volume={11},
  year={2009},
  publisher={Springer}
}

@article{fan1991optimal,
  title={On the optimal rates of convergence for nonparametric deconvolution problems},
  author={Fan, Jianqing},
  journal={The Annals of Statistics},
  pages={1257--1272},
  year={1991},
  publisher={JSTOR}
}

@article{vershynin2019high,
  title={High-dimensional probability},
  author={Vershynin, Roman},
  journal={Cambridge Series in Statistical and Probabilistic Mathematics},
  volume={47},
  year={2019}
}

@article{vershynin2012close,
  title={How close is the sample covariance matrix to the actual covariance matrix?},
  author={Vershynin, Roman},
  journal={Journal of Theoretical Probability},
  volume={25},
  number={3},
  pages={655--686},
  year={2012},
  publisher={Springer}
}

@article{luo2021schatten,
  title={A Schatten-q low-rank matrix perturbation analysis via perturbation projection error bound},
  author={Luo, Yuetian and Han, Rungang and Zhang, Anru R},
  journal={Linear Algebra and its Applications},
  volume={630},
  pages={225--240},
  year={2021},
  publisher={Elsevier}
}

@article{davis1970rotation,
  title={The rotation of eigenvectors by a perturbation. III},
  author={Davis, Chandler and Kahan, William Morton},
  journal={SIAM Journal on Numerical Analysis},
  volume={7},
  number={1},
  pages={1--46},
  year={1970},
  publisher={SIAM}
}

@article{bai2002determining,
  title={Determining the number of factors in approximate factor models},
  author={Bai, Jushan and Ng, Serena},
  journal={Econometrica},
  volume={70},
  number={1},
  pages={191--221},
  year={2002},
  publisher={Wiley Online Library}
}

@book{stewart1990matrix,
  title={Matrix perturbation theory},
  author={Stewart, Gilbert W and Sun, Ji Guang},
  year={1990},
  publisher={Academic Press}
}

@article{bonhomme2010generalized,
  title={Generalized non-parametric deconvolution with an application to earnings dynamics},
  author={Bonhomme, St{\'e}phane and Robin, Jean-Marc},
  journal={The Review of Economic Studies},
  volume={77},
  number={2},
  pages={491--533},
  year={2010},
  publisher={Wiley-Blackwell}
}

@article{amato2010noisy,
  title={Noisy Independent Factor Analysis Model for Density Estimation and Classification},
  author={Amato, Umberto and Antoniadis, Anestis and Samarov, Alexander and Tsybakov, Alexandre},
  journal={Electronic Journal of Statistics},
  volume={4},
  pages={707--736},
  year={2010}
}

@article{pelletier2005kernel,
  title={Kernel density estimation on Riemannian manifolds},
  author={Pelletier, Bruno},
  journal={Statistics \& probability letters},
  volume={73},
  number={3},
  pages={297--304},
  year={2005},
  publisher={Elsevier}
}

@article{stock2002forecasting,
  title={Forecasting using principal components from a large number of predictors},
  author={Stock, James H and Watson, Mark W},
  journal={Journal of the American statistical association},
  volume={97},
  number={460},
  pages={1167--1179},
  year={2002},
  publisher={Taylor \& Francis}
}

@book{wand1994kernel,
  title={Kernel smoothing},
  author={Wand, Matt P and Jones, M Chris},
  year={1994},
  publisher={Chapman and Hall/CRC},
  DOI={https://doi.org/10.1201/b14876}
}

@article{samb2010nonparametric,
  title={Nonparametric kernel estimation of the probability density function of regression errors using estimated residuals},
  author={Samb, Rawane},
  journal={arXiv preprint arXiv:1010.0439},
  year={2010}
}

@article{carroll1988optimal,
  title={Optimal rates of convergence for deconvolving a density},
  author={Carroll, Raymond J and Hall, Peter},
  journal={Journal of the American Statistical Association},
  volume={83},
  number={404},
  pages={1184--1186},
  year={1988},
  publisher={Taylor \& Francis}
}

@article{stefanski1990deconvolving,
  title={Deconvolving kernel density estimators},
  author={Stefanski, Leonard A and Carroll, Raymond J},
  journal={Statistics},
  volume={21},
  number={2},
  pages={169--184},
  year={1990},
  publisher={Taylor \& Francis}
}

@article{fan1992deconvolution,
  title={Deconvolution with supersmooth distributions},
  author={Fan, Jianqing},
  journal={Canadian Journal of Statistics},
  volume={20},
  number={2},
  pages={155--169},
  year={1992},
  publisher={Wiley Online Library}
}

@article{friedman1984projection,
  title={Projection pursuit density estimation},
  author={Friedman, Jerome H and Stuetzle, Werner and Schroeder, Anne},
  journal={Journal of the American Statistical Association},
  volume={79},
  number={387},
  pages={599--608},
  year={1984},
  publisher={Taylor \& Francis}
}

@article{samarov2004nonparametric,
  title={Nonparametric independent component analysis},
  author={Samarov, Alexander and Tsybakov, Alexandre},
  journal={Bernoulli},
  volume={10},
  number={4},
  pages={565--582},
  year={2004},
  publisher={Bernoulli Society for Mathematical Statistics and Probability}
}

@article{bai2006confidence,
  title={Confidence intervals for diffusion index forecasts and inference for factor-augmented regressions},
  author={Bai, Jushan and Ng, Serena},
  journal={Econometrica},
  volume={74},
  number={4},
  pages={1133--1150},
  year={2006},
  publisher={Wiley Online Library}
}

@inproceedings{song2014nonparametric,
  title={Nonparametric estimation of multi-view latent variable models},
  author={Song, Le and Anandkumar, Animashree and Dai, Bo and Xie, Bo},
  booktitle={International Conference on Machine Learning},
  pages={640--648},
  year={2014},
  organization={PMLR}
}

@article{vandermeulen2021beyond,
  title={Beyond smoothness: Incorporating low-rank analysis into nonparametric density estimation},
  author={Vandermeulen, Robert A and Ledent, Antoine},
  journal={Advances in Neural Information Processing Systems},
  volume={34},
  pages={12180--12193},
  year={2021}
}

@article{chhor2025generalized,
  title={Generalized multi-view model: Adaptive density estimation under low-rank constraints},
  author={Chhor, Julien and Klopp, Olga and Tsybakov, Alexandre B},
  journal={Journal of Machine Learning Research},
  volume={26},
  number={236},
  pages={1--52},
  year={2025}
}

@article{mclachlan2003modelling,
  title={Modelling high-dimensional data by mixtures of factor analyzers},
  author={McLachlan, Geoffrey J and Peel, David and Bean, Richard W},
  journal={Computational Statistics \& Data Analysis},
  volume={41},
  number={3-4},
  pages={379--388},
  year={2003},
  publisher={Elsevier}
}

@article{bouveyron2014model,
  title={Model-based clustering of high-dimensional data: A review},
  author={Bouveyron, Charles and Brunet-Saumard, Camille},
  journal={Computational Statistics \& Data Analysis},
  volume={71},
  pages={52--78},
  year={2014},
  publisher={Elsevier}
}

@article{kankanala2023quasi,
  title={Quasi-Bayes in Latent Variable Models},
  author={Kankanala, Sid},
  journal={arXiv preprint arXiv:2311.06831},
  year={2023}
}

@article{woods2009item,
  title={Item response theory with estimation of the latent density using Davidian curves},
  author={Woods, Carol M and Lin, Nan},
  journal={Applied Psychological Measurement},
  volume={33},
  number={2},
  pages={102--117},
  year={2009},
  publisher={Sage Publications Sage CA: Los Angeles, CA}
}

@article{wedin1972perturbation,
  title={Perturbation bounds in connection with singular value decomposition},
  author={Wedin, Per-{\AA}ke},
  journal={BIT Numerical Mathematics},
  volume={12},
  number={1},
  pages={99--111},
  year={1972},
  publisher={Springer}
}

@article{beran1996nonparametric,
  title={On nonparametric estimation of intercept and slope distributions in random coefficient regression},
  author={Beran, Rudolf and Feuerverger, Andrey and Hall, Peter},
  journal={The Annals of Statistics},
  volume={24},
  number={6},
  pages={2569--2592},
  year={1996},
  publisher={Institute of Mathematical Statistics}
}

@article{ben2018identification,
  title={Identification of joint distributions in dependent factor models},
  author={Ben-Moshe, Dan},
  journal={Econometric Theory},
  volume={34},
  number={1},
  pages={134--165},
  year={2018},
  publisher={Cambridge University Press}
}

@article{bonhomme2016non,
  title={Non-parametric estimation of finite mixtures from repeated measurements},
  author={Bonhomme, St{\'e}phane and Jochmans, Koen and Robin, Jean-Marc},
  journal={Journal of the Royal Statistical Society Series B: Statistical Methodology},
  volume={78},
  number={1},
  pages={211--229},
  year={2016},
  publisher={Oxford University Press}
}

@article{comte2012nonparametric,
  title={Nonparametric estimation of random-effects densities in linear mixed-effects model},
  author={Comte, Fabienne and Samson, Adeline},
  journal={Journal of Nonparametric Statistics},
  volume={24},
  number={4},
  pages={951--975},
  year={2012},
  publisher={Taylor \& Francis}
}

@article{jochmans2024inference,
  title={Inference on a distribution from noisy draws},
  author={Jochmans, Koen and Weidner, Martin},
  journal={Econometric Theory},
  volume={40},
  number={1},
  pages={60--97},
  year={2024},
  publisher={Cambridge University Press}
}

@article{van2010asymptotic,
  title={Asymptotic normality of the deconvolution kernel density estimator under the vanishing error variance},
  author={van Es, Bert and Gugushvili, Shota},
  journal={Journal of the Korean Statistical Society},
  volume={39},
  number={1},
  pages={103--115},
  year={2010},
  publisher={Springer}
}

@article{durot2026minimax,
  title={Minimax optimal rates of convergence in monotone shuffled and unlinked regression models under vanishing noise},
  author={Durot, Cecile and Mukherjee, Debarghya},
  journal={Bernoulli},
  volume={32},
  number={3},
  pages={2244--2270},
  year={2026},
  publisher={Bernoulli Society for Mathematical Statistics and Probability}
}

@article{li2018embracing,
  title={Embracing the blessing of dimensionality in factor models},
  author={Li, Quefeng and Cheng, Guang and Fan, Jianqing and Wang, Yuyan},
  journal={Journal of the American Statistical Association},
  volume={113},
  number={521},
  pages={380--389},
  year={2018},
  publisher={Taylor \& Francis}
}

@article{feng2023optimal,
  title={Optimal estimation of large-dimensional nonlinear factor models},
  author={Feng, Yingjie},
  journal={arXiv preprint arXiv:2311.07243},
  year={2023}
}

@article{rubin2022statistical,
  title={A statistical interpretation of spectral embedding: the generalised random dot product graph},
  author={Rubin-Delanchy, Patrick and Cape, Joshua and Tang, Minh and Priebe, Carey E},
  journal={Journal of the Royal Statistical Society Series B: Statistical Methodology},
  volume={84},
  number={4},
  pages={1446--1473},
  year={2022},
  publisher={Oxford University Press}
}

@inproceedings{anderson2014more,
  title={The more, the merrier: the blessing of dimensionality for learning large Gaussian mixtures},
  author={Anderson, Joseph and Belkin, Mikhail and Goyal, Navin and Rademacher, Luis and Voss, James},
  booktitle={Conference on Learning Theory},
  pages={1135--1164},
  year={2014},
  organization={PMLR}
}

@article{li2026estimating,
  title={Estimating Latent Distribution of Item Response Theory Using Kernel Density Method},
  author={Li, Seewoo and Lee, Guemin},
  journal={Psychometrika},
  volume={91},
  number={2},
  pages={620--640},
  year={2026},
  publisher={Cambridge University Press}
}

@article{cui2026multidimensional,
  title={Multidimensional Item Response Theory under General Latent Distributions},
  author={Cui, Chengyu and Chen, Taoyi and Wang, Chun and Xu, Gongjun},
  journal={arXiv preprint arXiv:2605.30471},
  year={2026}
}

@article{cui2026marginal,
  title={Marginal maximum likelihood estimation and asymptotic theory for latent variable models in high dimensions},
  author={Cui, Chengyu and Xu, Gongjun},
  journal={arXiv preprint arXiv:2609.16653},
  year={2026}
}

@article{fade,
  title={Factor-assisted density estimation: when ambient dimension enables oracle rates},
  author={Chakraborty, Sunrit and Dunson, David},
  doi={to be filled},
  year={2026}
}

\newpage
\begin{appendix}

\section{Proofs for Section \ref{sec: latent theory}}
\label{app: proof of estimation result}

This section discusses the proofs for our results on latent density estimation. Recall $\bbP_0$ is the joint law of $(\eta, \epsilon)$, the latent factor and idiosyncratic noise. Denote $Y_0=M_0\Lambda_0^\top$ to be the true signal, and then the observed data is $Y=Y_0+E_0$. Let $Y_0=U_0S_0V_0^\top$ be the SVD of the true signal with $U_0\in \bbR^{n\times d}$. Also, let $M_0=\bar{U}_0\bar{D}_0\bar{V}_0^\top$ be the SVD of the factor score matrix with $\bar{U}\in\bbR^{n\times d}$. In general, $\bar{U}_0$ and $U_0$ are not identical, but since $\Lambda_0$ is full rank $d$, the underlying column spaces coincide $\rm col(U_0)=\rm col(M_0)=\rm col(\bar{U}_0)$. Since these are two different orthonormal bases for the  same $d-$dimensional population subspace, they are related by an $M_0-$measurable rotation matrix $\bar{R}\in\cO(d)$, i.e., $\bar{U}_0=U_0\bar{R}^\top$. 

Next, at the observation level, we have the rank-$d$ SVD of the observation matrix $Y=USV^\top$ with $U\in\bbR^{n\times d}$, corresponding to the top $d$ singular values. Recall, our estimate for the score matrix is $\widehat{M}=\sqrt{n} U$. We will define a sample-level measurable rotation matrix $\hat{R}$ that optimizes the Procrustes problem
$$\hat{R} \in  \argmin_{\Omega\in\cO(d)} \|U\Omega^\top - U_0\|_F^2.$$
Thus, $\hat{R}$ is a random rotation that aligns the noisy empirical $U$ to the true signal counterpart $U_0$. Finally, as we demonstrate in the following proof, the (random) orthogonal matrix $R=\bar{V}_0\bar{R}\hat{R}\in \cO(d)$ is precisely the rotation required to align \textit{both} the score matrix $\widehat{M} \approx M_0 R$ and the loading matrix $\widehat{\Lambda} \approx \Lambda_0 R$. 

\medskip
\noindent
\textbf{Subspace perturbation decompositions: } Let the SVD of the observed data matrix be $Y=USV^\top + U_\perp S_\perp V_\perp^\top$, where $U\in\bbR^{n\times d}, S\in \bbR^{d\times d}$ and $V\in\bbR^{D\times d}$ represent the top $d$ components. Let $P=UU^\top$ and $P_0=U_0U_0^\top$. We define the rank $d$ truncated pseudo-inverse as $Y^+=VS^{-1}U^\top$, meaning $YY^+ = P$. Also, define $Y_0^+ = V_0S_0^{-1}U_0^\top$ to be the pseudo-inverse of $Y_0$. Let $P_0^\perp=I_n-P_0$ and $P^\perp=I_n-P$ be the projections onto the respective orthogonal complements.

Since $Y_0=P_0Y_0$, its orthogonal complement annihilates the signal $P_0^\perp Y_0 = 0$. Projecting the observed matrix yields
\begin{equation}\label{eq: p0 complement y}
    P_0^\perp Y = P_0^\perp(Y_0+E_0) = P_0^\perp E_0.
\end{equation}
Post-multiplying by $Y^+$ gives
\begin{equation}\label{eq: P0 perp P}
    P_0^\perp YY^+ = P_0^\perp E_0 Y^+ \Longrightarrow P_0^\perp P = P_0^\perp E_0 Y^+.
\end{equation}
Taking the transpose, we also have $PP_0^\perp = (Y^+)^\top E_0^\top P_0^\perp$. Now, we split $P-P_0$ into four orthogonal blocks by expanding $P-P_0=I(P-P_0)I$ and $I=P_0+P_0^\perp$:
$$P-P_0 = P_0^\perp P P_0 + P_0 P P_0^\perp + P_0^\perp P P_0^\perp - P_0 P^\perp P_0,$$
noting that $P_0PP_0 - P_0 = -P_0(I-P)P_0=-P_0P^\perp P_0$. Substituting the exact expressions for $P_0^\perp P$ and $PP_0^\perp$ obtained above into this identity yields the exact decomposition:
\begin{align*}
    P-P_0 &= P_0^\perp E_0 Y^+ P_0 +P_0(Y^+)^\top E_0^\top P_0^\perp + P_0^\perp E_0 Y^+ P_0^\perp - P_0P^\perp P_0.
\end{align*}
Now, we linearize this via $Y^+ = Y_0^+ + (Y^+ - Y_0^+)$:
\begin{equation}
\begin{aligned}\label{eq: decomposition P-P0}
    P-P_0 &= P_0^\perp E_0 Y^+ P_0 +P_0(Y^+)^\top E_0^\top P_0^\perp + P_0^\perp E_0 Y^+ P_0^\perp - P_0P^\perp P_0 \\
    &= P_0^\perp E_0 Y_0^+ P_0 + P_0(Y_0^+)^\top E_0^\top P_0^\perp + (P_0^\perp E_0 Y^+)P_0^\perp - P_0P^\perp P_0 + \dots \\
    &= \underbrace{P_0^\perp E_0 Y_0^+ + (Y_0^+)^\top E_0^\top P_0^\perp}_{\cL(E_0)} + \underbrace{P_0^\perp PP_0^\perp - P_0P^\perp P_0 + \dots}_{\cR(E_0)}
\end{aligned}
\end{equation}
where the last line used $Y_0^+P_0 = Y_0^+$ and replaced $P_0^\perp E_0Y^+$ by $P_0^\perp P$ as obtained in Equation \eqref{eq: P0 perp P}. The complete remainder term $\cR$ is given below:
\begin{equation}\label{eq: remainder P-P0}
    \cR(E_0) = P_0^\perp E_0(Y^+ - Y_0^+)P_0 + P_0(Y^+-Y_0^+)^\top E_0^\top P_0^\perp + P_0^\perp P P_0^\perp - P_0 P^\perp P_0.
\end{equation}

\medskip
\noindent
\textbf{Another useful identity for geometric annihilation:} By definition, $Y=USV^\top + U_\perp S_\perp V_\perp^\top$. So, $Y^\top U =V S$ and hence, $YY^\top U = US^2$ since $U, U_\perp$ and $V, V_\perp$ are orthogonal. This gives the following identity
\begin{equation}
    \left(\frac{1}{nD} YY^\top\right) U = U V_n, \quad V_n = \frac{1}{nD} S^2.
\end{equation}
Left multiply the above identity with $P_0^\perp = I_n - U_0U_0^\top$:
$$P_0^\perp\left(\frac{1}{nD} YY^\top\right)U = P_0^\perp U V_n \Longrightarrow P_0^\perp U = \frac{1}{nD} \left(P_0^\perp Y\right) Y^\top U V_n^{-1},$$
This is possible since on the good event constructed below, $V_n$ is invertible. Substitute $P_0^\perp Y = P_0^\perp E_0$ from equation \eqref{eq: p0 complement y} into the above to get
\begin{equation}\label{eq: p0 complement u}
    P_0^\perp U = \frac{1}{nD} P_0^\perp E_0 Y^\top U V_n^{-1}.
\end{equation}

\medskip
\noindent
\textbf{Control on $V_n^{-1}$:} The diagonal entries $v_i$ of the diagonal matrix $V_n$ are $v_i = s_i^2(Y)/nD$. Now, by Lemma \ref{lemma: subgaussian concentration}, there is an event with $g_0-$probability at least $1-2/n$ on which $s_d(M_0)\asymp \|M_0\|\asymp \sqrt{n}$. Similarly, assumption \ref{assume: loadings} gives $s_d(\Lambda_0)\asymp \|\Lambda_0\|\asymp \sqrt{D}$. Since $Y_0=M_0\Lambda_0^\top$, we have $s_d(Y_0)\geq s_d(M_0)s_d(\Lambda_0)\asymp \sqrt{nD}$, while $\|Y_0\|\leq \|M_0\|\|\Lambda_0\|\asymp \sqrt{nD}$. Consequently, $s_i(Y_0)\asymp \sqrt{nD}$ for $i=1,\dots, d$.

Now, consider the event on which $\|E_0\|\lesssim \sqrt{n}+\sqrt{D}$. This event is included in the construction of the good event $\cE_{n,D}$ below. Since $Y=Y_0+E_0$, Mirsky's theorem \ref{thm: mirsky} gives $|s_i(Y)-s_i(Y_0)|\leq \|E_0\|$ for all $i$. Moreover,
$$\frac{\|E_0\|}{s_d(Y_0)} \lesssim \frac{\sqrt n + \sqrt D}{\sqrt{nD}} = \frac{1}{\sqrt{D}} + \frac{1}{\sqrt{n}}=o(1).$$
It follows that, for all sufficiently large $n,D$, 
$$s_d(Y) \geq s_d(Y_0) -\|E_0\| \asymp \sqrt{nD},$$
and similarly 
$$s_1(Y) \leq s_1(Y_0) + \|E_0\|\asymp \sqrt{nD}.$$
Therefore, for all sufficiently large $n,D$, we have $s_i(Y)\asymp \sqrt{nD}$ for $i=1,\dots, d$, on an event with probability at least $1-C/n$. In particular, $V_n$ is invertible on this event and 
\begin{equation}\label{eq: control v_i}
    \|V_n^{-1}\| = \frac{1}{\min_{1\leq i\leq d} v_i} = \frac{nD}{s_d^2(Y)} \lesssim 1.
\end{equation}

\subsection{Proofs for Section \ref{subsec: spectral recovery} (Spectral recovery)}

\subsubsection*{Proof of Theorem \ref{thm: spectral recovery}(i)}

\begin{proof}\label{proof of thm 1(i)}
We divide the proof into several parts.

\medskip
    \noindent
    \textbf{Step 1: Good event}

    Firstly, by Lemma \ref{lemma: subgaussian concentration}, we have event $A_1$ with probability $\bbP_0^{(n)}(A_1) \geq 1-2/n$, such that on $A_1$, we have 
    $$|s_i(M_0) - \sqrt{n}| \lesssim \sqrt{\log n}, \quad i=1,\dots,d.$$
    Secondly, by standard Gaussian matrix results, we have
    $$\|E_0\| \leq C(\sqrt{n} + \sqrt{D} + t), \quad \text{w.p.}\geq 1-2e^{-ct^2},$$
    taking $t=\sqrt{\log n}$, we have event $A_2$ with $\bbP_0^{(n)}(A_2) \geq 1-c/n$, such that on $A_2$, $\|E_0\|\lesssim \sqrt{n} + \sqrt{D}$.

    Next, for each column $\epsilon^{(j)}\sim N(0, \sigma_{0j}^2 I)$, then $(1/n) \|\epsilon^{(j)}\|^2 \overset{d}{=}\sigma_{0j}^2 \chi^2_n / n$. Applying the standard chi-square concentration, and union bound over $j=1,\dots,D$, we have an event 
$A_3$, with $\bbP_0^{(n)}$-probability at least $1-c_3/n$, such that on $A_3$, we have
    $$\max_j \left|\frac{1}{n}\|\epsilon^{(j)}\|^2 - \sigma_{0j}^2 \right| \lesssim
\sqrt{\frac{\log(nD)}n}
+
\frac{\log(nD)}n\lesssim \sqrt{\frac{\log (nD)}{n}},$$
using $\log(nD) \lesssim n$.

    Finally, $U_0^\top \epsilon^{(j)} \mid M_0\sim N(0, \sigma_{0j}^2 I_d)$, since $U_0^\top U_0=I_d$. Noting that the conditional distribution does not depend on $M_0$, we have $\|U_0^\top \epsilon^{(j)}\|^2 \sim \sigma_{0j}^2 \chi^2_d$. Since $d$ is fixed, we can again use chi-square concentration to obtain $A_4$ with $\bbP_0^{(n)}(A_4)\geq 1-c_4/n$ such that on $A_4$, 
    $$\max_j \frac{1}{n} \|U_0^\top \epsilon^{(j)}\|^2 \lesssim \frac{\log (nD)}{n}.$$
    Finally, by Lemma \ref{lemma: concentration w quadratic}, there exists $A_5$ with $\bbP_0^{(n)}(A_5)\geq 1-c_5/n$, such that 
    $$\max_{j\in [D]} \bigg|\sum_{k\neq j} w_{j,k}\epsilon^{(k)\top}(I_n-U_0U_0^\top)\epsilon^{(j)}\bigg| \lesssim \sqrt{n\log(nD)} \|w_j\|_2, \quad w_j^\top=\lambda_{0j}^\top R_0^\top S_0^{-1} V_0^\top,$$
    where $R_0=U_0^\top M_0$; see Lemma \ref{lemma: concentration w quadratic} for details. 
    
    Take $\cE_{n,D}=A_1\cap A_2\cap A_3\cap A_4\cap A_5$ and apply union bound to get the final result. Thus, we have shown that there exists a set $\cE_{n,D}$ whose $\bbP_0^{(n)}$ measure is at least $1-C/n$ for some constant $C>0$, such that on $\cE_{n,D}$ we have
    \begin{align*}
        |s_i(M_0) - \sqrt{n}| &\lesssim \sqrt{\log n}, \quad \text{for } i=1,\dots, d\\
        \|E_0\| &\lesssim \sqrt{n} + \sqrt{D} \\
        \max_{j\in [D]}\left|\frac{1}{n}\|\epsilon^{(j)}\|^2 - \sigma_{0j}^2\right|&\lesssim \sqrt{\frac{\log (nD)}{n}}\\
        \max_{j\in [D]}\frac{1}{n}\|U_0^\top \epsilon^{(j)}\|^2 &\lesssim \frac{\log (nD)}{n} \\
        \max_{j\in [D]} |\sum_{k\neq j} w_{jk}\epsilon^{(k)\top}(I_n-U_0U_0^\top)\epsilon^{(j)}| &\lesssim \sqrt{n\log(nD)} \|w_j\|_2, \quad w_j^\top=\lambda_{0j}^\top R_0^\top S_0^{-1} V_0^\top.
    \end{align*}
    Note that $\|E_0^\top M_0\|_F^2 = \sum_{j=1}^D \|M_0^\top \epsilon^{(j)}\|^2$.
    Since $M_0=U_0 R_0$ (same column space) for invertible $R_0$  with the same singular values as $M_0$, hence $\|R_0\|\asymp \sqrt{n}$ and implying $\|M_0^\top \epsilon^{(j)}\|\leq \|R_0\| \|U_0^\top \epsilon^{(j)}\|$. By the fourth condition in the above display, we immediately get $\|E_0^\top M_0\|_F^2 \lesssim Dn\log (nD)$, and hence, $(1/n)\|E_0^\top M_0\|_F \lesssim \sqrt{(D\log (nD))/n}$. Also, the concentration of $\|\epsilon^{(j)}\|$ implies that $\|\epsilon^{(j)}\|\lesssim \sqrt{n}$.

    Recall $y^{(j)}=M_0 \lambda_{0j} + \epsilon^{(j)}$. This gives $\|y^{(j)}\|\leq \|M_0\|\|\lambda_{0j}\| + \|\epsilon^{(j)}\|$. The first term is controlled via $\cE_{n,D}$ and assumption on $\Lambda_0$, $\|M_0\|\|\lambda_{0j}\|\lesssim \sqrt{n\ell_D}$. The last term is again controlled via $\cE_{n,D}$, giving the overall bound $\|y^{(j)}\|\lesssim \sqrt{n\ell_D}$.
    
    \medskip
    \noindent
    \textbf{Step 2: Subspace estimation} We have $Y=Y_0+E_0$ with $Y_0=M_0\Lambda_0^\top$ being the true signal. We start with 
    \begin{align*}
        \|Y_0^\top UU^\top Y_0 - Y_0^\top U_0U_0^\top Y_0\| &= \|\Lambda_0 M_0^\top (UU^\top - U_0U_0^\top) M_0\Lambda_0^\top\| 
    \end{align*}
    Both $\Lambda_0$ and $M_0$ have exactly $d$ positive singular values since rank is $d$. Let $W=M_0^\top (UU^\top - U_0U_0^\top)M_0$. $W$ is a square $d\times d$ matrix. We first show that $\|\Lambda_0 W\Lambda_0^\top\|\geq s_{\min}^2(\Lambda_0)\|W\|$. Let $u^*\in\bbR^d$ be the top right singular vector of $W$, then by definition, $\|u^*\|_2=1$ and $\|Wu^*\|_2 = \|W\|$. Define $x=\Lambda_0(\Lambda_0^\top \Lambda_0)^{-1}u^*$ (note that $\Lambda_0(\Lambda_0^\top\Lambda_0)^{-1}=(\Lambda_0^\top)^+$, the pseudo-inverse of $\Lambda_0^\top$). By construction, $\Lambda_0^\top x = \Lambda_0^\top(\Lambda_0^\top)^+u^*=u^*$. Hence, we get
    $$\|x\|_2 \leq \|(\Lambda_0)^+\| \|u^*\|_2 \leq 1/s_{\min}(\Lambda_0).$$
    Define $x^*=x/\|x\|_2$. Then
    $$\|\Lambda_0W\Lambda_0^\top x^*\|_2 = \frac{1}{\|x\|_2} \|\Lambda_0 W \Lambda_0^\top x\| = \frac{1}{\|x\|_2}\|\Lambda_0(Wu^*)\|_2\geq s_{\min}^2(\Lambda_0)\|W\|.$$

    Now, we lower bound $\|W\|$. Since $\rm col(M_0)=\rm col(U_0)$, we can write $M_0=U_0 Q$ where $Q$ is a $d\times d$ invertible matrix. The singular values of $M_0$ are identical to those of $Q$ as $U_0$ has orthonormal columns. Thus, $s_{\min}(M_0)=s_{\min}(Q)$. Now, we have
    $$W = Q^\top U_0^\top(UU^\top - U_0U_0^\top)U_0Q = Q^\top(U_0^\top U U^\top U_0 - I_d)Q.$$
    Since $Q$ is square and invertible, we have $\|W\|\geq s_{\min}^2(Q) \|U_0^\top UU^\top U_0 - I_d\|$. Let $\theta_{\max}$ be the maximum principal angle between the true subspace $\rm col(U_0)$ and the estimated subspace $\rm col(U)$. The standard projection distance is $\|UU^\top - U_0U_0^\top\|=\sin (\theta_{\max})$. Furthermore, the eigenvalues of $U_0^\top UU^\top U_0$ are precisely $\cos^2(\theta_i)$, which implies that the maximum eigenvalue of $(I_d-U_0^\top UU^\top U_0)$ is $1-\cos^2(\theta_{\max})=\sin^2(\theta_{\max})$. Therefore,
    $$\|I_d - U_0^\top UU^\top U_0\| = \sin^2(\theta_{\max}) = \|UU^\top - U_0U_0^\top\|^2.$$

    Thus, combining, we obtain
    \begin{align*}
        \|Y_0^\top UU^\top Y_0 - Y_0^\top U_0U_0^\top Y_0\| &= \|\Lambda_0 M_0^\top (UU^\top - U_0U_0^\top) M_0\Lambda_o^\top\| 
        \\
        &\geq s_{\min}^2(\Lambda_0) s_{\min}^2(M_0) \|UU^\top - U_0U_0^\top\|^2.
    \end{align*}
     Since the Schatten $\infty-$norm is identical to the spectral norm, Theorem $2$ in \cite{luo2021schatten} implies
    $$\|Y_0^\top UU^\top Y_0 - Y_0^\top U_0U_0^\top Y_0\| = \|Y_0^\top UU^\top Y_0 - Y_0^\top Y_0\|=\|(I_n - UU^\top)Y_0\|^2 \leq 4\|E_0\|^2.$$
    The first equality uses the fact that $U_0U_0^\top Y_0=Y_0$. Combining the last two displays, we have
    $$\|UU^\top - U_0 U_0^\top \| \leq \frac{2\|E_0\|}{s_{\min}(\Lambda_0)s_{\min}(M_0)}.$$
    
    Under $\cE_{n,D}$, $s_{\min}(M_0)\asymp \sqrt{n}$. Under the assumptions, $s_{\min}(\Lambda_0)\asymp \|\Lambda_0\|\asymp \sqrt{D}$. Also, under $\cE_{n,D}$, $\|E_0\|\lesssim \sqrt{n} + \sqrt{D}$. Thus, under $\cE_{n,D}$, we have
    \begin{equation} \label{eq: diff subspace}
        \|UU^\top - U_0U_0^\top \|\lesssim \frac{(\sqrt{n}+\sqrt{D})}{\sqrt{nD}}= \frac{1}{\sqrt{n}} + \frac{1}{\sqrt{D}}.
    \end{equation}

    \medskip
    \noindent
    \textbf{Step 3: Noise variance rate} In this step, we utilize the decomposition \eqref{eq: decomposition P-P0} to control the estimation rate for the noise variance $\sigma_j^2$.

    \medskip
    \noindent
    \textbf{Step 3(a): Controlling the remainder in the decomposition of $P-P_0$:} Let us consider the remainder term $\cR(E_0)$ in the decomposition of $P-P_0$  as obtained in  Equations \eqref{eq: decomposition P-P0}, \eqref{eq: remainder P-P0} -- we show that $\|\cR(E_0)\| \lesssim 1/n + 1/D$ in this part, under the assumptions and in the good event $\cE_{n,D}$ defined in Step 1 in the proof above. Firstly, under $\cE_{n,D}$, we have $\|E_0\|\lesssim \sqrt{n} + \sqrt{D}$ and under the assumptions, we have $\sigma_d(Y_0)\asymp \sqrt{nD}$. By Mirsky's inequality Theorem \ref{thm: mirsky}, as $Y=Y_0+E_0$, we have $\sigma_d(Y)\geq \sigma_d(Y_0)-\|E_0\| \asymp \sqrt{nD}$.
    
By Lemma \ref{lemma: spectral stability truncated pseudo-inverse}, we also have
$$\|Y^+ - Y_0^+\| \leq 2\|E_0\| \left(\frac{1}{\sigma_d(Y)^2} + \frac{1}{\sigma_d(Y_0)^2} + \frac{1}{\sigma_d(Y)\sigma_d(Y_0)} \right).$$
Using the bounds $\|E_0\|\lesssim \sqrt{n} + \sqrt{D}$ and $\sigma_d(Y)\asymp \sqrt{nD}$ and $\sigma_d(Y_0)\asymp \sqrt{nD}$, we get
\begin{equation} \label{eq: diff pseudoinv}
    \|Y^+ - Y_0^+\| \lesssim \frac{\sqrt{n}+\sqrt{D}}{nD}.
\end{equation}
We now bound the operator norm of all the four terms in $\cR(E_0)$ from Equation \eqref{eq: remainder P-P0}.

For the first two terms, note that
$$\|P_0^\perp E_0 (Y^+-Y_0^+)P_0\| \leq \|E_0\|\|Y^+-Y_0^+\| \lesssim (\sqrt{n}+\sqrt{D})\frac{(\sqrt{n}+\sqrt{D})}{nD}\lesssim\frac{1}{n}+\frac{1}{D}.$$

For the third term, note that $\|P_0^\perp PP_0^\perp\| = \|P_0^\perp UU^\top P_0^\perp\|=\|P_0^\perp U\|^2$. Now, since $P_0^\perp P = P_0^\perp E_0 Y^+$ from Equation \eqref{eq: P0 perp P}, we have $\|P_0^\perp UU^\top\| = \|P_0^\perp E_0 Y^+\|$. Since $U$ is an isometry, $\|P_0^\perp UU^\top\| = \|P_0^\perp U\|$. Hence,
$$\|P_0^\perp P P_0^\perp\| = \|P_0^\perp U\|^2 = \|P_0^\perp E_0 Y^+\|^2 \leq (\|E_0\|\|Y^+\|)^2 \lesssim \left(\frac{\sqrt{n}+\sqrt{D}}{\sqrt{nD}}\right)^2\asymp \frac1n + \frac1D.$$

For the last term, since $P_0$ and $P^\perp$ are orthogonal projections, we have $\|P_0P^\perp P_0\|=\|P_0P^\perp\|^2$ and $\|P^\perp\|=1$. Now, note that $(P_0-P)P^\perp = P_0P^\perp - 0$. Hence,
$$\|P_0P^\perp P_0\| = \|P_0P^\perp\|^2 = \|(P_0-P)P^\perp\|^2 \leq \|P_0-P\|^2\|P^\perp\|^2\lesssim \left(\frac{1}{\sqrt{n}} + \frac{1}{\sqrt{D}}\right)^2 \asymp \frac1n+\frac1D.$$
Combining all the terms, we conclude that under the assumptions and good set $\cE_{n,D}$, we have 
\begin{equation}\label{eq: bound remainder P-P0 decomposition}
    \|\cR(E_0)\| \lesssim \frac1n + \frac1D.
\end{equation}
    
    \medskip
    \noindent
    \textbf{Step 3(b): Controlling the noise} 
    Recall $\hat{\sigma}_j^2 = \frac1n y^{(j)\top}(I_n - UU^\top)y^{(j)}.$ Thus, we have
    \begin{equation}\label{eq: sigma error}
        |\hat{\sigma}_j^2 - \sigma_{0j}^2| \leq \left|\frac{1}{n}y^{(j)\top} (I -U_0U_0^\top)y^{(j)} - \sigma_{0j}^2 \right|+ \frac{1}{n}\left|y^{(j)\top}(U_0U_0^\top - UU^\top)y^{(j)}\right|.
    \end{equation}
    
    For the first term, note that $(I-U_0U_0^\top)y^{(j)}=(I-U_0U_0^\top)\epsilon^{(j)}$. Thus, we get
    $$\left|\frac{1}{n}\epsilon^{(j)\top}(I-U_0U_0^\top)\epsilon^{(j)} - \sigma_{0j}^2\right| \leq \left|\frac{1}{n}\|\epsilon^{(j)}\|^2 -\sigma_{0j}^2\right| + \frac{1}{n}\|U_0^\top \epsilon^{(j)}\|^2.$$
    The first term is bounded by $\sqrt{(\log (nD))/n}$ under $\cE_{n,D}$. The second term is bounded by $\log (nD)/n$ under $\cE_{n,D}$. Thus, this term is bounded by $\sqrt{(\log (nD))/ n}$, since $\log (nD) \lesssim n$.

    Now, for the second term in Equation \eqref{eq: sigma error}, let $T_j = \frac{1}{n}y^{(j)\top}(U_0U_0^\top - UU^\top) y^{(j)}$. Expanding $y^{(j)}=M_0\lambda_{0j} + \epsilon^{(j)}$, we break $T_j$ into three components:
    \begin{equation}\label{eq: T decomposition}
    \begin{aligned}
        T_j = \frac{1}{n}(M_0\lambda_{0j})^\top(U_0U_0^\top - UU^\top)M_0\lambda_{0j} &+ \frac{2}{n}(M_0\lambda_{0j})^\top(U_0U_0^\top - UU^\top)\epsilon^{(j)} \\
        &\qquad+ \frac{1}{n} \epsilon^{(j)\top}(U_0U_0^\top - UU^\top)\epsilon^{{(j)}}.
    \end{aligned}
    \end{equation}
    Call these terms $T_{j,1}$, $T_{j,2}$ and $T_{j,3}$ respectively, capturing the signal-signal, signal-noise and noise-noise terms. For the signal-signal term, since $\rm col(M_0)=\rm col(U_0)$, we have $U_0U_0^\top M_0 = M_0$. Thus, 
    $$T_{j,1}=\frac{1}{n}\|(I_n - UU^\top)M_0\lambda_{0j}\|^2\leq \|UU^\top - U_0 U_0^\top\|^2 \left(\frac{1}{n}\|M_0\lambda_{0j}\|^2\right)\lesssim \ell_D(1/n + 1/D).$$

    Now, we turn to the noise-noise term $T_{j,3}$. Denote $\bbD=U_0U_0^\top - UU^\top$. By standard orthogonal (Procrustes) alignment, there exists orthogonal matrix $O\in \cO(d)$ such that $\|U-U_0O\|\leq \sqrt{2}\|\bbD\|$. Let $E_U = U - U_0O$. This gives
    $$\bbD = U_0U_0^\top - UU^\top = -U_0OE_U^\top - E_UO^\top U_0^\top - E_U E_U^\top.$$
    Substituting this back into the quadratic form and using triangle inequality, we have
    \begin{equation}\label{eq: sigma T_j3 decompose}
        |T_{j,3}|\leq \frac{2}{n}\left|\epsilon^{(j)\top} U_0 O E_U^\top \epsilon^{(j)}\right| + \frac{1}{n}\left|\epsilon^{(j)\top}E_UE_U^\top \epsilon^{(j)}\right|.
    \end{equation}
    We now bound each piece using the properties controlled by $\cE_{n,D}$. First,
    $$\frac{1}{n}\left|\epsilon^{(j)\top}E_UE_U^\top \epsilon^{(j)}\right|\leq \frac{1}{n}\|E_U\|^2 \|\epsilon^{(j)}\|_2^2 \leq \frac{2}{n}\|\bbD\|^2\|\epsilon^{(j)}\|^2 \lesssim \|\bbD\|^2 \lesssim 1/n + 1/D.$$

    Next, the first term can be bounded as
    $$\frac{2}{n}\left|\epsilon^{(j)\top} U_0 O E_U^\top \epsilon^{(j)}\right|\leq \frac{2}{n}\|U_0^\top\epsilon^{(j)}\|_2 \|O\|\|E_U\|\|\epsilon^{(j)}\|_2 \lesssim \frac{1}{n}\sqrt{\log(nD)}(1/\sqrt{n}+1/\sqrt{D})\sqrt{n}.$$
    Applying the inequality $2ab\leq a^2 + b^2 $ to the term $\sqrt{\log(nD)}/\sqrt{nD}\leq \log(nD) / n + 1/D$. Combining all the pieces, we get
    $$|T_{j,3}| \lesssim \frac{\log (nD)}{n} + \frac{1}{D}.$$

    Finally, we turn to the signal-noise term $T_{j,2}$ from the decomposition in Equation \eqref{eq: T decomposition}. Recall 
    $$T_{j,2} =\frac{2}{n}(M_0\lambda_{0j})^\top (U_0U_0^\top - UU^\top) \epsilon^{(j)}.$$ 
    Using the decomposition from Equation \eqref{eq: decomposition P-P0}, we know that $P-P_0 = L(E_0) + \cR(E_0)$, where $\|\cR(E_0)\| \lesssim 1/n + 1/D$, as shown in the previous step Equation \eqref{eq: bound remainder P-P0 decomposition}.

    Thus, combining the decomposition of $P-P_0$ and the form of $T_{j,2}$, we have
    \begin{equation}\label{eq: sigma T_j2 decompose}
        T_{j,2}=-\frac{2}{n}(M_0\lambda_{0j})^\top(L(E_0) + \cR(E_0)) \epsilon^{(j)}.
    \end{equation}

    Using the bound for the remainder term, we already have a bound for the second term
    $$\frac{2}{n}|(M_0\lambda_{0j})^\top \cR(E_0)\epsilon^{(j)}|\lesssim \frac{1}{n}\sqrt{n}\sqrt{\ell_D}(1/n + 1/D)\sqrt{n} = \sqrt{\ell_D}\left(\frac1n + \frac1D\right).$$
    So we can focus on the remaining part
    $$T_{j,2}^{\rm linear} = -\frac{2}{n}(M_0\lambda_{0j})^\top \left(P_0^\perp E_0 Y_0^+ + (Y_0^+)^\top E_0^\top P_0^\perp\right)\epsilon^{(j)}.$$
    Since $\rm col(M_0)=\rm col(U_0)$, the vector $M_0\lambda_{0j}$ lies entirely within the true subspace spanned by $P_0$. Hence $(M_0\lambda_{0j})^\top P_0^\perp = 0$, which kills the first term from the above display and leaves only
    $$T_{j,2}^{\rm linear} = -\frac{2}{n}(M_0\lambda_{0j})^\top (Y_0^+)^\top E_0^\top P_0^\perp \epsilon^{(j)}.$$
    Using the SVD $Y_0=U_0S_0V_0^\top$ and $M_0=U_0R_0$ where $R_0=\bar{R}^\top \bar{D}_0\bar{V}_0^\top$ with $\|R_0\|=\|\bar{D}_0\|=\|M_0\|\asymp \sqrt{n}$ on $\cE_{n,D}$, we can simplify the deterministic vector on the left
    $$w_j^\top = (M_0\lambda_{0j})^\top (Y_0^+)^\top = \lambda_{0j}^\top R_0^\top U_0^\top(U_0S_0^{-1}V_0^\top)=\lambda_{0j}^\top R_0^\top S_0^{-1}V_0^\top.$$
    This gives 
    $$\|w_j\|_2 = \|S_0^{-1}R_0\lambda_{0j}\|\leq \|S_0^{-1}\|\|R_0\|\|\lambda_{0j}\|_2 \lesssim \frac{1}{\sqrt{nD}}\sqrt{n}\sqrt{\ell_D}=\sqrt{\ell_D}/\sqrt{D},$$
    where we use the fact that $Y_0=M_0\Lambda_0^\top$ with $\|\Lambda_0\|\asymp \sqrt{D}$ and under $\cE_{n,D}$, $\|M_0\|\asymp \sqrt{n}$. Furthermore, $w_j=V_0 z_j$, where $z_j=S_0^{-1}U_0^\top M_0\lambda_{0j}.$ Since $V_0$ has orthonormal columns, it acts as an isometry, hence $\|w_j\|_2=\|V_0z_j\|_2=\|z_j\|_2 \lesssim \sqrt{\ell_D}/\sqrt{D}$. Note that $V_0^\top=S_0^{-1}U_0^\top Y_0 = S_0^{-1}U_0^\top M_0\Lambda_0^\top$ and hence extracting the $k-$th row of $V_0$, we have $v_{0k}^\top=\lambda_{0k}^\top(M_0^\top U_0 S_0^{-1})$. Hence, for every $k=1,\dots,D$
    $$\|v_{0k}\|_2 \leq \|\lambda_{0k}\|_2 \|M_0^\top U_0 S_0^{-1}\|\lesssim \sqrt{\ell_D}\sqrt{n} \frac{1}{\sqrt{nD}} = \frac{\sqrt{\ell_D}}{\sqrt{D}}$$
    using properties established above. Thus, by Cauchy-Schwarz inequality, we have
    $$|w_{jk}| =|v_{0k}^\top z_j| \leq \|v_{0k}\|_2\|z_j\|_2 \lesssim \frac{\ell_D}{D}.$$
    Recall, our linear term
    $$T_{j,2}^{\rm linear} = -\frac{2}{n}w_j^\top E_0^\top P_0^\perp \epsilon^{(j)}= -\frac2n \sum_{k=1}^D w_{jk} \epsilon^{(k)\top} P_0^\perp \epsilon^{(j)}.$$

    For the diagonal term ($k=j$), as $\|P_0^\perp \epsilon^{(j)}\|_2^2 = \|\epsilon^{(j)}\|_2^2 - \|U_0^\top \epsilon^{(j)}\|_2^2 \lesssim n$, we have
    $$\left|-\frac2n w_{jj} \epsilon^{(j)\top} P_0^\perp \epsilon^{(j)}\right| \lesssim \frac1n \frac{\ell_D}{D}  n = \frac{\ell_D}{D}.$$

    Under $\cE_{n,D}$, the off-diagonal terms are bounded as
    $$\frac2n \left|\sum_{k\neq j} w_{jk} \epsilon^{(k)\top}P_0^\perp \epsilon^{(j)}\right| \lesssim \frac1n \sqrt{n\log(nD)} \|w_j\|_2 \lesssim \frac{\|\lambda_{0j}\|\sqrt{\log(nD)}}{\sqrt{nD}}\lesssim \sqrt{\frac{\log(nD)}{n}}.$$
    The last inequality used $\|w_j\|\lesssim \|\lambda_{0j}\|/\sqrt{D}$ and $\|\lambda_{0j}\| \leq \|\Lambda_0\| \lesssim \sqrt D$. Thus, we have
    $$|T_{j,2}| \lesssim \sqrt{\frac{\log(nD)}n} + \frac{\ell_D}{D} + \sqrt{\ell_D}\left(\frac1n+\frac1D\right).$$

    Thus, starting from the bound Equation \eqref{eq: sigma error}, we have
    $$|\hat{\sigma}_j^2 - \sigma_{0j}^2| \leq \sqrt{\frac{\log (nD)}{n}} + |T_{j,1}|+|T_{j,2}|+|T_{j,3}| $$
    and we showed
    \begin{align*}
        |T_{j,1}| &\lesssim \frac{\ell_D}{n} + \frac{\ell_D}{D} \\
        |T_{j,2}| &\lesssim \sqrt{\frac{\log(nD)}n} + \frac{\ell_D}{D} + \sqrt{\ell_D}\left(\frac1n+\frac1D\right) \\
        |T_{j,3}| &\lesssim \frac{\log(nD)}{n} + \frac1D.
    \end{align*}
    Hence, combining all parts and under our assumption on $\ell_D$, we get
    $$\max_j |\hat{\sigma}_j^2 - \sigma_{0j}^2| \lesssim \sqrt{\frac{\log (nD)}{n}} + \frac{\ell_D}{D}.$$

    \medskip
    \noindent
    \textbf{Step 4: Controlling the factor scores}  By standard Procrustes inequality \citep{davis1970rotation}
    $$\min_{Q\in \cO(d)}\|U-U_0Q\|_F \lesssim \|UU^\top - U_0U_0^\top\|_F \leq \sqrt{2d}\|UU^\top  -U_0U_0^\top\|,$$
    and let $\hat{R}$ be the optimal $Q$ above. Recall $\widehat{M}=\sqrt{n} U$, and we defined the SVD $M_0=\bar{U}_0 \bar{D}_0 \bar{V}_0^\top$ where $\bar{U}_0=U_0\bar{R}^\top$ for some orthogonal $\bar{R}$ (recall $U_0$ consisted of the top $d$ singular vectors of $M_0\Lambda_0^\top$). Let $R=\bar{V}_0\bar{R} \hat{R}$.
    \begin{align*}
        \|\widehat{M} - M_0 R \|_F &= \|\sqrt{n} U - \bar{U}_0 \bar{D}_0 \bar{V}_0^\top R\|_F = \|\sqrt{n}U - U_0 \bar{R}^\top \bar{D}_0 \bar{R}\hat{R}\|_F \\
        &\leq \|\sqrt{n}(U - U_0 \hat{R})\|_F + \|\sqrt{n} U_0 \hat{R} - U_0 \bar{R}^\top \bar{D}_0 \bar{R} \hat{R}\|_F \\
        &\leq \sqrt{n} \|U -U_0 \hat{R}\|_F + \|\sqrt{n} I - \bar{R} ^\top \bar{D}_0 \bar{R}\|_F \\
        &\lesssim \sqrt{nd}\left(\frac{1}{\sqrt{n}}+\frac{1}{\sqrt{D}}\right) + \left(\sum_{i=1}^d (s_i(M_0) - \sqrt{n})^2\right)^{1/2} \\
        &\leq \sqrt{nd}(1/\sqrt{n} + 1/\sqrt{D}) + \sqrt{d\log n} \lesssim  \sqrt{n}\left(\frac{1}{\sqrt{D}} + \sqrt{\frac{\log n}{n}}\right).
    \end{align*}
    
    \medskip
    \noindent
    \textbf{Step 5: Controlling loading estimation} 
    Recall $\widehat{\Lambda} = Y^\top U / \sqrt{n} = \frac1n Y^\top \widehat{M}$. Hence, using $R=\bar{V}_0\bar{R}\hat{R}$ from Step 4, we have
$$\widehat{\Lambda} - \Lambda_0R = \frac1n Y^\top \widehat{M} - \Lambda_0R$$
Now,
    \begin{align*}
        \frac1n Y^\top \widehat{M} - \Lambda_0R 
        &= \frac1n \left(M_0\Lambda_0^\top + E_0\right)^\top \widehat{M} - \Lambda_0 R \left(\frac1n \widehat{M}^\top \widehat{M}\right), &\text{since } \widehat{M}=\sqrt{n}U, \text{ so } \widehat{M}^\top\widehat{M} = nI \\
        &= \frac1n \Lambda_0(RR^\top)M_0^\top \widehat{M} - \frac1n \Lambda_0 R \widehat{M}^\top \widehat{M} + \frac1n E_0^\top  \widehat{M} \\
        &= \frac1n \Lambda_0 R \left(M_0R - \widehat{M}\right)^\top \widehat{M} + \frac1n E_0^\top \widehat{M}.
    \end{align*}
    Thus, we get the upper bound
    \begin{equation}\label{eq: point estimate Lambda decompose}
    \|\widehat{\Lambda} - \Lambda_0 R\|_F \;\leq \;\underbrace{\frac1n \|\Lambda_0  R(M_0 R - \widehat{M})^\top \widehat{M}\|_F}_{\text{Term A (Signal alignment error)}} \quad+ \underbrace{\frac1n \|E_0^\top \widehat{M}\|_F}_{\text{Term B (Noise projection error)}}.
    \end{equation}
    Now, we look at the two terms separately. 

    \medskip
    \noindent
    \textbf{Bounding Term A:} Since $M_0=U_0\bar{R}^\top \bar{D}_0\bar{V}_0^\top$ and $R=\bar{V}_0\bar{R}\hat{R}$ , we have
    $$M_0 R = U_0\bar{R}^\top \bar{D}_0 \bar{R} \hat{R}.$$
    Plugging this and $\widehat{M}=\sqrt{n} U$ into term A, we get
    $$\frac1n \Lambda_0 R (M_0R - \widehat{M})^\top \widehat{M} = \Lambda_0\bar{V}_0 \bar{R}\left(\frac{1}{\sqrt{n}} \bar{R}^\top \bar{D}_0 \bar{R} U_0^\top U - \hat{R}\right).$$
    Thus, using another triangle inequality, we get
    \begin{equation}\label{eq: point estimate Lambda decompose A}
        \text{Term A} \leq \left\|\Lambda_0\bar{V}_0 \bar{R} \left[\frac{1}{\sqrt{n}}\bar{R}^\top \bar{D}_0 \bar{R}(U_0^\top U - \hat{R})\right]\right\|_F + \left\|\Lambda_0 \bar{V}_0 \bar{R}\left[\left(\frac{1}{\sqrt{n}}\bar{R}^\top \bar{D}_0 \bar{R} - I_d\right)\hat{R}\right]\right\|_F
    \end{equation}
    and let us call these terms $A_1$ and $A_2$. Firstly, note that $\|\Lambda_0\bar{V}_0 \bar{R}\|=\|\Lambda_0\|\asymp \sqrt{D}$ by assumption on $\Lambda_0$.

    For the term $A_1$, we have $\|\bar{D}_0/\sqrt{n}\|\asymp 1$ (using the control on singular values of $M_0$ under $\cE_{n,D})$. Now, to control the term $\|U_0^\top U - \hat{R}\|_F$, we recall that $\hat{R}=U_1V_1^\top$, since it is the optimal Procrustes rotation where $U_0^\top U = U_1\Sigma_1 V_1^\top$, where $\Sigma_1=\diag(\cos\theta_1,\dots,\cos \theta_d)$ containing the principal angles between the estimated and true subspaces. So we have
    \begin{align*}
        \|U_0^\top U - \hat{R}\|_F = \|U_1(\Sigma_1 - I_d) V_1^\top \|_F &= \|\Sigma_1 - I_d\|_F \\
        &=\sqrt{\sum_{i\in[d]} (1 - \cos \theta_i)^2} \\
        &\leq \sum_{i=1}^d (1 - \cos \theta_i), \qquad \text{using } \ell_2 \leq \ell_1\\
        &\leq \sum_{i=1}^d \sin^2 \theta_i, \qquad \text{as }1-\cos \theta = 2\sin^2(\theta/2) \leq \sin^2 \theta \\
        &= \frac12 \|UU^\top - U_0U_0^\top\|_F^2 \\
        &\lesssim (1/\sqrt{n} + 1/\sqrt{D})^2 = 1/n + 1/D.
    \end{align*}
    where the last bound uses Step 2, and we used $\|UU^\top - U_0U_0^\top\|_F^2 = 2\sum_i \sin^2(\theta_i)$, a standard result for principal angles. Hence, we have
    $$\text{Term } A_1 \leq \|\Lambda_0\bar{V}_0 \bar{R}\|\|\bar{D}_0/\sqrt{n}\|\|U_0^\top U - \hat{R}\|_F \lesssim \sqrt{D} (1/n + 1/D).$$

    For the term $A_2$, note that $\|\Lambda_0\|_F \asymp \sqrt{D}$ and on $\cE_{n,D}$, $\|\bar{D}_0/\sqrt{n} - I_d\| \lesssim \sqrt{\log n / n}$, which gives
    $$\text{Term } A_2 \leq \|\Lambda_0\|_F \left\|\frac{1}{\sqrt{n}}\bar{D}_0 - I_d\right\| \|\hat{R}\| \lesssim \sqrt{D}\sqrt{\frac{\log n}{n}}.$$
    Combining, we have
    \begin{equation}
        \text{Term A} \lesssim \sqrt{D} \left(\sqrt{\frac{\log n}{n}} + \frac{1}{D}\right).
    \end{equation}

    \medskip
    \noindent
    \textbf{Bounding Term B:} Now, we turn to the noise projection error term. We split based on the true space $P_0=U_0U_0^\top$ and its complement
    \begin{equation}\label{eq: point estimate Lambda decompose B}
        \frac1n E_0^\top \widehat{M} = \frac1n E_0^\top [P_0 + (I-P_0)]\sqrt{n} U = \frac{1}{\sqrt{n}}E_0^\top U_0U_0^\top U + \frac{1}{\sqrt{n}}E_0^\top P_0^\perp U.
    \end{equation}
    For the first term, note that under $\cE_{n,D}$, $\|E_0^\top U_0\|_F \lesssim \sqrt{D\log(nD)}$. Since $\|U_0^\top U\|\leq 1$, we have
    $$\left\|\frac{1}{\sqrt{n}}E_0^\top U_0U_0^\top U\right\|_F \leq \frac{1}{\sqrt{n}}\|E_0^\top U_0\|_F \|U_0^\top U\| \lesssim \sqrt{\frac{D\log(nD)}{n}}.$$
    For the second term, we use the identity \eqref{eq: p0 complement u} to get
    $$E_0^\top P_0^\perp U = \frac{1}{nD}E_0^\top P_0^\perp E_0 Y^\top U V_n^{-1}$$
    which gives
    $$\left\|\frac{1}{\sqrt{n}}E_0^\top P_0^\perp U\right\|_F=\frac{1}{nD\sqrt{n}} \|E_0^\top P_0^\perp E_0 Y^\top U V_n^{-1}\|_F \leq \frac{1}{nD\sqrt{n}}\|E_0^\top P_0^\perp E_0\|\|Y^\top U\|_F \|V_n^{-1}\|.$$
    Now, $\|E_0^\top P_0^\perp E_0\|\leq \|E_0\|^2 \lesssim n+D$, and $\|V_n^{-1}\|\lesssim 1$ (since $V_n=S^2/nD$ and we showed $v_i\gtrsim 1$ before the start of the proof, Equation \eqref{eq: control v_i}). Finally, $\|Y^\top U\|_F = \|V S\|_F = \|S\|_F \asymp \sqrt{nD}$. Together, we have
    $$\left\|\frac{1}{\sqrt{n}}E_0^\top P_0^\perp U\right\|_F \lesssim \frac{1}{nD\sqrt{n}} (n+D) \sqrt{nD} = \sqrt{D}\left(\frac1n + \frac1D\right).$$
    Combined, we have
    \begin{equation}
        \text{Term B} \lesssim \sqrt{D}\left(\sqrt{\frac{\log(nD)}{n}} + \frac{1}{D}\right).
    \end{equation}
The two numbered equations above give the following upper bound for the loading matrix estimation
$$\|\widehat{\Lambda} - \Lambda_0 R\|_F \lesssim \sqrt{D}\left(\sqrt{\frac{\log(nD)}{n}} + \frac1D\right).$$
    
\end{proof}

\begin{lemma}\label{lemma: concentration w quadratic}
    Let $Y_0=M_0\Lambda_0^\top = U_0S_0V_0^\top$ be the SVD of the signal and also $M_0=U_0R_0$ be the representation of $M_0$ in the orthonormal basis $U_0$. Define $w_j^\top = \lambda_{0j}^\top R_0^\top S_0^{-1} V_0^\top$. Also define
    $$Z_j = \sum_{k\neq j} w_{jk} \epsilon^{(k)\top} (I_n - U_0U_0^\top) \epsilon^{(j)}.$$
    Then with probability $1-c/n$ with respect to $\bbP_0^{(n)}$, we have
    $$ |Z_j| \lesssim \sqrt{n\log(nD)} \|w_j\|_2, \quad \text{for all } j=1,\dots,D.$$
\end{lemma}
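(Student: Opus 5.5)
Fix $j\in[D]$ and work conditionally on $M_0$. Then $U_0$, $S_0$, $V_0$, $R_0$, the weight vector $w_j$ and the projector $P_0^\perp = I_n-U_0U_0^\top$ are all deterministic. The columns $\epsilon^{(1)},\dots,\epsilon^{(D)}$ are independent, with $\epsilon^{(k)}\sim N_n(0,\sigma_{0k}^2 I_n)$, and independent of $M_0$. The plan is to condition further on $\epsilon^{(j)}$. Then
$$Z_j = \Big(\sum_{k\neq j} w_{jk}\epsilon^{(k)}\Big)^\top P_0^\perp\epsilon^{(j)}$$
is a linear form in the independent Gaussians $\{\epsilon^{(k)}\}_{k\neq j}$. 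Hence, given $(M_0,\epsilon^{(j)})$, $Z_j$ is centered Gaussian with variance
$$\tau_j^2=\sum_{k\neq j} w_{jk}^2\sigma_{0k}^2\,\|P_0^\perp\epsilon^{(j)}\|^2 \le C_\sigma \|w_j\|_2^2\,\|\epsilon^{(j)}\|^2 .$$
The key point is that $w_j$ depends only on $M_0$ and $\Lambda_0$, not on the noise, so no decoupling is needed beyond this conditioning.

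The argument then has two steps. \emph{Step (a):} control $\|\epsilon^{(j)}\|^2$ uniformly over $j$. By the chi-square concentration already used for the event $A_3$, on an event of probability at least $1-c/n$ we have $\max_j\|\epsilon^{(j)}\|^2\le C_\sigma(n+C\sqrt{n\log(nD)}+C\log(nD))\lesssim n$, using $\log(nD)\lesssim n$ from Assumption~\ref{assume: dimensions}. \emph{Step (b):} apply the Gaussian tail conditionally,
$$\bbP\big(|Z_j|>t\,\tau_j \mid M_0,\epsilon^{(j)}\big)\le 2e^{-t^2/2},$$
with $t=\sqrt{2\log(nD^{2})}\asymp\sqrt{\log(nD)}$. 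The right-hand side is $2/(nD^{2})\le 2/(nD)$. Integrating out the conditioning and taking a union bound over $j\in[D]$ gives total failure probability at most $2/n$. On the intersection with the event from Step (a), every $j$ satisfies $|Z_j|\le t\tau_j\lesssim \sqrt{\log(nD)}\,\|w_j\|_2\sqrt n$, which is the claim. Adding the failure probabilities of the two events gives the overall $1-c/n$. Conditioning on $M_0$ is harmless because none of these probability bounds depend on $M_0$, apart from $S_0$ needing to be invertible, which holds almost surely since $\Lambda_0$ and $M_0$ have rank $d$.

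There is no real obstacle here. The only point needing care is the exclusion $k\neq j$: it is exactly what makes $Z_j$ conditionally linear, and hence Gaussian, rather than a quadratic chaos, since the diagonal term is handled separately in the main proof. One also has to check that the sum of the variance contributions is bounded by $\|w_j\|_2^2$ (not $\|w_j\|_1^2$), which holds by independence of the columns.
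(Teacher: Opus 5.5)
Your proof is correct and follows the paper's argument. You condition on $(M_0,\epsilon^{(j)})$ so that $Z_j$ is centered Gaussian with variance $\|P_0^\perp\epsilon^{(j)}\|^2\sum_{k\neq j}w_{jk}^2\sigma_{0k}^2$, then bound that norm by $\lesssim n$ via chi-square concentration, apply the Gaussian tail at level $\sqrt{\log(nD)}$, and take a union bound over $j$. The differences are cosmetic: you use $\|P_0^\perp\epsilon^{(j)}\|\le\|\epsilon^{(j)}\|$ and a random threshold $t\tau_j$ intersected afterwards with the norm event, whereas the paper uses the $\chi^2_{n-d}$ law of $\|P_0^\perp\epsilon^{(j)}\|^2$ and a deterministic threshold on the event $\cG_j$.
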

\begin{proof}
For each $j$, define $g_j=P_0^\perp \epsilon^{(j)}$. Since $P_0^\perp$ is an orthogonal projection matrix of rank $n-d$, we have $\|g_j\|^2/\sigma_{0j}^2 \sim \chi_{n-d}^2$ conditionally on $M_0$. In particular, 
    $$\bbE_0[\|g_j\|^2 \mid M_0] = \sigma_{0j}^2 \tr(P_0^\perp) = \sigma_{0j}^2(n-d).$$
    Let $\cG_j=\{\|g_j\|^2 \leq C_0 n\}$. By chi-square concentration and uniform upper bound of $\sigma_{0j}^2$, $C_0$ can be chosen sufficiently large that $\bbP_0^{(n)}(\cG_j^c \mid M_0) \leq (nD)^{-3}$ uniformly over $j$, using $\log (nD) \lesssim n$ (any sufficiently large fixed polynomial power in place of 3 would work). 

    Next, condition on both $M_0$ and $\epsilon^{(j)}$. For $K\neq j$, the vectors $\epsilon^{(k)}$ are independent of $\epsilon^{(j)}$ and hence
    $$\epsilon^{(k)\top} g_j \mid M_0, \epsilon^{(j)} \sim N(0, \sigma_{0k}^2 \|g_j\|^2).$$
    These variables are also independent across $k\neq j$. Therefore,
    $$Z_j \mid M_0, \epsilon^{(j)} \sim N\bigg(0, \|g_j\|^2 \sum_{k\neq j} w_{jk}^2 \sigma_{0k}^2\bigg).$$
    On $\cG_j$, Assumption \ref{assume: noise} gives
    \begin{align*}
        \Var_0\left(Z_j\mid M_0, \epsilon^{(j)}\right) &= \|g_j\|^2 \sum_{k\neq j} w_{jk}^2 \sigma_{0k}^2 \\
        &\leq C_0 C_\sigma n \sum_{k\neq j} w_{jk}^2 \leq C_0 C_\sigma n\|w_j\|^2.
    \end{align*}
    If $\|w_j\|=0$, then $Z_j=0$ and the desired inequality holds automatically. Otherwise, define $t_j = C\sqrt{n\log(nD)} \|w_j\|$. The conditional Gaussian tail bound gives, on $\cG_j$,
    $$\bbP_0^{(n)}\left(|Z_j| >t_j \mid M_0, \epsilon^{(j)}\right) \leq 2 \exp \left\{-\frac{t_j^2}{2C_0 C_\sigma n \|w_j\|^2}\right\}.$$
    Substituting the definition of $t_j$, we have 
    $$\bbP_0^{(n)}\left(|Z_j|>t_j \mid M_0, \epsilon^{(j)}\right) \leq 2(nD)^{-C^2/(2C_0C_\sigma)},$$
    on $\cG_j$. We now integrate over $\epsilon^{(j)}$, while continuing to condition on $M_0$. This gives
    \begin{align*}
        \bbP_0^{(n)}(|Z_j| >t_j , \cG_j \mid M_0) &= \bbE_0\left[\textbf{1}_{\cG_j} \bbP_0^{(n)}(|Z_j|>t_j\mid M_0, \epsilon^{(j)}) \mid M_0\right] \\
        &\leq 2(nD)^{-C^2/(2C_0C_\sigma)}.
    \end{align*}
    Consequently, 
    \begin{align*}
        \bbP_0^{(n)}(|Z_j|>t_j\mid M_0) &\leq \bbP_0^{(n)}(|Z_j|>t_j, \cG_j\mid M_0) + \bbP_0^{(n)}(\cG^c\mid M_0) \\
        &\leq 2(nD)^{-C^2/(2C_0C_\sigma)} + (nD)^{-3}.
    \end{align*}
    Choose $C$ sufficiently large so that $C^2 \geq 3(2C_0C_\sigma)$ and applying a union bound over $j=1,\dots,D$, conditionally on $M_0$, yields
    \begin{align*}
        \bbP_0^{(n)}\left(\exists j\in[D]:|Z_j|>C\sqrt{n\log(nD)} \|w_j\| \mid M_0\right) \leq 3D(nD)^{-3} \leq c/n.
    \end{align*}
    This bound is uniform on $M_0$. Integrating with respect to $M_0$, therefore gives
    $$\bbP_0^{(n)}\left(|Zj|\leq C\sqrt{n\log(nD)}\|w_j\| \text{ for every } j\in[D]\right) \geq 1 - \frac{c}{n},$$
    which proves the result.
\end{proof}

\subsubsection*{Proof of Theorem \ref{thm: spectral recovery}(ii)}

We prove the score decomposition
in \eqref{eq: structured score expansion}. We present the proof in steps.

\begin{proof}[Proof of decomposition \eqref{eq: structured score expansion}]
Note that $\hat{\eta}_i = \widehat{M}^\top e_i = \sqrt{n}U^\top e_i\in\bbR^d$ and $\eta_i = M_0^\top e_i\in \bbR^d$ and the target is to decompose $\delta_i = \hat{\eta}_i - R^\top \eta_i$ (where $e_i\in \bbR^n$ is the standard basis vector with $e_{ii}=1$). First, applying the operator $P-P_0$ directly to the true basis matrix $U_0$, we have
    $$(P-P_0)U_0 = UU^\top U_0 - U_0=(U\hat{R}^\top - U_0) + U(U^\top U_0 - \hat{R}^\top).$$
    Rearranging and isolating $U\hat{R}^\top - U_0$, and multiplying the entire equation from the right by $\hat{R}$, we get
    \begin{align}
        U-U_0\hat{R} = (P-P_0)U_0\hat{R} - U(U^\top U_0 \hat{R} - I_d).
    \end{align}

    Now, we substitute the decomposition $P-P_0=\cL(E_0)+\cR(E_0)$ from equation \eqref{eq: decomposition P-P0} to get
    $$U-U_0\hat{R} = \cL(E_0)U_0\hat{R} + \cR(E_0)U_0\hat{R} - U(U^\top U_0 \hat{R} - I_d).$$
    Now, we use  $P_0^\perp U_0 = 0$ and the form of $Y_0^+=V_0S_0^{-1}U_0^\top$ to get the following simplification for the linear term
    \begin{align*}
        \cL(E_0) U_0 &= [P_0^\perp E_0 Y_0^+ + (Y_0^+)^\top E_0^\top P_0^\perp] U_0 \\
        &= P_0^\perp E_0 Y_0^+ U_0 + 0 \\
        &= P_0^\perp E_0 V_0S_0^{-1}U_0^\top U_0 \\
        &= P_0^\perp E_0 V_0 S_0^{-1}.
    \end{align*}
    Plugging this back, we get
    $$U- U_0\hat{R} =P_0^\perp E_0 V_0 S_0^{-1}\hat{R} + \cR(E_0)U_0 \hat{R} - U(U^\top U_0 \hat{R} - I_d).$$

    Isolating the $i-$th row and scaling by $\sqrt{n}$ we get
    \begin{equation}
        \begin{aligned}\label{eq: decompose eta_hat 1}
            \hat{\eta}_i - \sqrt{n}\hat{R}^\top U_0^\top e_i &= \sqrt{n}(U-U_0\hat{R})^\top e_i \\
            &= \hat{R}^\top (\underbrace{\sqrt{n}S_0^{-1}V_0^\top E_0^\top P_0^\perp e_i}_{\xi_i}) + \hat{R}^\top (\underbrace{\sqrt{n}U_0^\top \cR(E_0)^\top e_i}_{\delta_{i,\rm rem}}) \\
            &+\qquad\qquad(\underbrace{-(\hat{R}^\top U_0^\top U - I_d)\hat{\eta}_i)}_{\zeta_i}.
        \end{aligned}
    \end{equation}
    Note that $\xi_i$ corresponds to the linear row noise term, $\delta_{i, \rm rem}$ corresponds to the higher-order subspace remainder term and $\zeta_i$, which corresponds to the in-subspace Procrustes distortion term, uses $\hat{\eta}_i = \sqrt{n}U^\top e_i$.

    Next, we rewrite the shift vector $\sqrt{n}\hat{R}^\top U_0^\top e_i$ in terms of our target true parameter coordinates $R^\top \eta_i$. We simplify as follows
    \begin{align*}
        R^\top \eta_i &= (\bar{V}_0\bar{R}\hat{R})^\top (M_0^\top e_i) \\
        &= \hat{R}^\top \bar{R}^\top \bar{V}_0^\top (\bar{U}_0 \bar{D}_0 \bar{V}_0^\top)^\top e_i \\
        &= \hat{R}^\top \bar{R}^\top \bar{D}_0\bar{U}_0^\top e_i \\
        &= \hat{R}^\top \bar{R}^\top \bar{D}_0(\bar{R} U_0^\top) e_i = \hat{R}^\top\bar{R}^\top\bar{D}_0\bar{R} U_0^\top e_i.
    \end{align*}

    From Equation \eqref{eq: decompose eta_hat 1} and the above, we have
    \begin{align*}
        \hat{\eta}_i - R^\top \eta_i &= \left(\sqrt{n}\hat{R}^\top U_0^\top e_i - R^\top \eta_i \right) + \hat{R}^\top \xi_i + \hat{R}^\top\delta_{i, \rm rem} + \zeta_i \\
        &=\underbrace{\hat{R}^\top\bar{R}^\top (\sqrt{n} I_d - \bar{D}_0) \bar{R}U_0^\top e_i}_{\omega_i} + \hat{R}^\top \xi_i + \hat{R}^\top\delta_{i, \rm rem} + \zeta_i.
    \end{align*}

    This gives the exact decomposition that we sought to prove, with $\Delta_i = \omega_i + \hat{R}^\top\delta_{i, \rm rem} + \zeta_i$.
\end{proof}

\subsubsection*{Bounding $\Delta_i$}
We prove the upper bound for the term $\Delta_i = \hat{R}^\top \delta_{i,\rm rem} + \zeta_i + \omega_i$, as obtained above. Towards this, we first refine the good event considered in the proof of Theorem \ref{thm: spectral recovery}(i), based on some additional concentrations we require for this part. In particular, we introduce the leave-one-out noise and observation matrices. The resulting refined event does not change the probability order, as we show below. Before proceeding, to simplify notations, let
$$a_{n,D}:=\frac{1}{\sqrt n} +\frac{1}{\sqrt D}, \qquad b_{n,D}:=\frac{\sqrt n+\sqrt D}{nD}, \qquad L_{n,D}:=\sqrt{\log (nD)}.$$

We first collect the uniform concentration and leave-one-out bounds used to control the remainder in the score expansion. Let $\cE_{n,D}^{(1)}$ denote the event constructed in the proof \ref{proof of thm 1(i)}. It satisfies $\bbP_0^{(n)}(\cE_{n,D}^{(1)})\geq 1 - C/n$ for some constant $C>0$. Among the conclusions already established on this event are
$$\max_{j\in[d]} |s_k(M_0) - \sqrt{n}|\lesssim \sqrt{\log n}, \qquad \|E_0\|\lesssim \sqrt n + \sqrt D,$$
and consequently,
\begin{align*}
    s_d(Y_0)\asymp \sqrt{nD}, &\qquad s_d(Y)\asymp \sqrt{nD} ,\\
    \|Y_0^+\| \lesssim (nD)^{-1/2}, &\qquad \|Y^+\|\lesssim (nD)^{-1/2},\\
    \|Y^+-Y_0^+\|\lesssim b_{n,D}, &\qquad \|Y_\perp\|\lesssim \sqrt n + \sqrt D.
\end{align*}
The proof of part (i) also established
$$\|P-P_0\|\lesssim a_{n,D}, \qquad \|\cR(E_0)\|\lesssim \frac1n + \frac1D.$$
The event $\cE_{n,D}^{(1)}$ additionally contains columnwise concentration and quadratic-form bounds used to prove the loading and noise variance conclusions in part (i). Although those bounds are not all needed directly below, we retain them by constructing the event for part (ii) as a refinement of $\cE_{n,D}^{(1)}.$

For each $i\in [n]$, define the leave-one-out noise and observation matrices
\begin{equation}\label{eq: def leave-one-out}
    E_0^{(i)}=(I_n - e_ie_i^\top) E_0, \qquad Y^{(i)}=Y_0 + E_0^{(i)} = Y - e_ie_i^\top E_0,
\end{equation}
where $e_i\in\bbR^n$ is the standard bases vector with $e_{ii}=1$. Thus, the $i$th row of the noise matrix is set equal to 0, while the signal matrix remains unchanged. Conditionally on $M_0$, $Y^{(i)}$ is independent of the $i$th noise vector $\epsilon_i = E_0^\top e_i$. Write the singular-value decomposition as
\begin{equation}
    Y^{(i)} = Y_d^{(i)}+Y_\perp^{(i)} = U^{(i)}S^{(i)}V^{(i)\top} + U_\perp^{(i)}S_\perp^{(i)}V_\perp^{(i)\top},
\end{equation}
where $Y_d^{(i)}$ contains the top $d$ singular components. Define
\begin{equation}
    Y^{(i)+} = V^{(i)} (S^{(i)})^{-1} U^{(i)\top}.
\end{equation}
Thus, $Y^{(i)+}$ is the Moore-Penrose pseudoinverse of the rank-$d$ truncation $Y_d^{(i)}$, rather than the pseudoinverse of the full matrix $Y^{(i)}$.

\begin{lemma}[Uniform bounds for score expansion]\label{lemma: uniform bounds for score expansion}
    Under the conditions of Theorem \ref{thm: spectral recovery}, there exists an event $\cE_{n,D}\subseteq \cE_{n,D}^{(1)}$ such that $\bbP_0^{(n)}(\cE_{n,D}^c) \leq C/n$ for some constant $C>0$ and the following statements hold \textit{simultaneously} on $\cE_{n,D}$.

    First, the population singular vectors and noise rows satisfy
    \begin{equation}\label{eq: good event incoherence and noise rows}
        \begin{aligned}
            \max_{i\in[n]} \|e_i^\top U_0\| &\lesssim \sqrt{\frac{\log n}{n}}, \\
            \max_{i\in[n]} \|\epsilon_i\| &\lesssim \sqrt D + \sqrt{\log n}, \\
            \|U_0^\top E_0\| &\lesssim \sqrt D.
        \end{aligned}
    \end{equation}
    Consequently,
    \begin{equation}
        \max_{i\in[n]} \|e_i^\top P_0^\perp E_0\| \lesssim \sqrt D + \sqrt{\log n}.
    \end{equation}
    Second, uniformly over $i\in[n]$, 
    \begin{equation}\label{eq: good event singular values of leave-one-out}
        \begin{aligned}
            s_d(Y^{(i)}) &\asymp \sqrt{nD}, \\
            \|Y^{(i)+}\| &\lesssim \frac{1}{\sqrt{nD}},\\
            \|Y_\perp^{(i)}\|&\lesssim \sqrt n + \sqrt D,\\
            \|Y^{(i)+} - Y_0^+\| &\lesssim b_{n,D},\\
            \|Y^+ - Y^{(i)+}\|&\lesssim b_{n,D}.
        \end{aligned}
    \end{equation}
    The corresponding singular subspaces satisfy
    \begin{equation}\label{eq: good event singular subspaces}
    \begin{aligned}
        \|\sin \Theta(U_0, U^{(i)})\| \vee \|\sin \Theta(V_0, V^{(i)})\| &\lesssim a_{n,D}, \\
        \|\sin \Theta(U, U^{(i)})\| \vee \|\sin \Theta(V, V^{(i)})\| &\lesssim a_{n,D}, \\
        \max_{i\in [n]} \|e_i^\top U^{(i)} \| &\lesssim \sqrt{\frac{\log n}{n}} + \frac{1}{\sqrt D}.
    \end{aligned}
    \end{equation}
    In particular,
    $$\max_{i\in[n]} \|Y^{(i)+} e_i\| \lesssim \frac{L_{n,D}}{n\sqrt{D}} + \frac{1}{D\sqrt n}.$$
    Finally, the following leave-one-out Gaussian concentration bounds hold simultaneously
    \begin{equation}\label{eq: good event gaussian bounds}
        \begin{aligned}
            \max_{i\in[n]} \|\epsilon_i^\top (Y^{(i)+} - Y_0^+) U_0\| &\lesssim L_{n,D} b_{n,D}, \\
            \max_{i\in[n]} \|\epsilon_i^\top Y^{(i)+} U_0\| &\lesssim \frac{L_{n,D}}{\sqrt{nD}}, \\
            \max_{i\in[n]} \left|\epsilon_i^\top Y^{(i)+} e_i \right| &\lesssim \frac{L_{n,D}^2}{n\sqrt{D}} + \frac{L_{n,D}}{D\sqrt n}.
        \end{aligned}
    \end{equation}
\end{lemma}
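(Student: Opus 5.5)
The plan is to build $\cE_{n,D}$ as the intersection of $\cE_{n,D}^{(1)}$ with $O(1)$ additional families of events, each holding with probability at least $1-c/n$ after a union bound over $i\in[n]$. Throughout, the deterministic perturbation facts already used in the proof of part (i) carry the algebra: Mirsky's theorem, Lemma \ref{lemma: spectral stability truncated pseudo-inverse}, and Wedin's $\sin\Theta$ theorem. The probabilistic input is always of the form ``conditionally Gaussian vector against a matrix that is independent of it''.

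\textbf{First group, \eqref{eq: good event incoherence and noise rows}.} Write $M_0=U_0R_0$, so that $e_i^\top U_0=\eta_i^\top R_0^{-1}$. This gives $\|e_i^\top U_0\|\le \|\eta_i\|/s_d(M_0)$. The sub-Gaussian maximal inequality (Assumption \ref{assume: latent tail}) gives $\max_i\|\eta_i\|\lesssim\sqrt{\log n}$, and $s_d(M_0)\asymp\sqrt n$ on $\cE_{n,D}^{(1)}$, which yields the bound $\sqrt{(\log n)/n}$.

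For the noise rows, $\|\epsilon_i\|^2$ is a weighted $\chi^2_D$ with weights bounded by $C_\sigma$. Laurent--Massart and a union over $i$ give $\|\epsilon_i\|\lesssim\sqrt D+\sqrt{\log n}$.

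Conditionally on $M_0$, the matrix $U_0^\top E_0$ is $d\times D$ with independent columns $N_d(0,\sigma_{0j}^2I_d)$. The Gaussian matrix norm bound gives $\|U_0^\top E_0\|\lesssim\sqrt D+\sqrt d+\sqrt{\log n}\lesssim\sqrt D$, using $\log n=o(D)$. The consequence then follows from
\[
e_i^\top P_0^\perp E_0=\epsilon_i^\top-e_i^\top U_0\,U_0^\top E_0 .
\]

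\textbf{Second group, \eqref{eq: good event singular values of leave-one-out}--\eqref{eq: good event singular subspaces}.} To keep later conditioning clean, I would not use $\|E_0^{(i)}\|\le\|E_0\|$, since that involves $\epsilon_i$. Instead I would impose directly the event $\max_i\|E_0^{(i)}\|\lesssim\sqrt n+\sqrt D$. Each such event is implied by the single event on $\|E_0\|$, so no extra probability is lost, but it is worth recording that every bound on $Y^{(i)}$ used below is a function of $(M_0,E_0^{(i)})$ only.

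With this in hand, the argument runs as follows.
\begin{itemize}
\item Mirsky's theorem gives $s_d(Y^{(i)})\asymp\sqrt{nD}$ and $\|Y_\perp^{(i)}\|\lesssim\sqrt n+\sqrt D$.
\item The bound $\|Y^{(i)+}\|\lesssim (nD)^{-1/2}$ is immediate from the first item.
\item Lemma \ref{lemma: spectral stability truncated pseudo-inverse}, with perturbation $E_0^{(i)}$, gives $\|Y^{(i)+}-Y_0^+\|\lesssim b_{n,D}$.
\item The same lemma, with perturbation $e_ie_i^\top E_0$ of norm $\|\epsilon_i\|\lesssim\sqrt D$, gives $\|Y^+-Y^{(i)+}\|\lesssim b_{n,D}$.
\item Wedin's theorem, with gap $\asymp\sqrt{nD}$ and perturbations of norm $\lesssim\sqrt n+\sqrt D$, gives all four $\sin\Theta$ bounds of order $a_{n,D}$.
\end{itemize}

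For the leave-one-out incoherence, I use that the $i$th row of $E_0^{(i)}$ vanishes. Hence
\[
e_i^\top U^{(i)}=e_i^\top Y_0V^{(i)}(S^{(i)})^{-1}=\eta_i^\top\Lambda_0^\top V^{(i)}(S^{(i)})^{-1},
\]
whose norm is at most $\sqrt{\log n}\cdot\sqrt D/\sqrt{nD}=\sqrt{(\log n)/n}$. This is slightly stronger than the claim, so the extra $D^{-1/2}$ is harmless. Then
\[
\|Y^{(i)+}e_i\|=\|(S^{(i)})^{-1}U^{(i)\top}e_i\|\lesssim (nD)^{-1/2}\bigl(\sqrt{(\log n)/n}+D^{-1/2}\bigr),
\]
which gives the stated ``in particular''.

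\textbf{Third group, \eqref{eq: good event gaussian bounds}.} Condition on $(M_0,E_0^{(i)})$. Then $\epsilon_i\sim N_D(0,\Sigma_0)$ is independent of the matrices $A_1=(Y^{(i)+}-Y_0^+)U_0$, $A_2=Y^{(i)+}U_0$ and $a_3=Y^{(i)+}e_i$. Hence $\epsilon_i^\top A_k$ is Gaussian in $\bbR^d$ with covariance at most $C_\sigma A_k^\top A_k$. A Gaussian tail bound at level $C\sqrt{\log(nD)}$ gives $\|\epsilon_i^\top A_k\|\lesssim L_{n,D}\|A_k\|_F$ with conditional probability $1-(nD)^{-3}$. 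I apply this only on the $(M_0,E_0^{(i)})$-measurable event where the second-group bounds hold, which give
\[
\|A_1\|_F\lesssim\sqrt d\,b_{n,D},\qquad \|A_2\|_F\lesssim(nD)^{-1/2},\qquad \|a_3\|\lesssim \frac{L_{n,D}}{n\sqrt D}+\frac{1}{D\sqrt n}.
\]
I then integrate out and take a union over $i\in[n]$, exactly as in Lemma \ref{lemma: concentration w quadratic}.

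\textbf{Main obstacle.} The delicate step is this measurability bookkeeping. Every input bound used in the conditional Gaussian step, namely $\|E_0^{(i)}\|$, $s_d(Y^{(i)})$, the Wedin gaps and the incoherence of $U^{(i)}$, must be certified on events that do not involve $\epsilon_i$, and the bound on $e_i^\top U^{(i)}$ must avoid $\epsilon_i$ entirely. I would handle this by formulating each such bound as a $\sigma(M_0,E_0^{(i)})$-measurable event and proving it simultaneously for all $i$. The total failure probability then stays $O(n^{-1})$.
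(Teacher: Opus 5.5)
Your proof is correct and follows the paper's three-part structure. The first group uses the same rowwise and Gaussian-matrix bounds. The leave-one-out spectra are handled by Mirsky, Lemma~\ref{lemma: spectral stability truncated pseudo-inverse} and Wedin. The Gaussian bounds come from conditioning on $\sigma(M_0,E_0^{(i)})$. Two local steps differ, and one step needs a small correction.

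\textbf{Leave-one-out incoherence.} The paper splits $e_i^\top U^{(i)}=(e_i^\top U_0)(U_0^\top U^{(i)})+e_i^\top P_0^\perp U^{(i)}$ and pays $\|P_0^\perp U^{(i)}\|\lesssim a_{n,D}$; that is where its $D^{-1/2}$ term comes from. Your identity $e_i^\top U^{(i)}=\eta_i^\top\Lambda_0^\top V^{(i)}(S^{(i)})^{-1}$ uses that the $i$th row of $E_0^{(i)}$ vanishes. It avoids any subspace bound and gives the sharper $\sqrt{(\log n)/n}$. Consequently it also gives the sharper $\max_i|\epsilon_i^\top Y^{(i)+}e_i|\lesssim L_{n,D}^2/(n\sqrt D)$.

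\textbf{Gaussian step.} The paper first uses the random, $\cF_i$-measurable threshold $C\sqrt{\log n}\,\|A_i\|_F$, then unconditions, and only afterwards intersects with the event bounding $\|A_i\|_F$. This sidesteps the measurability bookkeeping you identify as the main obstacle. Your version also works, because every input to your bounds on $A_1,A_2,a_3$ is $\sigma(M_0,E_0^{(i)})$-measurable.

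\textbf{Correction.} Lemma~\ref{lemma: spectral stability truncated pseudo-inverse} needs a rank-$d$ unperturbed matrix, since its proof uses that $s_{d+1}$ of that matrix vanishes. So you cannot apply it to $Y=Y^{(i)}+e_i\epsilon_i^\top$ with perturbation size $\|\epsilon_i\|$. For $\|Y^+-Y^{(i)+}\|$ you should instead compare the truncations $Y_d$ and $Y_d^{(i)}$, as the paper does. Their difference $e_i\epsilon_i^\top-Y_\perp+Y_\perp^{(i)}$ has norm $\lesssim\sqrt n+\sqrt D$, and the conclusion $b_{n,D}$ is unchanged.
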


\begin{proof}
    The first rowwise bound follows from the singular value decomposition of $M_0$. Recall $M_0=\bar U_0 \bar D_0\bar V_0^\top$ and $\bar U_0 = U_0 \bar R^\top$. It follows that $U_0=M_0\bar V_0 \bar D_0^{-1} \bar R$. Therefore, $\|e_i^\top U_0\| \leq \|\eta_i\| \|\bar D_0^{-1}\|$. On $\cE_{n,D}^{(1)}$, $\|\bar D_0^{-1}\| \lesssim n^{-1/2}$. Since $d$ is fixed, and the $\eta_i$'s are uniformly sub-Gaussian, $\max_i \|\eta_i\| \lesssim \sqrt{\log n}$ with probability at least $1-C/n$. Hence, $\max_i \|e_i^\top U_0\| \lesssim \sqrt{(\log n)/n}$. For the noise rows, Gaussian norm concentration and a union bound gives $\max_i \|\epsilon_i\| \lesssim \sqrt D + \sqrt{\log n}$ with probability at least $1-C/n$. Next, conditional on $M_0$, the columns of $U_0^\top E_0$ are independent centered Gaussian vectors with covariance matrices $\sigma_{0j}^2 I_d$. Since $d$ is fixed and the noise variances are uniformly bounded, $\|U_0^\top E_0\|\lesssim \sqrt D + \sqrt{\log n} \lesssim \sqrt D$ with probability at least $1-C/n$, where the final inequality uses $\log n = o(D)$. Moreover,
    $e_i^\top P_0^\perp E_0 = e_i^\top E_0 - e_i^\top U_0U_0^\top E_0$. Thus,
    \begin{align*}
        \|e_i^\top P_0^\perp E_0\| &\leq \|\epsilon_i\| + \|e_i^\top U_0\|\|U_0^\top E_0\| \\
        &\lesssim \sqrt D + \sqrt{\log n} + \sqrt{\frac{\log n}{n}}\sqrt D \lesssim \sqrt D + \sqrt{\log n}.
    \end{align*}

    Next, we prove the leave-one-out spectral bounds. Since $I_n-e_ie_i^\top$ is an orthogonal projection, $\|E_0^{(i)}\|\leq \|E_0\| \lesssim \sqrt n + \sqrt D$ uniformly over $i$. Mirsky's theorem \ref{thm: mirsky} therefore gives
    $$\|Y_\perp^{(i)}\| = s_{d+1}(Y^{(i)})\leq \|E_0^{(i)}\| \lesssim \sqrt n + \sqrt D.$$
    Consequently,
    $$\|Y^{(i)+}\| = \frac{1}{s_d(Y^{(i)})}\lesssim \frac{1}{\sqrt{nD}}.$$
    Applying Lemma \ref{lemma: spectral stability truncated pseudo-inverse} to $Y^{(i)}=Y_0+E_0^{(i)}$ gives
    $$\|Y^{(i)+} - Y_0^+\|\lesssim \frac{\sqrt n+\sqrt D}{nD} = b_{n,D}.$$
    To compare $Y^+$ and $Y^{(i)+}$, note that 
    $$Y_d - Y_d^{(i)} = (Y-Y_\perp) - (Y^{(i)} - Y_\perp^{(i)}) = e_i\epsilon_i^\top -Y_\perp +Y_\perp^{(i)}.$$
    It follows that
    $$\|Y_d - Y_d^{(i)}\| \leq \|\epsilon_i\| + \|Y_\perp \| +\|Y_\perp^{(i)}\| \lesssim \sqrt n+ \sqrt D$$
    uniformly over $i$. Applying the pseudoinverse perturbation bound from Lemma \ref{lemma: spectral stability truncated pseudo-inverse} to the two rank-$d$ matrices $Y_d$ and $Y_d^{(i)}$, whose smallest nonzero singular values are both of order $\sqrt{nD}$, gives
    $$\|Y^+ - Y^{(i)+}\| \lesssim b_{n,D}.$$

    Wedin's theorem \ref{thm: wedin} applied to ($Y_0$ and $Y^{(i)}$), and ($Y_d$ and $Y_d^{(i)}$) respectively gives the first two bounds in \eqref{eq: good event singular subspaces}. For the rowwise leave-one-out singular vector bound, decompose
    $$e_i^\top U^{(i)} = (e_i^\top U_0)(U_0^\top U^{(i)}) + e_i^\top P_0^\perp U^{(i)}.$$
    Therefore,
    \begin{align*}
        \|e_i^\top U^{(i)}\| &\leq \|e_i^\top U_0\| + \|P_0^\perp U^{(i)}\| \\
        &\lesssim \sqrt{\frac{\log n}{n}} + a_{n,D} \lesssim \sqrt{\frac{\log n}{n}} + \frac{1}{\sqrt D},
    \end{align*}
    where the bound for the second term follows from the bound on the corresponding singular subspaces. It follows that
    \begin{align*}
        \|Y^{(i)+}e_i\| &= \|V^{(i)} (S^{(i)})^{-1} U^{(i)\top}e_i\| \\
        &\leq \|(S^{(i)})^{-1}\| \|U^{(i)\top} e_i\| \\
        &\lesssim \frac{1}{\sqrt{nD}}\left(\sqrt{\frac{\log n}{n}}+\frac{1}{\sqrt D}\right) \lesssim \frac{L_{n,D}}{n\sqrt D} + \frac{1}{D\sqrt n}.
    \end{align*}

    It remains to establish the three Gaussian leave-one-out bounds. We do this using conditional random thresholds. Let $\cF_i=\sigma(M_0, E_0^{(i)})$. If $A_i\in \bbR^{D\times q}$, for fixed $q$, is $\cF_i$-measurable, then conditional on $\cF_i$, $\epsilon_i^\top A_i \sim N_q(0, A_i^\top \Sigma_0 A_i)$. Because $\|\Sigma_0\|\leq C_\sigma$, Gaussian concentration gives, for a sufficiently large constant $C$,
    $$\bbP_0^{(n)}\left(\|\epsilon_i^\top A_i\| > C\sqrt{\log n} \|A_i\|_F\mid \cF_i\right) \leq C' n^{-3}.$$
    Integrating the conditional probability and taking a union bound over $i$ gives
    $$\bbP_0^{(n)}\left(\|\epsilon_i^\top A_i\|>C\sqrt{\log n} \|A_i\|_F \text{ for some }i \right) \leq C' n^{-2}.$$
    Crucially, the concentration event is first formulated with the random threshold $C\sqrt{\log n}\|A_i\|_F$. Only after unconditioning we intersect it with the high-probability event on which $\|A_i\|_F$ has a deterministic upper bound. Apply this result first with $A_i=(Y^{(i)+} - Y_0^+)U_0$. Since $d$ is fixed, $\|A_i\|_F \lesssim Y^{(i)+}-Y_0^+\|\lesssim b_{n,D}$. Hence, 
    $$\max_i \|\epsilon_i^\top (Y^{(i)+}-Y_0^+) U_0\| \lesssim L_{n,D} b_{n,D}.$$

    Next, take $A_i=Y^{(i)+}U_0$. Then, $\|A_i\|_F \lesssim \|Y^{(i)+}\|\lesssim 1/\sqrt{nD}$, which yields
    $$\max_i \|\epsilon_i^\top Y^{(i)+}U_0\|\lesssim \frac{L_{n,D}}{\sqrt{nD}}.$$
    Finally, take the $D-$dimensional vector $A_i = Y^{(i)+}e_i$. Using the previously established rowwise bound,
    \begin{align*}
        \max_i |\epsilon_i^\top Y^{(i)+}e_i| &\lesssim L_{n,D} \max_i \|Y^{(i)+} e_i\| \\
        &\lesssim \frac{L_{n,D}^2}{n\sqrt D} + \frac{L_{n,D}}{D\sqrt n}.
    \end{align*}
    Intersecting $\cE_{n,D}^{(1)}$ with the finitely many (not depending on $n,D$) rowwise concentration, leave-one-out spectral and conditional Gaussian events constructed above produces $\cE_{n,D}$. A union bound gives $\bbP_0^{(n)}(\cE_{n,D}^c)\leq C/n$. This proves the lemma.
\end{proof}

We work throughout on the event $\cE_{n,D}$ from the preceding lemma. 

\begin{lemma}[Bound for remainder term $\delta_{i,\rm rem}$]\label{lemma: bound delta_i}
    On the event $\cE_{n,D}$ constructed in Lemma~\ref{lemma: uniform bounds for score expansion}, we have 
    $$\max_{i\in [n]} \|\delta_{i,\rm rem}\| \lesssim \sqrt{\log(nD)} \left(\frac1n + \frac1D\right).$$
    Consequently, the bound holds with $\bbP_0^{(n)}$-probability at least $1-C/n$.
\end{lemma}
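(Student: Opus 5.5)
The plan is to bound $\delta_{i,\rm rem}=\sqrt n\,U_0^\top\cR(E_0)^\top e_i$ one piece at a time, using the four-term expression \eqref{eq: remainder P-P0}. Since $P$, $P_0$ and $\cL(E_0)$ are symmetric, $\cR(E_0)$ is symmetric, so it suffices to bound $\sqrt n\,\|U_0^\top \cR(E_0)e_i\|$ uniformly in $i$. Throughout I work on the event $\cE_{n,D}$ of Lemma~\ref{lemma: uniform bounds for score expansion}, with target $L_{n,D}(1/n+1/D)$.

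\emph{Terms that vanish or are easy.} Two of the four terms are killed by the left factor, because $U_0^\top P_0^\perp=0$:
\[
U_0^\top P_0^\perp E_0(Y^+-Y_0^+)P_0=0,\qquad U_0^\top P_0^\perp PP_0^\perp=0.
\]
For the fourth term, use $U_0^\top P_0P^\perp P_0e_i=(I_d-U_0^\top UU^\top U_0)\,U_0^\top e_i$. Its norm is at most $\|P-P_0\|^2\|e_i^\top U_0\|\lesssim a_{n,D}^2\sqrt{(\log n)/n}$. After multiplying by $\sqrt n$ this gives $\sqrt{\log n}\,(1/n+1/D)$.

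\emph{The cross term.} What remains is $\sqrt n\,\|e_i^\top P_0^\perp E_0(Y^+-Y_0^+)U_0\|$. I split $e_i^\top P_0^\perp E_0=\epsilon_i^\top-e_i^\top U_0U_0^\top E_0$.

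The second piece is bounded by $\|e_i^\top U_0\|\,\|U_0^\top E_0\|\,\|Y^+-Y_0^+\|\lesssim \sqrt{(\log n)/n}\,\sqrt D\,b_{n,D}$. After the $\sqrt n$ factor this is $\sqrt{\log n}\,(1/\sqrt D+1/\sqrt n)/\sqrt n\lesssim \sqrt{\log n}\,(1/n+1/D)$.

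For the piece $\epsilon_i^\top(Y^+-Y_0^+)U_0$, insert the leave-one-out pseudoinverse:
\[
\epsilon_i^\top(Y^+-Y_0^+)U_0=\epsilon_i^\top(Y^{(i)+}-Y_0^+)U_0+\epsilon_i^\top(Y^+-Y^{(i)+})U_0 .
\]
The first summand is $\lesssim L_{n,D}b_{n,D}$ by \eqref{eq: good event gaussian bounds}. Times $\sqrt n$ this is $L_{n,D}(1/D+1/\sqrt{nD})\lesssim L_{n,D}(1/n+1/D)$ by AM--GM.

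\emph{Main obstacle.} The hard step is the second summand $\epsilon_i^\top(Y^+-Y^{(i)+})U_0$. The crude bound $\|\epsilon_i\|\,\|Y^+-Y^{(i)+}\|\lesssim\sqrt D\,b_{n,D}$ gives only $a_{n,D}$ after scaling, which is too large. I would exploit that $Y-Y^{(i)}=e_i\epsilon_i^\top$ is rank one and expand the truncated pseudoinverse to first order in this perturbation.

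Concretely, I would use $Y^+=VS^{-1}U^\top$ together with $YY^+=P$ and $Y^{(i)}Y^{(i)+}=P^{(i)}$, and write
\[
Y^+-Y^{(i)+}=-Y^{(i)+}(Y-Y^{(i)})Y^+ + (\text{projection corrections}).
\]
The projection corrections are of the form $(I-P^{(i)})$ and $(I-V^{(i)}V^{(i)\top})$ terms, controlled by $\|Y_\perp\|$, $\|Y^{(i)}_\perp\|$ and the $\sin\Theta$ bounds in \eqref{eq: good event singular subspaces}. The leading term then contributes
\[
\epsilon_i^\top Y^{(i)+}e_i\;\epsilon_i^\top Y^+U_0 .
\]
Here $|\epsilon_i^\top Y^{(i)+}e_i|\lesssim L^2_{n,D}/(n\sqrt D)+L_{n,D}/(D\sqrt n)$ by \eqref{eq: good event gaussian bounds}. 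For $\|\epsilon_i^\top Y^+U_0\|$ I would use $\|\epsilon_i^\top Y^{(i)+}U_0\|\lesssim L_{n,D}/\sqrt{nD}$ plus a self-bounding (absorption) argument, since the same quantity reappears on the right-hand side. The product is then of smaller order than the target.

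The projection corrections should each be a product of an $O(\sqrt n+\sqrt D)$ residual norm with two pseudoinverse norms of order $(nD)^{-1/2}$, paired with the rowwise bound $\|Y^{(i)+}e_i\|$ or $\|e_i^\top U^{(i)}\|$. After scaling these should again fit within $L_{n,D}(1/n+1/D)$.

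I expect making this rank-one expansion rigorous for the \emph{truncated} pseudoinverse to be the main work. If the correction terms prove awkward, the fallback is to represent $\epsilon_i^\top V$ through $e_i^\top(Y-Y^{(i)})V$ and $Y^\top U=VS$. Collecting all pieces gives $\max_i\|\delta_{i,\rm rem}\|\lesssim\sqrt{\log(nD)}\,(1/n+1/D)$ on $\cE_{n,D}$, which has probability at least $1-C/n$.
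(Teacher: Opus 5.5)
Your proposal follows the paper's proof essentially step for step: the same reduction to $e_i^\top P_0^\perp E_0(Y^+-Y_0^+)U_0$ and $e_i^\top U_0U_0^\top P^\perp U_0$, the same splitting $e_i^\top P_0^\perp E_0=\epsilon_i^\top-e_i^\top U_0U_0^\top E_0$, the same leave-one-out insertion of $Y^{(i)+}$, and then an exact expansion of $Y^+-Y^{(i)+}$, which the paper obtains from Wedin's identity (Lemma~\ref{lemma: wedin identity}) applied to the truncations $Y_d,Y_d^{(i)}$ with $E_a=e_i\epsilon_i^\top-Y_\perp+Y_\perp^{(i)}$. You use that identity with the roles of $Y_d$ and $Y_d^{(i)}$ swapped, which works equally well; the projections that then appear are $I-P$ and $I-V^{(i)}V^{(i)\top}$, not $I-P^{(i)}$. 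Your absorption step is unnecessary, since the crude bound $\|\epsilon_i^\top Y^+U_0\|\le\|\epsilon_i\|\|Y^+\|\lesssim n^{-1/2}$ suffices once the other factor $\epsilon_i^\top Y^{(i)+}e_i$ is controlled by the leave-one-out bound in \eqref{eq: good event gaussian bounds}.
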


\begin{proof}
    To recall, we had
    $$\delta_{i, \rm rem} = \sqrt{n} U_0^\top \cR(E_0)^\top e_i$$
    where 
    $$\cR(E_0) = P_0^\perp E_0(Y^+-Y_0^+)P_0 + P_0(Y^+-Y_0^+)^\top E_0^\top P_0^\perp + P_0^\perp P P_0^\perp - P_0 P^\perp P_0.$$

    Using $P_0U_0=U_0$ and $P_0^\perp U_0=0$, we have
    $$e_i^\top \cR(E_0)U_0 = \underbrace{e_i^\top P_0^\perp E_0(Y^+ - Y_0^+)U_0}_{\text{Term I}} - \underbrace{e_i^\top U_0U_0^\top P^\perp U_0}_{\text{Term II}}.$$
    We bound the two terms uniformly over $i\in[n]$. The second term is easier to deal with, while for the first, we use a leave-one-out analysis.

    \medskip
    \noindent
    \textbf{Bounding Term II:} Firstly,
    $$\|e_i^\top U_0U_0^\top P^\perp U_0\| \leq \|e_i^\top U_0\|  \|U_0^\top P^\perp U_0\|.$$
    On $\cE_{n,D}$, the first term is bounded by $\sqrt{(\log n)/n}$ uniformly over $i$. For the second term, note that 
    $$U_0^\top P^\perp U_0 = I - U_0^\top P U_0$$
    The eigenvalues of $U_0^\top P U_0 = U_0^\top U U^\top U_0$ are precisely $\cos^2(\theta_i)$, where $\theta_i$ are principal angles between the subspaces. Hence, $U_0^\top P^\perp U_0$ has eigenvalues $\sin^2(\theta_i)$. Thus, $\|U_0^\top P^\perp U_0\| = \sin^2(\theta_{\max})$. But, $\|P-P_0\|=\sin(\theta_{\max})$. Combined with the fact that $\|P-P_0\|\lesssim 1/\sqrt{n} + 1/\sqrt{D}$ under $\cE_{n,D}$, we get
    $$\|U_0^\top P^\perp U_0\| \lesssim \frac1n + \frac1D.$$
    Hence, combining, we get the following bound for term II:
\begin{align}\label{eq: bound for Term II}
        \sqrt{n}\max_{i\in [n]} \|e_i^\top U_0U_0^\top P^\perp U_0\| \lesssim \sqrt{n} \sqrt{\frac{\log n}{n}}\left(\frac1n + \frac1D\right) = \sqrt{\log n}\left(\frac1n + \frac1D\right).
    \end{align}
    
    \medskip
    \noindent
    \textbf{Bounding Term I:} We decompose the term using $P_0^\perp = I - U_0U_0^\top$. This gives
    $$\text{Term I} = e_i^\top E_0(Y^+ - Y_0^+)U_0 - e_i^\top U_0U_0^\top E_0(Y^+-Y_0^+)U_0.$$
    The second term in the above display can be bounded using the bounds we have already established under $\cE_{n,D}$ \begin{equation}\label{eq: bound for second subterm in Term I}
    \begin{aligned}
        \sqrt{n}\|e_i^\top U_0U_0^\top E_0(Y^+-Y_0^+)U_0\| &\leq \sqrt{n}\|e_i^\top U_0\|\|U_0^\top E_0\| \|Y^+-Y_0^+\| \\
        &\lesssim \sqrt{n}\sqrt{\frac{\log n}{n}}  \sqrt{D} \frac{\sqrt{n}+\sqrt{D}}{nD} \\
        &\lesssim \sqrt{\log n}\left(\frac1n + \frac1D\right),
    \end{aligned}
    \end{equation}
    where we used $1/\sqrt{nD} \lesssim 1/n+1/D$.

    Now, we are only left with the first subterm from Term I. This term $e_i^\top E_0(Y^+-Y_0^+)U_0$ contains direct statistical dependence between the $i-$th row of noise $\epsilon_i^\top=e_i^\top E_0$ and the full pseudo-inverse $Y^+$. We break this dependence by a leave-one-out analysis. By inserting $Y^{(i)+}$, we expand the target term as follows
    $$e_i^\top E_0(Y^+-Y_0^+)U_0 = \underbrace{e_i^\top E_0 (Y^{(i)+} - Y_0^+)U_0}_{T_{i,b}} + \underbrace{e_i^\top E_0(Y^+ - Y^{(i)+})U_0}_{T_{i,a}}.$$

    The first Gaussian bound \eqref{eq: good event gaussian bounds} in Lemma \ref{lemma: uniform bounds for score expansion} gives $\max_i \|T_{i,b}\|\lesssim L_{n,D} b_{n,D}$. Consequently,
    \begin{equation}\label{eq: bound for T_ib}
    \begin{aligned}
        \sqrt n \max_i \|T_{i,b}\| &\lesssim L_{n,D} \sqrt n \frac{\sqrt n + \sqrt D}{nD} \\
        &= L_{n,D} \left(\frac1D + \frac{1}{\sqrt{nD}}\right) \lesssim L_{n,D}\left(\frac1n + \frac1D\right).
    \end{aligned}
    \end{equation}
    It remains to bound $T_{i,a}$.

   \medskip
    \noindent
    \textbf{Bounding $T_{i,a}$ -- Decomposition:} Now we focus on $T_{i,a}=e_i^\top E_0(Y^+ - Y^{(i)+})U_0$. Let $E_a=Y_d - Y_d^{(i)}$, the difference of the rank$-d$ matrices whose pseudo-inverses are involved in the term. Using Lemma \ref{lemma: wedin identity}, we get the identity
    $$Y^+ - Y^{(i)+} = -Y^+E_aY^{(i)+} + Y^+(Y^+)^\top E_a^\top(I - Y_d^{(i)}Y^{(i)+}) + (I-Y^+Y_d)E_a^\top (Y^{(i)+})^\top Y^{(i)+}.$$
    Accordingly, this splits $T_{i,a}$ into three terms
    \begin{equation}\label{eq: 2.34}
        \begin{aligned}
            T_{i,a} &= T_{i,a}^{(1)} + T_{i,a}^{(2)} + T_{i,a}^{(3)} \\
            T_{i,a}^{(1)} &= -e_i^\top E_0 Y^+ E_a Y^{(i)+}U_0 \\
            T_{i,a}^{(2)} &= e_i^\top E_0 Y^+(Y^+)^\top E_a^\top (I - Y_d^{(i)} Y^{(i)+}) U_0 \\
            T_{i,a}^{(3)} &= e_i^\top E_0 (I - Y^+Y_d)E_a^\top (Y^{(i)+})^\top Y^{(i)+} U_0.
        \end{aligned}
    \end{equation}
    Expressing $E_a$ via the un-truncated matrices
    \begin{equation}\label{eq: expand E_a}
        E_a = (Y-Y_\perp) - (Y^{(i)} - Y^{(i)}_\perp) = e_ie_i^\top E_0 -Y_\perp + Y_\perp^{(i)}.
    \end{equation}

    \medskip
    \noindent
    \textbf{Bound for $T_{i,a}^{(1)}$:} Because $Y^+Y_\perp = 0$ and $Y_\perp^{(i)}Y^{(i)+}=0$, we have $-Y^+E_a Y^{(i)+}=-Y^+ e_i \epsilon_i^\top Y^{(i)+}$ using the expression \eqref{eq: expand E_a}. Thus, $T_{i,a}^{(1)}=-(\epsilon_i^\top Y^+ e_i)(\epsilon_i^\top Y^{(i)+} U_0)$. By Lemma \ref{lemma: uniform bounds for score expansion}, Equation \eqref{eq: good event gaussian bounds} (second bound), we have
    $$\max_i \|\epsilon_i^\top Y^{(i)+} U_0\|\lesssim L_{n,D}/\sqrt{nD}.$$
    For the first factor, write
    $$\epsilon_i^\top Y^+ e_i = \epsilon_i^\top Y^{(i)+}e_i + \epsilon_i^\top (Y^+ - Y^{(i)+}) e_i.$$
    Consequently, using Lemma \ref{lemma: uniform bounds for score expansion}, equations \eqref{eq: good event gaussian bounds}, we have
    \begin{align*}
        \max_i |\epsilon_i^\top Y^+ e_i| &\leq \max_i |\epsilon_i^\top Y^{(i)+}e_i| + \max_i |\epsilon_i^\top (Y^+ - Y^{(i)+}) e_i| \\
        &\lesssim \max_i |\epsilon_i^\top Y^{(i)+}e_i| + \max_i \|\epsilon_i \| \|Y^+ - Y^{(i)+}\| \\
        &\lesssim \left[\frac{L_{n,D}^2}{n\sqrt D} + \frac{L_{n,D}}{D\sqrt n}\right] + (\sqrt D + \sqrt{\log n}) b_{n,D} \\
        &= \left[\frac{L_{n,D}^2}{n\sqrt D} + \frac{L_{n,D}}{D\sqrt n}\right] + \left[\frac{1}{\sqrt{nD}} + \frac1n\right],
    \end{align*}
    using $\log n = o(D)$. It follows that 
    $$\max_i \|T_{i,a}^{(1)}\| \lesssim \frac{L_{n,D}}{\sqrt{nD}}\left[\frac{L_{n,D}^2}{n\sqrt D} + \frac{L_{n,D}}{D\sqrt n} + \frac{1}{\sqrt{nD}} + \frac1n\right].$$
    Using $\log (nD)\lesssim n$ and $\log nD = o(D)$, each term of the right, after multiplication by $\sqrt{n}$ is bounded by a constant multiple of $L_{n,D}(1/n + 1/D)$. Hence,
    \begin{equation}\label{eq: bound T_ia^1}
        \sqrt{n} \max_i \|T_{i,a}^{(1)}\| \lesssim L_{n,D}\left(\frac1n + \frac1D\right).
    \end{equation}

    \medskip
    \noindent
    \textbf{Bound for $T_{i,a}^{(2)}$:} Let $Q_i = I_n - U^{(i)}U^{(i)\top}$. Since $Y^+ (Y^+)^\top = V S^{-2} V^\top$ and $I-Y_d^{(i)} Y^{(i)+} = Q_i$, we have
    \begin{equation}
        T_{i,a}^{(2)} = \epsilon_i^\top V S^{-2} V^\top \left[\epsilon_i e_i^\top - Y_\perp^\top + (Y_\perp^{(i)})^\top\right]Q_i U_0 = \underbrace{(\epsilon_i^\top V S^{-2} V^\top \epsilon_i) e_i^\top Q_i U_0}_{T_{i,a}^{(2,1)}} + \underbrace{\epsilon_i^\top VS^{-2} V^\top (Y_\perp^{(i)})^\top Q_i U_0}_{T_{i,a}^{(2,2)}},
    \end{equation}
    since $V^\top Y_\perp^\top = 0$ in the second term. For the first term $T_{i,a}^{(2,1)}$, we have
    \begin{align*}
        \|(\epsilon_i^\top V S^{-2} V^\top \epsilon_i) e_i^\top Q_i U_0\| &\leq |\epsilon_i^\top V S^{-2} V^\top \epsilon_i| \|e_i^\top Q_i U_0\| \\
        &\leq \left(\|S^{-1}\|^2 \|\epsilon_i\|^2\right) \left(\|e_i^\top U_0\| + \|e_i^\top U^{(i)}\| \|U^{(i)\top} U_0\|\right) \\
        &\lesssim \frac1n \left(\frac{L_{n,D}}{\sqrt n} + \frac{1}{\sqrt{D}}\right),
    \end{align*}
    using the bounds that hold under $\cE_{n,D}$, established in Lemma \ref{lemma: uniform bounds for score expansion}, noting that these bounds are uniform in $i$. Therefore,
    $$\sqrt{n} \max_i \|T_{i,a}^{(2,1)}\| \lesssim \frac{L_{n,D}}{n} + \frac{1}{\sqrt{nD}} \lesssim L_{n,D} \left(\frac1n + \frac1D\right).$$

    For the second term $T_{i,a}^{(2,2)}$, the identity $(Y_\perp^{(i)})^\top Q_i = (Y_\perp^{(i)})^\top$ gives 
    $$T_{i,a}^{(2,2)} = \epsilon_i^\top V S^{-2} (Y_\perp^{(i)}V)^\top U_0.$$
    Furthermore, on $\cE_{n,D}$,
    \begin{align*}
        \|Y_\perp^{(i)}V\| &\leq \|S_\perp^{(i)}\| \|V_\perp^{(i)\top} V\| \\
        &= \|S_\perp^{(i)}\| \|\sin \Theta(V, V^{(i)})\| \\
        &\lesssim (\sqrt n + \sqrt D) a_{n,D} = \frac{(\sqrt n+\sqrt D)^2}{\sqrt{nD}}.
    \end{align*}
    It follows that
    \begin{align*}
        \|T_{i,a}^{(2,2)}\| &\leq \|\epsilon_i \| \|S^{-2}\| \|Y_\perp^{(i)}V\| \\
        &\lesssim \sqrt D \frac{1}{nD} \frac{(\sqrt n + \sqrt D)^2}{\sqrt{nD}} \lesssim \frac{1}{\sqrt{n}}(1/n + 1/D),
    \end{align*}
    using bounds that hold under $\cE_{n,D}$. Consequently, combining the bounds for the two parts,
    \begin{equation}\label{eq: bound T_ia^2}
        \sqrt{n} \max_i \|T_{i,a}^{(2)}\| \lesssim L_{n,D}\left(\frac1n + \frac1D\right).
    \end{equation}

    \medskip
    \noindent
    \textbf{Bound for $T_{i,a}^{(3)}$:} Let $Q_V = I_D - Y^+ Y_d = I_D - VV^\top$. Then,
    \begin{equation}
    \begin{aligned}
        T_{i,a}^{(3)} &= e_i^\top E_0 (I - Y^+Y_d)E_a^\top (Y^{(i)+})^\top Y^{(i)+} U_0 \\
        &= \epsilon_i^\top Q_V \left[\epsilon_i e_i^\top - Y_\perp^\top + (Y_\perp^{(i)})^\top\right] (Y^{(i)+})^\top Y^{(i)+} U_0\\
        &= \underbrace{(\epsilon_i^\top Q_V \epsilon_i)e_i^\top (Y^{(i)+})^\top Y^{(i)+} U_0}_{T_{i,a}^{(3,1)}} + \underbrace{\left(-\epsilon_i^\top Y_\perp^\top (Y^{(i)+})^\top Y^{(i)+} U_0\right)}_{T_{i,a}^{(3,2)}}
    \end{aligned}
    \end{equation}
    where the third equality uses $(Y^{(i)}_\perp)^\top (Y^{(i)+})^\top = 0$ and $Q_V Y_\perp^\top =Y_\perp^\top$. For the term $T_{i,a}^{(3,1)}$, $|\epsilon_i^\top Q_V \epsilon_i| \leq \|\epsilon_i\|^2 \lesssim D$. Thus, the bounds on $\cE_{n,D}$ by Lemma \ref{lemma: uniform bounds for score expansion} give
    \begin{align*}
        \|T_{i,a}^{(3,1)}\| &\leq D \|Y^{(i)+}e_i\| \|Y^{(i)+}U_0\| \\
        &\leq D \left(\frac{L_{n,D}}{n\sqrt D} + \frac{1}{D\sqrt{n}}\right) \frac{1}{\sqrt{nD}} \\
        &\lesssim \frac{L_{n,D}}{n\sqrt n} + \frac{1}{n\sqrt{D}}.
    \end{align*}
    Consequently, 
    $$\sqrt{n} \max_i \|T_{i,a}^{(3,1)}\| \lesssim \frac{L_{n,D}}{n} + \frac{1}{\sqrt{nD}}\lesssim L_{n,D} \left(\frac1n + \frac1D\right).$$

    For $T_{i,a}^{(3,2)}$, we first note the following bound on $\cE_{n,D}$:
    \begin{align*}
        \|Y_\perp^\top (Y^{(i)+})^\top\| &\leq \|S_\perp\| \|U_\perp^\top U_i\| \|(S^{(i)})^{-1}\| \\
        &= \|S_\perp\| \|\sin \Theta(U, U^{(i)})\| \|(S^{(i)})^{-1}\| \\
        &\lesssim (\sqrt n + \sqrt D) a_{n,D} \frac{1}{\sqrt{nD}} \lesssim \frac1n + \frac1D.
    \end{align*}
    Hence, on $\cE_{n,D}$, we have
    \begin{align*}
        \max_i \sqrt{n}\|T_{i,a}^{(3,2)} \| &\leq \sqrt{n} \max_i \|\epsilon_i\| \|Y_\perp^\top (Y^{(i)+})^\top \| \|Y^{(i)+} U_0\| \\
        &\lesssim  \sqrt n\sqrt D (1/n + 1/D) \frac{1}{\sqrt{nD}} \lesssim \frac1n + \frac1D.
    \end{align*}
    Thus, combining the two parts above, we have
    \begin{equation}\label{eq: bound T_ia^3}
        \sqrt{n} \max_i \|T_{i,a}^{(3)}\| \lesssim L_{n,D}\left(\frac1n + \frac1D\right).
    \end{equation}

    Hence, combining \eqref{eq: bound T_ia^1}, \eqref{eq: bound T_ia^2}, \eqref{eq: bound T_ia^3}, we have $\sqrt{n} \max_i \|T_{i,a}\| \lesssim L_{n,D} (1/n + 1/D)$. Finally, combining this with \eqref{eq: bound for Term II}, \eqref{eq: bound for T_ib}, we obtain the desired result.
\end{proof}

\begin{lemma}\label{lemma: bound zeta and omega}
    On $\cE_{n,D}$ constructed in Lemma \ref{lemma: uniform bounds for score expansion}, we have
    \begin{align*}
        \max_{i\in [n]} \|\zeta_i\| &\lesssim \sqrt{\log n} \left(\frac1n + \frac1D\right) \\
        \max_{i\in [n]} \|\omega_i\| &\lesssim \frac{\log n}{\sqrt{n}}.
    \end{align*}
    Consequently, both bounds hold simultaneously with $\bbP_0^{(n)}-$probability at least $1-C/n$.
\end{lemma}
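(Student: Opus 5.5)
We bound the two terms separately, working throughout on $\cE_{n,D}$ from Lemma~\ref{lemma: uniform bounds for score expansion}.

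\textbf{The term $\omega_i$.} Recall
\[
\omega_i=\hat R^\top\bar R^\top(\sqrt n I_d-\bar D_0)\bar R U_0^\top e_i .
\]
Since $\hat R$ and $\bar R$ are orthogonal,
\[
\|\omega_i\|\le \|\sqrt n I_d-\bar D_0\|\,\|e_i^\top U_0\|.
\]
On $\cE_{n,D}^{(1)}$ we have $\max_k|s_k(M_0)-\sqrt n|\lesssim\sqrt{\log n}$. Lemma~\ref{lemma: uniform bounds for score expansion} gives $\max_i\|e_i^\top U_0\|\lesssim\sqrt{(\log n)/n}$. Multiplying these yields $\max_i\|\omega_i\|\lesssim \log n/\sqrt n$ directly.

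\textbf{The term $\zeta_i$.} Recall
\[
\zeta_i=-(\hat R^\top U_0^\top U-I_d)\hat\eta_i,\qquad \hat\eta_i=\sqrt n\,U^\top e_i .
\]
This gives
\[
\|\zeta_i\|\le \|U_0^\top U-\hat R\|\cdot\sqrt n\,\|e_i^\top U\|.
\]

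\emph{First factor.} The Procrustes computation in Step~5 of the proof of Theorem~\ref{thm: spectral recovery}(i) writes $U_0^\top U-\hat R=U_1(\Sigma_1-I)V_1^\top$. This gives
\[
\|U_0^\top U-\hat R\|\le \sin^2\theta_{\max}=\|P-P_0\|^2\lesssim \frac1n+\frac1D .
\]

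\emph{Second factor.} It then suffices to show $\max_i\sqrt n\,\|e_i^\top U\|\lesssim\sqrt{\log n}$, a row-wise incoherence bound for the empirical singular vectors. I would get it from the score expansion already established. From \eqref{eq: decompose eta_hat 1},
\[
\sqrt n\,U^\top e_i=\sqrt n\,\hat R^\top U_0^\top e_i+\hat R^\top\xi_i+\hat R^\top\delta_{i,\rm rem}+\zeta_i .
\]
The terms on the right are bounded as follows:
\begin{itemize}
\item $\sqrt n\|U_0^\top e_i\|\lesssim\sqrt{\log n}$ by Lemma~\ref{lemma: uniform bounds for score expansion}.
\item $\|\delta_{i,\rm rem}\|\lesssim\sqrt{\log(nD)}(1/n+1/D)=o(1)$ by Lemma~\ref{lemma: bound delta_i}, using $\log(nD)\lesssim n$ and $\log n=o(D)$. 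The $\log D$ part of $\log(nD)/D^2$ is also harmless.
\item $\|\xi_i\|\le\sqrt n\,\|S_0^{-1}\|\,\|e_i^\top P_0^\perp E_0\|\lesssim \sqrt n(nD)^{-1/2}(\sqrt D+\sqrt{\log n})\lesssim 1$.
\end{itemize}

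\textbf{Absorbing $\zeta_i$.} The term $\zeta_i$ also appears on the right, but it is bounded by $\|U_0^\top U-\hat R\|\,\sqrt n\|e_i^\top U\|$, with $\|U_0^\top U-\hat R\|=o(1)$. Set $x_i=\sqrt n\|e_i^\top U\|$. The display gives
\[
x_i\le C\sqrt{\log n}+C+o(1)\,x_i ,
\]
uniformly in $i$. Absorbing the last term into the left side yields $\max_i x_i\lesssim\sqrt{\log n}$.

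Substituting back gives $\max_i\|\zeta_i\|\lesssim\sqrt{\log n}(1/n+1/D)$. This avoids a separate leave-one-out argument for $e_i^\top U$.

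\textbf{Main obstacle and probability.} The main point of care is this self-referential absorption. Everything involved is deterministic on $\cE_{n,D}$, and the $o(1)$ coefficient is uniform in $i$ for $n,D$ large, so the absorption is legitimate. Both bounds hold on $\cE_{n,D}$, whose probability is at least $1-C/n$ by Lemma~\ref{lemma: uniform bounds for score expansion}.
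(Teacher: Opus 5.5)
Your proof is correct. The $\omega_i$ bound is the same as the paper's. So is the factorization $\|\zeta_i\|\le\|U_0^\top U-\hat R\|\,\|\hat\eta_i\|$ with $\|U_0^\top U-\hat R\|\le\sin^2\theta_{\max}\lesssim 1/n+1/D$.

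Where you differ is in the row-wise bound $\max_i\|\hat\eta_i\|\lesssim\sqrt{\log n}$.

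\emph{Your route.} You feed the full score expansion \eqref{eq: decompose eta_hat 1} back in. This requires Lemma~\ref{lemma: bound delta_i} for $\delta_{i,\rm rem}$ and a self-referential absorption of $\zeta_i$. Both steps are legitimate:
\begin{itemize}
\item $x_i\le\sqrt n$ is finite, and the absorbed coefficient is $O(1/n+1/D)$ uniformly in $i$.
\item The proof of Lemma~\ref{lemma: bound delta_i} only uses rows of $U_0$ and of the leave-one-out $U^{(i)}$, never rows of $U$, so there is no circularity.
\end{itemize}

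\emph{The paper's route.} The paper splits $e_i^\top U=e_i^\top P_0U+e_i^\top P_0^\perp U$. It then uses the exact identity $YV=US$, i.e.\ $U=YVS^{-1}$, which gives $P_0^\perp U=P_0^\perp E_0VS^{-1}$. Hence
\[
\sqrt n\,\|U^\top P_0^\perp e_i\|\le\sqrt n\,\|S^{-1}\|\,\|e_i^\top P_0^\perp E_0\|\lesssim 1
\]
in one line, and the $P_0$ part is at most $\sqrt n\,\|U_0^\top e_i\|\lesssim\sqrt{\log n}$.

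\emph{Comparison.} This is the same ingredient you used to bound $\xi_i$. The paper simply applies it to all of $P_0^\perp U$ rather than only to its linear part. As a result it needs neither the higher-order remainder $\delta_{i,\rm rem}$ nor any absorption step.

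Your remark that your route ``avoids a separate leave-one-out argument'' is therefore only partly accurate. You import the leave-one-out machinery indirectly through Lemma~\ref{lemma: bound delta_i}, whereas the paper's argument for this lemma uses none at all. What your route buys is only that it reuses an expansion already on the page. The paper's route is shorter and self-contained on the base concentration event.
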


\begin{proof}
    We work throughout on $\cE_{n,D}$.

    \medskip
    \noindent
    \textbf{Bounding $\zeta$:} Recall $\zeta_i=-(\widehat R^\top U_0^\top U - I_d) \widehat \eta_i$. Let $U_0^\top U = W \Gamma \Omega^\top$ be the singular value decomposition, where $\Gamma=\diag(\cos\theta_1,\dots,\cos\theta_d)$ and $\theta_1,\dots,\theta_d$ are the principal angles between the column spaces of $U_0$ and $U$. By the definition of the Procrustes rotation, $\widehat R = W \Omega^\top$. Therefore, $\widehat R^\top U_0^\top U = \Omega \Gamma \Omega^\top$ and hence, $\|\zeta_i\|=\|(\Omega\Gamma\Omega^\top - I_d)\widehat{\eta}_i\|\leq \|\Gamma-I_d\|\|\Omega^\top\widehat \eta_i\|$. We bound these two factors separately.

    For the first factor, since principal angles belong to $[0,\pi/2]$, $1-\cos\theta_k = \sin^2 \theta_k /(1+\cos \theta_k)\le \sin^2 \theta_k$. It follows that $\|\Gamma-I_d\|=\max_{k\in[d]} (1-\cos\theta_k)\leq \|\sin \Theta(U_0,U)\|^2 \lesssim 1/n + 1/D$ on $\cE_{n,D}$. 

    Next, since $\widehat \eta_i = \sqrt n U^\top e_i$ and $U_0^\top U\Omega = W\Gamma$, we have
    \begin{align*}
        U\Omega = (P_0U + P_0^\perp U) \Omega =U_0U_0^\top U\Omega + P_0^\perp U \Omega = U_0W\Gamma+P_0^\perp U \Omega.
    \end{align*}
    It follows that
    $$\Omega^\top \widehat \eta_i = \sqrt n \Gamma W^\top U_0^\top e_i + \sqrt n \Omega^\top U^\top P_0^\perp e_i.$$
    For the first term, Lemma \ref{lemma: uniform bounds for score expansion} gives
    \begin{align*}
        \sqrt n\|\Gamma W^\top U_0^\top e_i\| &\leq \sqrt n \|\Gamma\| \|U_0^\top e_i\| \\
        &\lesssim \sqrt n \sqrt{\frac{\log n}{n}} = \sqrt{\log n}
    \end{align*}
    uniformly over $i$. For the second term, the singular value decomposition of $Y$ gives $U=YVS^{-1}$. Since $P_0^\perp Y_0=0$, $P_0^\perp U = P_0^\perp E_0 V S^{-1}$. Therefore
    \begin{align*}
        \sqrt n \|\Omega^\top U^\top P_0^\perp e_i\| &\leq \sqrt n \|S^{-1}\| \|e_i^\top P_0^\perp E_0 \| \\
        &\lesssim \sqrt n \frac{1}{\sqrt{nD}} (\sqrt D + \sqrt{\log n}) =1 + \sqrt{\frac{\log n}{D} }\lesssim 1,
    \end{align*}
    using the bounds on $\cE_{n,D}$ from Lemma \ref{lemma: uniform bounds for score expansion}, and the last inequality using $\log n =o(D)$. Combining the two components gives $\max_i \|\Omega^\top \widehat\eta_i\|\lesssim \sqrt{\log n}$. Therefore,
    $$\max_{i\in [n]} \|\zeta_i\| \lesssim \sqrt{\log n} \left(\frac1n + \frac1D\right).$$

    \medskip
    \noindent
    \textbf{Bounding $\omega$:} Recall $\omega_i = \widehat R^\top \bar R^\top (\sqrt n I_d - \bar D_0)\bar R U_0^\top e_i$, where $\bar D_0$ contains the singular values of $M_0$. Since $\widehat R$ and $\bar R$ are orthogonal,
    $$\|\omega_i\| \leq \|\sqrt n I_d - \bar D_0\| \|U_0^\top e_i\|.$$
    On $\cE_{n,D}$, the singular value concentration inherited from the event constructed in the proof of Theorem \ref{thm: spectral recovery}(i) gives 
    $$\|\sqrt n I_d - \bar D_0\| = \max_{k\in [d]} |\sqrt n - s_k(M_0)| \lesssim \sqrt{\log n}.$$
    Lemma \ref{lemma: uniform bounds for score expansion} also gives $\max_i \|U_0^\top e_i\| \lesssim \sqrt{(\log n)/n}$. It follows that 
    $$\max_{i\in[n]} \|\omega_i\| \lesssim \sqrt{\log n} \sqrt{\frac{\log n}{n}} = \frac{\log n}{\sqrt n}.$$
\end{proof}

Finally, recall $\Delta_i = \zeta_i + \hat{R}^\top \delta_{i,\rm rem} + \omega_i$, so that $\delta_i = \hat{R}^\top \xi_i + \Delta_i$.
Hence, we get the following bound.
$$\max_i \|\Delta_i\| \leq \max_i \|\zeta_i\| + \max_i \|\delta_{i,\rm rem}\| + \max_i \|\omega_i\|.$$
Plugging in the bounds from Lemma \ref{lemma: bound delta_i} and Lemma \ref{lemma: bound zeta and omega}, we get
\begin{equation}
    \begin{aligned}
        \max_{i\in [n]} \|\Delta_i\| &\lesssim \sqrt{\log (nD)}\left(\frac1n + \frac1D\right) + \sqrt{\log n}\left(\frac1n + \frac1D\right) + \frac{\log n}{\sqrt{n}}\\
        &\lesssim\frac{\sqrt{\log (nD)}}{D} + \frac{\log n}{\sqrt{n}}.
    \end{aligned}
\end{equation}
using the assumed $\log(nD)\lesssim n$. The above uniform bound holds on a single event $\cE_{n,D}$ (see Lemma \ref{lemma: uniform bounds for score expansion}) such that $\bbP_0^{(n)}(\cE_{n,D}^c)\leq C/n$ for some constant $C>0$. This proves the uniform upper bound on $\|\Delta_i\|$ in \eqref{eq: structured score expansion} in Theorem \ref{thm: spectral recovery}(ii). Since the $\cE_{n,D}$ used here is a refinement of the event used in the proof of Theorem \ref{thm: spectral recovery}(i), the current $\cE_{n,D}$ can be used in part (i) as well. Thus, all conclusions in the theorem are valid under a single high-probability event $\cE_{n,D}$, as required.

\subsubsection*{Proof of Corollary \ref{cor: expected factor error}}

\begin{proof}
    \textbf{Part A: } (Control for $\Lambda$ part) From the proof (step 5) of Theorem \ref{thm: spectral recovery}(i), we have 
    $$\|\hat{\Lambda} - \Lambda_0 R\|_F \leq A_1 + A_2 + B_1 + B_2,$$
    see Equations \eqref{eq: point estimate Lambda decompose}, \eqref{eq: point estimate Lambda decompose A} and \eqref{eq: point estimate Lambda decompose B}. We bound the four terms separately.

    For $A_1$, exactly as in the proof of Theorem \ref{thm: spectral recovery}(i), 
    $$A_1 \lesssim \sqrt{D}(1/n + 1/D) \text{ on } \cE_{n,D} \;\Longrightarrow\; \bbE_0[A_1^2 \mid \cE_{n,D}] \lesssim D(1/n+1/D)^2.$$

    For $A_2$, following the proof of Theorem \ref{thm: spectral recovery}(i), we get
    $$A_2 \leq \|\Lambda_0\|_F \left\|\frac{\bar{D}_0}{\sqrt{n}} - I_d\right\| \lesssim \sqrt{D} \left\|\frac{\bar{D}_0}{\sqrt{n}} - I_d\right\|.$$
    Since the rows of $M_0$ are iid sub-Gaussian, centered with covariance $I_d$, and $d$ is fixed, the usual sample covariance moment bound gives
    $$\bbE_0\|(1/n)M_0^\top M_0 - I_d\|^2 \lesssim 1/n \Longrightarrow \bbE_0[A_2^2] \lesssim D  \frac1n.$$
    Since $\bbP_0(\cE_{n,D})\geq 1 - c/n$, 
    $$\bbE_0[A_2^2 \mid \cE_{n,D}] \leq \frac{\bbE_0[A_2^2]}{\bbP_0(\cE_{n,D})}\lesssim \frac{D}{n}.$$
    Thus,
    $$\bbE_0[(A_1+A_2)^2 \mid \cE_{n,D}] \lesssim D\left(\frac1D + \frac1n\right)^2 + \frac{D}{n} \lesssim \frac{D}{n} + \frac1D.$$

    Now, for $B_1$, we had
    $$B_1 \leq \frac{1}{\sqrt{n}} \|E_0^\top U_0\|_F$$
    since $\|U_0^\top U\|\leq 1$. Conditional on $M_0$, $U_0^\top \epsilon^{(j)}\sim N_d(0, \sigma_{0j}^2 I_d)$ and hence $\bbE_0[\|U_0^\top \epsilon^{(j)}\|^2 \mid M_0] = d\sigma_{0j}^2$. Since the true noise variances are uniformly bounded, $\bbE\|E_0^\top U_0\|_F^2 = \sum_{j=1}^D \bbE\|U_0^\top \epsilon^{(j)}\|^2 \lesssim D$. This gives
    $$\bbE_0[B_1^2 \mid \cE_{n,D}] \lesssim \frac{D}{n}.$$

    For $B_2$, we already had the pointwise bound $B_2\lesssim \sqrt{D}(1/n+1/D)$ on $\cE_{n,D}$ and hence 
    $$\bbE_0[B_2^2 \mid \cE_{n,D}] \lesssim D\left(\frac1n + \frac1D\right)^2 \lesssim \frac{D}{n^2} + \frac1D.$$

    Combining all the parts and applying Jensen's inequality, we have
    $$\bbE_0\left[\|\hat{\Lambda} - \Lambda_0 R\|_F\mid \cE_{n,D}\right] \lesssim \sqrt{\frac{D}{n}} + \frac{1}{\sqrt{D}}.$$

    \medskip\noindent
    \textbf{Part B: } (Control for $\Sigma$ part) As in Step 3(b) in the proof of Theorem \ref{thm: spectral recovery}(i), we have
    $$|\widehat{\sigma}_j^2 - \sigma_{0j}^2| \leq Q_j + |T_{j,1}| + |T_{j,2}| + |T_{j,3}|, \quad Q_j = \left|\frac1n \epsilon^{(j)\top} P_0^\perp \epsilon^{(j)} - \sigma_{0j}^2 \right|,$$
    see Equations \eqref{eq: sigma error} and \eqref{eq: T decomposition}. 

    For $Q_j$, note that conditional on $M_0$, $P_0^\perp$ is fixed with rank $n-d$ and independent of $\epsilon^{(j)}$. Therefore, $\epsilon^{(j)\top}P_0^\perp \epsilon^{(j)} / \sigma_{0j}^2 \sim \chi^2_{n-d}$. Consequently,
    $$\bbE_0\left[\frac1n \epsilon^{(j)\top}P_0^\perp \epsilon^{(j)} - \sigma_{0j}^2 \mid M_0\right]=-\frac{d\sigma_{0j}^2}{n}, \; \Var_0\left(\frac1n \epsilon^{(j)\top}P_0^\perp \epsilon^{(j)} - \sigma_{0j}^2 \mid M_0\right)=\frac{2(n-d)\sigma_{0j}^4}{n^2}.$$

    Since $d$ is fixed and the noise variances are uniformly bounded, $\bbE_0[Q_j^2] \lesssim 1/n$, and hence, $\bbE_0[\sum_j Q_j^2]\lesssim D/n$. Since $\bbP_0(\cE_{n,D})\geq 1-c/n$, we have 
    $$\bbE_0\left[\sum_{j=1}^D Q_j^2 \mid \cE_{n,D}\right] \lesssim \frac{D}{n}.$$

    For $T_{j,1}$, the proof of Theorem \ref{thm: spectral recovery}(i) shows $|T_{j,1}|\lesssim (1/n+1/D)\|\lambda_{0j}\|^2$ uniformly over $j$ on $\cE_{n,D}$. Thus,
    $$\sum_j T_{j,1}^2 \lesssim \left(\frac1n + \frac1D\right)^2 \sum_{j=1}^D \|\lambda_{0j}\|^4.$$
    Under the loading assumptions, $\max_j \|\lambda_{0j}\|^2 \lesssim \ell_D$ and $\sum_j \|\lambda_{0j}\|^2 = \|\Lambda_0\|_F^2 \lesssim D$. Hence, $\sum_j \|\lambda_{0j}\|^4 \lesssim D\ell_D$. This gives
    $$\bbE_0\left[ \sum_{j=1}^D T_{j,1}^2 \mid \cE_{n,D}\right] \lesssim D \ell_D\left(\frac{1}{n^2} + \frac{1}{D^2}\right).$$

    For $T_{j,3}$, by \eqref{eq: sigma T_j3 decompose}, we have
    $$|T_{j,3}| \leq \frac2n \|U_0^\top \epsilon^{(j)}\| \|E_U\| \|\epsilon^{(j)}\| + \frac1n \|E_U\|^2 \|\epsilon^{(j)}\|^2.$$
    On $\cE_{n,D}$, $\|E_U\|\lesssim 1/\sqrt{n} + 1/\sqrt{D}$. Hence
    $$T_{j,3}^2 \lesssim \frac{1}{n^2}\left(\frac1n + \frac1D\right) \|U_0^\top \epsilon^{(j)}\|^2 \|\epsilon^{(j)}\|^2 + \frac{1}{n^2} \left(\frac1n + \frac1D\right)^2 \|\epsilon^{(j)}\|^4, \quad \text{on } \cE_{n,D}.$$
    Since $U_0$ is independent of $E_0$, $d$ is fixed, and the noise variances are uniformly bounded,
    $$\bbE_0[\|U_0^\top \epsilon^{(j)}\|^2 \|\epsilon^{(j)}\|^2\mid M_0] \lesssim n, \quad \bbE_0\|\epsilon^{(j)}\|^4 \lesssim n^2.$$
    Using this in the last display, and summing over $j$,
    $$\bbE_0\left[\sum_{j=1}^D T_{j,3}^2 \mid \cE_{n,D}\right] \lesssim \frac{D}{n}(1/n+1/D) + D(1/n + 1/D)^2 \lesssim \frac{D}{n} + \frac1D.$$

    Finally, for $T_{j,2}$ use the decomposition $T_{j,2}=T_{j,2}^{\rm linear} + T_{j,2}^{\rm rem}$ from \eqref{eq: sigma T_j2 decompose}. For the remainder, Equation \eqref{eq: bound remainder P-P0 decomposition} gives $\|\cR(E_0)\|\lesssim 1/n + 1/D$ on $\cE_{n,D}$. Using $\|M_0\lambda_{0j}\|\lesssim \sqrt{n} \|\lambda_{0j}\|$ on $\cE_{n,D}$, we have $|T_{j,2}^{\rm rem}|\lesssim (1/n+1/D) \|\lambda_{0j}\| \|\epsilon^{(j)}\|/\sqrt{n}$. Thus, $\bbE_0[(T_{j,2}^{\rm rem})^2 \ind(\cE_{n,D})] \lesssim (1/n+1/D)^2 \|\lambda_{0j}\|^2$. Summing over $j$, we get
    $$\bbE_0\left[\sum_{j=1}^D (T_{j,2}^{\rm rem})^2 \mid \cE_{n,D}\right] \lesssim D\left(\frac{1}{n^2} + \frac{1}{D^2}\right).$$

    For the linear part, the argument in the proof of Theorem \ref{thm: spectral recovery}(i) gives
    $$T_{j,2}^{\rm linear} = -\frac2n \sum_{k=1}^D w_{jk} \epsilon^{(k)\top} P_0^\perp \epsilon^{(j)}, \quad \|w_j\|^2\lesssim \|\lambda_{0j}\|^2/D, \;|w_{jj}| \lesssim \|\lambda_{0j}\|^2/D.$$

    For the diagonal part, $L_{j,\rm diag} = -(2/n)w_{jj}\epsilon^{(j)\top}P_0^\perp \epsilon^{(j)}.$
    Since $\bbE_0[((1/n)\epsilon^{(j)}P_0^\perp \epsilon^{(j)})^2]\lesssim 1$, we obtain $\bE_0 L_{j,\rm diag}^2 \lesssim \|\lambda_{0j}\|^4/D^2$. Therefore,
    $$\sum_{j=1}^D \bbE_0[L_{j,\rm diag}^2] \lesssim \frac{1}{D^2} \sum_{j=1}^D \|\lambda_{0j}\|^4 \lesssim \frac{\ell_D}{D}.$$

    For the off-diagonal part, $L_{j, \rm off} = -(2/n) \sum_{k\neq j} w_{jk} \epsilon^{(k)\top}P_0^\perp \epsilon^{(j)}.$ Conditional on $M_0$ and $\epsilon^{(j)}$, the variables $\epsilon^{(k)\top}P_0^\perp \epsilon^{(j)}$ ($k\neq j$) are independent centered Gaussian variables. Hence
    $$\bbE_0[L_{j,\rm off}^2 \mid M_0, \epsilon^{(j)}] \lesssim \frac{1}{n^2} \sum_{k\neq j} w_{jk}^2 \|P_0^\perp \epsilon^{(j)}\|^2 \lesssim \frac{\|w_j\|^2}{n^2} \|P_0^\perp \epsilon^{(j)}\|^2.$$
    Taking expectation over $\epsilon^{(j)}$, $\bbE_0[L_{j,\rm off}^2] \lesssim \|w_j\|^2/n \lesssim \|\lambda_{0j}\|^2 / (nD)$. Summing over $j$,
    $$\sum_{j=1}^D \bbE_0[L_{j,\rm off}^2] \lesssim \frac{1}{nD} \sum_{j=1}^D \|\lambda_{0j}\|^2 \lesssim \frac1n.$$

    Combining all the bounds from the different parts, simplifying and using $\ell_D \lesssim n$ (since $\ell_D\lesssim \sqrt{n \log(nD)}$ and $\log(nD)\lesssim n$), we have
    $$\bbE_0\left[\|\hat{\Sigma} - \Sigma_0\|_F^2 \mid \cE_{n,D} \right] \lesssim \frac{D}{n} + \frac{\ell_D}{D}.$$
    Application of Jensen's inequality gives the result we set to prove.
\end{proof}

\subsection{Proofs for Section \ref{subsec: latent density upper bound} (KDE upper bound)}

Before the proof of the main result Theorem \ref{thm: new kde}, we establish a weaker upper bound, under weaker assumptions on the kernel. Here, we only need $K\in C^1$ with $\|\nabla K\|_{L^2}<\infty$. 

\begin{theorem}[Weak latent density upper bound]\label{thm: weak kde g0}
    Suppose we construct estimator $\hat{g}_h$ using a kernel $K$ with bandwidth $h$ from the pre-estimated factors in $\widehat{M}$. Assuming \eqref{assume: dimensions}--\eqref{assume: latent tail}, with $g_0$ of smoothness $\beta$, and normalized, integrable, spherically symmetric $K\in C^1\cap L^2$ of order $\beta$ with $\nabla K\in L^2$, then
        $$\bbE_0\left[\|\hat{g}_h - g_0^R\|_{L^2}^2\mid \cE_{n,D}\right] \lesssim h^{2\beta} + \frac{1}{nh^d} + \frac{1}{Dh^{d+2}} + \frac{\log n}{nh^{d+2}}.$$
    Here, $\cE_{n,D}$ is the event in Theorem \ref{thm: spectral recovery}(i). The constants appearing in $\lesssim$ are based on constants depending on $g_0$ and $K$, and thus free of $n$ or $D$. Using this upper bound, taking $h^*\asymp (\frac1D+\frac{\log n}{n})^{1/(2\beta+d+2)}$ yields the following rate:
    $$\bbE_0\left[\|\hat{g}_{h^*} - g_0^R\|_{L^2}^2 \mid \cE_{n,D}\right]\lesssim  \left(\frac1D + \frac{\log n}{n}\right)^{\frac{2\beta}{2\beta+d+2}}.$$
\end{theorem}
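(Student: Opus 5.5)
We follow the three-term decomposition already indicated after Theorem~\ref{thm: new kde}:
$$\widehat g_h - g_0^R = \left(\overline g_h^R - g_0^R\right) + \left(\widetilde g_h^R - \overline g_h^R\right) + \left(\widehat g_h - \widetilde g_h^R\right),$$
and use $\|a+b+c\|^2\le 3(\|a\|^2+\|b\|^2+\|c\|^2)$. Everything is conditional on $\cE_{n,D}$. This event has probability at least $1-C/n\geq 1/2$ for large $n$, so any nonnegative quantity satisfies $\bbE_0[X\mid\cE_{n,D}]\le 2\bbE_0[X]$.

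\textbf{Bias term.} The first term is deterministic given $R$. Since $R$ is orthogonal and $\varphi_{g_0^R}(t)=\varphi_{g_0}(Rt)$, Plancherel gives
$$\|\overline g_h^R-g_0^R\|_{L^2}^2=(2\pi)^{-d}\int|\varphi_K(ht)-1|^2|\varphi_{g_0}(t)|^2dt.$$
Applying the order-$\beta$ condition $|\varphi_K(ht)-1|\le C_K\min\{1,(h\|t\|)^\beta\}$ and Assumption~\ref{assume: smoothness of g_0}, this is at most $C h^{2\beta}L^2$, uniformly in $R$.

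\textbf{Variance term.} The second term is the one place where the randomness of $R$ matters, since $R$ depends on the data. We use spherical symmetry of $K$: $\widetilde g_h^R(x)=\widetilde g_h^{I}(Rx)$, and the same identity holds for $\overline g_h^R$. Hence, by rotation invariance of Lebesgue measure,
$$\|\widetilde g_h^R-\overline g_h^R\|_{L^2}=\|\widetilde g_h^I-\overline g_h^I\|_{L^2},$$
which no longer involves $R$. The standard i.i.d.\ computation then gives $\bbE_0\|\widetilde g_h^I-\overline g_h^I\|^2\le \|K\|_{L^2}^2/(nh^d)$.

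\textbf{Score-substitution term.} This is the only step using Theorem~\ref{thm: spectral recovery}. Write $\delta_i=\widehat\eta_i-R^\top\eta_i$ and apply the mean-value form
$$K\!\left(\tfrac{x-\widehat\eta_i}{h}\right)-K\!\left(\tfrac{x-R^\top\eta_i}{h}\right)=-\int_0^1\nabla K\!\left(\tfrac{x-R^\top\eta_i-s\delta_i}{h}\right)^{\!\top}\tfrac{\delta_i}{h}\,ds .$$
We then use Cauchy--Schwarz on the average over $i$, Minkowski's integral inequality in $s$, and translation invariance of the $L^2$ norm. This gives the deterministic bound
$$\|\widehat g_h-\widetilde g_h^R\|_{L^2}^2\le \frac{1}{n^2h^{2d}}\Big(\sum_i\|\delta_i\|\,h^{d/2-1}\|\nabla K\|_{L^2}\Big)^2\le \frac{\|\nabla K\|_{L^2}^2}{nh^{d+2}}\sum_{i}\|\delta_i\|^2 .$$
On $\cE_{n,D}$, Theorem~\ref{thm: spectral recovery}(i) gives
$$\sum_i\|\delta_i\|^2=\|\widehat M-M_0R\|_F^2\lesssim n(1/D+\log n/n).$$
This yields the bound $1/(Dh^{d+2})+\log n/(nh^{d+2})$. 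Because this bound holds pointwise on the event, no conditioning subtleties arise here.

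\textbf{Rate and main obstacle.} Summing the three bounds gives the displayed risk. With $\varepsilon=1/D+\log n/n$, the choice $h\asymp\varepsilon^{1/(2\beta+d+2)}$ balances $h^{2\beta}$ against $\varepsilon h^{-(d+2)}$. The remaining term $1/(nh^d)\le \varepsilon h^{-(d+2)}$ is dominated, since $h\le 1$ and $1/n\le\varepsilon$. The only delicate point is the variance term: $R$ depends on all the data, so the i.i.d.\ variance calculation cannot be applied directly to $\widetilde g_h^R$. We handle this by the rotation-invariance reduction above, which is where spherical symmetry of $K$ is essential.
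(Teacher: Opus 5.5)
Your proposal is correct and follows essentially the same route as the paper. It uses the same bias/variance/score-substitution decomposition. The bias is handled by Plancherel with the order-$\beta$ condition, and the variance by the unconditional i.i.d.\ computation once $R$ is removed via spherical symmetry. The substitution term is bounded by a mean-value argument combined with $\sum_i\|\delta_i\|^2=\|\widehat M-M_0R\|_F^2$ from Theorem~\ref{thm: spectral recovery}(i). The differences are cosmetic: you bound the substitution term with Minkowski in $L^2$ followed by Cauchy--Schwarz instead of the paper's pointwise Jensen step, and your identity $\|\widetilde g_h^R-\overline g_h^R\|_{L^2}=\|\widetilde g_h^I-\overline g_h^I\|_{L^2}$ makes explicit what the paper handles only informally when it says it ignores $R$.
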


\begin{proof}
    Conditioned on $\cE_{n,D}$, we have the rotation $R$, hence we ignore it in the proof (replace $\hat{\eta}_i$ with $R\hat{\eta}_i$, noting that rotations preserve Euclidean distances, and $K$ is spherically symmetric). Write $\hat{\eta}_i = \eta_i + \delta_i$.  Then, we have
    $$\hat{g}_h(\eta) = \frac{1}{n}\sum_{i=1}^n K_h(\eta - \eta_i - \delta_i).$$
    Define the following kernel-smoothed version of $g_0$
    \begin{align*}
        g_h(\eta) &= (K_h \star g_0) (\eta) = \int K_h(\eta-z)g_0(z) dz.
    \end{align*}
    We start with the basic decomposition (pointwise):
    \begin{align*}
        \hat{g}_h(\eta)-g_0(\eta) = \underbrace{g_h(\eta) - g_0(\eta)}_{R_1\text{ (bias)}} + \underbrace{\frac{1}{n}\sum_{i=1}^n K_h(\eta-\eta_i) - g_h(\eta)}_{R_2 \text{ (variance)}} + \underbrace{\hat{g}_h(\eta) - \frac{1}{n}\sum_{i=1}^n K_h(\eta-\eta_i)}_{R_3\text{ (perturbation)}}.
    \end{align*}
    Triangle inequality yields
    $$\|\hat{g}-g_0\|_{L^2}^2 \lesssim \|R_1\|_{L^2}^2 + \|R_2\|_{L^2}^2 + \|R_3\|_{L^2}^2.$$
    We bound the bias term pointwise. The perturbation term is bounded in expectation conditioned on $\cE_{n,D}$. The variance term is bounded in expectation as well, but we use $1_{\cE_{n,D}}\leq 1$ to convert to an unconditional expectation, so that $\eta_i\sim g_0$ iid could be used to bound this term.

    \medskip
    \noindent
    \textbf{Bias term: }For the bias term, we have $\|R_1\|_{L^2}^2 = \int ((K_h\star g_0)(\eta) - g_0(\eta))^2 d\eta$. By Parseval's identity, we have
    $$\|R_1\|_{L^2}^2 = (2\pi)^{-d}\int_{\bbR^d} |\varphi_{K_h\star g_0}(s) - \varphi_{g_0}(s)|^2 ds \lesssim \int_{\bbR^d} |\varphi_{g_0}(s)|^2|\varphi_K(hs) - 1|^2 ds.$$
    Now, by definition of the smoothness order of the kernel $K$, we have
    $$\|R_1\|_{L^2}^2 \lesssim \int |\varphi_{g_0}(s)|^2 \|hs\|^{2\beta} ds = h^{2\beta} \int \|s\|^{2\beta}|\varphi_{g_0}(s)|^2 ds \lesssim h^{2\beta}.$$
    
    \medskip
    \noindent
    \textbf{Variance term: } Next, we focus on the variance term. Let $\tilde{g}_h(\eta) = (1/n)\sum_i K_h(\eta - \eta_i)$. Under $\bbP_0^n$, $\eta_1,\dots,\eta_n$ are i.i.d. from $g_0$. Hence, $\bbE_0[\tilde{g}_h(\eta)]=g_h(\eta)$. Thus, $\|\tilde{g}_h - g_h\|_{L^2}^2=\int (\tilde{g}_h(\eta) - \bbE_0[\tilde{g}_h(\eta)])^2 d\eta$. Under the (unconditional) expectation, this becomes $\bbE[\|\tilde{g}_h - g_h\|_{L^2}^2]=\int \Var(\tilde{g}_h(\eta)) d\eta$. Now,
    \begin{align*}
        \bbE_0[\|R_2\|_{L^2}^2 &\mid \cE_{n,D}] \leq \frac{1}{\bbP_0(\cE_{n,D})} \bbE_0[\|R_2\|_{L^2}^2] \\
        &\leq \frac{1}{1-c/n} \int \Var\left(\tilde{g}_h(\eta)\right) d\eta\\
        &\leq \frac{n}{n-c}  \int \frac{1}{n} \bbE_0[K_h(\eta - \eta_1)^2] d\eta \\
        &\leq  \frac{c'}{n}\, \bbE_0 \left[\int K_h(\eta - \eta_1)^2 d\eta\right] \\
        &\leq \frac{1}{nh^d} \int K(u)^2 du \lesssim \frac{1}{nh^d}.
    \end{align*}

    \medskip
    \noindent
    \textbf{Perturbation term: } Finally, we consider the perturbation term. Use mean value theorem and Cauchy Schwarz inequality to get
    $$|K_h(\eta-\eta_i-\delta_i) - K_h(\eta-\eta_i)| \leq |\delta_i| \int_0^1 \|\nabla K_h(\eta-\eta_i - t\delta_i)\|dt.$$

    Averaging over $i$ and squaring,
    \begin{align*}
        |R_3(\eta)| &\leq \frac1n \sum_{i=1}^n |\delta_i|\int_0^1 \|\nabla K_h(\eta-\eta_i - t\delta_i)\|dt \\
        \Longrightarrow |R_3(\eta)|^2 &\leq \frac1n \sum_{i=1}^n |\delta_i|^2 \left(\int_0^1 \|\nabla K_h(\eta-\eta_i - t\delta_i)\|dt\right)^2 \\
        &\leq \frac1n \sum_{i=1}^n |\delta_i|^2 \int_0^1 \|\nabla K_h(\eta-\eta_i-t\delta_i)\|^2 dt,
    \end{align*}
    where we used Jensen's inequality over the sum and over the integral in the last two steps. 
    Integrating over $\eta$, exchanging the integrals and using the translation invariance of $L^2$, we get
    \begin{align*}
        \|R_3\|_{L^2}^2 &= \int_{\bbR^d} |R_3(\eta)|^2 d\eta\\
        &\leq \frac{1}{n}\sum_i |\delta_i|^2 \int_{0}^1 \int_{\bbR^d} \|\nabla K_h(\eta-\eta_i-t\delta_i)\|^2 d\eta \, dt \\
        &= \frac{1}{n}\sum_i |\delta_i|^2 \int_0^1 \int_{\bbR^d} \|\nabla K_h(\eta)\|^2 d\eta \, dt \\
        &= \frac{1}{n}\sum_{i=1}^n |\delta_i|^2 \|\nabla K_h \|_{L^2}^2.
    \end{align*}
    Note that $K_h(\eta) = h^{-d}K(\eta/h)$, so $\nabla K_h(\eta) = h^{-(d+1)} \nabla K(\eta/h)$. Hence, 
    $$\|\nabla K_h\|_{L^2}^2 = h^{-2(d+1)}\int \|\nabla K(u/h)\|^2 du= h^{-2(d+1)} h^d \int \|\nabla K(u)\|^2 du = h^{-(d+2)}\|\nabla K\|_{L^2}^2.$$
    Thus, we get the bound
    $$\|R_3\|_{L^2}^2 \leq \frac{1}{h^{d+2}} \|\nabla K\|_{L^2}^2 \left(\frac{1}{n}\sum_{i=1}^n |\delta_i|^2\right).$$
    Conditioned on $\cE_{n,D}$, we have $(1/n)\sum_{i} |\delta_i|^2 \leq \tilde{\epsilon}_{n,D}^2$ from Theorem \ref{thm: spectral recovery}(i) where $\tilde{\epsilon}_{n,D}=1/\sqrt{D} + \sqrt{(\log n)/n}$. Thus, we have
    $$\bbE_0[\|R_3\|_{L^2}^2 \mid \cE_{n,D}] \lesssim \frac{\tilde{\epsilon}_{n,D}^2}{h^{d+2}}.$$
    Combining the three parts, the conclusion follows.
\end{proof}

\subsubsection*{Proof of Theorem \ref{thm: new kde}}
\begin{proof}[Proof of Theorem \ref{thm: new kde}]\label{proof of theorem new kde}
    Recall $g_0^R(x) =g_0(Rx)$ and
    $$\hat{g}_h(x) = \frac{1}{nh^d} \sum_{i=1}^n K\left(\frac{\hat{\eta}_i -x}{h}\right).$$
    Define
    \begin{align*}
        \tilde{g}_h(x) &= \frac{1}{nh^d}\sum_{i=1}^n K\left(\frac{R^\top \eta_i - x}{h}\right) &\text{(oracle KDE)}\\
        \bar{g}_h(x) &= \int_{\bbR^d} \frac{1}{h^d} K\left(\frac{y-x}{h}\right)g_0^R(y) dy = (K_h \star g_0^R)(x) &\text{(smoothed rotated density)}
    \end{align*}
    where $R=\bar{V}_0\bar{R}\hat{R}$ (as defined earlier). Also, recall from \eqref{eq: structured score expansion}
    $$\delta_i=\hat{\eta}_i - R^\top \eta_i = \hat{R}^\top \xi_i + \Delta_i, \quad \max_i \|\Delta_i\| \lesssim \frac{\sqrt{\log (nD)}}{D} + \frac{\log n}{\sqrt{n}},$$
    with $\xi_i = \sqrt{n}S_0^{-1}V_0^\top E_0^\top P_0^\perp e_i$. We use the same decomposition as used in the proof of Theorem \ref{thm: weak kde g0}:
    \begin{equation}
        \hat{g}_h(x) - g_0^R(x) = \underbrace{\left(\bar{g}_h(x) - g_0^R(x)\right)}_{R_1(x)} + \underbrace{\left(\tilde{g}_h(x) - \bar{g}_h(x)\right)}_{R_2(x)} + \underbrace{\left(\hat{g}_h(x) - \tilde{g}_h(x)\right)}_{R_3(x)}.
    \end{equation}
    This gives
    $$\|\hat{g}_h - g_0^R\|_{L^2}^2 \lesssim \|R_1\|_{L^2}^2 + \|R_2\|_{L^2}^2 + \|R_3\|_{L^2}^2.$$
    The first term (bias) can be bounded pointwise by $h^{2\beta}$, exactly as in the proof of Theorem \ref{thm: weak kde g0}. Also, the second term (variance) can also be bounded in the conditional expectation by $1/nh^d$ in the same fashion as in Theorem \ref{thm: weak kde g0} by first moving to the unconditional expectation and removing the rotation $R$ by using the fact that the kernel $K$ is spherically symmetric. We now focus on the last term (perturbation). This term is
    $$R_3(x) = \frac{1}{nh^d}\sum_{i=1}^n \left[K\left(\frac{\hat{\eta}_i - x}{h}\right) - K\left(\frac{R^\top \eta_i - x}{h}\right)\right].$$
    Let $z_i(x) = (R^\top\eta_i - x)/h$ be the unperturbed center, then the perturbed center becomes
    $$\frac{\hat{\eta}_i - x}{h} = z_i(x) + \frac{\delta_i}{h}.$$
    Using Taylor expansion for $K:\bbR^d\to \bbR$ up to the second order about $z=z_i(x)$ with $\nu=\delta_i/h$ with the integral form of the third-order remainder, we have 
    $$K(z+\nu) =K(z) + \nabla K(z)^\top \nu +\frac12\nu^\top\nabla^2 K \nu + \frac12\int_0^1 (1-t)^2\nabla^3K(z+t\nu)[\nu,\nu,\nu] dt.$$
    Substituting this back into the expression for $R_3$, we decompose it into three distinct components:
    \begin{align*}
        R_3(x) &= R_{3,1}(x) + R_{3,2}(x) + R_{3,\rm rem}(x)
    \end{align*}
    corresponding to the linear, quadratic and third-order remainder terms, where 
    \begin{equation}
        \begin{aligned}
            R_{3,1}(x) &= \frac{1}{nh^{d+1}}\sum_{i=1}^n \nabla K\left(\frac{R^\top \eta_i -x}{h}\right)^\top (\hat{R}^\top\xi_i + \Delta_i) \\
            R_{3,2}(x) &= \frac{1}{2nh^{d+2}}\sum_{i=1}^n (\hat{R}^\top\xi + \Delta_i)^\top \nabla^2 K\left(\frac{R^\top \eta_i -x}{h}\right) (\hat{R}^\top \xi_i + \Delta_i) \\
            R_{3,\rm rem}(x) &= \frac{1}{2nh^{d+3}}\sum_{i=1}^n \int_0^1 (1-t)^2 \nabla^3 K\left(z_i(x) + t\frac{\delta_i}{h}\right)[\delta_i, \delta_i, \delta_i] dt.
        \end{aligned}
    \end{equation}

    \medskip
    \noindent
    \textbf{Decoupling the random rotation:} For each of the terms, we are interested in bounded $\|R_{3,p}\|_{L^2}^2=\int_{\bbR^d} |R_{3,p}(x)|^2dx$ for $p\in\{1,2,\rm rem\}$. We apply a change of variable $x=R^\top y$, noting that the Jocabian determinant satisfies $|\det(R^\top)|=1$ and the domain of integration remains the entire space $\bbR^d$. Under this change of variables,
    $$z_i(R^\top y) = \frac{R^\top\eta_i - R^\top y}{h} = R^\top \left(\frac{\eta_i-y}{h}\right) = R^\top u_i(y).$$
    Since the kernel is spherically symmetric, we have $K(R^\top u)=K(u)$, which leads to $\nabla K(R^\top u) = R^\top \nabla K(u)$ and $\nabla^2 K(R^\top u) = R^\top \nabla^2 K(u) R$. Applying this transformation to $R_{3,1}$ and $R_{3,2}$, we get
    \begin{align*}
        \|R_{3,1}\|_{L^2}^2 &= \int_{\bbR^d} \left(\frac{1}{nh^{d+1}} \sum_{i=1}^n \nabla K\left(\frac{\eta_i - y}{h}\right)^\top\left(R\hat{R}^\top\xi_i + R\Delta_i\right)\right)^2 dy, \\
        \|R_{3,2}\|_{L^2}^2 &= \int_{\bbR^d} \left[\frac{1}{2nh^{d+2}}\sum_{i=1}^n \left(R\hat{R}^\top\xi_i + R\Delta_i\right)^\top \nabla^2 K\left(\frac{\eta_i - y}{h}\right) \left(R\hat{R}^\top\xi_i + R\Delta_i\right)\right]^2 dy.
    \end{align*}

    \textbf{General strategy:} In each of the following steps, we will be dealing with one of the terms in the above expansion, expanding and upper bounding the corresponding integral, each taking the general form $\bbE_0[\int f(y, \omega) dy \mid \cE_{n,D}]$, where $f\geq 0$ is some random function (randomness arising from $M_0$ and $E_0$) of $y$. We wish to exploit stochasticity (e.g. $\eta_i'$s are i.i.d. from $g_0$ and $\eta_i'$s are unconditionally independent of $E_0$), which are not necessarily true under $\cE_{n,D}$. However, we also want to use the use concentrations such as $s_{\min} (M_0)\asymp \sqrt{n}$, which is not true unconditionally. To resolve this, using $\bbP_0(\cE_{n,D})\geq 1-c/n$ (and hence $\geq 1/2$ for sufficiently large $n$) we first bound
    \begin{equation}\label{eq: conditional to indicator}
        \bbE_0[I \mid \cE_{n,D}] = \frac{\bbE_0[I \ind(\cE_{n,D})]}{\bbP_0(\cE_{n,D})} \leq 2 \bbE_0\left[\int f(y;\omega) dy \ind(\cE_{n,D})\right].
    \end{equation}
    where $I$ is the integral. Next, decompose $\cE_{n,D}=\cE_\eta\cap \cE_{\rm noise}$, where $\cE_\eta\in \sigma(M_0)$ is the score-only good event (on which $\|M_0\|\asymp \sqrt{n}$) and $\cE_{\rm noise}$ is the event containing noise conditions (such as bounds for $\|E_0\|$). This implies $\ind(\cE_{n,D})\leq \ind(\cE_{\eta})$ for all $(M_0, E_0)$. As such, using $f\geq 0$, we get $f(y)\ind(\cE_{n,D})\leq f(y) \ind(\cE_\eta)$. By the tower property of expectation,
    \begin{equation}\label{eq: conditional to nested}
        \bbE_0\left[\int f(y;\omega)dy\mid \cE_{n,D}\right] \leq 2\bbE_\eta \left[\bbE_{E_0}\left[\int f(y;\omega) dy\mid M_0 \right] \ind(\cE_\eta)\right]
    \end{equation}
    where the expectation inside $\bbE_{E_0}$ is conditional on the realization of $M_0$, with expectation over the randomness from $E_0$, while the outer expectation is with respect to the randomness from $M_0$ itself. Conditional on $M_0$, the quantities depending only on the latent scores are deterministic, while $E_0$ retains its original Gaussian law by independence of $M_0$ and $E_0$ under $\bbP_0^n$. Thus, \eqref{eq: conditional to nested} allows us to combine score-only concentration on $\cE_\eta$ with exact conditional moments of the noise. We use \eqref{eq: conditional to nested} only when no subsequent bound requires a condition from $\cE_{\rm noise}$. When a pointwise noise-good event bound is needed, we retain the indicator $\ind(\cE_{n,D})$, use \eqref{eq: conditional to indicator}, apply the necessary bound first and then drop or enlarge the indicator. It is worth noting that since $f\geq 0$, Tonelli's theorem guarantees that we can interchange the integral and expectation in \eqref{eq: conditional to nested}, and hence, also have
    \begin{equation}\label{eq: conditional to nested interchanged}
        \bbE_0\left[\int f(y;\omega)dy\mid \cE_{n,D}\right] \leq 2\bbE_\eta \left[\int \bbE_{E_0}\left[f(y;\omega) \mid M_0 \right]dy \ind(\cE_\eta)\right].
    \end{equation}

    We will also repeatedly use the following translation-scaling identities. For every fixed $\eta\in \bbR^d$, we have
    \begin{equation}\label{eq: kernel derivative translation identities}
        \begin{aligned}
            \int \left\|\nabla K\left(\frac{\eta - y}{h}\right) \right\|^2 dy &= h^d\|\nabla K\|^2_{L^2},\\
            \int \left\|\nabla^2 K\left(\frac{\eta - y}{h}\right)\right\|_F^2 dy &= h^d\|\nabla^2 K\|_{F,L^2}^2
        \end{aligned}
    \end{equation}
    making the substitution $z = (\eta - y)/h\iff dy=h^d dz$, where for matrix valued function $H$, $\|H\|_{F,L^2}^2 = \int \|H(z)\|_{F}^2dz$.

    Consequently, for any $\eta_i, \eta_j\in\bbR^d$, Cauchy-Schwarz in $L^2(dy)$ yields the following inequalities.
    \begin{equation}\label{eq: kernel derivative cross cs}
    \begin{aligned}
    \int_{\mathbb R^d}
    \left\|
    \nabla K\left(\frac{\eta_i-y}{h}\right)
    \right\|
    \left\|
    \nabla K\left(\frac{\eta_j-y}{h}\right)
    \right\|dy
    &\leq
    h^d\|\nabla K\|_{L^2}^2,\\
    \int_{\mathbb R^d}
    \left\| \nabla^2K\left(\frac{\eta_i-y}{h}\right)
    \right\|_F
    \left\| \nabla^2K\left(\frac{\eta_j-y}{h}\right)
    \right\|_Fdy
    &\leq
    h^d\|\nabla^2K\|_{F,L^2}^2.
    \end{aligned}
\end{equation}
These bounds hold path-wise and therefore require neither independence (between $\eta_i$ and $\eta_j$), nor expectation.
However, using stochasticity of $\eta$ helps obtaining sharper bounds. To illustrate, consider the term $T_1=\bbE_\eta [\int \|\nabla K((\eta_1 -y)/h)\|\|\nabla K((\eta_2-y)/h)\| dy]$, where the expectation is over $\eta_1,\eta_2\sim g_0$ i.i.d. One option is to use the inequality in \eqref{eq: kernel derivative cross cs} to get $T_1 \leq h^{d}\|\nabla K\|_{L^2}^2$. Alternately, we could interchange expectation and integral to get
    $$T_1 = \int \bbE_\eta \left[\|\nabla K((\eta_1 -y)/h)\|\|\nabla K((\eta_2-y)/h)\|\right] dy = \int \psi_h(y)^2 dy,$$
    using the iid nature of $\eta_1,\eta_2$, where $\psi_h(y) = \bbE_\eta \left[\left\|\nabla K\left(\frac{\eta - y}{h}\right)\right\|\right]$. Now, note that $\psi_h(y) = (f_h \star g_0)(y)$ where $f_h(u)=\|\nabla K(u/h)\|$ (which is an even function). Using Young's convolution inequality in $L^1\star L^2 \to L^2$, we get
    $$\|\psi_h\|_{L^2} \leq \|f_h\|_{L^1} \|g_0\|_{L^2}.$$
    Since $\|f_h\|_{L^1} = h^d \|\nabla K\|_{L^1}$, we get
    \begin{equation}\label{eq: cross term linear young}
        \bbE_\eta\left[\int \left\|\nabla K\left(\frac{\eta_1-y}{h}\right)\right\| \left\|\nabla K\left(\frac{\eta_2 - y}{h}\right)\right\| dy\right] \leq h^{2d}\|\nabla K\|_{L^1}^2 \|g_0\|_{L^2}^2.
    \end{equation}
    Note that compared to the simpler $h^d\|\nabla K\|_{L^2}^2$ bound, we get a sharper scaling $h^{2d}$ utilizing the iid structure of $\eta$. This is useful when summing over $O(n^2)$ off-diagonal pairs. The $O(n)$ diagonal terms do not admit this factorization and are instead controlled using \eqref{eq: kernel derivative translation identities}. Similarly, we have the Hessian analogue of the above display
    \begin{equation}\label{eq: cross term hessian young}
        \bbE_\eta\left[\int \left\|\nabla^2 K\left(\frac{\eta_1-y}{h}\right)\right\|_F \left\|\nabla^2 K\left(\frac{\eta_2 - y}{h}\right)\right\|_F dy\right] \leq h^{2d}\|\nabla^2 K\|_{F,L^1}^2 \|g_0\|_{L^2}^2.
    \end{equation}
    
    \medskip
    \noindent
    \textbf{Step 1: Bounding the first-order term.} Firstly, note $R\hat{R}^\top\xi = (\bar{V}_0\bar{R}\hat{R})\hat{R}^\top \xi_i = \Theta_0 \xi_i$, where $\Theta_0 = \bar{V}_0\bar{R}$ is a fixed orthogonal matrix. So, we split the linear term into two parts
    $$\|R_{3,1}\|_{L^2}^2 \lesssim \underbrace{\int \left(\frac{1}{nh^{d+1}}\sum_i \nabla K(u_i(y))^\top \Theta_0\xi_i\right)^2 dy}_{I_{3,1}^{\rm lead}} + \underbrace{\int \left(\frac{1}{nh^{d+1}}\sum_i \nabla K(u_i(y))^\top R\Delta_i\right)^2 dy}_{I_{3,1}^{\Delta}}.$$

    \medskip
    \noindent
    \textbf{Step 1(a): Leading term} Applying \eqref{eq: conditional to nested interchanged}, we have
    $$\bbE_0[I_{3,1}^{\rm lead}\mid \cE_{n,D}] \leq 2\bbE_\eta\left[\bbE_{E_0}[I_{3,1}^{\rm lead}\mid M_0] \ind(\cE_\eta)\right].$$
    For a fixed realization of $M_0$, the randomness in $\xi_i$ arises only from $E_0$. By the definition of $\xi$ as $\xi_i = \sqrt{n}S_0^{-1}V_0^\top E_0^\top P_0^\perp e_i$, we have
    $$\bbE_{E_0}[\xi_i\xi_j^\top\mid M_0] = (P_0^\perp)_{ij} \Sigma_\xi, \quad \Sigma_\xi = nS_0^{-1} V_0^\top \Sigma_0 V_0 S_0^{-1},$$
    where we used the identity $\bbE_{E_0}[E_0^\top ab^\top E_0]=(a^\top b) \Sigma_0$ for deterministic $a,b\in\bbR^n$. Moreover, on $\cE_{\eta}$, $$\|\Sigma_\xi\|\leq n \|S_0^{-1}\|^2 \|\Sigma_0\| \lesssim 1/D$$
    using $\|S_0^{-1}\|\asymp (nD)^{-1/2}$ (concentration of $M_0$ under $\cE_\eta$ and assumption on $\Lambda_0$) and $\|\Sigma_0\|\lesssim 1$ (by assumption). Let $v_i(y) = \Theta_0^\top \nabla K(u_i(y))$ and $V(y)=[v_1(y),\dots,v_n(y)]$. Then
    $$\bbE_{E_0}[I_{3,1}^{\rm lead} \mid M_0] = \frac{1}{n^2 h^{2d+2}} \int_{\bbR^d} \tr\left(\Sigma_\xi V(y) P_0^\perp V(y)^\top\right) dy.$$
    Since $\Sigma_\xi$ is positive semi-definite and $0\preceq P_0^\perp \preceq I_n$, 
    $$\tr\left(\Sigma_\xi V(y) P_0^\perp V(y)^\top\right) \leq \|\Sigma_\xi\| \tr\left(V(y)P_0^\perp V(y)^\top\right) \leq \|\Sigma_\xi\| \sum_{i=1}^n \|\nabla K(u_i(y))\|^2,$$
    where we also used orthogonality of $\Theta_0$. Using the bound for $\|\Sigma_\eta\|$ on $\cE_{\eta}$, followed by \eqref{eq: kernel derivative translation identities}, 
    \begin{align*}
        \bbE_0[I_{3,1}^{\rm lead} \mid \cE_{n,D}] &\lesssim \frac{1}{Dn^2 h^{2d+2}} \sum_{i=1}^n \bbE_\eta\left[\int \|\nabla K(u_i(y))\|^2 dy \right] \\
        &= \frac{1}{Dn^2h^{2d+2}} \sum_{i=1}^n h^d \|\nabla K\|_{L^2}^2 \lesssim \frac{1}{nDh^{d+2}}.
    \end{align*}
    Thus,
    \begin{equation}
        \bbE_0[I_{3,1}^{\rm lead} \mid \cE_{n,D}] \lesssim \frac{1}{nDh^{d+2}}.
    \end{equation}

    \medskip
    \noindent
    \textbf{Step 1(b): Residual term} By Theorem \ref{thm: spectral recovery}(ii), on $\cE_{n,D}$, 
    $$\max_i \|R\Delta_i\| = \max_i \|\Delta_i\| \lesssim \frac{\sqrt{\log(nD)}}{D} + \frac{\log n}{\sqrt{n}}=:\delta_{n,D}.$$
Consequently, pointwise of $\cE_{n,D}$,
$$I_{3,1}^{\Delta} \lesssim \frac{\delta_{n,D}^2}{n^2 h^{2d+2}} \int \left(\sum_{i=1}^n \left\|\nabla K\left(\frac{\eta_i - y}{h}\right)\right\|\right)^2 dy.$$
Using Equation \eqref{eq: conditional to indicator}, applying the preceding bound on $\cE_{n,D}$ and subsequently dropping the indicator, we obtain
$$\bbE_0[I_{3,1}^{\Delta} \mid \cE_{n,D}] \lesssim \frac{\delta_{n,D}^2}{n^2 h^{2d+2}}\sum_{i,j=1}^n \bbE_\eta \left[\int \left\|\nabla K\left(\frac{\eta_i - y}{h}\right)\right\| \left\|\nabla K\left(\frac{\eta_j - y}{h}\right)\right\| dy\right].$$

Focus on the sum. For the $n$ diagonal terms, use identity \eqref{eq: kernel derivative translation identities} pointwise, and for $i\neq j$, since $\eta_i$ and $\eta_j$ are iid from $g_0$, use inequality \eqref{eq: cross term linear young} to bound the expectation. There are $n$ diagonal terms in the sum, each $\lesssim h^d$, and $O(n^2)$ off-diagonal terms, each $\lesssim h^{2d}$ in expectation. Combining with the above display, we get
\begin{equation}
    \bbE_0[I_{3,1}^\Delta \mid \cE_{n,D}] \lesssim \frac{\delta_{n,D}^2}{n^2h^{2d+2}}\left(nh^d + n^2 h^{2d}\right) = \delta_{n,D^2}\left(\frac{1}{nh^{d+2}} + \frac{1}{h^2}\right).
\end{equation}

Combining the leading and residual terms, we obtain the following bound for the linear term in 
\begin{equation}\label{eq: kde R_31}
    \bbE_0\left[\|R_{3,1}\|_{L^2}^2\mid \cE_{n,D}\right] \lesssim \frac{1}{nDh^{d+2}} + \delta_{n,D}^2 \left(\frac{1}{nh^{d+2}} + \frac{1}{h^2}\right).
\end{equation}
    
    \medskip
    \noindent
    \textbf{Step 2: Bounding the quadratic term.}
    Recall (and use $R\hat{R}^\top=\Theta_0\in\cO(d)$):
    $$\|R_{3,2}\|_{L^2}^2 = \int_{\bbR^d} \left[\frac{1}{2nh^{d+2}}\sum_{i=1}^n \left(\Theta_0\xi_i + R\Delta_i\right)^\top \nabla^2 K\left(\frac{\eta_i - y}{h}\right) \left(\Theta_0\xi_i + R\Delta_i\right)\right]^2 dy$$
    and we want to bound $\bbE_0[\|R_{3,2}\|_{L^2}^2\mid \cE_{n,D}]$. Firstly, let $H_i(y)=\nabla^2 K(u_i(y))$. Expanding the integrand, we get
    \begin{equation}
        \|R_{3,2}\|_{L^2}^2 \lesssim I_{3,2}^{\rm lead} + I_{3,2}^{\rm cross} + I_{3,2}^{\Delta}
    \end{equation}
    where
    \begin{align*}
        I_{3,2}^{\rm lead} &= \int_{\bbR^d} \left(\frac{1}{2nh^{d+2}} \sum_{i=1}^n \xi_i^\top \Theta_0^\top H_i(y) \Theta_0\xi_i\right)^2dy \\
        I_{3,2}^{\rm cross} &= \int_{\bbR^d} \left(\frac{1}{nh^{d+2}}\sum_{i=1}^n \xi_i^\top\Theta_0^\top H_i(y) R\Delta_i\right)^2 dy \\
        I_{3,2}^{\Delta} &= \int_{\bbR^d} \left(\frac{1}{2nh^{d+2}} \sum_{i=1}^n \Delta_i^\top R^\top H_i(y) R\Delta_i\right)^2 dy.
    \end{align*}

    We bound these separately.

    \medskip
    \noindent
    \textbf{Step 2(a): Leading term} Using \eqref{eq: conditional to nested}, we have $\bbE_0[I_{3,2}^{\rm lead}\mid \cE_{n,D}] \leq 2 \bbE_\eta[\bbE_{E_0}[I_{3,2}^{\rm lead}\mid M_0] \ind(\cE_\eta)]$. For fixed $M_0$, set $A_i(y)=\Theta_0^\top H_i(y) \Theta_0$. So, we get
    \begin{equation}\label{eq: I_32 lead 1}
        \bbE_{E_0}[I_{3,2}^{\rm lead}\mid M_0] = \frac{1}{4n^2h^{2d+4}}\sum_{i,j=1}^n \int \bbE_{E_0} \left[(\xi_i^\top A_i(y) \xi_i)(\xi_j^\top A_j(y) \xi_j)\mid M_0\right] dy.
    \end{equation}
    Conditional on $M_0$, the vectors $\xi_i$ are jointly centered Gaussian with conditional covariance
    $$\Sigma_{ij}=\Cov(\xi_i, \xi_j\mid M_0) = (P_0^\perp)_{ij}\Sigma_\xi, \quad \Sigma_\xi = nS_0^{-1} V_0^\top \Sigma_0 V_0 S_0^{-1}.$$
    As established above, on $\cE_\eta$, $\|\Sigma_\xi\|\lesssim 1/D$. By Isserlis' formula,
    $$\bbE_{E_0}[(\xi_i^\top A_i\xi_i)(\xi_j^\top A_j \xi_j)\mid M_0] = \underbrace{\tr(A_i \Sigma_{ii})\tr(A_j \Sigma_{jj})}_{\text{first Isserlis contribution}} + \underbrace{2\tr(A_i\Sigma_{ij} A_j \Sigma_{ji})}_{\text{second Isserlis contribuion}}.$$
    Thus, there are two contributions. For the first one, since
    $$|\tr(A_i\Sigma_\xi)|\leq \|\Sigma_\xi\|\|A_i\|_* \leq d\|\Sigma_\xi\|\|A_i\| = d\|\Sigma_\xi\|\|H_i\|,$$
    and $(P_0^\perp)_{ii}\leq 1$, on $\cE_\eta$, the corresponding double sum is bounded by
    $$\frac{C}{D^2}\left(\sum_{i=1}^n \|H_i(y)\|\right)^2.$$
    At this point, we invoke the diagonal/off-diagonal calculation established previously. The diagonal terms contribute $O(nh^d)$ using \eqref{eq: kernel derivative translation identities}, while independence of $\eta_i,\eta_j$ for $i\neq j$, together with \eqref{eq: cross term hessian young}, gives off-diagonal contribution $O(n^2h^{2d})$. Consequently, after dividing by $n^2h^{2d+4}$
    $$\text{first Isserlis contribution towards }  \bbE_0[I_{3,2}^{\rm lead}\mid \cE_{n,D}]\lesssim \frac{1}{D^2}\left(\frac{1}{nh^{d+4}} + \frac{1}{h^4}\right).$$

    Now, for the second Isserlis contribution, use
    $$|\tr(A_i\Sigma_\xi A_j \Sigma_\xi)| \leq \|\Sigma_\xi\|^2 \|A_i\|_F \|A_j\|_F = \|\Sigma_\xi\|^2 \|H_i\|_F \|H_j\|_F.$$
    Thus, we need to control 
    $$\sum_{i,j} (P_0^\perp)_{ij}^2 \int \|H_i(y)\|_F \|H_j(y)\|_F dy.$$
    By the pointwise bound in \eqref{eq: kernel derivative cross cs} used for every realization of $M_0$ and every $i,j$, we get
    \begin{align*}
        \sum_{i,j} (P_0^\perp)_{ij}^2 \int \|H_i(y)\|_F\|H_j(y)\|_F dy &\leq h^d \|\nabla^2 K\|_{F, L^2}^2 \sum_{i,j} (P_0^\perp)_{ij}^2 \\
        &= h^d \|\nabla^2 K\|_{F,L^2}^2 \|P_0^\perp \|_F^2 \\
        &= h^d \|\nabla^2 K\|_{F,L^2}^2 \tr(P_0^\perp) \lesssim nh^d.
    \end{align*}
    Note here we cannot use \eqref{eq: cross term hessian young} since both $P_0^\perp$ and $\|H_i\|_F\|H_j\|_F$ depend on $M_0$ and these two parts are not independent. Thus, after dividing appropriately
    $$\text{second Isserlis contribution towards } \bbE_0[I_{3,2}^{\rm lead}\mid \cE_{n,D}] \lesssim \frac{1}{D^2n^2 h^{2d+4}}(nh^d) = \frac{1}{D^2 nh^{d+4}},$$
    which is already contained in the bound from the first contribution. Hence,
    \begin{equation}\label{eq: kde new I_32 lead}
        \bbE_0[I_{3,2}^{\rm lead}\mid \cE_{n,D}] \lesssim \frac{1}{D^2}\left(\frac{1}{nh^{d+4}} + \frac{1}{h^4}\right).
    \end{equation}

    \medskip
    \noindent
    \textbf{Step 2(b): Residual term.} Moving on to $I_{3,2}^{\Delta}$. On $\cE_{n,D}$, we have
    $$|\Delta_i^\top R^\top H_i(y) R\Delta_i|\leq \|R\Delta_i\|^2\|H_i(y)\| \leq \delta_{n,D}^2 \|H_i(y)\|.$$
    Substituting this into the $L^2$ integral, we have
    $$I_{3,2}^\Delta \ind(\cE_{n,D})\leq \frac{\delta_{n,D}^4}{n^2h^{2d+4}}\int_{\bbR^d} \left(\sum_{i=1}^n \left\|\nabla^2 K\left(\frac{\eta_i - y}{h}\right)\right\|\right)^2dy$$
    where we dropped the indicator $\ind\leq1$ at the end. We have already bounded the unconditional expectation of the remaining part (in Step 2(a)). This gives
    \begin{equation}\label{eq: kde new I_32 Delta}
        \bbE_0[I_{3,2}^\Delta\mid \cE_{n,D}] \lesssim \delta_{n,D}^4\left(\frac{1}{nh^{d+4}} + \frac{1}{h^4}\right).
    \end{equation}

    \medskip
    \noindent
        \textbf{Step 2(c): Cross term.} Finally, focus on $I_{3,2}^{\rm cross}$. Note, here $\Delta_i$ (which depends on $\hat{R}$, which is further a function of noise $E_0$) and $\xi_i$ are not independent. We instead use a pointwise Cauchy-Schwarz inequality on the good event
    $$|\xi_i^\top \Theta_0^\top H_i(y) R\Delta_i| \leq \|\xi_i\|\|H_i(y)\|\|\Delta_i\|\leq \delta_{n,D}\|\xi_i\|\|H_i(y)\|, \quad\text{ on }\cE_{n,D}.$$
    This gives
    \begin{align}\label{eq: kde new I_32 step}
        I_{3,2}^{\rm cross} .\ind(\cE_{n,D})\leq \frac{\delta_{n,D}^2}{n^2h^{2d+4}}\int_{\bbR^d}\left(\sum_{i=1}^n \|\xi_i\|\|H_i(y)\|\right)^2 dy .\ind(\cE_{n,D}).
    \end{align}
    Now, use the fact $\bbE_0[J\ind(\cE_{n,D})]\leq \bbE_\eta[\bbE_{E_0}[J\mid M_0] \ind(\cE_{\eta})]$ by first using $\cE_{n,D}\subseteq \cE_\eta$ and then the tower property of expectation, where $J\geq 0$ is the quantity in the above display without the indicator. Thus, the problem boils down to bounding $S_1 = \bbE_{E_0}\left[\int \left(\sum_{i=1}^n \|\xi_i\|\|H_i(y)\|_F\right)^2 dy\mid M_0\right].$

    Conditional on $M_0$, the hessians $H_i$ are deterministic while $\xi_i$ are jointly Gaussian with covariance $\Cov(\xi_i,\xi_j\mid M_0) = (P_0^\perp)_{ij}\Sigma_\xi$, as used earlier. In particular,
    $$\bbE_{E_0}[\|\xi_i\|^2\mid M_0] = (P_0^\perp)_{ii}\tr(\Sigma_\xi)\leq \tr(\Sigma_\xi) \lesssim 1/D$$
    where the last inequality uses the indicator of $\cE_\eta$. Therefore, by conditional Cauchy-Schwarz, 
    $$\bbE_{E_0}[\|\xi_i\|\|\xi_j\|\mid M_0] \lesssim 1/D$$
    on $\cE_\eta$.
    
    Now, expanding the sum $S_1$, recalling that $H_i$ are deterministic conditional on $M_0$ and interchanging integral and sum, we get that on $\cE_\eta$
    \begin{align*}
        \bbE_{E_0}\left[\int \left(\sum_{i=1}^n \|\xi_i\|\|H_i(y)\|_F\right)^2 dy\mid M_0\right] &= \sum_{i,j} \int \|H_i(y)\|_F \|H_j(y)\|_F \bbE_{E_0}[\|\xi_i\|\|\xi_j\|\mid M_0] dy \\
        &\lesssim \frac{1}{D} \sum_{i,j} \int \|H_i(y)\|_F\|H_j(y)\|_F dy.
    \end{align*}
    Hence,
    \begin{align*}
        \bbE_0[I_{3,2}^{\rm cross}\mid \cE_{n,D}] &\lesssim \frac{\delta_{n,D}^2}{Dn^2h^{2d+4}} \bbE_\eta \left[\sum_{i,j} \int \|H_i(y)\|_F\|H_j(y)\|_F dy\right],
    \end{align*}
    where we finally drop $\ind(\cE_\eta)\leq 1$. Now, the remaining expectation is completely unconditional in the scores. Hence, we can control the diagonal terms by \eqref{eq: kernel derivative translation identities}, and the off-diagonal by \eqref{eq: cross term hessian young} to obtain
    $$\bbE_\eta \left[\sum_{i,j} \int \|H_i(y)\|_F\|H_j(y)\|_F dy\right]\lesssim nh^d + n^2 h^{2d}.$$
    Consequently,
    \begin{equation}\label{eq: kde new I_32 cross}
        \bbE_0[I_{3,2}^{\rm cross} \mid \cE_{n,D}] \lesssim \frac{\delta_{n,D}^2}{D} \left(\frac{1}{nh^{d+4}} + \frac{1}{h^4}\right).
    \end{equation}

    \medskip
    Combining the bounds in Equations \eqref{eq: kde new I_32 lead}, \eqref{eq: kde new I_32 Delta} and \eqref{eq: kde new I_32 cross} and since $\delta_{n,D}^2/D \lesssim 1/D^2 + \delta_{n,D}^4$, we get the following bound for the quadratic term.
    \begin{equation}\label{eq: kde R_32}
        \bbE_0\left[\|R_{3,2}\|_{L^2}^2 \mid \cE_{n,D}\right] \lesssim \left(\frac{1}{D^2} +  \delta_{n,D}^4\right) \left(\frac{1}{nh^{d+4}} + \frac{1}{h^4}\right).
    \end{equation}

    \medskip
    \noindent
    \textbf{Step 3: Bounding the remainder term. }
    For $R_{3,\rm rem}$, use $|\nabla^3 K(z)[a,a,a]| \leq \|\nabla^3K(z)\|_{\rm op} \|a\|^3$. By Minkowski, first over $t$ and then over $i$, we have
    $$\|R_{3,\rm rem}\|_{L^2} \leq \frac{1}{2nh^{d+3}}\sum_{i=1}^n \|\delta_i\|^3 \int_0^1 (1-t)^2 \left[\int \left\|\nabla^3 K\left(z_i(x) + t\frac{\delta_i}{h}\right)\right\|_{\rm op}^2 dx\right]^{1/2} dt.$$
    Now, note that for every $i, t, \eta_i, \delta_i$
    $$z_i(x) + t\frac{\delta_i}{h} = \frac{R^\top \eta_i +t\delta_i - x}{h}.$$
    Therefore, by translation and scaling
    $$\int\left\|\nabla^3 K\left(\frac{R^\top \eta_i +t\delta_i - x}{h}\right)\right\|_{\rm op}^2 dx = h^d \int \|\nabla^3 K(u)\|_{\rm op}^2 du \lesssim h^d$$
    Crucially, the right-hand side is free of $\eta_i, \delta_i$ and $t$. Thus, 
    $$\|R_{3,\rm rem}\|_{L^2} \lesssim \frac{h^{d/2}}{2nh^{d+3}}\sum_i \|\delta_i\|^3 \int_0^1 (1-t)^2 dt \Longrightarrow \|R_{3,\rm rem}\|_{L^2}^2 \lesssim \frac{1}{h^{d+6}} \max_i \|\delta_i\|^6.$$
    On $\cE_{n,D}$, by combining Theorem \ref{thm: spectral recovery}(ii) and Lemma \ref{lemma: bound xi_i}, we have 
    $$\max_i \|\delta_i\|\leq \max_i \|\xi_i\| + \max_i \|\Delta_i\| \lesssim \sqrt{\frac{\log n}{D}} + \delta_{n,D}.$$
     So, we get
    \begin{equation}\label{eq: kde R_3 rem}
        \bbE_0\left[\|R_{3,\rm rem}\|_{L^2}^2 \mid \cE_{n,D}\right] \lesssim \frac{1}{h^{d+6}}\left(\frac{\log^3 n}{D^3} + \delta_{n,D}^6\right).
    \end{equation}

    \medskip
    \noindent
    \textbf{Final step: Summarizing all the bounds together.} Now, combining all the bounds, we have
    \begin{equation}
        \bbE_0[\|\hat{g}_h - g_0^R\|_{L^2}^2 \mid \cE_{n,D}] \lesssim h^{2\beta} + \frac{1}{nh^d} + \tilde{R} =:R(h)
    \end{equation}
    where, by Equations \ref{eq: kde R_31}, \ref{eq: kde R_32} and \ref{eq: kde R_3 rem},  the perturbation bound $\tilde{R}$ is decomposed into the following three terms
    \begin{equation}
        \begin{aligned}
            \tilde{R} &\leq R_1 + R_2 + R_{\rm rem} \\
            R_1 &\lesssim \frac{1}{nDh^{d+2}} + \delta_{n,D}^2\left(\frac{1}{nh^{d+2}} + \frac{1}{h^2}\right) \\
            R_2 &\lesssim \left(\frac{1}{D^2} + \delta_{n,D}^4\right)\left(\frac{1}{nh^{d+4}} + \frac{1}{h^4}\right) \\
            R_{\rm rem} &\lesssim \frac{1}{h^{d+6}}\left(\frac{\log^3 n}{D^3} +  \delta_{n,D}^6\right)
        \end{aligned}
    \end{equation}
    where $\delta_{n,D} = \frac{\sqrt{\log(nD)}}{D} + \frac{\log n}{\sqrt{n}}$. 

    Combining all the parts, we get the following upper bound
    \begin{equation}
        \begin{aligned}
            R(h) = h^{2\beta} &+ \frac{1}{nh^d} \\
            &+  \frac{1}{nDh^{d+2}} + \delta_{n,D}^2\left(\frac{1}{nh^{d+2}} + \frac{1}{h^2}\right) \\
            &+\left(\frac{1}{D^2} + \frac{\delta_{n,D}^2}{D} + \delta_{n,D}^4\right)\left(\frac{1}{nh^{d+4}} + \frac{1}{h^4}\right) \\
            &+ \frac{\log^3n}{D^3h^{d+6}} + \frac{\log^6 n}{n^3h^{d+6}},
        \end{aligned}
    \end{equation}
    where $\delta_{n,D}=\frac{\sqrt{\log(nD)}}{D} + \frac{\log n}{\sqrt{n}}$ as before. 
\end{proof}

\begin{lemma}[Concentration of $\xi_i$]\label{lemma: bound xi_i}
    For any $\varepsilon\in(0,1)$, with joint probability  at least $1-\varepsilon-\bbP_0(\cE_\eta^c)$, we have
    $$\max_i \|\xi_i\|^2 \leq \frac{C_0}{D}\left(d + 2\sqrt{d\log(n/\varepsilon)} + 2\log(n/\varepsilon)\right)$$
    for some deterministic large constant $C_0>0$. Plugging $\varepsilon=1/n$, we have $\max_i \|\xi_i\|^2 \lesssim (\log n)/D$ with joint probability at least $1-c/n$ for some constant $c>0$.
\end{lemma}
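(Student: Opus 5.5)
Conditionally on $M_0$, each $\xi_i=\sqrt n S_0^{-1}V_0^\top E_0^\top P_0^\perp e_i$ is a centered Gaussian vector in $\bbR^d$ with covariance $(P_0^\perp)_{ii}\Sigma_\xi\preceq \Sigma_\xi$. The matrix $\Sigma_\xi=nS_0^{-1}V_0^\top\Sigma_0V_0S_0^{-1}$ is $\sigma(M_0)$-measurable. The plan is to bound $\|\Sigma_\xi\|$ on the score-only event $\cE_\eta$, apply a Gaussian quadratic-form tail bound conditionally on $M_0$ for each $i$, take a union bound over $i\in[n]$, and finally integrate out $M_0$ on $\cE_\eta$.

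\textbf{Step 1.} On $\cE_\eta$ we have $s_d(M_0)\asymp\sqrt n$, and Assumption~\ref{assume: loadings} gives $s_d(\Lambda_0)\gtrsim\sqrt D$. Hence $s_d(S_0)=s_d(Y_0)\gtrsim\sqrt{nD}$. Together with $\|\Sigma_0\|\le C_\sigma$, this yields
$$\|\Sigma_\xi\|\le n\|S_0^{-1}\|^2\|\Sigma_0\|\le C_0/D$$
for a deterministic constant $C_0$ that depends only on the assumption constants.

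\textbf{Step 2.} Fix $M_0\in\cE_\eta$ and write $\xi_i\overset{d}{=}\Sigma_i^{1/2}Z$ with $Z\sim N_d(0,I_d)$ and $\Sigma_i=(P_0^\perp)_{ii}\Sigma_\xi$. Then
$$\|\xi_i\|^2=Z^\top\Sigma_iZ\le\|\Sigma_i\|\,\|Z\|^2\le (C_0/D)\,\chi^2_d .$$
The Laurent--Massart inequality gives $\bbP(\chi^2_d\ge d+2\sqrt{dx}+2x)\le e^{-x}$. Taking $x=\log(n/\varepsilon)$ gives a per-$i$ conditional failure probability of at most $\varepsilon/n$, and the union bound over $i$ gives conditional failure probability at most $\varepsilon$, uniformly over $M_0\in\cE_\eta$. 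The bound only needs the marginal law of each $\xi_i$, so the dependence across $i$ through $P_0^\perp$ does not matter.

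\textbf{Step 3.} Write $B$ for the bad event that the stated bound on $\max_i\|\xi_i\|^2$ fails. Integrating the conditional bound over $M_0$,
$$\bbP_0(B)\le \bbE_\eta\big[\bbP(B\mid M_0)\ind(\cE_\eta)\big]+\bbP_0(\cE_\eta^c)\le\varepsilon+\bbP_0(\cE_\eta^c).$$
Setting $\varepsilon=1/n$ gives $\log(n/\varepsilon)=2\log n$, so $\max_i\|\xi_i\|^2\lesssim(d+\log n)/D\lesssim(\log n)/D$, because $d$ is fixed. The failure probability is $1/n+\bbP_0(\cE_\eta^c)\lesssim 1/n$, using Lemma~\ref{lemma: subgaussian concentration}.

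The main point of care is Step 1: the constant $C_0$ must be deterministic, which is why the argument conditions on $\cE_\eta\in\sigma(M_0)$ and does not use noise events. Doing so keeps the exact conditional Gaussian law of $\xi_i$ available. Beyond this, the argument is routine.
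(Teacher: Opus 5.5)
Your proposal is correct and follows essentially the same route as the paper. Both arguments use the conditional Gaussian law of $\xi_i$ given $M_0$ with covariance $(P_0^\perp)_{ii}\Sigma_\xi$, the deterministic bound $\|\Sigma_\xi\|\le C_0/D$ on the score-only event $\cE_\eta$, a Gaussian quadratic-form tail at level $t=\log(n/\varepsilon)$, a union bound over $i$, and integration over $M_0$ restricted to $\cE_\eta$. The only cosmetic difference is that you first dominate $\|\xi_i\|^2$ by $\|\Sigma_i\|\,\chi^2_d$ and apply the unweighted Laurent--Massart bound. The paper instead applies the weighted (Hanson--Wright) form and then bounds $\tr(\Sigma_\xi)$, $\tr(\Sigma_\xi^2)$ and $\|\Sigma_\xi\|$ separately, which gives the identical threshold.
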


\begin{proof}
    Finally, recall $\xi_i = \sqrt{n}S_0^{-1}V_0^\top E_0^\top P_0^\perp e_i$. Note that $\bbE_0[\xi_i]=\bbE_0[\xi_i\mid\bM_0]=0$. Further, $\Cov(\xi_i\mid\bM_0)=\bbE_0[\xi_i\xi_i^\top\mid\bM_0]=(P_0^\perp)_{ii}\Sigma_\xi$, where $\Sigma_\xi = nS_0^{-1}V_0^\top\Sigma_0V_0S_0^{-1}$. The $\xi_i'$s are not conditionally independent, but on $\cE_{\eta}$ (see Step 1(a) in the proof of Theorem \ref{thm: new kde}), we have $\|\Cov(\xi_i\mid \bM_0)\|\lesssim 1/D$. Hence, $\bbE_0[\|\xi_i\|^2\mid \bM_0]\leq \tr(\Sigma_\xi)\lesssim 1/D$ on $\cE_\eta$. Thus, 
    $$\xi_i\mid \bM_0 \sim N_d(0, (P_0^\perp)_{ii}\Sigma_\xi).$$
    Applying Hanson-Wright inequality for Gaussian vectors, we have
    $$\bbP_0\left(\|\xi_i\|^2 - \bbE_0[\|\xi_i\|^2\mid \bM_0] \geq 2\sqrt{\tr\left(((P_0^\perp)_{ii}\Sigma_\xi)^2\right)t} + 2\|(P_0^\perp)_{ii}\Sigma_\xi\|t \mid \bM_0\right)\leq e^{-t}.$$
    Using $(P_0^\perp)_{ii}\leq 1$ and the $\bbE_0[\|\xi_i\|^2\mid \bM_0]\leq \tr(\Sigma_\xi)$, we get
    $$\bbP_0\left(\|\xi_i\|^2 - \tr(\Sigma_\xi) \geq 2\sqrt{\tr(\Sigma_\xi^2)t} + 2\|\Sigma_\xi\|t\mid \bM_0\right) \leq e^{-t}.$$

    Since the event $\cE_\eta$ depends solely on $\bM_0$, its indicator $\ind(\cE_\eta)$ is $\bM_0-$measureable. Thus, application of union bound (and integrating out the randomness of $\bM_0$) gives
    $$\bbP\left(\left\{\max_i \|\xi_i\|^2 \geq \tr(\Sigma_\xi) + 2\sqrt{\tr(\Sigma_\xi^2)t} + 2\|\Sigma_\xi\|t\right\} \cap \cE_\eta\right) \leq \bbE_\eta\left[\sum_{i=1}^n e^{-t}\ind(\cE_\eta)\right].$$
    Setting $t=\log(n/\varepsilon)$ fixes the total failure probability of the union bound to $\varepsilon$. Now, on $\cE_{\eta}$, we can use $\|\Sigma_\xi\|\leq C_0/D$, hence $\tr(\Sigma_\xi)\leq dC_0/D$ and $\tr(\Sigma_\xi^2)\leq dC_0^2/D^2$. Plugging these back, we get that with probability 
    $$\bbP_0\left(\left\{\max_i \|\xi_i\|^2 \geq \frac{C_0}{D}[d + 2\sqrt{d\log(n/\varepsilon)} + 2\log(n/\varepsilon)] \right\} \cap \cE_\eta\right) \leq \varepsilon.$$
    Hence, the result follows. 

    \textbf{Note:} Plugging in $\varepsilon=1/n$, we get that $\max_i\|\xi_i\| \lesssim \sqrt{\frac{\log n}{D}}$ holds with probability at least $1-c/n$ for some $c>0$.
\end{proof}

\subsection{Proofs for Section \ref{subsec: latent density optimized rate} (Bandwidth selection and optimized rate)}\label{app: bandwidth}

Recall from Theorem \ref{thm: new kde},
\begin{equation}
        \begin{aligned}
            \cR_{n,D}(h) = h^{2\beta} &+ \frac{1}{nh^d} \\
            &+  \frac{1}{nDh^{d+2}} + \rho_{n,D}^2\left(\frac{1}{nh^{d+2}} + \frac{1}{h^2}\right) \\
            &+\left(\frac{1}{D^2} + \rho_{n,D}^4\right)\left(\frac{1}{nh^{d+4}} + \frac{1}{h^4}\right) \\
            &+ \frac{\log^3n}{D^3h^{d+6}} + \frac{\rho_{n,D}^6}{h^{d+6}}.
        \end{aligned}
    \end{equation}
    where $\rho_{n,D}=\sqrt{\log (nD)} / D + (\log n)/\sqrt{n}$. The following corollary shows the optimal bandwidth choice and the corresponding risk.

    \begin{corollary}\label{cor: optimal bandwidth 1}
        Define the following
        \begin{equation}
            \begin{aligned}
                h_{\rm or} &:= n^{-1/(2\beta+d)},\\
                h_{D,1} &:= D^{-1/(\beta+2)},\\
                h_{D,2} &:= D^{-1/(\beta+1)}\log^{\frac{1}{2(\beta+1)}} (nD), \\
                h_{D,3} &:= D^{-3/(2\beta+d+6)} \log^{\frac{3}{2\beta+d+6}} n,\\
                h_{n,1} &:= n^{-1/(2\beta+2)}\log^{\frac{1}{\beta+1}} n, \\
                h_{n,2} &= n^{-3/(2\beta+d+6)} \log^{\frac{6}{2\beta+d+6}} n,\\
                h_{D,4} &:= D^{-6/(2\beta+d+6)} \log^{\frac{3}{2\beta+d+6}} (nD).
            \end{aligned}
        \end{equation}
        Then it is sufficient to choose
        $$h^* \asymp \max\left\{h_{\rm or}, \; h_{D,j}, h_{n,i}:j\in[4], i\in[2]\right\}$$
        For this choice, $\cR_{n,D}(h^*) \lesssim (h^*)^{2\beta}.$ 
    \end{corollary}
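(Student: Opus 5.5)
The plan is a term-by-term domination argument. Every summand of $\cR_{n,D}(h)$ other than $h^{2\beta}$ has the form $c_{n,D}h^{-s}$, with $s\in\{d,d+2,2,d+4,4,d+6\}$ and $c_{n,D}$ a deterministic sequence. Such a term is monotone decreasing in $h$. So if $\bar h$ solves $c_{n,D}\bar h^{-s}=\bar h^{2\beta}$, that is $\bar h=c_{n,D}^{1/(2\beta+s)}$, then every $h\gtrsim\bar h$ satisfies $c_{n,D}h^{-s}\le c_{n,D}\bar h^{-s}=\bar h^{2\beta}\lesssim h^{2\beta}$. Hence it suffices to show that for each of the finitely many terms, the balancing bandwidth is bounded by a constant multiple of one of the listed quantities $h_{\rm or},h_{D,j},h_{n,i}$. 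Then $h^*$, the maximum, dominates all of them simultaneously, and summing the finitely many bounds gives $\cR_{n,D}(h^*)\lesssim(h^*)^{2\beta}$.

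The terms are handled in the following order.

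\begin{enumerate}
\item The oracle term $(nh^d)^{-1}$ balances at $h_{\rm or}$.
\item Expand $\rho_{n,D}^2\lesssim \log(nD)/D^2+\log^2n/n$ and the analogous bounds for $\rho_{n,D}^4$ and $\rho_{n,D}^6$. This splits every $\rho$-term into a pure $D$-part and a pure $n$-part.
\item For the $h^{-2}$ and $h^{-4}$ (non-$n$-divided) terms:
  \begin{itemize}
  \item $D^{-2}h^{-4}$ balances at $D^{-2/(2\beta+4)}=h_{D,1}$.
  \item $\log(nD)D^{-2}h^{-2}$ balances at $h_{D,2}$.
  \item $\log^2 n\, n^{-1}h^{-2}$ balances at $n^{-1/(2\beta+2)}(\log n)^{1/(\beta+1)}=h_{n,1}$.
  \item $\rho^4h^{-4}$ splits into $\log^2(nD)D^{-4}h^{-4}$ and $\log^4n\,n^{-2}h^{-4}$. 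Their balance points are $D^{-2/(\beta+2)}\log^{1/(\beta+2)}(nD)$ and $n^{-1/(\beta+2)}\log^{2/(\beta+2)}n$, which are dominated by $h_{D,2}$ and $h_{n,1}$ respectively up to constants, by comparing exponents.
  \end{itemize}
\item For the third-order terms:
  \begin{itemize}
  \item $\log^3n\,D^{-3}h^{-(d+6)}$ balances exactly at $h_{D,3}$.
  \item $\rho^6h^{-(d+6)}$ gives $\log^3(nD)D^{-6}h^{-(d+6)}$, balancing at $h_{D,4}$.
  \item It also gives $\log^6 n\,n^{-3}h^{-(d+6)}$, balancing at $h_{n,2}$.
  \end{itemize}
\item The $n$-divided terms are $(nDh^{d+2})^{-1}$, $\rho^2(nh^{d+2})^{-1}$ and $(D^{-2}+\rho^4)(nh^{d+4})^{-1}$. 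Each is the product of two pieces already treated. For example, $(nDh^{d+2})^{-1}\le\tfrac12[(nh^d)^{-2}h^{d}+(Dh^{2})^{-2}h^{-d}]$ is awkward, so it is cleaner to use the weighted geometric mean: with $x=(nh^d)^{-1}$ and $y=D^{-1}h^{-2}$, the term is $xy\le \max(x,y)^2$. Since $x\lesssim h^{2\beta}$, the term is $\lesssim h^{2\beta}\cdot\max(x,y)$, and $\max(x,y)\lesssim1$ once $h$ dominates $h_{\rm or}$ and $D^{-1/2}$. Similarly $\rho^2(nh^{d+2})^{-1}=(nh^d)^{-1}\cdot\rho^2h^{-2}\lesssim h^{2\beta}\cdot h^{2\beta}$. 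Likewise $(D^{-2}+\rho^4)(nh^{d+4})^{-1}=(nh^d)^{-1}(D^{-2}+\rho^4)h^{-4}\lesssim h^{4\beta}$. So these reduce to the previous cases, provided $h^*\lesssim 1$ and $D^{-1}h^{-2}\lesssim1$. The latter follows from $h^*\ge h_{D,1}=D^{-1/(\beta+2)}\ge D^{-1/2}$.
\end{enumerate}

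The main obstacle is the bookkeeping of logarithms and mixed $n$–$D$ terms arising from $\rho_{n,D}$. The cross terms in $(a+b)^k$ are absorbed by $(a+b)^k\lesssim a^k+b^k$. The only genuinely unbalanced cases are those where a computed balance point ($D^{-2/(\beta+2)}$-type) differs from a listed $h$. There the task is to check the exponent comparison, for example $2/(\beta+2)\ge 1/(\beta+1)$, and that log powers are harmless; this is where care is needed. Finally, $h^*\to0$ because every listed $h$ vanishes under Assumption~\ref{assume: dimensions}. This justifies the products in the last step and completes the proof.
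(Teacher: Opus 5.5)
Your proposal is correct and follows essentially the same term-by-term domination as the paper. In both, each decreasing summand of $\cR_{n,D}(h)$ is balanced against $h^{2\beta}$, $h^*$ is the maximum of the resulting bandwidths, and the $n^{-1}$-weighted terms are absorbed using $D(h^*)^2\gtrsim 1$ (from $h^*\ge h_{D,1}\ge D^{-1/2}$) together with the control of $\rho_{n,D}^2/h^2$. The differences are cosmetic: you compare the $\rho_{n,D}^4h^{-4}$ balance points with $h_{D,2}$ and $h_{n,1}$ directly, where the paper uses $\rho_{n,D}\lesssim h^*$ to get $\rho_{n,D}^4/h^4\lesssim \rho_{n,D}^2/h^2$. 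Your detour through $\max(x,y)^2$ is also unnecessary, since $xy\lesssim h^{2\beta}y$ with $y=D^{-1}h^{-2}\le 1$ is exactly the paper's step.
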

    \begin{proof}
        Starting with the form of $\cR_{n,D}(h)$ from Theorem \ref{thm: new kde}, use $(a+b)^k\lesssim a^k +b^k$ for $k=2,4,6$ on $\rho_{n,D}$. Write $a_{n,D}=\sqrt{\log(nD)}/D$ and $b_n=(\log n)/\sqrt{n}$, and $q=2\beta+d+6$.

        First, we argue that the terms carrying an additional factor $n^{-1}$ do not introduce new bandwidth scales. Because $h^* \geq h_{D,1}$, $h^* \geq D^{-1/(\beta+2)}\geq D^{-1/2}$ for $D$ sufficiently large. Hence
        $$\frac{1}{nD(h^*)^{d+2}}\leq \frac{1}{n(h^*)^d}, \quad \frac{1}{nD^2(h^*)^{d+4}} \leq \frac{1}{n(h^*)^d}.$$

        Likewise, since $h^*\geq a_{n,D}^{1/(\beta+1)}=h_{D,2}$ and $h^*\geq b_n^{1/(\beta+1)}=h_{n,1}$ and $a_{n,D}, b_n = o(1)$, we have $a_{n,D} \lesssim h^*$ and $b_n\lesssim h^*$. Consequently,
        $$\frac{\rho_{n,D}^2}{n(h^*)^{d+2}} \lesssim \frac{1}{n(h^*)^d}, \quad \frac{\rho_{n,D}^4}{n(h^*)^{d+4}} \lesssim \frac{1}{n(h^*)^d}.$$
        Moreover, $\rho_{n,D}^4/(h^*)^4 \lesssim \rho_{n,D}^2/(h^*)^2$ as $\rho_{n,D} \lesssim h^*$. Therefore, it is enough to control the terms
        $$h^{2\beta}, \; \frac{1}{nh^d}, \; \frac{1}{D^2h^4},\; \frac{a_{n,D}^2}{h^2}, \; \frac{b_n^2}{h^2}, \; \frac{\log^3 n}{D^3 h^{d+6}}, \; \frac{a_{n,D}^6}{h^{d+6}}, \; \frac{b_n^6}{h^{d+6}}.$$
        Balancing each decreasing term with $h^{2\beta}$ gives precisely the bandwidths in the corollary. Since $h^*$ is at least each of these balancing bandwidths, every term in the preceding display is bounded by a constant multiple of $(h^*)^{2\beta}$, which implies $R(h^*)\lesssim (h^*)^{2\beta}$.
    \end{proof}

\begin{proof}[Proof of Corollary \ref{cor: optimized latent kde rate}]
    Recall the notations from Corollary \ref{cor: optimal bandwidth 1}. Since $\log n = D^{o(1)}$ by assumption, and automatically $\log D = D^{o(1)}$, we have $\log (nD) = \log n + \log D = D^{o(1)}$, where $a_n = b_n^{o(1)}$ means that $\log a_n = o(\log b_n)$. Hence, every fixed power of $\log n$ and $\log(nD)$ are $D^{o(1)}$. Also, every fixed power of $\log n$ is automatically $n^{o(1)}$. Also, recall 
    \begin{equation}
        \begin{aligned}
            h_D^*:=\max \{h_{D,1}, h_{D,2},h_{D,3}, h_{D,4}\}, \quad h_n^*:=\max\{h_{\rm or}, h_{n,1}, h_{n,2}\}.
        \end{aligned}
    \end{equation}

    These together imply $h_{D,2}/h_{D,1}\to 0$ and $h_{D,4}/h_{D,3}\to 0$. Comparing $h_{D,1}$ and $h_{D,3}$, the polynomial parts tell us
    $$\frac{1}{\beta+2}\lesseqgtr \frac{3}{2\beta+d+6} \iff \beta \gtreqless d.$$
    Since a smaller exponent gives a larger bandwidth, we have the form of $h_D^*$ as given. Note that at $\beta=d$, the polynomial parts match, but $h_{D,3}$ with the log term dominates.

    For the $n-$dependent scales, first suppose $d\geq 3$. Then, $1/(2\beta+d)\leq 1/(2\beta+2)$ and $1/(2\beta+d) \leq 3/(2\beta+d+6)$, which follows from the fact that $2\beta+d\geq 3$. Thus $h_{\rm or}$ is the largest $n-$dependent bandwidth for $d\geq 3$.

    Next, suppose $d\leq 2$. Then $1/(2\beta+2)\leq 1/(2\beta+d)$. So $h_{n,1}$ is at least as large as $h_{\rm or}$. Finally 
    $$\frac{1}{2\beta+2} \lesseqgtr \frac{3}{2\beta+d+6} \iff 4\beta \gtreqless d.$$
    This gives the $h_n^*$, and the remaining conclusions follow immediately.
\end{proof}

The following corollary supplies the phase diagram corresponding to the last corollary in the main text.

\begin{corollary}\label{cor: optimal bandwidth 3}
    Under the conditions of the preceding corollary, define the critical $D-$ thresholds as:
    \begin{equation}
        \begin{aligned}
            D_c(n):=\begin{cases}
                n^{\frac{\beta+2}{2\beta+d}}, &d\geq 3, \beta > d,\\
                n^{\frac{2\beta+d+6}{6\beta+3d}}(\log n), &d\geq 3, \beta\leq d,\\
                n^{\frac{\beta+2}{2(\beta+1)}}(\log n)^{-\frac{\beta+2}{\beta+1}}, & d\leq 2, \beta>d,\\
                n^{\frac{2\beta+d+6}{6(\beta+1)}} (\log n)^{\frac{\beta-d-3}{3(\beta+1)}}, & d\leq 2, d/4 \leq \beta \leq d,\\
                n (\log n)^{-1}, &d\leq 2, 0<\beta<d/4.
            \end{cases}     
        \end{aligned}
    \end{equation}
    Denoting $q:=2\beta+d+6$,  the corresponding risk $\cR_{n,D}^*:=\cR_{n,D}(h^*)=(h^*)^{2\beta}$ satisfies the following:
    \begin{enumerate}[label={Case (\arabic*)}]
        \item If $d\geq 3, \beta > d$, then
        \begin{align*}
            \cR_{n,D}^* \asymp \begin{cases}
                n^{-2\beta/(2\beta+d)}, & D\gtrsim D_c(n), \\
                D^{-2\beta/(\beta+2)}, & D\lesssim D_c(n).
            \end{cases}
        \end{align*}
        \item If $d\geq 3$ and $\beta \leq d$, then
        \begin{align*}
            \cR_{n,D}^* \asymp \begin{cases}
                n^{-2\beta/(2\beta+d)}, & D\gtrsim D_c(n), \\
                D^{-6\beta/q}(\log n)^{6\beta/q}, & D\lesssim D_c(n).
            \end{cases}
        \end{align*}
        \item If $d\leq 2$ and $\beta > d$, then
        \begin{align*}
            \cR_{n,D}^* \asymp \begin{cases}
                n^{-\beta/(\beta+1)}(\log n)^{2\beta/(\beta+1)}, & D\gtrsim D_c(n), \\
                D^{-2\beta/(\beta+2)}, & D\lesssim D_c(n).
            \end{cases}
        \end{align*}
        \item If $d\leq 2$ and $d/4 \leq \beta \leq d$, then
        \begin{align*}
            \cR_{n,D}^* \asymp \begin{cases}
                n^{-\beta/(\beta+1)}(\log n)^{2\beta/(\beta+1)}, & D\gtrsim D_c(n), \\
                D^{-6\beta/q}(\log n)^{6\beta/q}, & D\lesssim D_c(n).
            \end{cases}
        \end{align*}
        \item If $d\leq 2$ and $0<\beta < d/4$, then
        \begin{align*}
            \cR_{n,D}^* \asymp \begin{cases}
                n^{-6\beta/q}(\log n)^{12\beta/q}, & D\gtrsim D_c(n), \\
                D^{-6\beta/q}(\log n)^{6\beta/q}, & D\lesssim D_c(n).
            \end{cases}
        \end{align*}
    \end{enumerate}
\end{corollary}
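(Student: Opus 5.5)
The plan is to start from Corollary \ref{cor: optimized latent kde rate}, which gives $\cR_{n,D}^*\lesssim (h^*)^{2\beta}$ with $h^*\asymp h_D^*\vee h_n^*$. The claimed two-sided relation $\cR_{n,D}^*\asymp(h^*)^{2\beta}$ needs a lower bound as well. For that I would note that $\cR_{n,D}(h)\geq h^{2\beta}$ trivially. Since $\cR_{n,D}^*$ is \emph{defined} as $\cR_{n,D}(h^*)$ at the specified $h^*$, we get $\cR_{n,D}^*\asymp (h^*)^{2\beta}=(h_D^*)^{2\beta}\vee(h_n^*)^{2\beta}$. The whole statement then reduces to determining which of $h_D^*$ and $h_n^*$ is larger in each case, and computing $D_c(n)$ as the crossover.

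The case split follows Corollary \ref{cor: optimized latent kde rate}. For $h_D^*$, use $D^{-1/(\beta+2)}$ when $\beta>d$ and $D^{-3/q}(\log n)^{3/q}$ when $\beta\le d$. For $h_n^*$, use $n^{-1/(2\beta+d)}$ when $d\ge3$, $n^{-1/(2\beta+2)}(\log n)^{1/(\beta+1)}$ when $d\le2,\beta\ge d/4$, and $n^{-3/q}(\log n)^{6/q}$ when $d\le 2,\beta<d/4$. Note that for $d\le 2$ and $\beta>d$ we automatically have $\beta\ge d/4$, so the five listed cases exhaust the combinations. Since $h_D^*$ is decreasing in $D$ and $h_n^*$ does not depend on $D$, there is a unique crossover $D$ solving $h_D^*=h_n^*$. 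For $D$ below it, $h_D^*$ dominates; above it, $h_n^*$ dominates. Raising the dominant bandwidth to the power $2\beta$ gives each displayed rate. For example, $(D^{-3/q}(\log n)^{3/q})^{2\beta}=D^{-6\beta/q}(\log n)^{6\beta/q}$ and $(n^{-3/q}(\log n)^{6/q})^{2\beta}=n^{-6\beta/q}(\log n)^{12\beta/q}$.

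The remaining work is solving for $D_c(n)$ in each case. Case (1): $D^{-1/(\beta+2)}=n^{-1/(2\beta+d)}$ gives $D=n^{(\beta+2)/(2\beta+d)}$. Case (2): $D^{-3/q}(\log n)^{3/q}=n^{-1/(2\beta+d)}$ gives $D=n^{q/(3(2\beta+d))}\log n$. Case (3): $D^{-1/(\beta+2)}=n^{-1/(2\beta+2)}(\log n)^{1/(\beta+1)}$ gives $D=n^{(\beta+2)/(2\beta+2)}(\log n)^{-(\beta+2)/(\beta+1)}$. Case (4): $D^{3/q}=n^{1/(2\beta+2)}(\log n)^{3/q-1/(\beta+1)}$ gives $D=n^{q/(6\beta+6)}(\log n)^{1-q/(3\beta+3)}$. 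The log exponent is $(3\beta+3-2\beta-d-6)/(3\beta+3)=(\beta-d-3)/(3\beta+3)$, as stated. Case (5): $D^{-3/q}(\log n)^{3/q}=n^{-3/q}(\log n)^{6/q}$ gives $D=n/\log n$.

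The main obstacle is the bookkeeping of logarithmic factors, not any analytic step. In particular, the claimed $\asymp$ should be read as holding with $h^*$ given exactly by the displayed formulas of Corollary \ref{cor: optimized latent kde rate}, and with the discarded scales $h_{D,2},h_{D,4}$ being of lower order. This uses $\log\log n=o(\log D)$, already invoked there. At the boundary $D\asymp D_c(n)$, both expressions agree up to constants because $h_D^*\asymp h_n^*$ there. I would remark on this to handle the overlap of the ``$\gtrsim$'' and ``$\lesssim$'' regimes.
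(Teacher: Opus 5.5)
Your proposal is correct and follows the paper's own argument: you take $h^*\asymp h_D^*\vee h_n^*$ from the preceding corollary, use that $h_D^*$ decreases in $D$ while $h_n^*$ does not depend on $D$, and equate the two to get $D_c(n)$ in each of the five regimes. Your crossover computations and rates match the stated ones, and your extra observation $\cR_{n,D}(h)\geq h^{2\beta}$ justifies the two-sided $\asymp$, which the paper only asserts by writing $\cR_{n,D}(h^*)=(h^*)^{2\beta}$.
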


\begin{proof}[Proof of Corollary \ref{cor: optimal bandwidth 3}]
    From the preceding Corollary, $h^*\asymp \max\{h_n^*, h_D^*\}$, with the forms of $h_n^*$ and $h_D^*$ given in that Corollary, for each of the five regimes here. Since $h_D^*$ decreases with $D$, the $n-$dependent bandwidth dominates when $D$ is larger than the value at which $h_D^*=h_n^*$, whereas the $D-$dependent bandwidth dominates when $D$ is smaller. We therefore obtain $D_c(n)$ by equating the appropriate expressions within each regime, which gives the results here.
\end{proof}

\subsection{Proofs for Section \ref{subsec: latent density lower bound} (Lower bound for latent density estimation)}

 The main idea is to perturb the Gaussian density by $\phi_d(x)\cos(Tx_1)$. This has two useful properties. First, 
$$|\phi_d(x)\cos(Tx_1)|\leq \phi_d(x),$$
so sufficiently small multiplicative perturbations retain Gaussian tails. So for sufficiently small $a>0$, 
$$\phi_d(x)\{1 \pm aT^{-\beta} \cos(T x_1)\} \geq 0.$$
The factor $T^{-\beta}$ controls its amplitude. We choose large $T\asymp \sqrt{D\log n}$.

Second, let us look at the effect of this perturbation when we pass from latent density to the observation density. Consider the oracle experiment $X = \eta + D^{-1/2} Z$. If $\eta \sim g$, then $X$ has density $q_D = g\star \phi_{D^{-1}I_d}.$ Therefore, if the two latent alternatives differ approximately by
$$g_{+,T} - g_{-,T} \approx 2aT^{-\beta} \phi_d(x) \cos (Tx_1)$$
then the corresponding observed densities differ by
$$q_{+,T} - q_{-,T} \approx 2a T^{-\beta} \left[\{\phi_d(\cdot) \cos(T \cdot_1)\} \star \phi_{D^{-1} I_d}\right].$$
To see how Gaussian convolution attenuates the oscillation, consider $d=1$, and suppose the perturbation is $\phi(\eta) \cos(T\eta)$. After adding Gaussian noise, the perturbation becomes exactly
$$\phi(x\mid 0, 1+1/D) \exp\left\{-\frac{T^2}{2(D+1)}\right\} \cos \left(\frac{DT}{D+1}x\right).$$
The key factor is $\exp\{-T^2/2(D+1)\}$, which for large $D$, is $\approx e^{-T^2/2D}$. Thus, convolution with Gaussian noise attenuates the oscillation exponentially. This is precisely the supersmooth deconvolution phenomenon responsible for the $D-$dependent lower bound.

Note that the raw oscillatory perturbation does not preserve mass and covariance exactly. Its defects, however, are exponentially small:
$$\int \phi_d(x) \cos (Tx_1) dx = e^{-T^2/2}$$
and second moment defects are at most polynomial in $T$ times $e^{-T^2/2}$. We use the following elementary fact.

\begin{lemma}\label{lemma: moment correction}
    There exist bounded, smooth, even functions $b_0,\dots,b_d$ depending only on $d$, such that for every sufficiently large $T$, one can choose coefficients $c_k(T)=O((1+T^2)e^{-T^2/2})$ for which
    $$F_T(x):= \cos(Tx_1) - \sum_{k=0}^d c_k(T) b_k(x)$$
    satisfies
    $$\bbE [F_T(X)] = 0, \quad \bbE [X_j^2 F_T(X)] = 0, \quad j=1,\dots, d.$$
    The functions may be chosen even in every coordinate, so $\bbE_0[X_j F_T(X)] = 0$ and all off-diagonal second moments vanish as well. Note all expectations are under $X\sim N(0,I_d)$.
\end{lemma}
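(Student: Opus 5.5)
The plan is to reduce the construction to a finite-dimensional linear system. Write $\phi_d$ for the standard Gaussian density, so all expectations are integrals against $\phi_d$. The constraints are $d+1$ linear functionals of $F_T$:
\[
\ell_0(f)=\bbE[f(X)], \qquad \ell_j(f)=\bbE[X_j^2 f(X)], \qquad j=1,\dots,d .
\]
Once we take $b_k$ even in every coordinate, the first moments $\bbE[X_jF_T]$ and the off-diagonal moments $\bbE[X_jX_kF_T]$, $j\neq k$, vanish automatically. Indeed, $\cos(Tx_1)$ is even in every coordinate, and so is $\phi_d$, so each such integrand is odd in some coordinate. Thus only $\ell_0,\dots,\ell_d$ need to be matched.

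First, compute the defects of the raw oscillation exactly using Gaussian Fourier identities. We have $\ell_0(\cos(T\cdot_1))=e^{-T^2/2}$ and $\ell_j(\cos(T\cdot_1))=e^{-T^2/2}$ for $j\ge2$, since these factor over coordinates. Differentiating $\bbE\cos(TX_1)=e^{-T^2/2}$ twice in $T$ gives
\[
\ell_1(\cos(T\cdot_1))=\bbE[X_1^2\cos(TX_1)]=(1-T^2)e^{-T^2/2}.
\]
So the defect vector $r(T)\in\bbR^{d+1}$ satisfies $\|r(T)\|\lesssim(1+T^2)e^{-T^2/2}$.

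Second, choose the fixed correction functions. A natural choice is $b_k(x)=\psi_k(x)$ with $\psi_0\equiv1$ and $\psi_j(x)=x_j^2\exp(-\|x\|^2/2)$, or any bounded smooth even functions (e.g.\ $b_k(x)=\exp(-\alpha_k\|x\|^2)$ times coordinate squares, multiplied by a bounded cutoff) for which the $(d+1)\times(d+1)$ Gram-type matrix $G_{mk}=\ell_m(b_k)$ is invertible. A concrete choice is $b_0\equiv1$ and $b_j(x)=\tanh^2$-type bounded even functions of $x_j$ alone, e.g.\ $b_j(x)=1-e^{-x_j^2}$. Then $G$ has first row $(1,\ell_0(b_1),\dots)$, and the entries involve $\ell_j(b_k)$, which separate into the cases $j=k$ and $j\neq k$. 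Invertibility of $G$ is where I expect the only genuine work. To handle it, I would check that $G=\alpha\mathbf 1\mathbf 1^\top+\gamma I$ on the $b_1,\dots,b_d$ block by symmetry, with $\gamma=\bbE[X_1^2b_1]-\bbE[X_1^2]\bbE[b_1]\ne0$. This holds by strict positive correlation of the increasing functions $X_1^2$ and $1-e^{-X_1^2}$ of $X_1^2$. The border row and column for $b_0$ then require a nonzero determinant of an explicit $2$-parameter arrow matrix. If this is awkward, I would simply replace $b_0$ by a constant and use the Schur complement. Because $G$ depends only on $d$, $\|G^{-1}\|$ is a constant.

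Finally, set $c(T)=G^{-1}r(T)$. Then $\ell_m(F_T)=r_m-(Gc)_m=0$ for all $m$, and $|c_k(T)|\le\|G^{-1}\|\|r(T)\|=O((1+T^2)e^{-T^2/2})$, as claimed. Smoothness, boundedness and coordinatewise evenness of the $b_k$ hold by construction, and the $b_k$ depend only on $d$.
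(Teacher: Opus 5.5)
Your proposal is correct and follows the same route as the paper: a fixed invertible $(d+1)\times(d+1)$ moment system is solved against the explicit defect vector $r(T)=e^{-T^2/2}(1,1-T^2,1,\dots,1)$, and coordinatewise evenness kills the first and off-diagonal moments. Your concrete choice $b_0\equiv 1$, $b_j(x)=1-e^{-x_j^2}$ is a more explicit substitute for the paper's localized bumps, and your hedging about the arrow matrix is unnecessary, since the Schur complement of $G_{00}=1$ is exactly $\gamma I_d$ with $\gamma=3^{-1/2}-3^{-3/2}>0$, so $\det G=\gamma^d\neq 0$.
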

\begin{proof}
    To see this, choose bounded even $b_0,\dots,b_d$ for which the $(d+1)\times (d+1)$ matrix
    $$\left(\bbE[b_k(X)], \bbE[X_1^2 b_k(X)],\dots, \bbE[X_d^2 b_k(X)]\right)_{k=0}^d$$
    is nonsingular (call this matrix $M$). Such functions can be obtained, for example, by choosing $d+1$ points $z_0,\dots,z_d$ such that the vectors $(1, z_{k1}^2,\dots,z_{kd}^2)$ are linearly independent, and choose sufficiently localized even $C^\infty$ bumps around their coordinatewise sign orbits. Solving the resulting fixed finite-dimensional linear system $M c(T) = r(T)$ gives the coefficients $c_k(T)$, where $$r(T)=(\bbE[\cos(TX_1)], \bbE[X_1^2 \cos(TX_1)],\dots, \bbE[X_d^2\cos(TX_1)]).$$ Note that $\bbE[X_j^2 \cos(TX_1)]$ is $(1-T^2)e^{-T^2/2}$ for $j=1$ and $e^{-T^2/2}$ for $j\neq 1$, while $\bbE[\cos(TX_1)]=e^{-T^2/2}$ (using the characteristic function of standard normal), which implies $r(T)=e^{-T^2/2}(1, 1-T^2,1,\dots,1)$ and consequently $\|r(T)\|\lesssim (1+T^2)e^{-T^2/2}$. Thus, $\|c(T)\|\leq \|M^{-1}\|\|r(T)\|\lesssim (1+T^2)e^{-T^2/2}$. Hence, for every $k$
    $$|c_k(T) |\lesssim (1+T^2) e^{-T^2/2}.$$
\end{proof}

Thus, 
\begin{equation}\label{eq: moment corrected oscillation bounded}
    \sup_x |F_T(x)| \leq C
\end{equation}
uniformly for large $T$, while the correction to $\cos(T x_1)$ is exponentially small.

\medskip
    \noindent
    \textbf{Proof for the lower bound for latent density estimation}

\begin{proof}[Proof of Theorem \ref{thm: latent lower bound}]
    Let $\Theta_{D,\beta}$ denote the full parameter space of triples $\theta=(g,\Lambda,\Sigma)$ satisfying the assumptions in the theorem. For the $n^{-2\beta/(2\beta+d)}$, we use a modification of the classical Assouad construction for nonparametric density estimation lower bounds, together with Fano's lemma. For the $(D\log n)^{-\beta}$ part, we use Le Cam's two point technique for the lower bound with the quotient metric $d_{\cO}^2(g_1, g_2) = \inf_{R\in \cO(d)} \|g_1 - g_2^R\|_{L^2}^2$.

    \medskip
    \noindent
    \textbf{Step 1: Reduction to an admissible oracle submodel}. A lower bound over any submodel of $\Theta_{D,\beta}$ is a lower bound for the full problem. We therefore fix $\Sigma=I_D$ and a known loading matrix $\Lambda_D$ satisfying $\Lambda_D^\top\Lambda_D = DI_d$ and $\max_j \|\lambda_{Dj}\|^2 \leq C_0$. Such a dense loading matrix can be constructed by taking $\Lambda_D = \sqrt{D} U_D$, where $U_D\in\bbR^{D\times d}$ has orthonormal columns and $\max_j \|(U_D)_{j\cdot}\|^2 \lesssim D^{-1}$. For instance, one may take $d$ columns of a Hadamard-type orthogonal matrix. Hence $s_1(\Lambda_D)=\dots=s_d(\Lambda_D)=\sqrt{D}$ while its row norms remain uniformly bounded. Thus, this oracle submodel satisfies the loading assumptions. 

    Under this submodel, $Y_i = \Lambda_D \eta_i + \epsilon_i$ with $\epsilon_i\sim N(0,I_D)$. Consider the projection
\begin{equation}\label{eq: modified experiment}
        X_i = \frac1D \Lambda_D^\top Y_i = \eta_i + \underbrace{\frac1D \Lambda_D^\top \epsilon_i}_{\sim N_d(0, \frac1D I_d)}.
    \end{equation}
    The component of $Y_i$ orthogonal to $\rm col(\Lambda_D)$ is independent Gaussian noise and carries no information about $g$. Thus, estimation of $g$ in this oracle submodel is statistically equivalent to estimation from 
    $$X_1,\dots, X_n \overset{iid}{\sim} q_{g,D} := g \star \phi_{D^{-1} I_d}.$$
     We establish the two terms in the lower bound separately.

    \medskip
    \noindent
    \textbf{Step 2: Usual nonparametric lower bound}.
    Considering the more informative experiment in which estimator 
    $\tilde{g}$ is based on 
    directly observed
    $\eta_1,\dots,\eta_n$. The experiment \eqref{eq: modified experiment} is obtained from these observations by applying independently the Markov kernel $\eta \mapsto \eta + D^{-1/2} Z$. Consequently, by monotonicity of minimax risk under such a transform:
    $$\mathfrak{R}_{n,D} \geq \inf_{\tilde{g}}\sup_{g\in \cG_{\beta}(L,C)} \bbE_g \left[ \inf_{R\in\cO(d)} \|\tilde{g} - g^R\|_{L^2}^2\right].$$
    To show that the right side is the usual classical nonparametric rate $n^{-2\beta/(2\beta+d)}$, we argue that a slight modification of the classical Assouad construction (accounting for the quotient metric) works. See Lemma \ref{lemma: low dim minimax rotation}.

    \medskip
    \noindent
    \textbf{Step 3: Construction of two high-frequency alternatives}. Take as baseline $g_0(x)=\phi_d(x)$. For $T\to \infty$, let $F_T$ be the moment-corrected oscillatory function from Lemma \ref{lemma: moment correction} and define
    \begin{equation}
        g_{\pm, T}(x) = \phi_d(x)\left\{ 1 \pm a T^{-\beta} F_T(x)\right\}
    \end{equation}
    where $a>0$ is a sufficiently small fixed constant.

    By construction, $\bbE[F_T(X)]=0$ and hence $\int g_{\pm,T}(x)dx =1$. Similarly, $\bbE[X]=0$ and $\bbE[XX^\top]=I_d$ under $g_{\pm,T}$. Moreover, by \eqref{eq: moment corrected oscillation bounded}, $|aT^{-\beta} F_T(x)| \leq aC$ for $T\geq 1$. Choosing $a$ sufficiently small gives
    $$\frac12\phi_d(x) \leq g_{\pm, T}(x) \leq \frac32 \phi_d(x).$$
    Thus, the alternatives are valid densities and sub-Gaussian. 

    Next, we verify the Sobolev condition. First, ignore the exponentially small moment correction and consider $v_T(x)=T^{-\beta} \phi_d(x) \cos(Tx_1)$. Its Fourier transform is, up to the fixed Fourier normalization, 
    $$\varphi_{v_T}(t) = \frac{T^{-\beta}}{2} \left[e^{-\|t-Te_1\|^2/2} + e^{-\|t + Te_1\|^2/2} \right].$$
    Thus its Fourier energy is concentrated in $O(1)$ neighborhoods of $Te_1$ and $-Te_1$ (where $e_1=(1,0\dots,0)^\top\in\bbR^d$ is the unit vector). On these neighborhoods, $1+\|t\|^2 \asymp T^2$. Consequently,
    $$\int (1+\|t\|^2)^\beta |\varphi_{v_T}(t)|^2 dt \lesssim T^{-2\beta} T^{2\beta} \lesssim 1.$$
    The correction in $F_T$ is $-\sum_{k=0}^d c_k(T)b_k(x)$, where $|c_k(T)|\lesssim (1+T^2)e^{-T^2/2}$. Since the $b_k's$ are fixed smooth functions, their contribution to the Sobolev norm after multiplication by $\phi_d$ is exponentially small. It follows that
    $$\sup_T \int (1+\|t\|^2)^\beta |\varphi_{g_{\pm,T}}(t)|^2 dt <\infty.$$
    Taking $a$ sufficiently small therefore ensures $g_{\pm, T}\in \cG_{\beta}(L,C)$ uniformly in $T$. Since $\cG_{\beta}(L,C)$ is invariant under the action of $\cO(d)$, $g_{\pm, T}^R\in \cG_{\beta}(L,C)$ for any rotation $R\in\cO(d)$.

    \medskip
    \noindent
    \textbf{Step 4: Separation of latent densities modulo rotations}. For any $R\in \cO(d)$, let $v=R^\top e_1\in \bbS^{d-1}$ (the unit sphere). Since $\phi_d$ is rotationally invariant,
    $$g_{-,T}^R(x) = \phi_d(x) \left[1 - a T^{-\beta} F_T(Rx)\right].$$
    Since $F_T(Rx) = \cos(Tv^\top x) + o(1)$ uniformly, as $(1+T^2)e^{-T^2/2} = o(1)$ as $T\to\infty$, consequently
    \begin{equation}\label{eq: g pm T difference}
        g_{+,T}(x) - g_{-,T}^R(x) = aT^{-\beta} \phi_d(x) \left[\cos(Te_1^\top x) + \cos(Tv^\top x)\right] + o(T^{-\beta}) \phi_d(x),
    \end{equation}
    uniformly over $R$. Now,
    \begin{align*}
        &\int \phi_d(x)^2\left[\cos(Te_1^\top x)+\cos(Tv^\top x)\right]^2 dx \\
        &=\int \phi_d(x)^2 \cos^2(Te_1^\top x) dx + \int \phi_d(x)^2 \cos^2(T v^\top x)dx \\
        &\quad \quad + 2\int \phi_d(x)^2 \cos(Te_1^\top x) \cos(T v^\top x) dx.
    \end{align*}
    By rotational invariance, the first two terms are equal, and 
    $\int \phi_d(x)^2 \cos^2 (Te_1^\top x) dx = \frac12 (1+e^{-T^2})\int \phi_d(x)^2 dx.$
    The cross term is nonnegative, since $\cos a \cos b = \frac12[\cos (a-b) + \cos (a+b)]$ and the Fourier transform of $\phi_d^2$ is a positive Gaussian. Thus, uniformly over $R\in\cO(d)$, 
    the integral in the above display is $\geq c_2>0$ for sufficiently large $T$. Using Equation \eqref{eq: g pm T difference} and the exponentially small correction,
    \begin{equation}
        \inf_{R\in \cO(d)} \|g_{+,T} - g_{-,T}^R\|_{L^2}^2 \asymp T^{-2\beta}.
    \end{equation}
    
    Note that this also shows that $\inf_{R\in \cO(d)} \|g_{+,T}^R - g_{-,T}\|_{L^2}^2 \asymp T^{-2\beta}$ since $\|f\|_{L^2}^2 = \|f^R\|_{L^2}^2$. Thus, using the definition of the quotient metric $d_{\cO}$, we have 
    \begin{equation}
        d_{\cO}^2(g_{+,T}, g_{-,T}) \asymp T^{-2\beta}.
    \end{equation}

    \medskip
    \noindent
    \textbf{Step 5: Attenuation under Gaussian convolution}. Let $\tau_D^2 = D^{-1}$ and $s_D^2 = 1+1/D$. Note the following key identity in one dimension:
    \begin{equation}\label{eq: convolution identity}
        \left(\left\{\phi(\cdot)\cos(T\cdot)\right\} \star \phi_{\tau_D^2}\right)(y) = \phi_{s_D^2}(y) \exp\left\{-\frac{T^2}{2(D+1)}\right\} \cos\left(\frac{DT}{D+1}y\right),
    \end{equation}
    where $\phi_{r^2}(y)=\phi(y\mid 0, r^2)$. To verify this, let $X\sim N(0,1)$ and $Y=X+D^{-1/2}Z$. Then $X|Y=y\sim N(Dy/(D+1), 1/(D+1))$, and hence $\bbE[\cos(TX)\mid Y=y]=\exp\{-T^2/2(D+1)\}\cos(DTy/(D+1))$ which gives the above display. The same identity can be used in our case in dimension $d$, since the oscillation only occurs in the first coordinate.

    Let $q_{\pm,T}=g_{\pm,T}\star \phi_{D^{-1} I_d}$. Since $g_{-,T}\geq \phi_d/2$ (from Step 3), convolution preserves the inequality:
    $$q_{-,T}(y) \geq \frac12 \phi_{s_D^2 I_d}(y).$$
    The leading part of $q_{+,T}-q_{-,T}$ is, by \eqref{eq: convolution identity}, $2aT^{-\beta} \phi_{s_D^2 I_d}(y) \exp\{ -T^2/(2(D+1))\}\cos (DTy_1 / (D+1))$. The moment correction part has coefficients $O((1+T^2)e^{-T^2/2})$. Since each $b_k$ is bounded, $|\phi_d b_k| \star \phi_{D^{-1}I_d}\leq \|b_k\|_{\infty} \phi_{s_D^2 I_d}$. Consequently, using the lower bound for $q_{-,T}$, we have
    \begin{equation}\label{eq: chi square separation}
        \chi^2(q_{+,T}, q_{-,T}) \lesssim T^{-2\beta} \left[e^{-T^2/(D+1)} + (1+T^2)^2 e^{-T^2}\right] \lesssim T^{-2\beta} e^{-T^2/(D+1)},
    \end{equation}
    where $\chi^2(p,q)=\int (p-q)^2/q$ is the chi squared divergence.

    \medskip
    \noindent
    \textbf{Step 6: Application of Le Cam's lemma}. Take $T^2=A(D+1)\log n$ for sufficiently large fixed constant $A>1$. Then $e^{-T^2/(D+1)}=n^{-A}$, and hence
    $$n\chi^2(q_{+,T}, q_{-,T}) \lesssim n^{1-A}T^{-2\beta}\to 0.$$
    Using $1+\chi^2(p^{\otimes n}, q^{\otimes n}) =[1 + \chi^2(p,q)]^n$ and $\TV(p, q) \leq \frac12\sqrt{\chi^2(p,q)}$, we get
    $$\TV(q_{+,T}^{\otimes n}, q_{-,T}^{\otimes n}) \leq c_0 < 1$$
    for all sufficiently large $n$.

    Le Cam's two-point lemma now gives
    \begin{align*}
        \mathfrak{R}_{n,D} &\geq \inf_{\hat{g}}\max_{\theta\in \{+,-\}} \bbE_\theta d_{\cO}^2(\hat{g}, g_{\theta, T}) \\
        &\gtrsim d_{\cO}^2(g_{+,T}, g_{-,T})[1 - \TV(q_{+,T}^{\otimes n}, q_{-,T}^{\otimes n})]\\
        &\gtrsim T^{-2\beta} \asymp (D\log n)^{-\beta}.
    \end{align*}

    Combining the two mechanisms, we get the desired result.
\end{proof}

\begin{lemma}\label{lemma: rotation lemma}
    Let $g_{\theta}$ be as defined in Equation \eqref{eq: Assouad alternatives}. There exists constants $C_1,\delta_0>0$ independent of $h, \theta, R$ and $S$ such that
    $$\|g_\theta^R - g_{\theta}^S\|_{L^2} \leq C_1(1+h^{\beta-1}) \|R - S\|$$
    for all $R, S\in \cO(d)$, whenever $\|R - S\|\leq \delta_0$.
\end{lemma}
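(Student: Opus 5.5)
The plan is to split $g_\theta$ from \eqref{eq: Assouad alternatives} into a rotation-invariant baseline and a localized perturbation, and to bound each rotated difference with a mean-value argument in $L^2$. I will take the Assouad alternatives to have the usual form
$$g_\theta(x)=\phi_d(x)+p_\theta(x)+c_\theta(x),\qquad p_\theta(x)=\sum_{k=1}^{m}\theta_k\, h^{\beta}\psi\Big(\frac{x-x_k}{h}\Big),\qquad m\asymp h^{-d}.$$
Here $\psi$ is a fixed smooth bump supported in the unit cube, the $x_k$ form an $h$-grid of a fixed compact cube $Q\subset\{\|x\|\le r_0\}$, and $c_\theta$ collects any fixed smooth, compactly supported moment-correction terms with bounded coefficients. (If the displayed construction differs, I would adapt only the bookkeeping below.)

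\textbf{Baseline.} Since $\phi_d$ is rotation invariant, $\phi_d^R-\phi_d^S\equiv0$, so the baseline contributes nothing.

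\textbf{A general Lipschitz bound.} For any $f\in C^1$ supported in $\{\|x\|\le r_0+1\}$, write
$$f(Rx)-f(Sx)=\int_0^1\nabla f(A_t x)^\top (R-S)x\,dt,\qquad A_t=S+t(R-S).$$
Apply Minkowski in $t$, then change variables $u=A_tx$ inside the $L^2(dx)$ norm. This step needs $\|x\|$ to be bounded on the relevant set and $A_t$ to be uniformly well conditioned. Both hold once $\|R-S\|\le\delta_0$ with $\delta_0$ small: then $s_d(A_t)\ge 1-\delta_0\ge 1/2$, so $|\det A_t|^{-1}\le 2^d$, and $\|x\|\le 2\|A_tx\|$, so the support of $x\mapsto \nabla f(A_tx)$ lies in a fixed ball. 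This gives
$$\|f^R-f^S\|_{L^2}\le C(d,r_0)\,\|\nabla f\|_{L^2}\,\|R-S\|.$$

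\textbf{Perturbation term.} I apply this with $f=p_\theta$. The pieces $h^{\beta-1}\nabla\psi((x-x_k)/h)$ of $\nabla p_\theta$ have disjoint supports, so
$$\|\nabla p_\theta\|_{L^2}^2=m\,h^{2\beta-2}h^d\|\nabla\psi\|_{L^2}^2\lesssim h^{2\beta-2}.$$
Hence $\|p_\theta^R-p_\theta^S\|_{L^2}\lesssim h^{\beta-1}\|R-S\|$, uniformly in $\theta$.

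\textbf{Correction term.} The correction $c_\theta$ is a bounded combination of fixed smooth compactly supported functions, so the same bound gives $\|c_\theta^R-c_\theta^S\|_{L^2}\lesssim\|R-S\|$. This is the origin of the constant $1$ in $1+h^{\beta-1}$.

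Summing the three contributions gives the lemma, with $C_1$ and $\delta_0$ depending only on $d$, $\psi$, $Q$ and the correction functions, and not on $h$, $\theta$, $R$ or $S$.

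\textbf{Main obstacle.} The step needing care is the change of variables. The interpolant $A_t$ is not orthogonal, so I cannot invoke the rotation invariance of $L^2$ directly; this is exactly why the restriction $\|R-S\|\le\delta_0$ is imposed. A cleaner alternative is to connect $R$ and $S$ by the geodesic $S\exp(tK)$ in $\mathcal O(d)$, with $\|K\|\lesssim\|R-S\|$. This keeps every intermediate map orthogonal, so the $L^2$ norm is invariant at each $t$. If the Jacobian bookkeeping with $A_t$ becomes awkward, I would switch to this version.
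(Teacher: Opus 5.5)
Your argument is essentially the paper's proof: the same linear interpolation $A_t=S+t(R-S)$, fundamental theorem of calculus, Minkowski in $t$, change of variables using uniform invertibility of $A_t$ when $\|R-S\|\le\delta_0$, and the disjoint-support estimate $\|\nabla p_\theta\|_{L^2}^2\lesssim h^{2\beta-2}$. The one adjustment is to the baseline. In \eqref{eq: Assouad alternatives} the baseline is a general fixed smooth, Gaussian-tailed density $g_*$, which need not be rotation invariant, and there is no correction term, so the constant $1$ in $1+h^{\beta-1}$ comes from $g_*$. Your inequality $\|x\|\le 2\|A_tx\|$ already gives the weighted bound $\|f^R-f^S\|_{L^2}\lesssim\|R-S\|\,\bigl\|\,\|x\|\nabla f\bigr\|_{L^2}$ with no support assumption, and applying it to $g_*$ is exactly how the paper handles that term.
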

\begin{proof}
    Let $f\in C^1$ with $\|\|x\| \nabla f(x)\|_{L^2} <\infty$. For $R, S\in \cO(d)$ define
    $$R_t = S + t(R-S), \quad t\in [0,1].$$
    By the fundamental theorem of calculus, 
    \begin{align*}
        f(Rx) - f(Sx) &= \int_0^1 \nabla f(R_tx)^\top (R-S) x dt \\
        &\Longrightarrow \; |f(Rx) - f(Sx)| \leq \|R-S\| \|x\| \int_0^1 \|\nabla f(R_t x)\|dt.
    \end{align*}

    Choose $\delta_0>0$ sufficiently small. If $\|R-S\|\leq \delta_0$ then
    $R_t = S[I + t(S^\top R - I)]$ and $\|S^\top R - I\|=\|R-S\|\leq \delta_0$. Thus, the matrices $R_t$ are uniformly invertible over $t\in [0,1]$ with
    $$\sup_{t\in[0,1]} \|R_t^{-1}\| \leq C, \quad \inf_{t\in[0,1]} |\det R_t|\geq c>0$$
    where $c, C$ depend only on $\delta_0$. Taking $L^2$ norm in the previous display and applying Minkowski's integral inequality,
    $$\|f^R - f^S\|_{L^2} \leq \|R-S\| \int_0^1 \left(\int \|x\|^2 \|\nabla f(R_tx)\|^2 dx\right)^{1/2} dt.$$
    For fixed $t$, making the change of variable $y=R_tx$ gives
    \begin{align*}
        \int \|x\|^2 \|\nabla f(R_tx)\|^2 dx &= |\det R_t|^{-1} \int \|R_t^{-1} y\|^2 \|\nabla f(y)\|^2 dy \lesssim \int \|y\|^2 \|\nabla f(y)\|^2 dy.
    \end{align*}
    Therefore,
    $$\|f^R - f^S\|_{L^2} \lesssim \|R - S\| \|\|x\|\nabla f(x)\|_{L^2}.$$

    We apply this to $f=g_\theta$. Since $g_*$ is fixed and Gaussian-tailed, $\|\|x\|\nabla g_*(x)\|<\infty$. For the perturbation, $p_{\theta,h}(x)=ah^\beta\sum_{k=1}^M \theta_k \psi_{k,h}(x)$, the supports are contained in the fixed compact set $K$, so $\|x\|\lesssim 1$. By disjointness, 
    $$\|\nabla p_{\theta,h}\|_{L^2}^2 = a^2 h^{2\beta}\sum_{k=1}^M \|\nabla \psi_{k,h}\|_{L^2}^2 \lesssim h^{2\beta-2}$$
    since $\|\nabla \psi_{k,h}\|_{L^2}^2 = h^{d-2} \|\nabla\psi\|_{L^2}^2$ and $M\asymp h^{-d}$. Thus, 
    $$\|\|x\| \nabla g_\theta\|_{L^2} \leq \| \|x\|\nabla g_*\|_{L^2} + \|\|x\| \nabla p_{\theta,h}\|_{L^2} \leq O(1) + C_K \|\nabla p_{\theta,h}\|_{L^2}\lesssim 1 + h^{\beta-1}$$
    uniformly over $\theta$, using $g_*$ being smooth, fixed with Gaussian tail to control the first term and $\|x\|\leq C_K$ as all bumps are contained in a fixed compact $K$. Hence,
    $$\|g_\theta^R - g_\theta^S\|_{L^2} \leq C_1(1+h^{\beta-1}) \|R - S\|.$$
\end{proof}

\begin{lemma}\label{lemma: low dim minimax rotation}
    Consider the task of estimating a probability density $g\in\cG_{\beta}(L,C)$ on $\bbR^d$ up to a rotation from $X_1,\dots,X_n\sim g$ iid under the metric $d_{\cO}^2(\hat{g}, g)=\inf_{R\in\cO(d)} \|\hat{g} - g^R\|_{L^2}^2$, where $g^R(x)=g(Rx)$. Then the minimax risk is lower bounded by
    $$\inf_{\hat{g}} \sup_{g\in \cG_\beta(L,C)} \bbE_g [d_{\cO}^2(\hat{g}, g)] \gtrsim n^{-2\beta/(2\beta+d)}.$$
\end{lemma}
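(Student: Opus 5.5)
We will follow the classical Assouad/Varshamov--Gilbert (Fano) construction for Sobolev densities. The two new ingredients are that every alternative must satisfy the moment and envelope constraints of $\cG_\beta(L,C)$, and that separation must survive the quotient by $\cO(d)$. Lemma \ref{lemma: rotation lemma} is designed for the second point.

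\textbf{Construction.} Take as base $g_*$ a fixed smooth density with Gaussian tails, mean zero and covariance $I_d$ (e.g.\ $\phi_d$). We choose it \emph{asymmetric enough} that its stabilizer in $\cO(d)$ is trivial and that $\inf_{\|R-I\|\ge\delta_0}\|g_*^R-g_*\|_{L^2}\ge c_*>0$. For instance, add to $\phi_d$ a fixed small moment-corrected bump placed at a point with distinct nonzero coordinates, so no nontrivial rotation fixes it. Fix a compact cube $K$ on which $g_*\ge c>0$ and tile it by $M\asymp h^{-d}$ cubes of side $h$. Let $\psi$ be a smooth compactly supported bump whose integral and first and second moments vanish, and set $\psi_{k,h}(x)=\psi((x-x_k)/h)$. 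The alternatives (the \eqref{eq: Assouad alternatives} referenced in Lemma \ref{lemma: rotation lemma}) are
\[
g_\theta = g_* + a h^\beta\sum_{k=1}^M \theta_k\psi_{k,h},\qquad \theta\in\{0,1\}^M .
\]
For small $a$ these are nonnegative, have unit mass and identity covariance, and satisfy the Gaussian envelope. Because the bumps are disjoint, the Sobolev norm is $\lesssim L$. By Varshamov--Gilbert we extract $\Theta'\subset\{0,1\}^M$ with $|\Theta'|\ge 2^{M/8}$ and pairwise Hamming distance $\ge M/8$. This gives $\|g_\theta-g_{\theta'}\|_{L^2}^2\gtrsim h^{2\beta}$, and the KL bound $\KL(g_\theta^{\otimes n},g_{\theta'}^{\otimes n})\lesssim n h^{2\beta+d}M\asymp nh^{2\beta}$. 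With $h\asymp n^{-1/(2\beta+d)}$ the KL is at most a small multiple of $M\asymp\log|\Theta'|$, so Fano applies.

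\textbf{Separation modulo rotation (main obstacle).} We must show $\inf_R\|g_\theta-g_{\theta'}^R\|_{L^2}\gtrsim h^\beta$. Split according to $\|R-I\|$.

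If $\|R-I\|\ge\delta_0$, then $\|g_\theta-g_{\theta'}^R\|\ge\|g_*-g_*^R\|-2\|g_\theta-g_*\|\ge c_*-O(h^\beta)$, which is of order $1$.

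If $\|R-I\|\le\eta$ for a small $\eta$, Lemma \ref{lemma: rotation lemma} gives $\|g_{\theta'}^R-g_{\theta'}\|\le C_1(1+h^{\beta-1})\eta$. This is too crude when $\beta<1$ and $\eta$ is not tiny. We therefore first handle $\eta\le c h^{\max(\beta,1)}$, where the lemma gives $\|g_\theta-g_{\theta'}^R\|\ge\|g_\theta-g_{\theta'}\|-C\eta(1+h^{\beta-1})\gtrsim h^\beta$ for $c$ small.

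In the intermediate range $c h^{\max(\beta,1)}\le\|R-I\|\le\delta_0$, we instead use the base density. We expect the rotation to move $g_*$ by $\gtrsim\|R-I\|$ thanks to the trivial stabilizer, i.e.\ $\|g_*^R-g_*\|\ge c'\|R-I\|$ locally (nondegeneracy of the orbit map). The bump parts contribute at most $\|p_\theta-p_{\theta'}^R\|$. We then need this to beat the bump differences. This is the delicate point: for $\beta\le1$ the lower bound $c'\|R-I\|\ge c'c\,h$ is larger than $h^\beta$ only if $\beta<1$ suitably. To handle it cleanly I would exploit orthogonality. Choose the base asymmetry (the fixed bump) to be supported away from $K$, so that on its support only $g_*$ matters and $\|g_*^R-g_*\|_{L^2(\text{supp})}\ge c'\|R-I\|$. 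Meanwhile $\|g_\theta-g_{\theta'}^R\|^2\ge\|\cdot\|^2_{L^2(K)}+\|\cdot\|^2_{L^2(K^c)}$, with the $K$ part $\ge (h^\beta-\text{motion})$ and the $K^c$ part $\ge c'\|R-I\|-(\text{smooth motion of }\phi_d\text{, which is zero since }\phi_d\text{ is invariant})$. Since $\phi_d$ is rotation invariant, only the asymmetric bump and the $\psi$-bumps move. On $K$, the $\psi$ motion is $\lesssim h^{\beta-1}\|R-I\|$ and the separation is $\ge h^\beta$. Hence either $\|R-I\|\le c h$, so the $K$ part wins, or $\|R-I\|\ge ch\gtrsim h^\beta$ up to constants when $\beta\le1$ (and $\|R-I\|\ge c h^\beta$ otherwise), so the $K^c$ part wins. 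This yields $d_{\cO}(g_\theta,g_{\theta'})\gtrsim h^\beta$ uniformly over the packing.

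\textbf{Conclusion.} Fano's lemma with the quotient pseudo-metric $d_\cO$ (valid because $d_\cO$ satisfies the triangle inequality, as $\cO(d)$ acts isometrically on $L^2$) then gives the risk bound $\gtrsim h^{2\beta}\asymp n^{-2\beta/(2\beta+d)}$.
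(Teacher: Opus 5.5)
There is a genuine gap in the intermediate rotation range when $0<\beta<1$ and $d\ge 2$. Your closing dichotomy relies on ``$\|R-I\|\ge ch\gtrsim h^\beta$ when $\beta\le 1$''. That inequality goes the wrong way: for $h<1$ and $\beta<1$ you have $h^\beta/h=h^{\beta-1}\to\infty$.

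Write the asymmetric part of $g_*$ as $\epsilon b$, with $b$ supported away from $K$. On the nonempty range $ch\le\|R-I\|\le c''h^\beta$:
\begin{itemize}
\item The base contributes only $\epsilon\|b-b^R\|\asymp\epsilon\|R-I\|$, which can be as small as order $h\ll h^\beta$.
\item Lemma~\ref{lemma: rotation lemma} bounds the motion of the bump part by $C_1h^{\beta-1}\|R-I\|$, which already exceeds $h^\beta$ there.
\end{itemize}
So neither term gives a lower bound on $\|p_\theta-p_{\theta'}^R\|_{L^2}$. Once the rotation displaces the bumps by more than their width $h$, a lower bound would have to come from a combinatorial fact: that the rotated bump pattern of $\theta'$ does not nearly reproduce the pattern of $\theta$. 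You have no such control for an arbitrary Varshamov--Gilbert packing. Proving it for every pair would need a separate lattice or equidistribution argument that the proposal does not supply.

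For $\beta\ge1$, and for $d=1$ where $\cO(1)$ is discrete, your case analysis does close, with one small fix. A single radial bump at $p$ is fixed by the copy of $\cO(d-1)$ that stabilizes $p$. You need a non-radial bump, or several bumps at generic points, to get a trivial stabilizer.

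The missing idea is that you do not need every pair of the packing to be separated modulo rotation; it suffices to thin the packing. This is what the paper does.
\begin{itemize}
\item Take a $\delta_h$-net $\{R_1,\dots,R_{N_h}\}$ of $\cO(d)$ with $\delta_h=\gamma h^\beta/\{C_1(1+h^{\beta-1})\}$. Then $N_h\lesssim\delta_h^{-d(d-1)/2}$ and $\log N_h=O(\log(1/h))=o(M)$.
\item If $d_{\cO}(g_\theta,g_{\theta'})<\gamma h^\beta$, Lemma~\ref{lemma: rotation lemma} gives a net point $R_j$ with $\|g_\theta-g_{\theta'}^{R_j}\|_{L^2}<2\gamma h^\beta$.
\item The ordinary $L^2$ packing has separation $c_1h^\beta>4\gamma h^\beta$. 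This forces distinct such $\theta'$ to use distinct net points, so each $\theta$ has at most $N_h$ rotational neighbours.
\item A greedy extraction then yields $\Theta_1$ with $\log|\Theta_1|\ge\log|\Theta_0|-\log N_h\gtrsim h^{-d}$ and $d_{\cO}$-separation at least $\gamma h^\beta$.
\end{itemize}
This works uniformly in $\beta>0$ and needs no asymmetric base density. It uses the Lipschitz lemma only at the fine scale $\delta_h$, where it is sharp enough. With $\Theta_1$ in place of your packing, the rest of your argument goes through as written: the construction, the KL bound, and Fano with the pseudo-metric $d_{\cO}$.
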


\begin{proof}
    We construct a finite family of alternatives (by taking an appropriate subset of the classical Assouad construction) that remain exponentially large after quotienting by the orthogonal group. 

    Let $g_*\in\cG_\beta(L,C)$ be a fixed smooth density and choose a compact set $K\subset \bbR^d$ such that $\inf_{x\in K} g_*(x) \geq c_* >0$. Choose a nonzero function $\psi\in C_c^\infty (\bbR^d)$, supported in a sufficiently small cube, satisfying the moment conditions $\int \psi(u)du=0$, $\int u \psi(u) du = 0$ and $\int uu^\top \psi(u)du = 0$. For sufficiently small $h>0$ (to be chosen later), set $M\asymp h^{-d}$ centers $x_1,\dots,x_M\in K$ such that the functions 
    $$\psi_{k,h}(x) =\psi\left(\frac{x-x_k}{h}\right), \quad k=1,\dots,M,$$
    have mutually disjoint supports contained in $K$. 

    For $\theta=(\theta_1,\dots,\theta_M)\in \{-1,1\}^M$, define
    \begin{equation}\label{eq: Assouad alternatives}
        g_{\theta}(x) = g_*(x) + ah^\beta \sum_{k=1}^M \theta_k \psi_{k,h}(x),
    \end{equation}
    where $a>0$ is a sufficiently small constant. By the moment conditions and the standard smooth bump construction, $a$ may be chosen independently of $h$ such that $g_\theta\in \cG_{\beta}(L,C)$ for every $\theta$ and all sufficiently small $h$.

    We first construct an ordinary $L^2-$packing of $\{-1,1\}^M$. By the Varshamov-Gilbert bound, there exists a subset $\Theta_0\subset \{-1,1\}^M$ such that $\log |\Theta_0|\geq cM$ and $H(\theta, \theta')\geq c_0 M$ for all distinct $\theta,\theta'\in\Theta_0$, where $H$ is the Hamming distance. Now, since the bump supports are disjoint, 
    $$\|g_\theta - g_{\theta'}\|_{L^2}^2 = a^2 h^{2\beta} \sum_{k=1}^M (\theta_k - \theta_{k'})^2\|\psi_{k,h}\|_{L^2}^2 =4a^2 h^{2\beta}H(\theta,\theta') h^d \|\psi\|_{L^2}^2.$$
    Since $H(\theta,\theta')\gtrsim M \asymp h^{-d}$, there exists $c_1>0$ such that
    $$\|g_\theta - g_{\theta'}\|_{L^2} \geq c_1 h^{\beta}, \quad \theta\neq\theta', \quad \theta, \theta'\in \Theta_0.$$

    We next extract from $\Theta_0$ an exponentially large packing under the quotient metric $d_{\cO}$. Fix $0<\gamma < c_1/4$ and set $L_h=C_1(1+h^{\beta-1})$, and $\delta_h = \gamma h^\beta/L_h$. Then
    $$\delta_h \asymp \begin{cases}
        h^\beta, & \beta\geq 1,\\
        h, &0<\beta<1
    \end{cases}$$
    and hence $\delta_h\to 0$. In particular, for sufficiently small $h$, $\delta_h \leq \delta_0$, where $\delta_0, C_1$ are the constants from Lemma \ref{lemma: rotation lemma}.

    \medskip
    \noindent
    Since $\cO(d)$ is a compact manifold of fixed dimension $d(d-1)/2$, it admits a $\delta_h-$net $\{R_1,\dots,R_{N_h}\}$ with $N_h\leq C_d\delta_h^{-q}$ for some fixed $q<\infty$. For example, one may take $q=d(d-1)/2$ up to an irrelevant constant accounting for the two connected components of $\cO(d)$. Consequently, 
    $$\log N_h = O(\log (1/h)) = o(h^{-d}) = o(M).$$

    Fix $\theta\in \Theta_0$ and define
    $$\cN_\theta = \{\theta'\in \Theta_0 : d_{\cO}(g_\theta, g_{\theta'}) < \gamma h^\beta\}.$$
    For each $\theta'\in \cN_\theta$, there exists $R\in \cO(d)$ such that $\|g_\theta - g_{\theta'}^R\|_{L^2} < \gamma h^\beta$. Choose $R_j$ from the $\delta_h-$net
 satisfying $\|R-R_j\| \leq \delta_h$. By Lemma \ref{lemma: rotation lemma}, 
 $$\|g_{\theta'}^R - g_{\theta'}^{R_j}\|_{L^2} \leq L_h\delta_h =\gamma h^\beta\; \Longrightarrow \; \|g_\theta - g_{\theta'}^{R_j}\|_{L^2} < 2\gamma h^\beta.$$

 For each fixed $R_j$, at most one $\theta'\in \cN_\theta$ can satisfy this inequality. To see this, suppose two distinct $\theta_1', \theta_2'$ both corresponded to the same $R_j$. Using the invariance of the $L^2$ norm under rotations,
 \begin{align*}
     \|g_{\theta_1'} - g_{\theta_2'}\|_{L^2} & = \left\|g_{\theta_1'}^{R_j} - g_{\theta_2'}^{R_j}\right\|_{L^2} \\
     &\leq \left\|g_{\theta_1'}^{R_j} - g_\theta\right\|_{L^2} + \left\|g_{\theta_2'}^{R_j} - g_\theta\right\|_{L^2} \\
     &< 4\gamma h^\beta < c_1 h^\beta,
 \end{align*}
 contradicting the ordinary $L^2$-packing property. Therefore, for every $\theta\in \Theta_0$, $|\cN_\theta| \leq N_h$.

 \medskip
 \noindent
 Now, construct $\Theta_1 \subset \Theta_0$ greedily as follows. Start with $\Theta_1 = \{\}$ and $B=\Theta_0$. At each stage, pick $\theta\in B$ and $\Theta_1 \to \Theta_1 \cup \{\theta\}$, and remove all elements at quotient distance less than $\gamma h^\beta$, i.e., $B \to B \setminus \cN_\theta$. Continue until $B$ is empty. Since each selected point removes at most $N_h$ elements, 
 $$|\Theta_1| \geq \frac{|\Theta_0|}{N_h}.$$

 Moreover 
 $$d_{\cO}(g_\theta, g_{\theta'})\geq \gamma h^\beta,\quad \text{for all } \theta \neq \theta' \in \Theta_1.$$
 Finally
 $$\log |\Theta_1|\geq \log |\Theta_0| - \log N_h \gtrsim h^{-d} - O(\log (1/h)) \gtrsim h^{-d}.$$
 Thus, $\{g_\theta: \theta\in \Theta_1\}$ is an exponentially large packing under $d_{\cO}$ with mutual separation of order $h^\beta$.

 It remains to verify the information bound. Since all perturbations are supported on $K$, on which $g_*\geq c_* > 0$,
 $$\KL(g_\theta, g_*) \leq \int \frac{(g_\theta - g_*)^2}{g_*} \lesssim \|g_\theta - g_*\|_{L^2}^2.$$

 By disjointness,
 \begin{align*}
     \|g_\theta - g_*\|_{L^2}^2 &= a^2 h^{2\beta} \sum_{k=1}^M \|\psi_{k,h}\|_{L^2}^2 = a^2h^{2\beta} Mh^d \|\psi\|_{L^2}^2 \lesssim h^{2\beta}\;\Rightarrow \; \KL\left(P_{\theta}^{\otimes n} \Vert P_*^{\otimes n}\right) \lesssim nh^{2\beta}.
 \end{align*}

 Choose $h=c n^{-1/(2\beta+d)}$ with $c>0$ sufficiently small. Since $nh^{2\beta} =c^{2\beta+d}h^{-d}$ and $\log |\Theta_1|\gtrsim h^{-d}$, $c$ can be chosen so that 
 $$\max_{\theta \in \Theta_1} \KL\left(P_{\theta}^{\otimes n} \Vert P_*^{\otimes n}\right) \leq \alpha \log |\Theta_1|$$
 for some sufficiently small fixed $\alpha\in (0,1)$. Fano's lemma applied with $\Theta_1$ and metric $d_{\cO}$ gives
 $$\inf_{\hat{g}} \sup_{\theta\in \Theta_1} \bbE_\theta \left[d_{\cO}^2(\hat{g}, g_\theta)\right] \gtrsim h^{2\beta}.$$
 Since $\{g_\theta: \theta\in \Theta_1\} \subset \cG_\beta (L,C)$, 
$$\inf_{\hat{g}} \sup_{g\in \cG_{\beta}(L,C)} \bbE_g\left[d_{\cO}^2(\hat{g}, g)\right] \gtrsim h^{2\beta}.$$
 This proves the result.
 \end{proof}

 \subsection{Additional results}
For completeness, we present a few relatively standard concentration results, and some subspace perturbation results that were used in the proof above. The following is a  standard concentration result for sub-Gaussian random variables \citep{vershynin2019high}.

\begin{lemma}\label{lemma: subgaussian concentration}[Sub-Gaussian concentration]
    Let $A$ be an $m\times n$ matrix  whose rows $A_i$ are independent zero-mean sub-Gaussian isotropic random vectors in $\bbR^n$. Then for any $t\geq 0$, we have
    $$\sqrt{m} - CK^2(\sqrt{n}+t)\leq s_{\min}(A)\leq s_{\max}(A)\leq \sqrt{m} + CK^2 (\sqrt{n}+t)$$
    with probability at least $1-2\exp(-t^2)$, where $K=\max_i \|A_i\|_{\psi_2}$. In fact, with the same probability, the following holds
    $$\norm{\frac{1}{m}A^\top A - I_n}\leq K^2 \max\{\delta,\delta^2\}, \quad \delta = C(\sqrt{n/m} + t/\sqrt{m}).$$
\end{lemma}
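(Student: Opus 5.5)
The plan is to reduce to the classical non-asymptotic bound for matrices with independent sub-Gaussian isotropic rows, via an $\varepsilon$-net argument on the sphere $\mathbb{S}^{n-1}$, and to obtain the singular value bounds as a corollary of the operator-norm bound on $\frac1m A^\top A - I_n$. Concretely, I would first prove the second statement and then deduce the first from it: if $\|\frac1m A^\top A - I_n\|\le \max\{\delta,\delta^2\}=:\epsilon$ (absorbing $K^2$), then for every unit $x$, $|\frac1m\|Ax\|^2-1|\le\epsilon$. The elementary inequality $\max\{|z-1|,|z-1|^2\}\le|z^2-1|$ for $z\ge0$ then gives $|\frac{1}{\sqrt m}\|Ax\|-1|\le\delta$ (up to the $K^2$ factor). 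Taking inf and sup over the sphere yields $\sqrt m - CK^2(\sqrt n + t)\le s_{\min}(A)\le s_{\max}(A)\le \sqrt m + CK^2(\sqrt n+t)$.

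For the operator-norm bound, the first step is to fix a $1/4$-net $\mathcal N$ of $\mathbb{S}^{n-1}$ with $|\mathcal N|\le 9^n$. For a symmetric matrix $B=\frac1m A^\top A-I_n$, one has $\|B\|\le 2\max_{x\in\mathcal N}|x^\top Bx|$. The second step is to fix $x\in\mathcal N$ and write $x^\top Bx=\frac1m\sum_{i=1}^m (\langle A_i,x\rangle^2-1)$. By isotropy each summand has mean zero, and since $\langle A_i,x\rangle$ is sub-Gaussian with $\psi_2$-norm at most $K$, the centered squares are independent sub-exponential variables with $\psi_1$-norm $\lesssim K^2$. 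Bernstein's inequality then gives
$$\bbP\Big(|x^\top Bx|\ge \tfrac{\epsilon}{2}\Big)\le 2\exp\big(-c\,m\min\{\epsilon^2/K^4,\epsilon/K^2\}\big),$$
where $c$ absorbs a harmless $\ge1$ normalization of $K$ forced by isotropy. The third step is to choose $\epsilon=K^2\max\{\delta,\delta^2\}$ with $\delta=C(\sqrt{n/m}+t/\sqrt m)$. This makes $m\min\{\epsilon^2/K^4,\epsilon/K^2\}\ge m\delta^2\ge C^2(n+t^2)$. A union bound over $9^n$ points then gives failure probability at most $9^n\cdot 2e^{-cC^2(n+t^2)}\le 2e^{-t^2}$ once $C$ is large enough.

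The only delicate point is the bookkeeping in the Bernstein step. The $\min$ of the two regimes must convert into $\delta^2$ uniformly, which is why $\epsilon$ is taken as $\max\{\delta,\delta^2\}$: when $\delta\le1$ the quadratic branch is active, and when $\delta>1$ the linear branch $\epsilon/K^2=\delta^2$ is. I would also note that in the paper this lemma is applied with $A=M_0$, $m=n$ (sample size), $n=d$ (fixed) and $t=\sqrt{\log n}$, giving $|s_i(M_0)-\sqrt n|\lesssim\sqrt{\log n}$ with probability $1-2/n$, as used in Step 1 of the proof of Theorem \ref{thm: spectral recovery}. Since the argument is textbook (\cite{vershynin2019high}, Theorem 4.6.1), I would cite it for the constants rather than optimize them.
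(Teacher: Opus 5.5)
Your proposal is correct and is the standard $\varepsilon$-net plus Bernstein argument of \cite{vershynin2019high} (Theorem 4.6.1); the paper gives no proof of its own and simply cites that reference, so your route coincides with the one it relies on. One small bookkeeping point: the normalization $K\gtrsim 1$ (from isotropy, $1=\bbE\langle A_i,x\rangle^2\lesssim K^2$) is actually needed when you move $K^2$ inside $\max\{\delta,\delta^2\}$ to deduce the singular-value bounds, whereas the Bernstein step holds without it.
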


Using the above concentration result, we have a high-probability concentration of the matrix $(1/n)M_0^\top M_0$ noting that each row of $M_0$ is sub-Gaussian. In fact, taking $t=\sqrt{\log n}$, we get that with $g_0$-probability at least $1-2/n$, 
$$\norm{\frac{1}{n}M_0^\top M_0 - I_d} \leq C_\eta^2 \sqrt{\frac{\log n}{n}}$$
and with the same probability, $s_{\min}(M_0)\asymp s_{\max}(M_0)\asymp \sqrt{n}$.

We discuss some existing results from matrix perturbation theory for completeness. First, suppose $U_0$ and $U_1$ are two $n\times d$ orthogonal matrices, i.e., $U_0^\top U_0 = U_1^\top U_1 = I_d$. They span two $d-$dimensional subspaces $\cU_0=\rm span(U_0)$ and $\cU_1=\rm span(U_1)$. The principal angles between these subspaces are defined as follows. Let $U_0^\top U_1 = W\Sigma V^\top$ be the SVD, with $\Sigma=\diag(\sigma_1,\dots,\sigma_d)$ with $1\geq \sigma_1\geq \dots \geq \sigma_d\geq 0$. The principal angles are defined by
$$\sigma_i = \cos \theta_i,\qquad i=1,\dots,d$$
with $0\leq \theta_1 \leq \cdots \leq \theta_d \leq \pi/2$, and $V\in \bbR^{d\times d}.$

\begin{lemma}[Principal angles]\label{lemma: projection sine}
    Using the notations above, we have
    $$\|(I_n - U_0U_0^\top) U_1\| = \|\sin \Theta(U_0, U_1)\|$$
    where $\sin \Theta(U_0,U_1) = \diag(\sin \theta_1,\dots,\sin \theta_d)$ is a $d\times d$ diagonal matrix containing the sines of the principal angles.
\end{lemma}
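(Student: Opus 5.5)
The plan is to compute the Gram matrix of $(I_n - U_0U_0^\top)U_1$ directly and read off its singular values. Since the spectral norm of any matrix $A$ satisfies $\|A\|^2 = \|A^\top A\|$, it suffices to identify $A^\top A$ for $A = (I_n - U_0U_0^\top)U_1$.

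First, I will use that $P_0^\perp := I_n - U_0U_0^\top$ is an orthogonal projection, so it is symmetric and idempotent. Together with $U_1^\top U_1 = I_d$ this gives
\begin{equation*}
A^\top A = U_1^\top P_0^\perp U_1 = I_d - U_1^\top U_0 U_0^\top U_1 .
\end{equation*}

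Next I will substitute the singular value decomposition $U_0^\top U_1 = W\Sigma V^\top$ used to define the principal angles. Then $U_1^\top U_0U_0^\top U_1 = V\Sigma W^\top W\Sigma V^\top = V\Sigma^2V^\top$, because $W$ is orthogonal. Since $V$ is also orthogonal, $I_d = VV^\top$, and therefore
\begin{equation*}
A^\top A = V(I_d - \Sigma^2)V^\top = V\,\diag(\sin^2\theta_1,\dots,\sin^2\theta_d)\,V^\top ,
\end{equation*}
using $\sigma_i = \cos\theta_i$ and $1-\cos^2\theta_i = \sin^2\theta_i$.

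This is an eigendecomposition, so the singular values of $A$ are exactly $\sin\theta_1,\dots,\sin\theta_d$, which are nonnegative because $\theta_i\in[0,\pi/2]$. Hence $\|A\| = \max_i \sin\theta_i = \sin\theta_d = \|\sin\Theta(U_0,U_1)\|$. The same computation in fact shows the stronger statement that $A$ and $\sin\Theta(U_0,U_1)$ share all singular values, so the identity also holds in Frobenius norm.

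There is no genuine obstacle here. The only points needing care are that $W$ and $V$ are square $d\times d$ orthogonal matrices, so that $W^\top W = I_d$ and $VV^\top = I_d$ both hold, and that the ordering convention $\theta_1\le\cdots\le\theta_d$ makes the largest sine correspond to $\theta_d$.
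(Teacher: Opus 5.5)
Your proposal is correct and follows essentially the same route as the paper's proof: compute the Gram matrix $U_1^\top(I_n-U_0U_0^\top)U_1 = I_d - V\Sigma^2V^\top = V\sin^2\Theta\, V^\top$ and read off the singular values. Your added remarks about $W,V$ being square orthogonal and about the ordering $\theta_1\le\cdots\le\theta_d$ are accurate, but they are not needed for the identity.
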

\begin{proof}
    Note that
    \begin{align*}
        ((I_n-U_0U_0^\top)U_1)^\top ((I_n-U_0U_0^\top)U_1) & = U_1^\top (I_n - U_0U_0^\top) U_1 \\
        &= I_d - (U_0^\top U_1)^\top (U_0^\top U_1) \\
        &= I_d - V \cos^2 \Theta V^\top \\
        &= V(I_d - \cos^2 \Theta) V^\top = V \sin^2 \Theta V^\top.
    \end{align*}
    Here we used the fact that $I_n-U_0U_0^\top$ is a projection matrix, and hence idempotent. Also $U_1^\top U_1 = I_d$. This shows that the singular values of $(I_n-U_0U_0^\top)U_1$ are precisely $\sin \theta_1,\dots,\sin \theta_d$, which proves the result.
\end{proof}

So, we see that the singular values of $U^\top V$ (for $U, V\in\bbR^{n\times d}$ with orthonormal columns) are precisely cosines of the principal angles $\theta_1,\dots,\theta_d$. Now, let $U_\perp$ be the matrix whose columns are a basis for the orthogonal complement of $\rm span(U)$. Then, $I_n = UU^\top + U_\perp U_\perp^\top$. Then we get
$$I_d = V^\top I_n V = V^\top(UU^\top + U_\perp U_\perp^\top) V = AA^\top + BB^\top$$
where $A=V^\top U$ and $B=V^\top U_\perp$. Since $BB^\top = I_d - AA^\top$, the singular values of $B$ are precisely $\sqrt{1 - \sigma_j(A)^2}=\sqrt{1-\cos^2(\theta_j)}=\sin(\theta_j)$ since $A=V^\top U$ with singular values $\cos (\theta_j)$, for $j=1,\dots,d$. This proves the following lemma.

\begin{lemma}
    For orthogonal $U, V$, we have 
    $$\|U_\perp^\top V \| = \|\sin \Theta(U,V)\|.$$
\end{lemma}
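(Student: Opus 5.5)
The plan is to reduce the statement to Lemma~\ref{lemma: projection sine} by comparing Gram matrices. Take $U_\perp\in\bbR^{n\times(n-d)}$ to have orthonormal columns spanning the orthogonal complement of $\mathrm{span}(U)$. Then $U_\perp U_\perp^\top = I_n - UU^\top$, and so
$$(U_\perp^\top V)^\top (U_\perp^\top V) = V^\top U_\perp U_\perp^\top V = V^\top (I_n - UU^\top) V = \big((I_n-UU^\top)V\big)^\top\big((I_n-UU^\top)V\big),$$
where the last equality uses that $I_n-UU^\top$ is symmetric and idempotent.

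Since the two matrices $U_\perp^\top V$ and $(I_n-UU^\top)V$ have the same $d\times d$ Gram matrix, they have the same singular values. Lemma~\ref{lemma: projection sine}, applied with $U_0=U$ and $U_1=V$, identifies the singular values of $(I_n-UU^\top)V$ as $\sin\theta_1,\dots,\sin\theta_d$. Taking the largest one gives $\|U_\perp^\top V\| = \sin\theta_d = \|\sin\Theta(U,V)\|$.

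As a self-contained alternative, one can write $U^\top V = W\,\cos\Theta\,\widetilde V^\top$ as an SVD. Then
$$V^\top U_\perp U_\perp^\top V = I_d - \widetilde V\cos^2\Theta\,\widetilde V^\top = \widetilde V\,\sin^2\Theta\,\widetilde V^\top,$$
and the eigenvalues $\sin^2\theta_j$ can be read off directly. This is the $AA^\top + BB^\top = I_d$ computation given in the text, with $A=V^\top U$ and $B=V^\top U_\perp$.

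I do not expect a genuine obstacle here. The only care needed is bookkeeping. First, $U_\perp^\top V$ is rectangular of size $(n-d)\times d$, so one must argue through the $d\times d$ Gram matrix rather than treat the matrix as square. Second, when $d>n-d$, some $\sin\theta_j$ are forced to vanish; the Gram-matrix argument handles this automatically. Third, the result is independent of the choice of orthonormal basis $U_\perp$ for the complement, since $U_\perp U_\perp^\top$ is determined by the subspace alone.
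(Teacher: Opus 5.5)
Your proof is correct and is essentially the paper's own argument. The paper writes $I_d = AA^\top + BB^\top$ with $A = V^\top U$, $B = V^\top U_\perp$, and reads the singular values $\sin\theta_j$ of $B$ off $BB^\top = I_d - AA^\top$; this is exactly your alternative, while your main route packages the same Gram-matrix identity $V^\top U_\perp U_\perp^\top V = V^\top(I_n - UU^\top)V$ as a reduction to Lemma~\ref{lemma: projection sine}. You are also right to require that $U_\perp$ have orthonormal columns rather than merely spanning ones, since $U_\perp U_\perp^\top = I_n - UU^\top$ depends on it.
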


\begin{theorem}[Mirsky's theorem -- Theorem 4.11 \cite{stewart1990matrix}]\label{thm: mirsky}
    Let $X$ and $\tilde{X}$ be matrices of the same dimensions with singular values
    \begin{align*}
        s_1\geq s_2 \geq \dots \geq s_p, \\
        \tilde{s}_1 \geq \tilde{s}_2 \geq \dots \geq \tilde{s}_p
    \end{align*}
    then for any unitarily invariant norm $\|\cdot\|$ (and hence for the spectral norm in particular),
    $$\|\diag(\tilde{s}_i - s_i)_{i=1}^p\| \leq \|\tilde{X} - X\|.$$
\end{theorem}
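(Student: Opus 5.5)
Mirsky's theorem is a classical result, and I would prove it in the standard way, through the Hermitian dilation and Lidskii--Wielandt eigenvalue majorization, rather than by any argument specific to this paper. Let $X,\tilde X\in\bbR^{m\times p}$; pad with zero singular values if needed so that both lists have a common length $q=\min(m,p)$. Form the symmetric dilations
$$J(X)=\begin{pmatrix}0 & X\\ X^\top & 0\end{pmatrix},\qquad J(\tilde X)=\begin{pmatrix}0 & \tilde X\\ \tilde X^\top & 0\end{pmatrix}.$$
The eigenvalues of $J(X)$ are $\pm s_1,\dots,\pm s_q$ together with $m+p-2q$ zeros, and similarly for $J(\tilde X)$. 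Moreover $J(\tilde X)-J(X)=J(\tilde X-X)$, whose eigenvalues are $\pm$ the singular values of $\tilde X-X$.

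The first key step is the Lidskii--Wielandt theorem for symmetric $A,B$ with decreasingly ordered eigenvalues $\lambda(\cdot)$. It states that $\lambda(A+B)-\lambda(A)$ lies in the convex hull of permutations of $\lambda(B)$, and equivalently
$$\sum_{i=1}^k \big(\lambda(A+B)-\lambda(A)\big)^{\downarrow}_i\le \sum_{i=1}^k\lambda_i(B)\quad\text{for all }k,$$
with equality of total sums. I would take $A=J(X)$ and $B=J(\tilde X-X)$. The ordered eigenvalue differences are then exactly $\pm(\tilde s_i-s_i)$ plus zeros, because both spectra are symmetric and are ordered consistently. It follows that the vector $(|\tilde s_i-s_i|)_i$ is weakly majorized by the singular values of $\tilde X-X$: for every $k$, the sum of the $k$ largest $|\tilde s_i-s_i|$ is at most $\sum_{i\le k}s_i(\tilde X-X)$. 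The step that needs care is pairing the $k$ largest entries of $|\tilde s-s|$ with sign choices among $\pm(\tilde s_i-s_i)$ so that the Lidskii inequality applies. Choosing for each $i$ the sign that makes the entry nonnegative produces $k$ distinct eigenvalue-difference coordinates, and the Ky Fan form of Lidskii then bounds their sum.

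The second step is to convert weak majorization into the norm inequality. By the Ky Fan dominance theorem, weak majorization of singular-value vectors is equivalent to $\|\cdot\|\le\|\cdot\|$ for every unitarily invariant norm, since such norms are symmetric gauge functions of the singular values. The singular values of $\diag(\tilde s_i-s_i)$ are the $|\tilde s_i-s_i|$, so the claim follows.

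For the paper's use, only the spectral-norm case is needed, and this has a one-line proof via Weyl's inequality $\lambda_i(A+B)\le\lambda_i(A)+\|B\|$ applied to the dilations. It gives $|\tilde s_i-s_i|\le\|\tilde X-X\|$ for every $i$. I would present this case first and cite \cite{stewart1990matrix} for the general case. The main obstacle in the general case is the Lidskii step; the cleanest route is the Wielandt minimax characterization of sums of eigenvalues, which is standard.
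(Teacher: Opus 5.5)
Your proof is correct. The paper gives no proof of this statement; it quotes it from Stewart and Sun. Your route (Jordan--Wielandt dilation, Lidskii's majorization, then Ky Fan dominance) is the standard textbook argument for the cited result.

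Your sign pairing is handled correctly. For each $i\le q$, the coordinates $i$ and $m+p+1-i$ of $\lambda(J(\tilde X))-\lambda(J(X))$ are distinct, and they carry $\tilde s_i-s_i$ and $-(\tilde s_i-s_i)$ respectively. Lidskii therefore bounds the sum of the $k$ largest $|\tilde s_i-s_i|$ by $\sum_{j\le k}s_j(\tilde X-X)$, and dominance finishes the argument. This requires reading $\diag(\tilde s_i-s_i)$ as a rectangular diagonal matrix of the same size as $\tilde X-X$, so that the same symmetric gauge function is evaluated on both sides.

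What your write-up adds over the bare citation is the observation that every use in the paper is the spectral-norm case. These uses are:
\begin{itemize}
\item $|s_i(Y)-s_i(Y_0)|\le\|E_0\|$ in the control of $V_n^{-1}$;
\item $s_{d+1}(Y)\le\|E_0\|$ in the truncated pseudo-inverse lemma;
\item $\|Y_\perp^{(i)}\|\le\|E_0^{(i)}\|$ in the leave-one-out bounds;
\item the gap argument accompanying Wedin's theorem.
\end{itemize}
Your one-line Weyl argument on the dilations proves that case directly, with no need for Lidskii. The general unitarily invariant version, which genuinely needs Lidskii's theorem, is never used.

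One cosmetic point: you use $p$ for the column dimension, whereas the statement uses $p$ for the number of singular values, which is your $q$.
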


\begin{theorem}[Wedin's theorem -- Theorem 4.4 \cite{stewart1990matrix}]\label{thm: wedin}
    Let $A$ and $\tilde{A}=A + E$ be two $n\times D$ matrices. Let $A=UDV^\top + U_\perp D_\perp V_\perp^\top$ and $\tilde{A}=\tilde{U}\tilde{D}\tilde{V}^\top + \tilde{U}_\perp \tilde{D}_\perp\tilde{V}_\perp^\top$ be their respective SVD, where the first parts correspond to the top $d$ singular values. Let $\Phi_1 = \Theta(U, \tilde{U})$ and $\Phi_2 = \Theta(V, \tilde{V})$ be the canonical angles between the subspaces (as diagonal matrices). Additionally, suppose 
    $$s_d(\tilde{A}) \geq \alpha + \delta, \quad s_{d+1}(A) \leq \alpha$$
    for some $\alpha,\delta>0$. Then
    $$\max \{\|\sin\Phi_1\|, \|\sin\Phi_2\|\} \leq \frac{\|E\|}{\delta}.$$
\end{theorem}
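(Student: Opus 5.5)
The plan is the classical residual argument for singular subspaces. We transfer the perturbation equations to the complementary singular spaces of $A$, and then use the spectral gap between $s_d(\tilde A)$ and $s_{d+1}(A)$ to invert them. The target quantities are
$$X := U_\perp^\top \tilde U, \qquad Y := V_\perp^\top \tilde V.$$
By the preceding lemma, $\|X\| = \|\sin\Phi_1\|$ and $\|Y\| = \|\sin\Phi_2\|$, so it suffices to show $\max\{\|X\|,\|Y\|\}\le \|E\|/\delta$.

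\textbf{Step 1 (residual equations).} From the SVD of $\tilde A$ we have $\tilde A\tilde V = \tilde U\tilde D$ and $\tilde A^\top \tilde U = \tilde V\tilde D$. Writing $A = \tilde A - E$ gives
$$A\tilde V - \tilde U\tilde D = -E\tilde V, \qquad A^\top\tilde U - \tilde V\tilde D = -E^\top\tilde U.$$
Left-multiply the first equation by $U_\perp^\top$ and the second by $V_\perp^\top$. Then use $U_\perp^\top A = D_\perp V_\perp^\top$ and $V_\perp^\top A^\top = D_\perp^\top U_\perp^\top$, which follow from orthogonality of the full singular bases of $A$. This yields the coupled Sylvester-type system
$$X\tilde D - D_\perp Y = U_\perp^\top E\tilde V, \qquad Y\tilde D - D_\perp^\top X = V_\perp^\top E^\top \tilde U.$$
Here $D_\perp$ is the rectangular block of trailing singular values, so $\|D_\perp\| = s_{d+1}(A)\le\alpha$. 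The matrix $\tilde D$ is square, diagonal and invertible, with smallest entry $s_d(\tilde A)\ge\alpha+\delta$.

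\textbf{Step 2 (gap inversion).} Let $m=\max\{\|X\|,\|Y\|\}$, and suppose without loss of generality that $\|X\|=m$; the other case is symmetric and uses the second equation. Since $\tilde D$ is invertible,
$$\|X\| = \|X\tilde D\tilde D^{-1}\| \le \|X\tilde D\|/s_d(\tilde A),$$
so $\|X\tilde D\|\ge(\alpha+\delta)m$. On the other hand $\|D_\perp Y\|\le \alpha\|Y\|\le\alpha m$. The reverse triangle inequality applied to the first equation then gives
$$\delta m \le \|X\tilde D\| - \|D_\perp Y\| \le \|U_\perp^\top E\tilde V\| \le \|E\|,$$
because $U_\perp$ and $\tilde V$ have orthonormal columns. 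Hence $m\le\|E\|/\delta$, which is the claim.

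\textbf{Expected obstacle.} The only delicate point is the bookkeeping in Step 1. One must ensure that the identities $U_\perp^\top A = D_\perp V_\perp^\top$ hold with the full complementary bases, which may have different sizes $n-d$ and $D-d$. This is exactly why $D_\perp$ is rectangular and why only its operator norm $s_{d+1}(A)$ enters. The choice to bound the larger of $\|X\|,\|Y\|$ is what decouples the two equations; no separate estimate of each is needed. If either complement is trivial, the corresponding term vanishes and the argument only simplifies.
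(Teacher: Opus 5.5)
Your argument is correct. It is the standard residual proof of Wedin's theorem: the residuals $A\tilde V-\tilde U\tilde D=-E\tilde V$ and $A^\top\tilde U-\tilde V\tilde D=-E^\top\tilde U$ are projected onto the complementary singular spaces, and the gap is used to bound the larger of $\|U_\perp^\top\tilde U\|$ and $\|V_\perp^\top\tilde V\|$. The paper gives no proof of its own and only cites Theorem 4.4 of Stewart and Sun, whose proof proceeds in exactly this way, so your route coincides with the intended one.
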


While applying this theorem, often $E$ is a noise matrix and as such, and it is difficult to control the singular values of the perturbed matrix $\tilde{A}$ to verify the condition directly. Instead we show something like
$$s_d(A) - s_{d+1}(A) \geq \delta_0.$$
Then we use this spectral gap, together with Mirsky's inequality Theorem \ref{thm: mirsky}. This gives
$$s_d(\tilde{A})-s_{d+1}(A)\geq s_d(A) - \|E\| -s_{d+1}(A) \geq \delta_0 -\|E\|.$$
Thus, as long as $\|E\|<\delta_0$, the condition required to invoke Wedin's theorem is satisfied with $\delta=\delta_0-\|E\|>0$.

\begin{lemma}[Wedin's Identity]\label{lemma: wedin identity}
    Let $A,B\in\bbR^{n\times D}$ be two matrices having the same rank $d$, where $A=B+E$. Let $A^+, B^+$ be the Moore-Penrose pseudo-inverses of $A$ and $B$ respectively. Then we have the following decomposition:
    $$A^+ - B^+ = -A^+ E B^+ + A^+(A^+)^\top E^\top (I-BB^+) + (I-A^+A)E^\top (B^+)^\top B^+.$$
\end{lemma}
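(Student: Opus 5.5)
This is Wedin's classical identity, and I would prove it by direct algebraic verification using only the Moore--Penrose relations for $A^+$ and $B^+$, together with the rank assumption. Write $P_A=AA^+$ and $Q_A=A^+A$ for the orthogonal projections onto $\mathrm{col}(A)$ and $\mathrm{row}(A)$, and define $P_B,Q_B$ likewise. Then $A^+P_A=A^+=Q_AA^+$ and $A^+(I-P_A)=0$, with the analogous identities for $B$. The strategy is to split $A^+-B^+$ according to the decomposition $I=Q_A+(I-Q_A)$ on the left and $I=P_B+(I-P_B)$ on the right:
\[
A^+-B^+ = Q_A(A^+-B^+)P_B + Q_A(A^+-B^+)(I-P_B) + (I-Q_A)(A^+-B^+)P_B + (I-Q_A)(A^+-B^+)(I-P_B).
\]
I would then identify each block with one term of the claimed formula.

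For the first block, $Q_AA^+P_B=A^+P_B=A^+BB^+$ and $Q_AB^+P_B=A^+AB^+$. Hence the block equals $A^+(B-A)B^+=-A^+EB^+$. For the second block, $B^+(I-P_B)=0$, so it reduces to $A^+(I-P_B)$. I would write $A^+=A^+(A^+)^\top A^\top$, which holds because $A^+(A^+)^\top A^\top=A^+(AA^+)^\top=A^+P_A=A^+$. Since $B^\top(I-P_B)=0$, this gives $A^+(I-P_B)=A^+(A^+)^\top(A-B)^\top(I-P_B)=A^+(A^+)^\top E^\top(I-BB^+)$. The third block is symmetric: it equals $-(I-Q_A)B^+$. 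Using $B^+=B^\top(B^+)^\top B^+$ and $(I-Q_A)A^\top=0$, it becomes $(I-A^+A)(A-B)^\top(B^+)^\top B^+=(I-A^+A)E^\top(B^+)^\top B^+$. The sign works out correctly because the minus sign in $-B^+$ is absorbed by writing $-B^\top=(A-B)^\top-A^\top$.

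The main obstacle is the fourth block, $(I-Q_A)(A^+-B^+)(I-P_B)$, which must vanish. Here $(I-Q_A)A^+=0$ and $B^+(I-P_B)=0$, so both pieces are zero and no rank condition is actually needed for this block. I would double-check this carefully, because the equal-rank hypothesis is classically invoked in the related bounds on $\|A^+-B^+\|$ rather than in the identity itself. Summing the four blocks yields the stated decomposition. I would also note that the identity is applied in the paper to rank-$d$ truncations $Y_d$ and $Y_d^{(i)}$, for which $A^+$ and $B^+$ are the genuine pseudoinverses, so the Moore--Penrose relations used above hold.
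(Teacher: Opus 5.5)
Your proof is correct and is essentially the paper's argument: your four-block split reproduces the paper's three-term decomposition $A^+-B^+=A^+(I-BB^+)-(I-A^+A)B^+-A^+EB^+$, with your fourth block vanishing identically, and both proofs finish with the same identities $A^+=A^+(A^+)^\top A^\top$, $B^+=B^\top(B^+)^\top B^+$, $B^\top(I-BB^+)=0$ and $(I-A^+A)A^\top=0$. You are also right that the equal-rank hypothesis is not used in the identity itself, which the paper notes in the remark following the lemma.
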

\begin{proof}
    First, insert the identities $I=BB^+ + (I-BB^+)$ and $I=A^+A + (I-A^+A)$ to split:
    $$A^+-B^+ = \underbrace{A^+(I-BB^+)}_{\text{term 1}} - \underbrace{(I-A^+A)B^+}_{\text{term 2}} - \underbrace{A^+(A-B)B^+}_{=A^+EB^+}.$$
    The last term is already in the target form. We focus on the first two terms.

    Recall the fundamental property of pseudo-inverse $A^+=A^+(A^+)^\top A^\top$. Plugging this into term 1 gives
    $$A^+(I-BB^+) = A^+(A^+)^\top A^\top (I-BB^+).$$
    Using $A^\top = B^\top + E^\top$ and expanding the right side,
    $$A^\top(I-BB^+) = B^\top(I-BB^+) + E^\top(I-BB^+).$$
    By property of the pseudo-inverse, $B^\top = B^\top B B^+$, hence the first term is 0. Thus, we get the simplification for term 1 as
    $$A^+(I-BB^+) = A^+(A^+)^\top E^\top (I-BB^+).$$
    The second term 2 also simplifies similarly. Note that $B^+=B^\top (B^+)^\top B^+$ and $B^\top = A^\top -E^\top$. Hence,
    \begin{align*}
        (I-A^+A)B^+ &= (I - A^+A)B^\top (B^+)^\top B^+ \\
        &= (I - A^+ A) (A^\top - E^\top )(B^+)^\top B^+ \\
        &= -(I-A^+A)E^\top (B^+)^\top B^+,
    \end{align*}
    because $(I-A^+ A)A^\top = 0$. Since the original decomposition subtracts term 2, this produces the required positive term $+(I-A^+A)E^\top (B^+)^\top B^+$, matching the corresponding term in the statement of the lemma.
\end{proof}

\begin{remark}
    Note that the fact that $A$ and $B$ have the same rank is not used in the proof of the above lemma. Indeed, the algebraic identity holds for any $A$ and $B$. However, the pseudo-inverse operation is  continuous only on the set of matrices with the same rank. If $\rank(A)\neq \rank(B)$, then as the perturbation $E\to 0$, the pseudo-inverse $A^+$ does not converge to $B^+$. Hence, in the following lemma, we introduce truncation to the top $d$ singular values.
\end{remark}

\begin{lemma}[Spectral stability of truncated pseudo-inverse]\label{lemma: spectral stability truncated pseudo-inverse}
    Given $\bY = \bY_0 + E_0$ and their rank$-d$ pseudo-inverses $\bY^+$ and $\bY_0^+$, as defined earlier, if $s_d(\bY)>0$ then
    $$\|\bY^+ - \bY_0^+\| \leq 2\|E_0\| \left(\frac{1}{s_d(\bY)^2} + \frac{1}{s_d(\bY_0)^2} + \frac{1}{s_d(\bY)s_d(\bY_0)} \right).$$
\end{lemma}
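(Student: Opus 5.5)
The plan is to reduce to the two-matrix version of Wedin's identity (Lemma~\ref{lemma: wedin identity}) applied to the rank-$d$ truncations, and then bound each of its three terms in operator norm. Write $\bY_d = USV^\top$ for the rank-$d$ truncation of $\bY$, so that $\bY^+ = \bY_d^+$. Since $\bY_0$ already has rank $d$, we have $\bY_0^+=(\bY_0)^+$ in the Moore--Penrose sense. Apply the identity with $A=\bY_d$, $B=\bY_0$ and $E_a := \bY_d-\bY_0$:
\[
\bY^+-\bY_0^+ = -\bY^+E_a\bY_0^+ + \bY^+(\bY^+)^\top E_a^\top(I-\bY_0\bY_0^+) + (I-\bY^+\bY_d)E_a^\top(\bY_0^+)^\top\bY_0^+ .
\]
The pseudo-inverse norms are $\|\bY^+\|=1/s_d(\bY)$ and $\|\bY_0^+\|=1/s_d(\bY_0)$, and the complementary projections have norm at most one. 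The three terms are therefore bounded by $\|E_a\|/(s_d(\bY)s_d(\bY_0))$, $\|E_a\|/s_d(\bY)^2$ and $\|E_a\|/s_d(\bY_0)^2$, respectively.

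The remaining step, and the only delicate one, is to control $\|E_a\|$ by $\|E_0\|$. The naive bound is
\[
E_a = E_0 - \bY_\perp,\qquad \bY_\perp:=U_\perp S_\perp V_\perp^\top,
\]
which gives $\|E_a\|\le \|E_0\|+\|\bY_\perp\|$. Mirsky's theorem (Theorem~\ref{thm: mirsky}) gives $\|\bY_\perp\| = s_{d+1}(\bY)\le s_{d+1}(\bY_0)+\|E_0\| = \|E_0\|$, so $\|E_a\|\le 2\|E_0\|$. Substituting this into the three term bounds yields exactly the stated constant $2$ in front of the sum of the three reciprocal products.

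I expect the main obstacle to be making sure the truncation is handled cleanly in the identity. First, the identity of Lemma~\ref{lemma: wedin identity} is purely algebraic, and the paper's remark confirms that the equal-rank condition is not used. Second, the projections $I-\bY_0\bY_0^+$ and $I-\bY^+\bY_d$ must be genuine orthogonal projections. This holds because $\bY_0\bY_0^+=U_0U_0^\top$ and $\bY^+\bY_d=VV^\top$, and it is this fact that gives the norm-one bounds. The hypothesis $s_d(\bY)>0$ ensures that $\bY_d$ has rank exactly $d$, so that $\|\bY^+\|=1/s_d(\bY)$ is finite. No refinement exploiting cancellations, such as $\bY^+\bY_\perp=0$, is needed for this crude bound, though it could sharpen the first term.
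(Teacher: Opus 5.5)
Your proposal is correct and matches the paper's proof step for step. The paper applies Wedin's identity to the rank-$d$ truncation $USV^\top$ and $\bY_0$, bounds the three terms by the reciprocal singular-value products, and controls the effective perturbation $E_0 - U_\perp S_\perp V_\perp^\top$ by $2\|E_0\|$ using Mirsky's bound $s_{d+1}(\bY)\le \|E_0\|$, exactly as you do.
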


\begin{proof}
    Recall $\bY_0^+$ is the pseudo-inverse of $\bY_0=\bM_0\Lambda_0^\top$, the rank $d$ true signal matrix. Also, recall $\bY=USV^\top + U_\perp S_\perp V_\perp^\top$ and $\bY^+=VS^{-1}U^\top$ is the pseudo-inverse of $\tilde{\bY} = USV^\top$, the best rank$-d$ approximation as given by SVD (top $d$ singular values). Note that while $\bY=\bY_0+E_0$ may not be rank $d$, $\tilde{\bY}$ is of rank $d$ by definition. Since both $\tilde{\bY}$ and $\bY_0$ have rank $d$ and are related as $\tilde{\bY} = \bY_0 + \tilde{E}_0$ (where $\tilde{E}_0$ is the effective perturbation matrix), we can use Lemma \ref{lemma: wedin identity} to obtain
    $$\bY^+ - \bY_0^+ = -\bY^+\tilde{E}_0\bY_0^+ + \bY^+(\bY^+)^\top \tilde{E}_0^\top (I - \bY_0\bY_0^+) + (I-\tilde{\bY}^+\bY)\tilde{E}_0^\top (\bY_0^+)^\top \bY_0^+.$$
    Note that both $(I-\bY_0\bY_0^+)$ and $(I-\tilde{\bY}^+\bY)$ are orthogonal projection operators, and hence have spectral norm at most 1. Now, we use triangle inequality on the above display to get:
    $$\|\bY^+-\bY_0^+\| \leq \|\bY^+\tilde{E}_0 \bY_0^+\| + \|\bY^+(\bY^+)^\top \tilde{E}_0^\top\| + \|\tilde{E}_0^\top (\bY_0^+)^\top \bY_0^+\|.$$
    We can bound each of these terms as follows
    \begin{align*}
        \|\bY^+\tilde{E}_0 \bY_0^+\| &\leq \frac{\|\tilde{E}_0\|}{s_d(\tilde{\bY})s_d(\bY_0)} \\
        \|\bY^+(\bY^+)^\top \tilde{E}_0^\top\| &\leq \frac{\|\tilde{E}_0\|}{s_d(\tilde{\bY})^2}\\
        \|\tilde{E}_0^\top (\bY_0^+)^\top \bY_0^+\| &\leq \frac{\|\tilde{E}_0\|}{s_d(\bY_0)^2}.
    \end{align*}
    By definition of $\tilde{\bY}$ in relation to $\bY$, we have $s_d(\tilde{\bY}) = s_d(\bY)$. Now, to relate the norm of $\tilde{E}_0$ in terms of the original noise matrix $E_0=\bY-\bY_0$, we can use triangle inequality:
    $$\tilde{E}_0=E_0 - U_\perp S_\perp V_\perp^\top \Longrightarrow \|\tilde{E}_0\| \leq \|E_0\| + s_{d+1}(\bY).$$
    Now, since $s_{d+1}(\bY_0) = 0$, by Mirsky's inequality \ref{thm: mirsky}, $s_{d+1}(\bY) \leq \|E_0\|$. Thus, combining everything, we get
    $$\|\bY^+ - \bY_0^+\| \leq 2\|E_0\| \left(\frac{1}{s_d(\bY)^2} + \frac{1}{s_d(\bY_0)^2} + \frac{1}{s_d(\bY)s_d(\bY_0)} \right).$$
\end{proof}

\section{Proofs for Section \ref{sec: observation density theory}}

\subsection{Additional Lemmas}\label{app: additional lemmas}
We present and proof the two small technical lemmas. The first is used to transfer the $L_2$ rate obtained in the previous section to an $L_1$ rate using the sub-Gaussian assumption on $g_0$. The second result is used to deal with the positivity correction, for higher-order kernels, as discussed in the main paper.

\medskip
\textbf{$L^2$ to $L^1$ conversion} For $0<r<1$, write $\log_+(1/r)=1\vee \log(1/r)$. Recall the kernel tail assumption \eqref{assume: K tail}.

\begin{lemma}\label{lemma: L2 to L1 signed K}
    Suppose $g_0$ is sub-Gaussian and $K$ satisfies the kernel-tail condition in \eqref{assume: K tail}. Let $\cE_{n,D}$ be the event in Theorem \ref{thm: new kde}, and suppose 
    $$\bbE_0[\|\widehat g_h - g_0^R\|_{L^2}^2 \mid \cE_{n,D}] \lesssim r_{n,D}$$
    where $r_{n,D}\to 0$. For the bandwidths $h\lesssim 1$, we have
    $$\bbE_0[\|\widehat g_h - g_0^R\|_{L^1} \mid \cE_{n,D}] \lesssim r_{n,D}^{1/2} \{\log_+ (1/r_{n,D})\}^{d/4}.$$
\end{lemma}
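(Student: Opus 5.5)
The approach is truncation plus Cauchy--Schwarz. Fix a radius $t>0$, to be chosen later as $t\asymp\sqrt{\log_+(1/r_{n,D})}$, and let $B_t=\{x:\|x\|\le t\}$. Write $f=\widehat g_h-g_0^R$. On the ball, Cauchy--Schwarz gives
$$\int_{B_t}|f|\le \vol(B_t)^{1/2}\|f\|_{L^2}\lesssim t^{d/2}\|f\|_{L^2}.$$
Taking conditional expectations and applying Jensen, $\bbE_0[\|f\|_{L^2}\mid\cE_{n,D}]\le r_{n,D}^{1/2}$. This contributes $t^{d/2}r_{n,D}^{1/2}$, which equals $r_{n,D}^{1/2}\{\log_+(1/r_{n,D})\}^{d/4}$ for the chosen $t$.

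Off the ball we bound $|f|\le|\widehat g_h|+g_0^R$ and control the two pieces separately.

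\textbf{Density tail.} Since $g_0$ is sub-Gaussian and rotation-invariant norms are preserved, $\int_{\|x\|>t}g_0^R(x)\,dx=\bbP(\|\eta\|>t)\lesssim e^{-ct^2}$. This holds deterministically, for every $R$.

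\textbf{Estimator tail.} Here
$$\int_{\|x\|>t}|\widehat g_h(x)|\,dx\le \frac1n\sum_{i}\int_{\|x\|>t}h^{-d}\bigl|K\bigl((\widehat\eta_i-x)/h\bigr)\bigr|\,dx.$$
Split according to whether $\|\widehat\eta_i\|\le t/2$.

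If $\|\widehat\eta_i\|\le t/2$, then $\|x\|>t$ forces $\|u\|=\|x-\widehat\eta_i\|/h>t/(2h)$. The kernel-tail condition \eqref{assume: K tail} with $h\lesssim1$ then bounds the contribution by $C_Ke^{-c_Kt^2/(4h^2)}\lesssim e^{-ct^2}$.

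If $\|\widehat\eta_i\|>t/2$, bound the integral by $\|K\|_{L^1}$. This term is $\|K\|_{L^1}\,n^{-1}\#\{i:\|\widehat\eta_i\|>t/2\}$. To control it, use $\widehat\eta_i=R^\top\eta_i+\delta_i$ together with the bound on $\cE_{n,D}$,
$$\max_i\|\delta_i\|\lesssim\sqrt{(\log n)/D}+\rho_{n,D}=o(1),$$
which comes from Theorem~\ref{thm: spectral recovery}(ii) and the $\xi$ concentration. For large $n,D$ and $t\ge 1$, the event $\|\widehat\eta_i\|>t/2$ therefore implies $\|\eta_i\|>t/4$. Hence
$$\bbE_0\Bigl[n^{-1}\#\{i:\|\widehat\eta_i\|>t/2\}\Bigm|\cE_{n,D}\Bigr]\le \frac{\bbP(\|\eta_1\|>t/4)}{\bbP_0(\cE_{n,D})}\lesssim e^{-ct^2},$$
where we dropped the indicator of $\cE_{n,D}$ and used $\bbP_0(\cE_{n,D})\ge1/2$, exactly as in \eqref{eq: conditional to indicator}.

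\textbf{Choice of $t$.} All tail contributions are $\lesssim e^{-ct^2}$. Choosing $t^2=c^{-1}\log_+(1/r_{n,D})$ makes them $\lesssim r_{n,D}\le r_{n,D}^{1/2}$. Adding the ball contribution gives the claim.

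The main obstacle is the estimator's tail: the $\widehat\eta_i$ are not draws from $g_0$. The rowwise uniform control of $\max_i\|\delta_i\|$ on $\cE_{n,D}$ is what lets us transfer the sub-Gaussian tail of $\eta_i$ to $\widehat\eta_i$. The only care needed is to pass from the conditional expectation to an unconditional tail probability using the fact that $\cE_{n,D}$ has probability bounded below. If one wants $t$ smaller than a constant the argument degenerates, but since $r_{n,D}\to0$ we may take $t\ge1$.
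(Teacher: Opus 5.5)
Your proposal is correct and follows the paper's proof essentially step for step. The common steps are:
\begin{itemize}
\item truncation to a ball of radius $\asymp\sqrt{\log_+(1/r_{n,D})}$, with Cauchy--Schwarz on the ball;
\item the sub-Gaussian tail bound for $g_0^R$;
\item splitting the tail of $\widehat g_h$ into a kernel-tail piece and a piece handled by transferring the tail of $\eta_i$ to $\widehat\eta_i$, via the uniform rowwise score bound on $\cE_{n,D}$ together with $\bbP_0(\cE_{n,D})\ge 1/2$.
\end{itemize}
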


\begin{proof}[Proof of Lemma \ref{lemma: L2 to L1 signed K}]
    Write $f(z)=\widehat{g}_h(z) - g_0^R(z)$. For any $T>0$, let $B_T=\{z: \|z\| < T\}$
    \begin{equation}\label{eq: L2 to L1 proof decomposition}
        \begin{aligned}
            \|f\|_{L^1} &= \int_{B_T} |f(z)|dz + \int_{B_T^c} |f(z)| dz \\
        &\leq \vol(B_T)^{1/2} \|f\|_{L^2} + \int_{B_T^c} |\widehat{g}_h(z)|dz + \int_{B_T^c} g_0^R(z)dz.
        \end{aligned}
    \end{equation}
    Since $\vol(B_T)=v_d T^d$, we have $\int_{B_T} |f| \lesssim T^{d/2} \|f\|_{L^2}$. Because $K$ may be signed, $\widehat{g}_h$ may be negative. So, for the second term in \eqref{eq: L2 to L1 proof decomposition} we split
    \begin{align*}
        \int_{\|z\|>T} |\widehat{g}_h(z)|dz &\leq \frac{1}{n} \sum_{i=1}^n \int_{\|z\|>T} h^{-d} \left|K\left(\frac{z-\widehat{\eta}_i}{h}\right)\right| dz \\
        &= \frac1n \sum_i \int \ind(\|\widehat{\eta}_i +hu\|>T) |K(u)| du.
    \end{align*}
    If $\|\widehat{\eta}_i\|\leq T/2$ and $\|\widehat{\eta}_i +hu\|>T$, then necessarily $h\|u\|>T/2$. Hence
    $$\ind(\|\widehat{\eta}_i + hu\|>T) \leq \ind(\|\widehat{\eta}_i\| >T/2) + \ind(\|u\|>T/(2h)).$$
    Thus, 
    $$\int_{B_T^c} |\widehat{g}_h| \leq \|K\|_{L^1} \frac1n \sum_{i=1}^n \ind(\|\widehat{\eta}_i\| >T/2) + \int_{\|u\|>T/2h} |K(u)| du.$$
    Now, for the first term, we use the score-estimation event. On $\cE_{n,D}$, we know that $\max_i \|\widehat{\eta}_i - R^\top \eta_i\| \leq a_{n,D}\to 0$. For $T$ sufficiently large compared with $a_{n,D}\to 0$, $\|\widehat{\eta}_i\|>T/2$ implies $\|\eta_i\|\geq \|\widehat{\eta}_i\| - a_{n,D}>T/4$. Therefore, on $\cE_{n,D}$,
    $$\frac1n \sum_i \ind(\|\widehat{\eta}_i\|>T/2) \leq \frac1n \sum_i \ind(\|\eta_i\|>T/4).$$
    Taking conditional expectation, 
    $$\bbE_0\left[\frac1n \sum_i \ind(\|\eta_i\|>T/4)\mid \cE_{n,D}\right] \leq \frac{\bbE_0[n^{-1}\sum_i \ind(\|\eta_i\|>T/4)]}{\bbP_0^{(n)}(\cE_{n,D})}\lesssim e^{-c'T^2},$$
    where we used $\bbP_0(\cE_{n,D})\geq 1-c/n$ and the fact that $\eta_i'$s are iid from $g_0$ with sub-Gaussian tails.

    For the second term we use the K-tail condition assumed on the kernel $K$. This gives
    $$\int_{\|u\|>T/2h} |K(u)|du \lesssim \exp(-c_KT^2/h^2).$$
    For $h\leq 1$, $\exp(-c_KT^2/h^2)\leq e^{-c_K T^2}.$ Consequently,
    $$\bbE_0\left[\int_{B_T^c}|\widehat{g}_h|\mid \cE_{n,D}\right] \lesssim e^{-c'' T^2}.$$

    Similarly, sub-Gaussianity of $g_0$ gives
    $$\int_{B_T^c} g_0^R(z)dz \lesssim e^{-c'T^2},$$
    for the last term in \eqref{eq: L2 to L1 proof decomposition}, where the same statement holds after rotation because $\|Rz\|=\|z\|$. Putting everything together,
    $$\bbE_0[\|f\|_{L^1} \mid \cE_{n,D}] \lesssim T^{d/2} \bbE_0[\|f\|_{L^2} \mid \cE_{n,D}] + e^{-c_0 T^2},$$
    for some suitable constant $c_0>0$. By Cauchy-Schwarz,
    $$\bbE_0[\|f\|_{L^2} \mid \cE_{n,D}] \leq \left\{\bbE_0[\|f\|_{L^2}^2 \mid \cE_{n,D}]\right\}^{1/2} \lesssim r_{n,D}^{1/2}.$$
    Plugging this into the last display and choosing $T^2 = C\log (1/r_{n,D})$ for sufficiently large constant $C>0$, we obtain the required bound.
\end{proof}

\medskip
\noindent
\textbf{Positive correction} For KDE $\widehat g_h$, recall the positive corrected version $\widetilde g_h$, defined in \eqref{eq: positive correction}. Since $\int \widehat g_h = 1$, we have $\int \widehat g_h^+ \geq 1$, and hence, $\widetilde g_h$ is well-defined and is a probability density, by construction.

\begin{lemma}\label{lemma: positive correction preserves L1}
    Let $\widehat g$ be an integrable, possibly signed function satisfying $\int \widehat g = 1$, and define $\widetilde g =\widehat g^+ / \int \widehat g^+$. Then, for every probability density $g$, we have
    $$\|\widetilde g - g \|_{L^1} \leq 2\|\widehat g - g\|_{L^1}.$$
    
    Moreover, for $\widehat g_h$ as in \eqref{eq: fade latent kde} for an integrable kernel $K$ satisfying $\int \|u\| |K(u)|du <\infty$, then,
    $$\int \|z\| \tilde{g}(z) dz \leq \sqrt{d} \|K\|_{L^1} + h\int \|u\| |K(u)| du.$$
    In particular, this first moment is uniformly bounded for $h\lesssim 1$.
\end{lemma}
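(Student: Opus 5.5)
For the first claim, the plan is to compare $\widehat g^+$ and $\widetilde g$ with $g$ separately. Write $m := \int \widehat g^+ \ge 1$; this holds because $\int \widehat g = 1$ and $\widehat g^+ \ge \widehat g$. Since $g\ge 0$, at every point $|\widehat g^+ - g| \le |\widehat g - g|$. Indeed, where $\widehat g\ge 0$ the two sides are equal, and where $\widehat g<0$ we have $|0-g| = g \le g - \widehat g$. Integrating gives $\|\widehat g^+ - g\|_{L^1}\le \|\widehat g - g\|_{L^1}$. Next, $\widetilde g - \widehat g^+ = \widehat g^+(1/m - 1)$, so
$$\|\widetilde g - \widehat g^+\|_{L^1} = m\,|1/m-1| = m-1.$$
Now $m-1 = \int \widehat g^+ - \int g = \int(\widehat g^+ - g) \le \|\widehat g^+ - g\|_{L^1}$, using $\int g = 1$. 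The triangle inequality then gives
$$\|\widetilde g - g\|_{L^1}\le 2\|\widehat g^+ - g\|_{L^1} \le 2\|\widehat g - g\|_{L^1}.$$

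For the moment bound, use $m\ge1$ to get $\widetilde g \le \widehat g^+ \le |\widehat g_h|$. This gives
$$\int \|z\|\,\widetilde g(z)\,dz \le \frac1n\sum_i \int \|z\|\, h^{-d}\Bigl|K\Bigl(\frac{z-\widehat\eta_i}{h}\Bigr)\Bigr|\,dz = \frac1n\sum_i\int \|\widehat\eta_i + hu\|\,|K(u)|\,du$$
after the substitution $z=\widehat\eta_i+hu$. The triangle inequality bounds the right side by
$$\Bigl(\frac1n\sum_i\|\widehat\eta_i\|\Bigr)\|K\|_{L^1} + h\int\|u\||K(u)|\,du.$$
The remaining point is to control the average score norm. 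By Cauchy--Schwarz and the normalization $n^{-1}\widehat M^\top\widehat M = I_d$,
$$\frac1n\sum_i\|\widehat\eta_i\| \le \Bigl(\frac1n\sum_i\|\widehat\eta_i\|^2\Bigr)^{1/2} = \bigl(n^{-1}\tr(\widehat M^\top \widehat M)\bigr)^{1/2} = \sqrt d.$$
This yields the stated bound deterministically, and it is uniform for $h\lesssim1$.

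There is no real obstacle here. The only delicate step is the pointwise inequality $|\widehat g^+-g|\le|\widehat g-g|$, which needs $g\ge0$, together with recognizing that the renormalization error $m-1$ is itself controlled by the $L^1$ error. The second part relies entirely on the exact score normalization $\widehat M=\sqrt n U$.
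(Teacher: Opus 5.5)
Your proposal is correct and follows the paper's proof essentially step for step: the pointwise bound $|\widehat g^+ - g|\le|\widehat g - g|$, the triangle inequality through $\widehat g^+$, and, for the moment bound, $\widetilde g\le|\widehat g_h|$ followed by the substitution $z=\widehat\eta_i+hu$ and Cauchy--Schwarz with $\widehat M^\top\widehat M=nI_d$. The only cosmetic difference is in the renormalization error. You bound $m-1$ by $\int(\widehat g^+-g)$ using $\int g=1$, whereas the paper bounds $N=m-1$ by $\int_{\{\widehat g<0\}}|\widehat g-g|$ using $g\ge0$; both give the same factor of $2$.
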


\begin{proof}[Proof of Lemma \ref{lemma: positive correction preserves L1}]
    Since $\int K = 1$, $\int \widehat{g}_h = 1$. Write $\widehat{g} = \widehat{g}^+ - \widehat{g}^-$ where $\widehat{g}^- = \max\{-\widehat{g}, 0\}$. Let $N = \int \widehat{g}^-(z)dz$ and $A = \int \widehat{g}^+(z)dz$. Since the signed integral of $\widehat{g}$ equals 1, $A - N = 1$. Hence, $A = 1+ N\geq 1$. Thus, $\widetilde{g}$ is well-defined, non-negative and integrates to 1. 

    Next, we establish the pointwise $L^1$ bound, with $g$ a probability density. Because $g\geq 0$, $\|\widehat{g}_h^+ - g\|_{L^1}\leq \|\widehat{g} - g\|_{L^1}$. Indeed, on $\{\widehat{g}\geq 0\}$, the two integrands coincide, whereas on $\{\widehat{g}<0\}$, $|\widehat{g}^+-g|=g\leq g-\widehat{g}=|\widehat{g} - g|$. Also, 
    $$N = \int_{\widehat{g} <0} |\widehat{g}(z)|dz \leq \int_{\widehat{g} <0} |\widehat{g}(z)-g(z)|dz \leq \|\widehat{g} - g\|_{L^1}$$
    since $g\geq 0$, which implies $|\widehat{g}(z)|\leq |\widehat{g}(z) - g(z)|$ whenever $\widehat{g}(z) <0$. Now,
    $$\|\widetilde{g} - g\|_{L^1} \leq \left\|\frac{\widehat{g}^+}{A} - \widehat{g}^+\right\|_{L^1} + \|\widehat{g}^+ - g\|_{L^1}.$$
    The second term is already shown to be bounded by $\|\widehat{g} - g\|_{L^1}$. For the first term, note that
    $$\left\|\frac{\widehat{g}^+}{A} - \widehat{g}^+\right\|_{L^1} = |A^{-1} - 1| \int \widehat{g}^+(z) dz = |1-A| = N \leq \|\widehat{g} - g\|_{L^1}.$$
    Thus, $\|\widetilde{g} - g\|_{L^1} \leq 2\|\widehat{g} - g\|_{L^1}$.

    It remains to control the first moment for the case $\widehat g = \widehat g_h$, the KDE estimator in \eqref{eq: fade latent kde}. Denote the corresponding quantities with the suffix $h$. Since $A_h\geq 1$, $\tilde{g}_h (z)\leq \hat{g}_h^+(z)\leq |\hat{g}_h(z)|$. Thus, $\int \|z\|\tilde{g}_h(z)dz \leq \int \|z\| |\hat{g}_h(z)| dz$. Using the definition of the KDE and triangle inequality,
    $$|\hat{g}_h(z)| \leq \frac{1}{nh^d} \sum_{i=1}^n \left|K\left(\frac{z-\hat{\eta}_i}{h}\right)\right|.$$
    Therefore, we have
    \begin{align*}
        \int \|z\|\tilde{g}_h(z)dz &\leq \frac1n \sum_{i=1}^n \frac{1}{h^d}\int \|z\| \left|K\left(\frac{z-\hat{\eta}_i}{h}\right)\right|dz \\
        &= \frac1n \sum_{i=1}^n \int \|\hat{\eta}_i + hu\| |K(u)| du, \qquad u=(z-\hat{\eta}_i)/h \iff z=\hat{\eta}_i + hu \\
        &\leq \frac1n \sum_{i=1}^n \left\{\|\hat{\eta}_i\|\int |K(u)|du + h \int\|u\| |K(u)| du\right\} \\
        &= \|K\|_1 \left(\frac1n \sum_{i=1}^n \|\hat{\eta}_i\|\right) + h \int \|u\| |K(u)| du.
    \end{align*}
    Recalling $\hat{\eta}_i'$s are the rows of $\widehat{M}=\sqrt{n} U$. Thus, $\|\widehat{M}\|_F^2 = nd$, and hence, $\frac1n\sum_{i=1}^n \|\hat{\eta}_i\|^2 = d$. Thus, $\frac1n\sum_i \|\hat{\eta}_i\| \leq \sqrt{d}$ by Cauchy Schwarz.
\end{proof}

\medskip
\noindent
\textbf{$L^1$ between Gaussians.} Next, we present a simply lemma that provides an upper bound for the $L^1$ distance between two Gaussian distributions in terms of Euclidean distances between the parameters.

\begin{lemma}\label{lemma: L1 between normals}
    Suppose $\lambda_{\min}(\Sigma)\land \lambda_{\min}(\Sigma_0)\geq c> 0$. Then, for any $z\in\bbR^d$
    $$\|\phi(\cdot\mid \Lambda z, \; \Sigma) - \phi(\cdot \mid \Lambda_0 z,\; \Sigma_0)\|_{L^1} \lesssim \|\Lambda - \Lambda_0\|_F \|z\| +  \|\Sigma - \Sigma_0\|_F.$$
\end{lemma}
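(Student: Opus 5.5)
The plan is to split the $L^1$ distance into a mean-change part and a covariance-change part by the triangle inequality through the intermediate density $\phi(\cdot\mid \Lambda z,\Sigma_0)$:
\begin{equation*}
\|\phi(\cdot\mid \Lambda z,\Sigma)-\phi(\cdot\mid\Lambda_0 z,\Sigma_0)\|_{L^1}\le \|\phi(\cdot\mid \Lambda z,\Sigma)-\phi(\cdot\mid\Lambda z,\Sigma_0)\|_{L^1}+\|\phi(\cdot\mid \Lambda z,\Sigma_0)-\phi(\cdot\mid\Lambda_0 z,\Sigma_0)\|_{L^1}.
\end{equation*}
Both terms are $2\TV$ distances, so each can be bounded by Pinsker's inequality, $\|p-q\|_{L^1}\le\sqrt{2\KL(p\Vert q)}$, using the closed form of the Gaussian KL divergence. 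Everything happens in the ambient dimension of the Gaussians, and the constants must not grow with that dimension.

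For the mean term, the common covariance $\Sigma_0$ gives $\KL=\tfrac12 (\Lambda z-\Lambda_0 z)^\top\Sigma_0^{-1}(\Lambda z-\Lambda_0 z)\le \tfrac{1}{2c}\|(\Lambda-\Lambda_0)z\|^2$. Pinsker then yields a bound $c^{-1/2}\|\Lambda-\Lambda_0\|\,\|z\|\le c^{-1/2}\|\Lambda-\Lambda_0\|_F\|z\|$.

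For the covariance term, with common mean, I will write $\KL(N(\mu,\Sigma)\Vert N(\mu,\Sigma_0))=\tfrac12\sum_k\{\lambda_k-1-\log\lambda_k\}$, where $\lambda_k$ are the eigenvalues of $A=\Sigma_0^{-1/2}\Sigma\Sigma_0^{-1/2}$. This is the step I expect to be the main obstacle, because the elementary inequality $x-1-\log x\lesssim (x-1)^2$ holds only for $x$ bounded away from $0$. Here $\lambda_{\min}(\Sigma)\ge c$, and for diagonal covariances $\Sigma_0$ is bounded above by Assumption \ref{assume: noise}, so the $\lambda_k$ are bounded below. If the $\lambda_k$ are also bounded above, the bound $(x-1)^2$ applies directly. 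Otherwise I will use $x-1-\log x\le (x-1)^2/x$, which is valid for all $x>0$ and, together with the lower bound on $x$, gives $\sum_k(\lambda_k-1)^2\lesssim\|A-I\|_F^2$ uniformly. Since $\|A-I\|_F\le\|\Sigma_0^{-1}\|\,\|\Sigma-\Sigma_0\|_F\le c^{-1}\|\Sigma-\Sigma_0\|_F$, Pinsker gives a bound $\lesssim\|\Sigma-\Sigma_0\|_F$.

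A cleaner alternative is to skip KL and bound TV directly by $\min\{1,\cdot\}$, because the $L^1$ distance is at most $2$ anyway. When $\|A-I\|_F$ is large the claimed bound is trivially true. When it is small, all $\lambda_k\in[1/2,3/2]$ and the quadratic bound applies. This case split avoids needing any upper bound on $\Sigma_0$. Adding the two terms gives the lemma, with an implicit constant depending only on $c$.
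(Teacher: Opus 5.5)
Your proof is correct, but it treats the covariance piece differently from the paper.

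\textbf{The paper's route.} It routes the triangle inequality through $\phi(\cdot\mid\Lambda_0 z,\Sigma)$ and uses Pinsker only for the mean shift. For the covariance change it interpolates $\Sigma_t=\Sigma_0+t(\Sigma-\Sigma_0)$ and writes $\|p_1-p_0\|_{L^1}\le\int_0^1\|\dot p_t\|_{L^1}\,dt$, with $\|\dot p_t\|_{L^1}=\tfrac12\bbE|Z^\top A_tZ-\tr(A_t)|\le 2^{-1/2}\|A_t\|_F$. This is dimension-free and needs no case analysis. It does need $\Sigma_t\succeq cI$ along the whole path, hence the lower eigenvalue bound on both $\Sigma$ and $\Sigma_0$.

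\textbf{Your route.} You stay with the closed-form Gaussian $\KL$ plus Pinsker. The cost is the inequality $x-1-\log x\le (x-1)^2/x$ and a case split. The gain is that both of your pieces use $\Sigma_0$ as the reference covariance, so you only ever need $\lambda_{\min}(\Sigma_0)\ge c$, a slightly weaker hypothesis than the lemma's.

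\textbf{Use the case-split version.} Your first version gets its lower bound on the $\lambda_k$ from the upper bound on $\Sigma_0$ in Assumption \ref{assume: noise}. That is not a hypothesis of the lemma, and it would make the constant depend on $C_\sigma$. In the large case, state explicitly that $\|A-I\|_F\ge 1/2$ together with $\|A-I\|_F\le c^{-1}\|\Sigma-\Sigma_0\|_F$ forces $\|\Sigma-\Sigma_0\|_F\ge c/2$, so $2\le (4/c)\|\Sigma-\Sigma_0\|_F$.

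\textbf{Two small corrections.} First, the worry about the $\lambda_k$ being unbounded above is unnecessary, since $x-1-\log x\le (x-1)^2/2$ for every $x\ge 1$; only $x$ near $0$ causes trouble. Second, the quantity you mean to bound by a multiple of $\|A-I\|_F^2$ is $\sum_k(\lambda_k-1-\log\lambda_k)$. The sum $\sum_k(\lambda_k-1)^2$ equals $\|A-I\|_F^2$ exactly, so writing it as ``$\lesssim$'' is a slip.
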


\begin{proof}
    By triangle inequality
    $$\|\phi(\cdot|\Lambda z, \Sigma) - \phi(\cdot|\Lambda_0 z, \Sigma_0)\|_{L^1} \leq \|\phi(\cdot|\Lambda z, \Sigma) - \phi(\cdot|\Lambda_0 z, \Sigma)\|_{L^1} + \|\phi(\cdot|\Lambda_0 z, \Sigma) - \phi(\cdot|\Lambda_0 z, \Sigma_0)\|_{L^1}.$$
    We control the mean and covariance perturbations separately.

    \medskip
    \noindent
    \textbf{Part (A): Mean perturbation}. Translation invariance and Pinsker's inequality give
    \begin{align*}
        \|\phi(\cdot|\Lambda z, \Sigma) - \phi(\cdot|\Lambda_0 z, \Sigma)\|_{L^1} &\leq \sqrt{z^\top(\Lambda-\Lambda_0)^\top  \Sigma^{-1}(\Lambda-\Lambda_0)z} \leq c^{-1/2} \|(\Lambda - \Lambda_0)z\|\lesssim_c \|\Lambda-\Lambda_0\|_F \|z\|.
    \end{align*}

    \medskip
    \noindent
    \textbf{Part (B): Covariance perturbation}. Translation invariance allows us to take the common mean to be 0. Set $\Delta=\Sigma-\Sigma_0$ and $\Sigma_t = \Sigma_0 + t\Delta$ for $t\in [0,1]$. Since the cone of matrices bounded below by $cI_D$ is convex, $\Sigma_t \succeq cI_D$ for all such $t$. Let $p_t(y)=\phi(y\mid 0, \Sigma_t)$. By the fundamental theorem of calculus
    $$p_1(y) - p_0(y) = \int_0^1 \dot{p}_t(t) dt,$$
    so $\|p_1 - p_0\|_{L^1} \leq \int_0^1 \|\dot{p}_t\|_{L^1} dt$. Differentiating the Gaussian density (with respect to $t$), 
    $$\dot{p}_t (y) = \frac12 p_t(y) \left[y^\top \Sigma_t^{-1} \Delta \Sigma_t^{-1} y - \tr(\Sigma_t^{-1}\Delta)\right].$$

    Writing $Y_t = \Sigma_t^{1/2} Z$ with $Z\sim N(0, I_D)$ and $A_t := \Sigma_t^{-1/2} \Delta \Sigma_t^{-1/2}$, we have
    $$\|\dot{p}_t\|_{L^1} = \frac12 \bbE\left|Z^\top A_t Z - \tr(A_t)\right|.$$

    By Cauchy-Schwarz and standard variance formula for Gaussian quadratic forms,
    $$\bbE\left|Z^\top A_t Z - \tr(A_t)\right| \leq \left\{\Var(Z^\top A_t Z)\right\}^{1/2} = \sqrt{2} \|A_t\|_F.$$
    Moreover,
    $$\|A_t\|_F \leq \|\Sigma_t^{-1/2}\|\|\Delta\|_F \|\Sigma_t^{-1/2}\| \leq \frac1c \|\Sigma - \Sigma_0\|_F$$

    Hence, uniformly in $t\in[0,1]$, $\|\dot{p}_t\|_{L^1} \lesssim_c \|\Sigma - \Sigma_0\|_F.$ Integrating over $t$, we get
    $$\|\phi(\cdot \mid 0, \Sigma) - \phi(\cdot \mid 0, \Sigma_0)\|_{L^1} \lesssim_c  \|\Sigma - \Sigma_0\|_F.$$

    Combining the mean and covariance bounds, we get the desired result.
\end{proof}

\subsection{Proofs for Section \ref{subsec: obs density upper bound}}

\begin{proof}[Proof of Theorem \ref{thm: observation kde rate}]\label{proof of observation estimation upper bound}
    By rotational invariance of the factor representation, we may write the target observation density as
    $$p_0(y) = \int_{\bbR^d} \phi_D(y\mid \Lambda_0 R z, \Sigma_0)g_0^R(z)dz,$$
    using the rotation $R$ used in Theorem \ref{thm: new kde}. Introduce the intermediate density
    $$\tilde{p}(y) = \int_{\bbR^d} \phi_D(y\mid \Lambda_0 Rz, \Sigma_0) \tilde{g}(z) dz$$
    which uses the true parameters $\Lambda_0, \Sigma_0$, but the latent density estimator $\tilde{g}$. By the triangle inequality,
    $$\|\hat{p} - p_0\|_{L^1} \leq \|\hat{p} - \tilde{p}\|_{L^1} + \|\tilde{p} - p_0\|_{L^1}.$$
    We control these two terms separately. 

    \medskip
    \noindent
    \textbf{Step 1: Latent density estimation error} Note that, by Fubini's theorem 
    \begin{align*}
        \|\tilde{p} - p_0\|_{L^1} & = \int_{\bbR^D} \left|\int_{\bbR^d} \phi_D(y\mid \Lambda_0R z, \Sigma_0) \left\{\tilde{g}(z) - g_0^R(z)\right\}\right| dy \\
        &\leq \int_{\bbR^d}\int_{\bbR^D} \phi_D(y \mid \Lambda_0 Rz, \Sigma_0)\left|\tilde{g}(z) - g_0^R(z)\right| dy dz \\
        &= \|\tilde{g} - g_0^R\|_{L^1}.
    \end{align*}
    Under the $L^2$ rate assumed in the statement, apply Lemma \ref{lemma: positive correction preserves L1}, followed by Lemma \ref{lemma: L2 to L1 signed K} to get
    $$\bbE_0\left[\|\tilde{p} - p_0\|_{L^1} \mid \cE_{n,D}\right] \lesssim r_{n,D}^{1/2} \left(\log \frac{1}{r_{n,D}}\right)^{d/4}.$$

    \medskip
    \noindent
    \textbf{Step 2: Loading and covariance estimation error}. By the definitions of $\hat{p}$ and $\tilde{p}$, we have
    \begin{align*}
        \|\hat{p} - \tilde{p}\|_{L^1} &= \int_{\bbR^D} \left|\int_{\bbR^d} \left\{\phi_D(y\mid \widehat{\Lambda}z, \widehat{\Sigma}) - \phi_D(y \mid \Lambda_0 R z, \Sigma_0)\right\} \tilde{g}(z) dz\right| dy \\
        &\leq \int_{\bbR^d} \tilde{g}(z) \left\|\phi_D(\cdot \mid \widehat{\Lambda}z, \widehat{\Sigma}) - \phi_D(\cdot \mid \Lambda_0 R z, \Sigma_0)\right\|_{L^1} dz.
    \end{align*}
    Next, we apply Lemma \ref{lemma: L1 between normals}. Before applying it, we verify the lower eigenvalue condition for $\widehat \Sigma$. Define
    $$b_{n,D} = \sqrt{\frac Dn} + \sqrt{\frac{\ell_D}{D}}.$$
    We divide the argument into two cases. If $b_{n,D}$ is bounded below by a sufficiently small fixed constant $b_0>0$, then the desired conclusion follows immediately from 
    $$\|\widehat p - p_0\|_{L^1} \leq 2 \lesssim b_{n,D}.$$
    Thus, it remains to consider the case $b_{n,D}<b_0$. On $\cE_{n,D}$, Theorem \ref{thm: spectral recovery} gives $\|\widehat \Sigma - \Sigma_0\|\lesssim \sqrt{\frac{\log(nD)}{n}}+\frac{\ell_D}{D}$. Since $\log n = o(D)$ and $\log D \lesssim D$, we have $\log (nD) \lesssim D$ for all sufficiently large $n,D$. Hence, 
    $$\|\widehat \Sigma - \Sigma_0\| \lesssim \sqrt{\frac Dn} + \frac{\ell_D}{D} \leq b_{n,D} + b_{n,D}^2.$$
    Choosing $b_0>0$ sufficiently small, depending on the constants in the assumptions, ensures that $\|\widehat \Sigma - \Sigma_0 \| \leq c_\sigma /2$ on $\cE_{n,D}$, where $\lambda_{\min}(\Sigma_0)\geq c_\sigma$ by Assumption \ref{assume: noise}. Weyl's inequality therefore yields $\lambda_{\min}(\widehat \Sigma) \geq c_\sigma/2$. The covariance matrices $\widehat \Sigma$ and $\Sigma_0$ consequently satisfy the conditions of Lemma \ref{lemma: L1 between normals}.

    Applying Lemma \ref{lemma: L1 between normals}, we get
    \begin{align*}
        \|\hat{p} - \tilde{p}\|_{L^1} &\lesssim \int_{\bbR^d} \tilde{g}(z)\left\{\|\widehat{\Lambda} - \Lambda_0 R\|_F\|z\| +  \|\widehat{\Sigma} - \Sigma_0\|_F\right\} dz.
    \end{align*}
    By Lemma \ref{lemma: positive correction preserves L1}, for $h^*\lesssim 1$, $\int \|z\|\tilde{g}(z) dz \lesssim 1$. Therefore, 
    $$\|\hat{p} - \tilde{p}\|_{L^1} \lesssim \|\widehat{\Lambda} - \Lambda_0 R\|_F + \|\widehat{\Sigma} - \Sigma_0\|_F.$$
    Taking conditional expectations and applying Corollary \ref{cor: expected factor error}, we get 
    $$\bbE_0\left[\|\hat{p} - \tilde{p}\|_{L^1}\mid \cE_{n,D}\right] \lesssim \sqrt{\frac{D}{n}} + \sqrt{\frac{\ell_D}{D}},$$
    when $\ell_D\gtrsim 1$. Combining both parts, 
    $$\bbE_0\left[\|\hat{p} - p_0\|_{L^1}\mid \cE_{n,D}\right]\lesssim r_{n,D}^{1/2}\left(\log \frac{1}{r_{n,D}}\right)^{d/4} + \sqrt{\frac{D}{n}} + \sqrt{\frac{\ell_D}{D}}.$$
    The unconditional version is obtained by noting that the $L^1$ distance between two probability densities is bounded by 2 on $\cE_{n,D}^c$, and $\bbP_0^{(n)}(\cE_{n,D}^c) \lesssim 1/n$.
\end{proof}

\subsection{Proofs for Section \ref{subsec: obs density lower bound}}

\begin{lemma}\label{lemma: local L1 separation covariance}
    There exist universal constants $c_0, c_1>0$ such that the following holds. Let $P_0 = N_m(0,\Gamma_0)$ and $P_1=N_m(0,\Gamma_1)$, where $\frac12I_m \preceq \Gamma_0, \Gamma_1\preceq 2I_m$. If $\|\Gamma_1 - \Gamma_0\|_F \leq c_0$, then
    $$\|p_1 - p_0 \|_{L^1} \geq c_1 \|\Gamma_1 - \Gamma_0\|_F.$$
\end{lemma}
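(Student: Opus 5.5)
We need a dimension-free lower bound on the total variation distance between two centered Gaussians in terms of the Frobenius distance of their covariances, valid locally. The plan is to test against a single quadratic statistic. Set $\Delta=\Gamma_1-\Gamma_0$ and $W=\Gamma_0^{-1/2}\Delta\Gamma_0^{-1/2}$. Because $\tfrac12 I_m\preceq\Gamma_0\preceq 2I_m$, we have $\|W\|_F\asymp\|\Delta\|_F$ with absolute constants. For a test function $f$ with $|f|\le 1$, we use
\[
\|p_1-p_0\|_{L^1}\ \ge\ \bigl|\bbE_{P_1}f-\bbE_{P_0}f\bigr|.
\]
We would not apply this to an unbounded $f$ directly. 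Instead we bound $L^1$ below by the ratio of a mean difference to a second-moment quantity, via Cauchy--Schwarz in a reversed form. For a statistic $F$ with finite second moments,
\[
\bigl|\bbE_1F-\bbE_0F\bigr| = \Bigl|\int F(p_1-p_0)\Bigr| \le \Bigl(\int F^2|p_1-p_0|\Bigr)^{1/2}\|p_1-p_0\|_{L^1}^{1/2}.
\]
Therefore
\[
\|p_1-p_0\|_{L^1}\ge \frac{(\bbE_1F-\bbE_0F)^2}{\bbE_1F^2+\bbE_0F^2}.
\]

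The statistic we would choose is $F(y)=y^\top Ay$ with $A=\Gamma_0^{-1/2}W\Gamma_0^{-1/2}/\|W\|_F$, centered by its $P_0$ mean. Write $Z=\Gamma_0^{-1/2}y$ and $B=W/\|W\|_F$, so that $\|B\|_F=1$.
\begin{itemize}
\item Under $P_0$, $F=Z^\top BZ-\tr B$ with $Z\sim N(0,I)$. Hence $\bbE_0F=0$ and $\bbE_0F^2=2\|B\|_F^2=2$.
\item Under $P_1$, $Z\sim N(0,I+W)$. The mean is $\bbE_1F=\tr(BW)=\|W\|_F$.
\item Also $\bbE_1F^2=\|W\|_F^2+2\tr\bigl((B(I+W))^2\bigr)\le \|W\|_F^2+2\|I+W\|^2\lesssim 1$, using $\|W\|\le\|W\|_F\le C c_0$.
\end{itemize}
Plugging these into the displayed inequality gives $\|p_1-p_0\|_{L^1}\gtrsim\|W\|_F^2$. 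This is only quadratic in $\|W\|_F$, so it falls short of the linear bound we need.

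The main obstacle is therefore recovering linearity in $\|W\|_F$. To get it, we would replace the squared ratio by a bounded test. Set
\[
f=\mathrm{clip}(F/t,-1,1), \qquad t\asymp 1.
\]
Then $\|p_1-p_0\|_{L^1}\ge|\bbE_1f-\bbE_0f|$, and we would show this difference is at least $c\|W\|_F/t$. We write $\bbE_1f-\bbE_0f=(\bbE_1F-\bbE_0F)/t$ minus truncation errors. The truncation error under $P_j$ is at most $t^{-1}\bbE_j\bigl[|F|\ind(|F|>t)\bigr]$. The essential point is that the truncation errors nearly cancel between $P_0$ and $P_1$, since the two laws are close. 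To see this, parametrize the path $\Gamma_s=\Gamma_0+s\Delta$ and differentiate $s\mapsto\bbE_sf$. Its derivative is $\tfrac12\bbE_s\bigl[f\,(Z_s^\top W_sZ_s-\tr W_s)\bigr]$, in the same notation as the proof of Lemma \ref{lemma: L1 between normals}.

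At $s=0$ this derivative equals $\tfrac12\|W\|_F\,\bbE_0\bigl[\mathrm{clip}(G/t)\,G\bigr]$ with $G=Z^\top BZ-\tr B$. This is bounded below by an absolute constant. The reason is that $G$ has mean $0$ and variance $2$, and its fourth moment is uniformly bounded by hypercontractivity of Gaussian chaos (here $\|B\|_F=1$). By Paley--Zygmund, the mass of $G$ near $[-t,t]$ is then nondegenerate, and so $\bbE_0[\mathrm{clip}(G/t)G]\ge c$.

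Along the path, the derivative changes by $O(\|W\|_F^2)$, using the bounded fourth moments of the same chaos and $\|W\|\le Cc_0$. Integrating over $s\in[0,1]$ gives
\[
|\bbE_1f-\bbE_0f|\ge c\|W\|_F-C\|W\|_F^2\ge \tfrac{c}{2}\|W\|_F
\]
once $c_0$ is small. This yields the claim with $c_1\asymp c$. All constants are absolute because every estimate uses only $\|B\|_F=1$ and the spectral bounds $\tfrac12 I_m\preceq\Gamma_0,\Gamma_1\preceq 2I_m$. In particular nothing depends on $m$.
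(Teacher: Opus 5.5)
Your argument is correct, but it takes a different route from the paper.

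\textbf{The paper's route.} The paper reduces to $N(0,I)$ versus $N(0,I+A)$ and expands the likelihood ratio as $L-1=S_A+R_A$, where $S_A=\tfrac12\sum_j a_j(Z_j^2-1)$. It proves $\|S_A\|_{L^2(P)}\asymp\|A\|_F$, $\|S_A\|_{L^4(P)}\lesssim\|A\|_F$ and $\|R_A\|_{L^2(P)}+\|R_A\|_{L^4(P)}\lesssim\|A\|_F^2$. The remainder bound needs uniform $L^r(P)$ bounds on the likelihood ratios $L_t$, which exist only because $\|A\|$ is small. It then concludes by the interpolation $\|L-1\|_{L^1(P)}\ge\|L-1\|_{L^2(P)}^3/\|L-1\|_{L^4(P)}^2$.

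\textbf{Your route.} You exhibit an explicit bounded witness, the clipped centered quadratic form $f$, and expand $s\mapsto\bbE_sf$ to second order along $\Gamma_s$.
\begin{itemize}
\item The first derivative at $s=0$ is $\tfrac12\|W\|_F\,\bbE_0[\mathrm{clip}(G/t)G]$. This is bounded below by $t^{-1}(2-\bbE_0G^4/t^2)$, and $\bbE_0G^4\le 60$ when $\|B\|_F=1$.
\item The second derivative is controlled by $|\int f\ddot p_s|\le\|\ddot p_s\|_{L^1}\le\bbE_s(\ell_s')^2+\bbE_s|\ell_s''|\le 2\|W_s\|_F^2\lesssim\|\Delta\|_F^2$.
\end{itemize}
That second-derivative bound uses only second moments of Gaussian quadratic forms under $P_s$ itself. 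So the fourth moments you invoke for the variation along the path are not actually needed; they matter only at the base point.

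\textbf{Comparison.} Your approach never touches moments of the likelihood ratio and yields an interpretable test statistic. The paper's approach gives the stronger structural fact that $L-1$ is, in $L^2$ and $L^4$, a linear chaos term plus a quadratic remainder. Your opening Cauchy--Schwarz computation, which only gives $\|W\|_F^2$, is a dead end you correctly abandon, and it can be dropped.
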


More generally, if the eigenvalues of $\Gamma_0, \Gamma_1$ are in a fixed compact subset of $(0,\infty)$, the same conclusion holds with the constants depending on that compact set.
\begin{proof}
    By the invertible change of variables $x=\Gamma_0^{1/2}z$, the $L^1$ distance is unchanged. Thus, it suffices to compare $N_m(0, I_m)$ and $N_m(0, I_m+A)$ where
    $$A = \Gamma_0^{-1/2} (\Gamma_1 - \Gamma_0)\Gamma_0^{-1/2}.$$
    Under the assumed eigenvalue bound, $\|A\|_F\asymp \|\Gamma_1 - \Gamma_0\|_F$, $\|A\|\lesssim c_0$. Diagonalize $A = U \diag(a_1,\dots,a_m) U^\top$. A further orthogonal transformation leaves $N_m(0,I_m)$ invariant, so we may assume without loss of generality that $A = \diag(a_1,\dots,a_m)$. Let $P=N(0,I_m)$ and $Q=N(0, I_m +A)$, and write $L = dQ/dP$.

    For $Z=(Z_1,\dots,Z_m)\sim N(0,I_m)$, 
    $$L(Z) = \prod_{j=1}^m (1+a_j)^{-1/2} \exp \left\{\frac12 \frac{a_j}{1+a_j} Z_j^2\right\}.$$
     Define $S_A = \frac12\sum_{j=1}^m a_j (Z_j^2 - 1)$. Since the $Z_j$ are iid standard normal,
    $$\|S_A\|_{L^2(P)}^2 =\frac14 \sum_{j=1}^m a_j^2 \bbE[(Z_j^2 - 1)^2] =\frac12 \sum_{j=1}^m a_j^2 = \frac12 \|A\|_F^2.$$
    With $X_j = Z_j^2-1$ independent and centered, and we have the moments $\bbE Z^2=1, \bbE Z^4=3$, $\bbE Z^6=15$ and $\bbE Z^8=105$, we have $\bbE (Z^2-1)^4 = 60$. Now, for the fourth moment, $\bbE S_A^4 = \frac{1}{16}\bbE \left(\sum_j a_j X_j \right)^4$. Expanding and noting that only two types of terms have nonzero expectation, we get
    $$\bbE S_A^4 = \frac{1}{16}\left[60 \sum_j a_j^4 + 24\sum_{j<k} a_j^2a_k^2\right] \lesssim \left(\sum_j a_j^2\right)^2.$$
    Therefore,
    $$\|S_A\|_{L^4(P)} \lesssim \left(\sum_j a_j^2 \right)^{1/2} \asymp \|S_A\|_{L^2}\asymp \|A\|_F.$$

    Now, we perform a Taylor expansion of $L$ for sufficiently small $\|A\|_F$. For $c_0$ sufficiently small, $\|A\|\leq \|A\|_F$ is also sufficiently small. For $t\in[0,1]$ define
    $$P_t=N(0, I_m + t A), \quad L_t = \frac{dP_t}{dP}$$
    so that $L_0=1$ and $L_1 = L$. Writing $\ell_t = \log L_t$, we have
    $$L_t' = L_t \ell_t', \quad L_t'' = L_t \left\{(\ell_t')^2 + \ell_t''\right\}.$$
    The likelihood ratio is
    \begin{align*}L_t(Z) &= \prod_{j=1}^m (1+ta_j)^{-1/2} \exp\left\{\frac12 \frac{ta_j}{1+ta_j} Z_j^2 \right\}\\
    \;&\Longrightarrow \; \ell_t(Z)=-\frac12 \sum_{j=1}^m \log (1+ta_j) +\frac12 \sum_{j=1}^m \frac{ta_j}{1+ta_j} Z_j^2.\end{align*}
    Differentiating, we have
    \begin{align*}
        \ell_t' &= \frac12 \sum_{j=1}^m \left[-\frac{a_j}{1+ta_j} + \frac{a_j}{(1+ta_j)^2} Z_j^2\right], \\
        \ell_t'' &= \frac12 \sum_{j=1}^m \left[\frac{a_j^2}{(1+ta_j)^2} - \frac{2a_j^2}{(1+ta_j)^3} Z_j^2 \right].
    \end{align*}
    Note that $L_0' = \ell_0' = S_A$. Since $\|A\|$ is sufficiently small, $1+ta_j$ is uniformly bounded away from 0 for all $t\in[0,1]$. Standard moment bounds for sums of independent centered variables therefore give, for any fixed $q<\infty$,
    $$\sup_{t\in[0,1]} \|\ell_t'\|_{L^q(P)} \lesssim_q \|A\|_F, \quad \sup_{t\in[0,1]} \|\ell_t''\|_{L^q(P)} \lesssim_q \|A\|_F^2.$$
    To see this, after separating the centered terms $Z_j^2-1$ from their means, the coefficients in $\ell_t'$ are of order $a_j$ as $1+ta_j\asymp 1$ uniformly and coefficients of the centered $Z_j^2-1$ are $O(a_j)$, while the deterministic term is $O(\sum_j a_j^2)$. Likewise, for the $\ell_t''$, use $(\sum_j a_j^4)^{1/2} \leq \sum_j a_j^2$. Next, we argue a uniform moment bound for $L_t$. For any fixed $r>1$, we have
    $$\bbE_P [L_t^r] = \prod_{j=1}^m (1+t a_j)^{-r/2} \left( 1 - \frac{rta_j}{1+t a_j}\right)^{-1/2}.$$
For sufficiently small $\|A\|$ all factors are finite. Taking logarithm and expanding around $a_j=0$, the linear terms cancel, so we get
$$\log \bbE_P [L_t^r] \leq C_r \sum_{j=1}^m a_j^2 = C_r \|A\|_F^2.$$
To see this, taking the logarithm of the expression for $\bbE_P[L_t^r]$, each term in the sum in $\log \bbE_P[L_t^r]$ can be expressed as $f_r(x)=-\frac{r-1}{2}\log (1+x) - \frac12 \log (1-(r-1)x)$ with $x=t a_j$. Note that $f_r(0)=0$ and $f_r'(0)=0$. Hence, for $|x|$ sufficiently small, $f_r(x) \leq C_r x^2$. Thus, since $|t a_j|\leq \|A\|$ is uniformly small, we have $\log \bbE_P[L_t^r]\leq C_r \sum_j t^2 a_j^2 \leq C_r \|A\|_F^2$. Hence, $\sup_{t\in[0,1]} \|L_t\|_{L^r(P)} \leq C_r$.

Using H\"{o}lder's inequality together with this bound for $p=2,4$, we have
\begin{align*}
    \|L_t''\|_{L^p(P)} &\leq \|L_t (\ell_t')^2 \|_{L^p(P)} + \|L_t \ell_t''\|_{L^p(P)} \\
    &\leq \|L_t\|_{L^{2p}(P)} \|\ell_t'\|_{L^{4p}(P)}^2 + \|L_t\|_{L^{2p}(P)} \|\ell_t''\|_{L^{2p}(P)} \\
    &\lesssim \|A\|_F^2,
\end{align*}
uniformly over $t\in[0,1]$ for $p=2,4$ using the bounds above with $r=2p$. Finally, Taylor expansion with integral remainder gives
$$L_1 = L_0 + L_0' + \int_0^1 (1-t)L_t'' dt.$$
Thus, $L-1 = S_A + R_A$ where $R_A=\int_0^1 (1-t)L_t'' dt$. By Minkowski's inequality
$$\|R_A\|_{L^p(P)} \leq \int_0^1 (1-t) \|L_t''\|_{L^p(P)} dt \lesssim \|A\|_F^2, \quad p=2,4.$$
Therefore, we have the expansion,
$$L - 1 = \underbrace{\frac12\sum_{j=1}^m a_j (Z_j^2 - 1)}_{S_A} + R_A, \quad \text{with }\|R_A\|_{L^2(P)} + \|R_A\|_{L^4(P)} \lesssim \|A\|_F^2.$$
and we have already shown that $\|S_A\|_{L^2(P)}\asymp \|A\|_F$ and $\|S_A\|_{L^4(P)} \lesssim \|A\|_F$. Therefore, we can conclude
\begin{align*}
    \|L-1\|_{L^2(P)} & = \|S_A + R_A\|_{L^2(P)} \\
    &\geq \|S_A\|_{L^2(P)} - \|R_A\|_{L^2(P)} \\
    &\gtrsim \|A\|_F
\end{align*}
while
\begin{align*}
    \|L-1\|_{L^4(P)} &\leq \|S_A\|_{L^4(P)} + \|R_A\|_{L^4(P)} \\
    &\lesssim \|A\|_F + \|A\|_F^2 \lesssim \|A\|_F.
\end{align*}

    Since $1/p=\theta/p_0 + (1-\theta)/p_1$ with $p_0=1, p=2, p_1=4$ and $\theta=1/3$, by H\"{o}lder's inequality $\|f\|_{L^p}\leq \|f\|_{L^{p_0}}^\theta \|f\|_{L^{p_1}}^{1-\theta}$, we have
    $$\|L-1\|_{L^2(P)}\leq \|L-1\|_{L^1(P)}^{1/3} \|L-1\|_{L^4(P)}^{2/3} \;\Longrightarrow\; \|L-1\|_{L^1(P)} \geq \frac{\|L-1\|_{L^2(P)}^3}{\|L-1\|_{L^4(P)}^2} \gtrsim \|A\|_F.$$
    Finally, 
    $$\|p_1 - p_0\|_{L^1} = \bbE_P |L-1|=\|L-1\|_{L^1(P)} \gtrsim \|A\|_F \asymp \|\Gamma_1 - \Gamma_0\|_F.$$
\end{proof}

\begin{proof}[Proof of Theorem \ref{thm: obs lower bound}]
    \textbf{Step 1: Reduction}. We first restrict to a particular loading matrix satisfying the assumptions. Since $d$ is fixed, for every sufficiently large $D$, choose a deterministic matrix $U_D\in\bbR^{D\times d}$ such that $U_D^\top U_D = I_d$ and $\max_j \|(U_D)_{j\cdot}\|^2\lesssim D^{-1}$. For example, take $d$ columns of a Hadamard-type orthogonal matrix. Then set $\Lambda_D=\sqrt{D} U_D$ so that $\Lambda_D^\top \Lambda_D= DI_d$ and hence, $s_j(\Lambda_D)=\sqrt{D}$ $j=1,\dots,d$, and $\|\lambda_{Dj}\|^2 =O(1)$ (thus satisfying $\ell_D\asymp 1$). 

    We shall use this fixed loading matrix for the lower bound argument. Since the minimax risk over the full parameter space is bounded below by the minimax risk over any submodel obtained by fixing $\Lambda = \Lambda_D$, it suffices to derive lower bounds on such submodels.

    Let $P_D=\frac1D \Lambda_D \Lambda_D^\top = U_DU_D^\top$. Then $P_D$ is the orthogonal projection onto $\rm col(\Lambda_D)$. Let $W_D\in \bbR^{D\times (D-d)}$ have orthonormal columns spanning $\rm col(\Lambda_D)^\perp$. Thus,
    $$W_D^\top W_D =I_{D-d}, \quad W_D W_D^\top = I_D - P_D,\quad W_D^\top \Lambda_D = 0.$$
    These identities will be used in Step 2.

    \medskip
    \noindent
    \textbf{Step 2: Parametric lower bound (fix $g$ and vary $\Sigma$)}. Fix any $g_*\in\cG_{\beta}(L,C)$ and fix the loading matrix $\Lambda_D$ from Step 1. For one observation $Y=\Lambda_D \eta + \epsilon$ with $\eta\sim g_*, \epsilon\sim N_D(0,\Sigma)$. Project $Y$ onto the orthogonal complement of the factor space: $Z=W_D^\top Y$. Since $W_D^\top \Lambda_D=0$, we have $Z=W_D^\top \epsilon \sim N_{D-d}(0, \Gamma_{\Sigma})$ where $\Gamma_{\Sigma}=W_D^\top \Sigma W_D$. Thus the latent component disappears under this projection.

    \medskip
    \noindent
    \textbf{Construction} Consider the family of diagonal covariance matrices
    $$\Sigma_\vartheta = I_D + \delta \diag(\vartheta_1,\dots,\vartheta_D), \quad \vartheta\in \{0,1\}^D$$
    where $\delta>0$ will be chosen below. For sufficiently small fixed $\delta$, every $\Sigma_\vartheta$ satisfies the noise-variance assumptions. The corresponding projected covariance is
    $$\Gamma_\vartheta = W_D^\top \Sigma_\vartheta W_D.$$
    For two vertices $\vartheta, \vartheta'$, set $A_{\vartheta,\vartheta'}=\diag(\vartheta - \vartheta')$. Then,
    $$\Gamma_{\vartheta} - \Gamma_{\vartheta'} = \delta  W_D^\top A_{\vartheta,\vartheta'} W_D.$$

    \medskip
    \noindent
    \textbf{Separation}. For any diagonal $A$, 
    \begin{align*}
        \|W_D^\top A W_D\|_F^2 &= \tr(W_D^\top A W_D W_D^\top A W_D) \\
        &= \tr((I_D-P_D)A (I_D-P_D)A) \\
        &= \tr(A^2) -2\tr(P_D A^2) + \tr(P_D A P_D A).
    \end{align*}
    The final term is non-negative as $\tr(P_D A P_D A) = \tr[(P_D A P_D)^2] \geq 0$. Take $A=A_{\vartheta,\vartheta'}$ and write $H=H(\vartheta, \vartheta')$ as the Hamming distance. Since nonzero diagonal entries of $A$ are $\pm 1$, $\tr(A^2) = H$. Moreover, $(P_D)_{jj}=\|\lambda_{Dj}\|^2/D \lesssim 1/D$. Thus $\tr(P_D A^2) = \sum_{j: \vartheta_j \neq \vartheta_j'} (P_D)_{jj} \lesssim H/D$. It follows that 
    $$\|W_D^\top A_{\vartheta, \vartheta'} W_D\|_F^2 \geq \left(1 - \frac{2C}{D}\right) H.$$
    Hence, for all sufficiently large $D$, $\|W_D^\top A_{\vartheta,\vartheta'} W_D\|_F \gtrsim \sqrt{H(\vartheta, \vartheta')}$ and therefore,
    $$\|\Gamma_\vartheta - \Gamma_{\vartheta'}\| \gtrsim \delta \sqrt{H(\vartheta,\vartheta')}.$$
    We now translate this covariance separation into $L^1$ separation. Since $I_D\preceq \Sigma_\vartheta \preceq (1+\delta) I_D$, we have $I_{D-d} \preceq \Gamma_\vartheta \preceq (1+\delta)I_{D-d}$. Thus, all eigenvalues of all $\Gamma_\vartheta$ are uniformly bounded above and below. Furthermore, $\|\Gamma_\vartheta - \Gamma_{\vartheta'}\|_F \leq \delta \|A_{\vartheta, \vartheta'}\|_F \leq \delta \sqrt{D}$. Hence, provided $\delta\sqrt{D}$ is smaller than the constant $c_0$ in Lemma \ref{lemma: local L1 separation covariance}, that lemma gives
    $$\|p_\vartheta^Z - p_{\vartheta'}^Z\|_{L^1} \gtrsim \|\Gamma_\vartheta - \Gamma_{\vartheta'}\|_F \gtrsim \delta \sqrt{H(\vartheta, \vartheta')}.$$
    Since $Z=W_D^\top Y$ is a measurable function of $Y$, the total variation contracts under the projection $Y\mapsto Z$. Hence for any $\vartheta, \vartheta' \in \{0,1\}^D$, we have
    $$\|p_\vartheta^Y - p_{\vartheta'}^Y\|_{L^1} \gtrsim \delta \sqrt{H(\vartheta, \vartheta')}.$$
    Here $p_{\vartheta}^Z$ and $p_\vartheta^Y$ are respectively the distribution on $Z$ and $Y$ under $\Sigma_\vartheta$.
    
    \medskip
    \noindent
    \textbf{Controlling KL}. Let $P_0^Y$ denote the distribution of $Y$ under $\Sigma_0=I_D$. Since $g_*$ and $\Lambda_D$ are fixed, the two models $P_\vartheta^Y$ and $P_0^Y$ differ only through the Gaussian noise distribution. Adding the independent common random vector $\Lambda_D\eta$ is a Markov kernel, so
    $$\KL(P_\vartheta^Y\Vert P_0^Y) \leq \KL(N_D(0, \Sigma_\vartheta) \;\Vert \; N(0, I_D)).$$
    For centered Gaussian distributions
    $$\KL(N_D(0, \Sigma_\vartheta) \;\Vert \; N(0, I_D)) = \frac12 \left[\tr(\Sigma_\vartheta) - D - \log \det (\Sigma_\vartheta)\right] = \frac12 \sum_{j=1}^D \left[\delta \vartheta_j - \log (1+\delta \vartheta_j)\right].$$
    For sufficiently small $\delta$, $x-\log(1+x)\lesssim x^2$ and hence $\KL(P_\vartheta^Y\Vert P_0^Y)\lesssim D\delta^2$. For $n$ independent observations
    $$\KL\left((P_\vartheta^Y)^{\otimes n} \; \Vert \; (P_0^Y)^{\otimes n} \right) \lesssim nD\delta^2.$$

    Now, by the Varshamov-Gilbert bound, there exists a subset $\cV_D\subset \{0,1\}^D$ with $\log |\cV_D| \gtrsim D$ and $H(\vartheta,\vartheta') \gtrsim D$ for every distinct $\vartheta,\vartheta' \in \cV_D$. We may include $0$ as a reference element in the construction. Choose 
    $$\delta = \frac{c_\delta}{\sqrt{n\vee D}},$$
    where $c_\delta>0$ is a sufficiently small constant. Then $\delta\sqrt{D}\leq c_\delta$. Thus, by choosing $c_\delta$ sufficiently small, the local covariance-separation Lemma \ref{lemma: local L1 separation covariance} applies uniformly over the packing. For distinct $\vartheta,\vartheta'\in \cV_D$, 
    $$\|p_\vartheta^Y - p_{\vartheta'}^Y\|_{L^1} \gtrsim \delta \sqrt{H} \gtrsim \delta \sqrt{D} = c_\delta \sqrt{\frac{D}{n\vee D}}.$$
    At the same time, the upper bound for KL satisfies
    $$nD\delta^2 = c_\delta^2 \frac{nD}{n \vee D} \leq c_\delta^2 D.$$
    Since $\log |\cV_D|\gtrsim D$, choosing $c_\delta$ sufficiently small ensures that the KL divergence is bounded by a sufficiently small multiple of the logarithm of the packing size. Fano's lemma therefore gives
    $$\inf_{\hat{p}} \sup_{\vartheta\in \cV_D} \bbE_\vartheta \left[\|\hat{p} - p_\vartheta^Y\|_{L^1}\right] \gtrsim \delta \sqrt{D}\asymp \min\left\{1, \sqrt{\frac{D}{n}}\right\}.$$
    Since this covariance family is a submodel of $\Theta_{D,\beta}$, we conclude that
    \begin{equation}\label{eq: lower bound observation parametric}
        \inf_{\hat{p}} \sup_{(g,\Lambda,\Sigma)\in \Theta_{D,\beta}} \bbE_{g,\Lambda,\Sigma}\left[\|\hat{p} - p_{g,\Lambda,\Sigma}\|_{L^1}\right] \gtrsim \min \left\{1, \sqrt{D/n}\right\}.
    \end{equation}
    
    \bigskip
    \noindent
    \textbf{Part 3: Nonparametric lower bound (fix $\Sigma$ and vary $g$)}. Fix $\Sigma = I_D$. Thus, for this submodel $Y=\Lambda_D \eta + \epsilon$, with $\epsilon\sim N(0,I_D)$ and $\eta\sim g$ with only $g\in \cG_\beta(L,C)$ allowed to vary. Define $X=\frac1D \Lambda_D^\top Y$. Since $\Lambda_D^\top \Lambda_D = D I_d$, we have
    $$X = \eta + \frac1D \Lambda^\top \epsilon = \eta + D^{-1/2} Z, \quad Z\sim N(0, I_d).$$
    Hence $X$ has the density
    $$q_{g,D} = g \star \phi_{D^{-1} I_d}$$
    where $\phi_{D^{-1}I_d}$ denotes the $N(0, D^{-1} I_d)$ density. Using the definitions of $P_D$ and $W_D$ from Step 1, 
    $$W_D^\top Y = W_D^\top \epsilon \sim N(0,  I_{D-d}) \; \perp X.$$
    Moreover, the map $Y\mapsto (D^{-1} \Lambda_D^\top Y, W_D^\top Y)$ is invertible. Therefore, the $Y$ experiment is equivalent, for inference on $g$, to observing $X$ together with an ancillary Gaussian variable whose distribution does not depend on $g$. In particular, for two latent densities $g, \tilde{g}$,
    $$\|p_{g,\Lambda_D,I_D} - p_{\tilde{g},\Lambda_D,I_D}\|_{L^1} = \|q_{g,D} - q_{\tilde{g},D}\|_{L^1}, \qquad \KL\left(p_{g,\Lambda_D,I_D} \; \Vert \; p_{\tilde{g},\Lambda_D,I_D}\right) = \KL(q_{g,D} \;\Vert \; q_{\tilde{g},D}).$$
    We therefore construct a lower bound in the $d-$dimensional convolution model $X=\eta + D^{-1/2}Z$.

    \medskip
    \noindent
    \textbf{Construction}. Choose a smooth density $g_*$ in the interior of $\cG_\beta(L,C)$. Fix a compact cube $K\subset \bbR^d$ such that $\inf_{x\in K} g_*(x) \geq c_*>0$. Choose a nonzero function $\psi\in C_c^\infty(\bbR^d)$ supported in the unit cube satisfying $\int \psi = 0$, $\int u\psi(u)du=0$ and $\int uu^\top \psi(u)=0$. For $h> 0$ sufficiently small, choose points $x_1,\dots,x_M\in K$ such that the functions
    $$\psi_{j,h}(x) = \psi\left(\frac{x-x_j}{h}\right), \quad j=1,\dots, M$$
    have pairwise disjoint supports in $K$, with $M\asymp h^{-d}$. For $\theta=(\theta_1,\dots,\theta_M)\in\{-1,1\}^M$, define the alternatives
    $$g_\theta = g_* + \epsilon_h \sum_{j=1}^M \theta_j \psi_{j,h}, \quad \epsilon_h = a \min\left\{h^\beta, \frac{1}{\sqrt{nh^d}}\right\}$$
    and $a>0$ is a sufficiently small constant. By the moment conditions on $\psi$, $\int g_\theta = 1$, $\bbE_{g_\theta}[X]=0$ and $\bbE_{g_\theta}[XX^\top]=I_d$. Since all perturbations are supported on the fixed compact cube $K$ and $g_*$ is bounded away from 0 on $K$, choosing $a$ sufficiently small guarantees $g_\theta \geq 0$. The Gaussian-tail is also preserved as the perturbation vanishes outside $K$. Finally, standard scaling of Sobolev norms give $\|\sum_j \theta_j \psi_{j,h}\|_{\cW_\beta}^2 \lesssim M h^{d-2\beta}$. Since $M\asymp h^{-d}$, we get $\|\epsilon_h \sum_j \theta_j \psi_{j,h}\|_{\cW_\beta} \lesssim \epsilon_h h^{-\beta} \leq a$. Because $g_*$ is in the interior, $a$ can be chosen sufficiently small so that $g_\theta\in \cG_\beta(L,C)$ for every $\theta$.

    \medskip
    \noindent
    \textbf{Separation} Let $r_D=\frac{1}{\sqrt{D}h}$.  By scaling 
    $$\|\psi_{j,h}\star \phi_{D^{-1}I_d} - \psi_{j,h}\|_{L^1} = h^d \|\psi \star \phi_{r_D^2 I_d} - \psi\|_{L^1}.$$
    Since $\|\psi\star \phi_{r^2 I_d} - \psi\|_{L^1} \to 0$  as $r \to 0$, there is a constant $r_0>0$ such that whenever $r_D\leq r_0$, $\|\psi\star \phi_{r_D^2 I_d} - \psi\|_{L^1} \leq \frac12 \|\psi\|_{L^1}$.

    Let $\theta,\theta'\in \{-1,1\}^M$. Write $q_{\theta,D}=q_{g_\theta, D}$. Since 
    $$q_{\theta,D} - q_{\theta',D} = \epsilon_h \sum_{j=1}^M (\theta_j - \theta_j') \psi_{j,h} \star \phi_{D^{-1}I_d},$$
    by reverse triangle inequality
    \begin{align*}
        \|q_{\theta,D} - q_{\theta',D}\|_{L^1} &\geq \epsilon_h \left\|\sum_{j=1}^M (\theta_j - \theta_j')\psi_{j,h}\right\|_{L^1} - \epsilon_h \sum_{j=1}^M |\theta_j-\theta_j'| \|\psi_{j,h} \star \phi_{D^{-1} I_d} - \psi_{j,h} \|_{L^1} \\
        &\geq 2\epsilon_h h^d \|\psi\|_{L^1} H(\theta,\theta') -  \epsilon_h h^d \|\psi\|_{L^1} H(\theta,\theta') \gtrsim \epsilon_h h^d H(\theta,\theta')
    \end{align*}
    whenever $r_D\leq r_0\iff \sqrt{D} h \geq 1/r_0$, using the disjointness of the supports of $\psi_{j,h}$ for the first term, and the upper bound we obtained above for $r_D\leq r_0$ in the second term. Here $H(\theta,\theta')$ is the Hamming distance. By the equivalence of the $Y-$ and $X-$ experiments, 
    $$\|p_\theta - p_{\theta'}\|_{L^1} \gtrsim \epsilon_h h^d H(\theta,\theta')$$
    with $p_\theta = p_{g_\theta,\Lambda_D, I_D}.$

    \medskip
    \noindent
    \textbf{Controlling KL} Consider two neighboring alternatives $\theta,\theta'$ such that $H(\theta,\theta')=1$. Then $g_\theta$ and $g_{\theta'}$ differ only on one bump. Since $g_*$ is bounded below on $K$ and $a$ is sufficiently small, $g_\theta(x)\asymp g_*(x)$ uniformly on the support of the perturbation. Using $\KL(f\Vert g) \leq \chi^2(f,g)= \int (f-g)^2/g$, we obtain
    $$\KL(g_\theta \Vert g_{\theta'}) \lesssim \epsilon_h^2 \int \psi_{j,h}(x)^2 dx \lesssim \epsilon_h^2 h^d.$$
    Since Gaussian convolution is a Markov kernel, $\KL(q_{\theta,D} \Vert q_{\theta',D})\leq \KL(g_\theta \Vert g_{\theta'})$. Thus, for $n$ independent observations,
    $$\KL\left(q_{\theta,D}^{\otimes n}\;\Vert\; q_{\theta',D}^{\otimes n} \right) \lesssim n\epsilon_h^2 h^d  \leq a^2$$
    by using the definition of $\epsilon_h$. Hence, choosing $a$ sufficiently small makes the total variation distance between every pair of neighboring $n-$sample experiments uniformly bounded away from 1. Assouad's lemma therefore yields
    $$\inf_{\hat{p}} \sup_{\theta\in \{-1,1\}^M} \bbE_\theta \left[\|\hat{p} - p_\theta\|_{L^1}\right] \gtrsim M\epsilon_h h^d \gtrsim \epsilon_h, \quad \text{since }M\asymp h^{-d}.$$
    Take
    $$h\asymp \max \{n^{-1/(2\beta+d)}, D^{-1/2}\}$$
    where the constant multiplying $D^{-1/2}$ is chosen sufficiently large that $r_D \leq r_0$. 

    Now, if $D\gtrsim n^{2/(2\beta+d)}$, then $D^{-1/2}\lesssim n^{-1/(2\beta+d)}$. So, $h\asymp n^{-1/(2\beta+d)}$. At this bandwidth, $h^\beta \asymp 1/\sqrt{nh^d}$ and therefore, $\epsilon_h \asymp n^{-\beta/(2\beta+d)}$.

    If, instead $D\lesssim n^{2/(2\beta+d)}$, then $h\asymp D^{-1/2}$. Since in this regime $h\gtrsim n^{-1/(2\beta+d)}$, we have $1/\sqrt{nh^d} \lesssim h^\beta$. Hence, $\epsilon_h \asymp 1/\sqrt{nh^d} = D^{d/4}/\sqrt{n}$.

    Combining the two regimes, we have
    \begin{equation}\label{eq: lower bound observation nonparametric}
    \inf_{\hat{p}} \sup_{(g,\Lambda, \Sigma)\in \Theta_{D,\beta}} \bbE_{g,\Lambda,\Sigma}\left[\|\hat{p} - p_{g,\Lambda,\Sigma}\|_{L^1}\right] \gtrsim \min \left\{ n^{-\beta/(2\beta+d)}, \frac{D^{d/4}}{\sqrt{n}}\right\}.
    \end{equation}

    Combine Equations \eqref{eq: lower bound observation parametric} and \eqref{eq: lower bound observation nonparametric} to conclude the desired result.
\end{proof}

\section{Extension to an unknown observation mean}
\label{app: sample centering}
In this section, we go back to the raw observation model $Y_i^{\rm raw}=\mu + \Lambda \eta_i + \epsilon_i$ discussed in Remark \ref{remark: unknown mean}, and demonstrate that sample centering does not affect any of the bounds established in the prior sections for the centered factor model formulation. To recall, suppose $\mu\in\bbR^D$ is unknown, but the latent variables satisfy $\bbE_g[\eta_i]=0$ and $\Cov_g(\eta_i)=I_d$ without any loss of generality. For this section, we denote $\theta=(\mu, g,\Lambda, \Sigma)$ as the parameters, which satisfy the Assumptions in the main paper. In matrix notation, with the sample centering matrix defined as
$$H = I_n  - \frac1n \mathbf1_n \mathbf 1_n^\top,$$
sample centering reduces the problem to a centered factor model with  
$$Y^c := HY^{\rm raw} = (H M) \Lambda^\top + HE$$
where the $n$ rows of $HM$ are $\eta_i - \overline \eta$ for $i=1,\dots,n$. No restriction on $\mu$ is required and the conclusions below hold uniformly over $\mu\in\bbR^D$.

Writing $M_{0,c}=HM_0$ and $E_{0,c}=H E_0$ with the $i-$th rows of $M_{0,c}$ and $E_{0,c}$ given by $(\eta_i-\overline \eta)^\top$ and $(\epsilon_i - \overline \epsilon)^\top$ respectively, the true data generating model is $Y^c = M_{0,c}\Lambda_0^\top + E_{0,c}$. The FADE procedure is applied on the centered data matrix $Y^c$.

\subsection{Spectral recovery after sample centering}
\label{app: sample centering spectral}

We first record the basic properties of the centered signal and noise matrices. Since $\frac1n M_{0,c}^\top M_{0,c} = \frac1n M_0^\top M_0 - \overline \eta \overline \eta^\top$, we have
\begin{equation}\label{eq: centered score gram}
    \left\|\frac1n M_{0,c}^\top M_{0,c} - I_d \right\| \leq \left\|\frac1n M_0^\top M_0 - I_d\right\| + \|\overline \eta\|^2.
\end{equation}
Under the sub-Gaussian assumption on $\eta_i$, there is an event with probability at least $1-C/n$ on which 
\begin{equation}\label{eq: centered score concentration}
    \|\overline \eta\| \lesssim \sqrt{\frac{\log n}{n}}, \qquad \left\|\frac1n M_{0,c}^\top M_{0,c} - I_d\right\| \lesssim \sqrt{\frac{\log n}{n}},
\end{equation}
and 
$$\max_{i\in [n]} \|\eta_i - \overline \eta \| \lesssim \sqrt{\log n}.$$

Consequently, $s_d(M_{0,c})\asymp |M_{0,c}| \asymp \sqrt n$ and the singular-value and incoherence properties used in the proof of Theorem \ref{thm: spectral recovery} continue to hold.

Although the rows of $E_{0,c}=HE_0$ are no longer independent, this does not change the Gaussian calculations used in the spectral analysis. The columns remain independent and satisfy
$$E_{0,c}^{(j)} \sim N_n(0, \sigma_{0j}^2 H).$$
Equivalently, if $Q\in \bbR^{n\times (n-1)}$ has orthonormal columns spanning $\mathbf 1_n^\perp$ so that $Q^\top Q = I_{n-1}$ and $QQ^\top = H$, then 
$$Q^\top Y^{\rm raw} = (Q^\top M_0) \Lambda_0^\top + Q^\top E_0,$$
where the rows of $Q^\top E_0$ are independent $N_D(0, \Sigma_0)$ random vectors. Thus, sample centering removes only the one-dimensional intercept direction.

Let $Y_{0,c}:=M_{0,c}\Lambda_0^\top=U_{0,c}S_{0,c}V_{0,c}^\top$ be the compact SVD of the centered true signal, and let $P_{0,c}=U_{0,c} U_{0,c}^\top$. Because $\rm col(M_{0,c})\subseteq \mathbf 1_n^\perp$, we have $HP_{0,c} = P_{0,c}H=P_{0,c}$. The effective orthogonal-complement projection is therefore $H-P_{0,c}$, which is an orthogonal projection of rank $n-d-1$. The Gaussian quadratic-form identities in the proof of Theorem \ref{thm: spectral recovery} remain valid after replacing $P_0^\perp$ by $H-P_{0,c}$. 

For completeness, the leave-one-out argument can be preserved by defining
$$Y_c^{(i)} = Y^c - He_i \epsilon_i^\top,$$
similar to \eqref{eq: def leave-one-out}. 
Applying the spectral procedure to $Y^c$, let
$$Y^c = USV^\top + \text{ remaining singular components}, \qquad \widehat M = \sqrt n U, \qquad \widehat \Lambda = \frac{1}{\sqrt n} Y_c^\top U$$
and define $\widehat \Sigma$ for the corresponding centered residuals. Both $Y^c$ and $Y_c^{(i)}$ have columns in $\mathbf 1_n^\perp$. Consequently, their leading left singular vectors satisfy $HU=U$ and $HU^{(i)}=U^{(i)}$, and their truncated pseudo-inverses satisfy
$$(Y^c)^+ H = (Y^c)^+, \qquad (Y_c^{(i)})^+ H = (Y_c^{(i)})^+.$$
The same identifies hold for the centered population signal. Thus the left complement projections become $H-P_{0,c}$ and $H-U^{(i)}U^{(i)\top}$. In the decomposition corresponding to \eqref{eq: 2.34}, the perturbation $e_i\epsilon_i^\top$ is replaced by $He_i \epsilon_i^\top$; the identities above remove the additional factors of $H$ while $|He_i|\leq 1$ and $\|H e_i \epsilon_i^\top\|\leq \|\epsilon_i\|$. Moreover, conditional on $M_0$, $Y_c^{(i)}$ is independent of $\epsilon_i$, so the leave-one-out Gaussian concentration arguments remain applicable. 

The centered noise row is $\epsilon_i - \bar \epsilon$. The additional average-noise term in the remainder analysis is controlled using the independent of $\bar\epsilon \sim N_D(0, \Sigma_0/n)$ and $HE_0$. Specifically, for
$$B = \left[ (Y_c)^+ - Y_{0,c}^+ \right] U_{0,c},$$
conditional Gaussian concentration and pseudoinverse bound give, with failure probability $O(1/n)$,
$$\sqrt n |\bar \epsilon^\top B | \lesssim \sqrt{\log n} \|B\|_F \lesssim \sqrt{\log (nD)} \frac{\sqrt n +  \sqrt D}{nD} \lesssim \sqrt{\log (nD)} \left(\frac 1n + \frac 1D\right).$$
This is absorbed by the existing rowwise remainder bound. Intersecting with this additional event therefore preserves both the remainder order and failure probability order. The preceding observations give the following extension of Theorem \ref{thm: spectral recovery}.

\begin{proposition}[Spectral recovery with unknown mean]\label{prop: unknown mean spectral recovery}
    Under the conditions of Theorem \ref{thm: spectral recovery}, there is an event $\cE_{n,D}^\mu$ satisfying $\bbP_0^{(n)}(\cE_{n,D}^\mu) \geq 1 - \frac Cn$ and a measurable rotation $R\in \cO(d)$ such that on $\cE_{n,D}^\mu$, 
    \begin{equation}
        \begin{aligned}
            \|\widehat \Sigma - \Sigma_0 \| &\lesssim \sqrt{\frac{\log (nD)}{n}} + \frac{\ell_D}{D},\\
            \|\widehat M - M_0 R\|_F &\lesssim \sqrt n\left(\frac{1}{\sqrt D} + \sqrt{\frac{\log n}{n}}\right), \\
            \|\widehat \Lambda - \Lambda_0 R\|_F &\lesssim \sqrt D \left(\frac1D + \sqrt{\frac{\log (nD)}{n}}\right).
        \end{aligned}
    \end{equation}
    Moreover, the rowwise score error admits the decomposition
    \begin{equation}\label{eq: centered structured score expansion}
        \widehat \eta_i - R^\top \eta_i = \widehat R^\top \xi_{i,c} + \Delta_{i,c}, \qquad i=1,\dots,n
    \end{equation}
    where $\xi_{i,c}=\sqrt n S_{0,c}^{-1} V_{0,c}^\top E_0^\top (H - P_{0,c}) e_i$. Conditionally on $M_0$, these leading perturbations are jointly Gaussian with 
    \begin{equation}\label{eq: centered xi cov}
        \bbE [\xi_{i,c}\xi_{j,c}^\top \mid M_0] = (H - P_{0,c})_{ij} \Sigma_{\xi,c}, \qquad \Sigma_{\xi, c} = n S_{0,c}^{-1} V_{0,c}^\top \Sigma_0 V_{0,c} S_{0,c}^{-1}
    \end{equation}
    and $\|\Sigma_{\xi,c}\| \lesssim 1/D$. The remainder satisfies
    \begin{equation}
        \max_{i\in [n]} \|\Delta_{i,c}\| \lesssim \frac{\sqrt{\log (nD)}}{D} + \frac{\log n}{\sqrt n}.
    \end{equation}
    The constants are independent of $\mu, n, D$.
\end{proposition}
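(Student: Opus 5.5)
The plan is to reduce Proposition~\ref{prop: unknown mean spectral recovery} to Theorem~\ref{thm: spectral recovery} by rotating away the intercept direction. Fix $Q\in\bbR^{n\times(n-1)}$ with orthonormal columns, $QQ^\top=H$ and $Q^\top Q=I_{n-1}$. Because $Y^c=HY^{\rm raw}$ has all its columns in $\mathbf 1_n^\perp$, the SVDs of $Y^c$ and $\check Y:=Q^\top Y^{\rm raw}=(Q^\top M_0)\Lambda_0^\top+Q^\top E_0$ are related by $U=Q\check U$. The singular values and right singular vectors coincide. Hence $\widehat\Lambda$ and $\widehat\Sigma$ computed from $Y^c$ equal the quantities computed from $\check Y$, up to the factor $n/(n-1)$ in the normalization.

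The noise $Q^\top E_0$ has $n-1$ iid $N_D(0,\Sigma_0)$ rows independent of $M_0$, so every noise-only event in Step~1 of the proof of Theorem~\ref{thm: spectral recovery}(i) and in Lemma~\ref{lemma: uniform bounds for score expansion} holds verbatim for $\check Y$. The score matrix $\check M=Q^\top M_0$ has non-iid rows, however, so I would not invoke the theorem as a black box. Instead I would rerun the deterministic parts of its proof on the event where the following bounds hold:
\begin{itemize}
\item the centered Gram bound \eqref{eq: centered score concentration}, which gives $s_d(M_{0,c})\asymp\sqrt n$ and $|s_k(M_{0,c})-\sqrt n|\lesssim\sqrt{\log n}$;
\item the incoherence bound $\max_i\|e_i^\top U_{0,c}\|\lesssim\sqrt{\log n/n}$, which follows from $\max_i\|\eta_i-\overline\eta\|\lesssim\sqrt{\log n}$.
\end{itemize}
The only inputs that proof used about $M_0$ are singular values, incoherence and its column space. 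With $P_0^\perp$ replaced by $H-P_{0,c}$, these inputs give the three Frobenius bounds with rotation $R_c$ aligning $\widehat M$ to $M_{0,c}$.

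To pass from $M_{0,c}$ to $M_0R$, I write $M_{0,c}R=M_0R-\mathbf 1_n\overline\eta^\top R$. This adds $\sqrt n\|\overline\eta\|\lesssim\sqrt{\log n}$ to the Frobenius score bound, which is absorbed. It also adds $\|\overline\eta\|\lesssim\sqrt{\log n/n}$ to each row, which is absorbed into $\Delta_{i,c}$ since $\sqrt{\log n/n}\le\log n/\sqrt n$. The loading bound is unaffected, because $\widehat\Lambda$ depends only on $Y^c$ and the alignment $R$ is built from the SVD of $M_{0,c}$ exactly as in \eqref{eq: common rotation}.

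For the row expansion I would repeat the proof of decomposition \eqref{eq: structured score expansion} with $P=UU^\top$ and $P_{0,c}$, using $HP=P$ and $HP_{0,c}=P_{0,c}$. Then \eqref{eq: p0 complement y} becomes $(H-P_{0,c})Y^c=(H-P_{0,c})E_0$, and the same linearization yields $\xi_{i,c}$ with $E_0$ in place of $HE_0$, since $(H-P_{0,c})H=H-P_{0,c}$. Conditional Gaussianity of $\xi_{i,c}$ and the covariance \eqref{eq: centered xi cov} follow from $\bbE[E_0^\top ab^\top E_0]=(a^\top b)\Sigma_0$ and idempotence of $H-P_{0,c}$. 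The bound $\|\Sigma_{\xi,c}\|\lesssim 1/D$ follows from $s_d(S_{0,c})\asymp\sqrt{nD}$.

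The remainder pieces $\zeta_i,\omega_i$ are controlled exactly as in Lemma~\ref{lemma: bound zeta and omega}, using the centered singular value and incoherence bounds. The main obstacle is $\delta_{i,\rm rem}$, specifically Term~I, $e_i^\top(H-P_{0,c})E_0(Y_c^+-Y_{0,c}^+)U_{0,c}$. Here the relevant noise row is $\epsilon_i-\overline\epsilon$, and the leave-one-out matrix must be $Y_c^{(i)}=Y^c-He_i\epsilon_i^\top$ so that it is independent of $\epsilon_i$ given $M_0$. I would split this row as $\epsilon_i-\overline\epsilon$:
\begin{itemize}
\item The $\epsilon_i$ part is handled by the three-term expansion \eqref{eq: 2.34} with $E_a=He_i\epsilon_i^\top-Y_\perp+Y_\perp^{(i)}$. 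The identities $(Y_c)^+H=(Y_c)^+$ and $\|He_i\|\le1$ reduce each subterm to the bounds in Lemma~\ref{lemma: bound delta_i}, together with $\max_i\|e_i^\top U_c^{(i)}\|$ rederived via $e_i^\top H U_c^{(i)}$.
\item The $\overline\epsilon$ part is not independent of $Y_c^+$ through $HE_0$. It is in fact independent, because $\overline\epsilon$ and $HE_0$ are independent Gaussians. So $B=(Y_c^+-Y_{0,c}^+)U_{0,c}$ is independent of $\overline\epsilon\sim N_D(0,\Sigma_0/n)$, and a single conditional Gaussian bound gives $\sqrt n\|\overline\epsilon^\top B\|\lesssim\sqrt{\log(nD)}(1/n+1/D)$.
\end{itemize}
Intersecting these finitely many events keeps the failure probability $O(1/n)$. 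Since $\mu$ is annihilated by $H$, all constants are uniform in $\mu$.
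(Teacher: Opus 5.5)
Your proposal is correct and follows essentially the same route as the paper: rerun the proof of Theorem~\ref{thm: spectral recovery} on $Y^c$ with $H-P_{0,c}$ in place of $P_0^\perp$, use the leave-one-out matrices $Y_c^{(i)}=Y^c-He_i\epsilon_i^\top$, handle the extra $\overline\epsilon$ term by a single conditional Gaussian bound using the independence of $\overline\epsilon$ and $HE_0$, and absorb the common shift $\mathbf 1_n\overline\eta^\top$ into the Frobenius score bound and into $\Delta_{i,c}$. The only blemishes are in wording: the sentence saying the $\overline\epsilon$ part is ``not independent'' of $Y_c^+$ should simply assert the independence you then use, and the rescaling of $\widehat\Lambda$ relative to the $\check Y$ model is $\sqrt{(n-1)/n}$ rather than $n/(n-1)$; neither affects the argument.
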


\begin{proof}
    Applying the proof of Theorem \ref{thm: spectral recovery} to $Y^c$ gives the stated bounds initially with $M_{0,c}$ in place of $M_0$. The discussing preceding the proposition verifies all singular-value, Gaussian concentration, projection, and leave-one-out ingredients required by that proof.

    For the global score bound, use $M_{0,c}=M_0 - \mathbf 1_n \overline \eta^\top$ to obtain
    $$\|\widehat M - M_0 R\|_F \leq \|\widehat M - M_{0,c}R\|_F + \|\mathbf 1_n \overline \eta^\top R\|_F = \|\widehat M - M_{0,c}R\|_F + \sqrt n \|\overline \eta\|.$$
    By \eqref{eq: centered score concentration}, the additional term is $O(\sqrt{\log n})$, and is therefore absorbed by the existing $n-$dependent part of the bound.

    The centered rowwise expansion from the proof of Theorem \ref{thm: spectral recovery} first gives
    $$\widehat \eta_i - R^\top (\eta_i - \overline \eta) = \widehat R^\top \xi_{i,c} + \widetilde \Delta_{i,c}$$
    with $\max_i \|\widetilde\Delta_{i,c}\| \lesssim \sqrt{\log (nD)}/D + (\log n)/\sqrt n$. Therefore, 
    $$\widehat \eta_i - R^\top \eta_i = \widehat R^\top \xi_{i,c} + \underbrace{\widetilde \Delta_{i,c} - R^\top \overline \eta}_{\Delta_{i,c}}$$
    where $\|\overline \eta\|\lesssim \sqrt{(\log n)/n}$ ensures that $\Delta_{i,c}$ has the stated uniform bound. 

    The loading proof is unchanged because the centered population signal retains the loading matrix $\Lambda_0$. For the residual variance estimator, sample centering removes one additional degree of freedom. The corresponding deterministic bias is $O(1/n)$, which is absorbed by the existing bounds for the noise variance estimation.
\end{proof}

The expectation bounds in Corollary \ref{cor: expected factor error} also remain unchanged. 

\subsection{Latent density estimation after sample centering}
\label{app: sample centering latent density}

The leading score perturbation in \eqref{eq: centered structured score expansion} has the same structure as the one used in the proof of Theorem \ref{thm: new kde}. The only change is that $P_0^\perp=I_n-P_0$ is replaced by the projection $H-P_{0,c}$. In particular, 
\begin{align*}
    \sum_{i=1}^n (H-P_{0,c})_{ii} &= n-d-1, \\
    \sum_{i,j=1}^n (H-P_{0,c})_{ij}^2 &= \tr((H -P_{0,c})^2) = n-d-1
\end{align*}
These are the only projection-rank identities used when aggregating the conditionally centered Gaussian perturbations. The convolution and kernel calculations continue to involve the original i.i.d. factors $\eta_1,\dots,\eta_n$, because the common displacement $\overline \eta$ has already been incorporated into the remainder $\Delta_{i,c}$. Consequently, the proof of Theorem \ref{thm: new kde} applies without any change, leading to the same rate. It follows that the optimized bandwidths, the threshold $D_c(n)$ and all oracle-transition conclusions in Section \ref{subsec: latent density optimized rate} remain unchanged.

The latent-density minimax lower bound also continues to hold, since the centered model with $\mu=0$ is a submodel of the model considered here. Hence, all upper- and lower-bound comparisons from Section \ref{subsec: latent density lower bound} remain valid.

\subsection{Observation density estimation after sample centering}
\label{app: sample centering observation density}

Let 
$$p_{0,c}(y) = \int_{\bbR^d} \phi_D(y\mid \Lambda_0 z, \Sigma_0) g_0(z) dz$$
denote the centered observation density. Under the model considered in this section, the raw observation density is
$$p_{0,\mu}(y) = p_{0,c}(y - \mu).$$
Let $\widehat p_c$ denote the FADE plug-in estimator constructed from the sample-centered data matrix $Y^c$, including the positive correction of the latent-density estimator. Define the estimator of the raw observation density by
\begin{equation}
    \widehat p_{\rm raw}(y) = \widehat p_c(y - \widehat \mu), \qquad \widehat \mu = \overline Y.
\end{equation}
The mean estimator satisfies $\widehat \mu - \mu = \Lambda_0\overline \eta + \overline \epsilon$. Using independence, $\Cov(\eta_i)=I_d$ and the loading and noise assumptions
\begin{align*}
    \bbE_0 \|\widehat \mu - \mu\|^2 &= \frac1n\tr\left(\Lambda_0 \Lambda_0^\top + \Sigma_0\right) \\
    &= \frac1n \left(\|\Lambda_0\|_F^2 + \tr(\Sigma_0)\right) \\
    &\lesssim \frac Dn.
\end{align*}
By translation invariance of $L^1$,
\begin{align}\label{eq: raw density triangle}
    \|\widehat p_{\rm raw} - p_{0,\mu}\|_{L^1} &\leq \|\widehat p_c - p_{0,c}\|_{L^1} + \|p_{0,c}(\cdot - (\widehat \mu - \mu)) - p_{0,c}\|_{L^1}.
\end{align}
For any $a\in \bbR^D$, convexity of the $L^1$ norm gives 
\begin{align*}
    \|p_{0,c}(\cdot - a) - p_{0,c}\|_{L^1} &\leq \int_{\bbR^d} \left\|\phi_D(\cdot \mid \Lambda_0 z + a, \Sigma_0) - \phi_D(\cdot \mid \Lambda_0 z, \Sigma_0) \right\|_{L^1} g_0(z) dz \\
    &\leq \|\Sigma_0^{-1/2} a\| \lesssim \|a\|
\end{align*}
where the first inequality uses the uniform lower bound on the eigenvalues of $\Sigma_0$. Taking expectations in \eqref{eq: raw density triangle} and applying the last display yields
$$\bbE_0 \|\widehat p_{\rm raw} - p_{0,\mu}\|_{L^1} \lesssim \bbE_0 \|\widehat p_c - p_{0,c}\|_{L^1} + \sqrt{\frac Dn}.$$
Combining this with Theorem \ref{thm: observation kde rate}, we observe that we obtain the exact same order of the upper bound for the raw observation density estimator, as the additional $\sqrt{D/n}$ is absorbed by the already existing $\sqrt{D/n}$ term in the bound in Theorem \ref{thm: observation kde rate}. 

\section{Additional simulation details}\label{app: simulation details}

\begin{figure}
    \centering
    \includegraphics[width=1.0\linewidth]{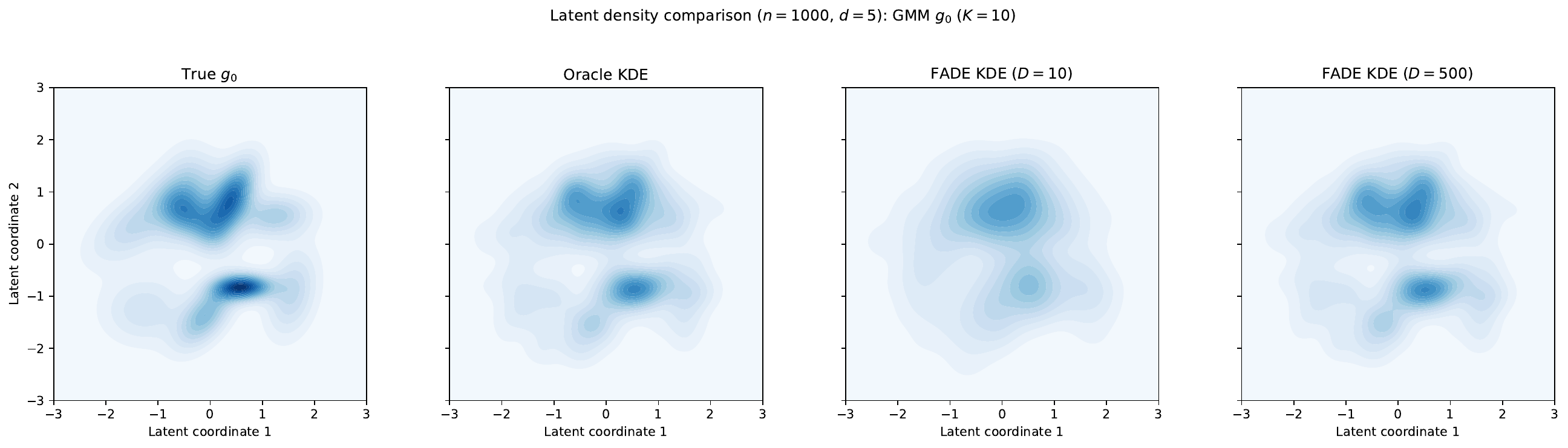}
    \caption{Same phenomenon as in Figure \ref{fig: demonstration}, when the underlying latent density is a Gaussian mixture model in $d=5$ with $K=10$ components, visualized in the first two coordinates. Performance of FADE is nearly the same as that of the oracle as the ambient dimension $D$ grows large. This latent density is used in the simulations in Sections \ref{subsec: simulations spectral recovery} and \ref{subsec: simulations latent density}}
    \label{fig: latent gmm example}
\end{figure}

\subsection{Spectral recovery simulations}

\subsubsection{Details about Section \ref{subsec: simulations spectral recovery}}
\label{app: spectral recovery simulation details}

We provide the complete design for the spectral recovery experiment reported in Section \ref{subsec: simulations spectral recovery}. The latent dimension is fixed at \(d=5\), and \(g_0\) is a fixed ten-component Gaussian-mixture density. The mixture is centered and linearly transformed so that $\mathbb E_{g_0}(\eta)=0$ and $\operatorname{Cov}_{g_0}(\eta)=I_d$. The same mixture distribution is used at every design point and in every
Monte Carlo replication. An illustration of the resulting latent density is given in Figure \ref{fig: latent gmm example}. The idiosyncratic noise
covariance is fixed at \(\Sigma_0=I_D\).

To generate loadings satisfying the bounded-row and pervasiveness conditions
of Section \ref{subsec: assumptions}, we draw
\[
    \lambda_{0j}
    =
    \sqrt d\,\frac{z_j}{\|z_j\|},
    \qquad
    z_j\stackrel{\mathrm{iid}}{\sim}N_d(0,I_d),
    \qquad j=1,\ldots,D.
\]
It follows that $\|\lambda_{0j}\|^2=d$ for every $j$ and $\ell_D=d\asymp 1$.
Moreover, \(D^{-1}\Lambda_0^\top\Lambda_0\) concentrates around \(I_d\).
For each design point, the loading matrix is generated once and held fixed
across all Monte Carlo replications. Thus, the reported variation is due to
the sampled latent scores and observation noise rather than changes in the
underlying factor parameters.

We consider three joint-growth regimes indexed by \(M\):
\[
\begin{array}{c|cc}
\text{regime} & n & D\\ \hline
\text{balanced growth} & M & M\\
\text{sample size grows faster} & M^2 & M\\
\text{ambient dimension grows faster} & M & M^2.
\end{array}
\]
For the balanced regime, \(M\) varies logarithmically from \(100\) to \(3000\). For each of the two unbalanced regimes, \(M\) varies logarithmically from \(50\) to \(250\). At each design point, we perform \(100\) independent replications. In every
replication, we draw
\[
    \eta_1,\ldots,\eta_n
    \stackrel{\mathrm{iid}}{\sim}g_0,
    \qquad
    \epsilon_1,\ldots,\epsilon_n
    \stackrel{\mathrm{iid}}{\sim}N_D(0,I_D),
\]
and form
\[
    Y_i=\Lambda_0\eta_i+\epsilon_i,
    \qquad i=1,\ldots,n.
\]
Applying the spectral step of FADE gives the estimated score matrix
\(\widehat M\), loading matrix \(\widehat\Lambda\), and diagonal noise
covariance $\widehat\Sigma=\diag(\widehat\sigma_1^2,\ldots,\widehat\sigma_D^2)$.  Because the score and loading matrices are identifiable only up to a common
orthogonal transformation, we compute $\widehat R \in \argmin_{R\in\mathcal O(d)} \|\widehat M-M_0R\|_F$.
The same rotation is used when evaluating the loading matrix. The reported
errors are
\[
    \mathcal E_M
    =
    \frac{\|\widehat M-M_0\widehat R\|_F}
         {\|M_0\|_F},
    \qquad
    \mathcal E_\Lambda
    =
    \frac{\|\widehat\Lambda-\Lambda_0\widehat R\|_F}
         {\|\Lambda_0\|_F}, \qquad
    \mathcal E_\Sigma
    =
    \frac1D\sum_{j=1}^D
    |\widehat\sigma_j^2-\sigma_{0j}^2|.
\]
The first two are relative Frobenius errors. The third is the average
absolute error across the \(D\) diagonal noise variances. Theorem \ref{thm: spectral recovery} gives, after division by
\(\|M_0\|_F\asymp\sqrt n\) and omission of logarithmic factors,
$\mathcal E_M
    \lesssim
    n^{-1/2}+D^{-1/2}$.
Similarly, Corollary \ref{cor: expected factor error} and
\(\|\Lambda_0\|_F\asymp\sqrt D\) give $\mathbb E_0\mathcal E_\Lambda
    \lesssim
    n^{-1/2}+D^{-1}$.
For the diagonal covariance estimator, the average absolute error satisfies $\mathcal E_\Sigma
    \leq
    \frac{1}{\sqrt D}
    \|\widehat\Sigma-\Sigma_0\|_F$.
The expected Frobenius bound in Corollary
\ref{cor: expected factor error}, together with \(\ell_D\asymp1\), therefore
yields
\[
    \mathbb E_0\mathcal E_\Sigma
    \lesssim
    n^{-1/2}+D^{-1}.
\]

Figure \ref{fig: parameter estimation simulation gmm truth} reports the
Monte Carlo mean of each error. The shaded region is the mean plus or minus
one Monte Carlo standard error. To facilitate visual comparison of decay
rates, each panel also includes reference curves proportional to
\(M^{-1/2}\) and \(M^{-1}\). These curves are rescaled to agree with the
empirical mean at the smallest value of \(M\); they indicate slopes only and
are not theoretical upper bounds with specified constants.

\subsection{Latent density estimation simulations}

\subsubsection{Details about Section \ref{subsec: simulations latent density}}
\label{app: latent density simulation details}

We provide the complete design for the experiment reported in Section
\ref{subsec: simulations latent density}. The latent dimension is fixed at
\(d=5\), and the latent density \(g_0\) is a fixed ten-component Gaussian
mixture. The mixture is centered and linearly transformed so that
$\mathbb E_{g_0}(\eta)=0$ and
$\Cov_{g_0}(\eta)=I_d$.
The same density is used in every simulation regime and replication. We set
\(\Sigma_0=I_D\). For every replication, loading rows are generated independently according to
\[
    \lambda_{0j}
    =
    \sqrt d\,\frac{z_j}{\|z_j\|},
    \qquad
    z_j\stackrel{\mathrm{iid}}{\sim}N_d(0,I_d).
\]
Thus, \(\|\lambda_{0j}\|^2=d\) for every \(j\), implying
\(\ell_D=d\asymp1\). The normalized Gram matrix
\(D^{-1}\Lambda_0^\top\Lambda_0\) concentrates around \(I_d\). We record its
smallest and largest eigenvalues in each replication and at every value of
\(D\) to monitor finite-sample conditioning. Loading matrices are independent
between replications but nested across the varying dimension within a
replication.

\medskip
\noindent
For the fixed-\(n\) experiment, we consider $n\in\{100,500,5000\}$
and a logarithmically spaced grid of ambient dimensions between \(10\) and
\(500\). For each \(n\) and replication, we generate one latent score matrix
\(M_0\in\mathbb R^{n\times d}\), one master loading matrix
\(\Lambda_0^{\max}\in\mathbb R^{D_{\max}\times d}\), and one master noise
matrix \(E_0^{\max}\in\mathbb R^{n\times D_{\max}}\), where
\(D_{\max}=500\). At ambient dimension \(D\), we use
$\Lambda_{0,D}
    =
    \Lambda_{0,1:D}^{\max}$ and $
    E_{0,D}
    =
    E_{0,:,1:D}^{\max}$,
and form $Y_D=M_0\Lambda_{0,D}^{\top}+E_{0,D}$.
Consequently, \(Y_D\) is exactly the first \(D\) columns of
\(Y_{D_{\max}}\). Moving along the \(D\)-grid therefore adds observed
coordinates without changing the latent subjects, loading rows or noise
coordinates already present.

\medskip
\noindent
For the fixed-\(D\) experiment, we take $D\in\{10,100,500\}$
and a logarithmically spaced grid of sample sizes between \(200\) and
\(5000\). For each \(D\) and replication, we generate one loading matrix
\(\Lambda_0\in\mathbb R^{D\times d}\), a master score matrix
\(M_0^{\max}\in\mathbb R^{n_{\max}\times d}\), and a master noise matrix
\(E_0^{\max}\in\mathbb R^{n_{\max}\times D}\), where \(n_{\max}=5000\).
At sample size \(n\), we retain their first \(n\) rows and form
$Y_n
    =
    M_{0,1:n}^{\max}\Lambda_0^\top
    +
    E_{0,1:n}^{\max}$.
Hence, increasing \(n\) adds subjects while leaving the loading matrix and
all previously observed subjects unchanged.

\medskip
\noindent
For every dataset, FADE computes the rank-\(d\) spectral score estimates
\(\widehat\eta_1,\ldots,\widehat\eta_n\) and applies an isotropic Gaussian
KDE to these scores. The oracle estimator applies the same type of KDE to
the true scores \(\eta_1,\ldots,\eta_n\). The two estimators select their
bandwidths separately but use the same selection rule. Specifically, we use
five-fold held-out log likelihood over \(50\) logarithmically spaced
bandwidths between \(0.1h_{\mathrm{sc}}\) and \(10h_{\mathrm{sc}}\), where
\[
    h_{\mathrm{sc}}
    =
    \widehat s\,n^{-1/(d+4)}
\]
and \(\widehat s\) is the median of the coordinatewise sample standard
deviations for the scores used by the corresponding estimator. Thus, the
oracle and FADE bandwidth grids are scaled separately to their respective
score samples.

Oracle and FADE use the same subject-level fold assignments. These
assignments are retained across \(D\) in the fixed-\(n\) experiment, and
existing subjects retain their assignments as \(n\) increases in the
fixed-\(D\) experiment. Spectral scores are estimated once from the complete
dataset before bandwidth selection. Cross-validation therefore applies only
to the second-stage KDE: the factor model is not refitted within individual
folds. In the fixed-\(n\) experiment, the oracle KDE, its selected bandwidth,
and its integrated squared error are computed once per replication and
reused exactly across all values of \(D\).

To evaluate FADE, we calculate the Procrustes rotation $\widehat R
    \in
    \argmin_{R\in\mathcal O(d)}
    \|\widehat M-M_0R\|_F$
and measure
\[
    \operatorname{ISE}_{\mathrm{FADE}}
    =
    \|\widehat g_{\mathrm{FADE}}-g_0^{\widehat R}\|_{L^2}^2,
    \qquad
    g_0^{\widehat R}(x)=g_0(\widehat R x).
\]
While $\operatorname{ISE}_{\rm FADE}$ is not equal to the loss $\inf_{R\in\cO(d)} \|\widehat g - g_0^R\|_{L^2}^2$ used in the accompanying theory, it upper bounds this loss. The oracle error is
\[
    \operatorname{ISE}_{\mathrm{oracle}}
    =
    \|\widehat g_{\mathrm{oracle}}-g_0\|_{L^2}^2.
\]
Because the Gaussian kernel is isotropic and \(L^2\) loss is invariant under
orthogonal transformations, the two losses correspond to the same
identifiable latent-density target.

Both \(g_0\) and the fitted Gaussian KDEs are finite Gaussian mixtures. Their
integrated squared errors are therefore evaluated analytically. In
particular, if
\[
    f(x)=\sum_{r=1}^{R}a_r\phi_d(x\mid\mu_r,A_r),
    \qquad
    q(x)=\sum_{s=1}^{S}b_s\phi_d(x\mid\nu_s,B_s),
\]
then
\[
    \int f(x)q(x)\,dx
    =
    \sum_{r=1}^{R}\sum_{s=1}^{S}
    a_rb_s\phi_d(\mu_r\mid\nu_s,A_r+B_s).
\]
Applying this identity to
\[
    \|f-q\|_{L^2}^2
    =
    \int f^2+\int q^2-2\int fq
\]
gives the exact ISE without Monte Carlo integration. We report the mean ISE
over \(50\) independent replications, with uncertainty bands corresponding
to one Monte Carlo standard error.

\begin{figure}
    \centering
    \includegraphics[width=1.0\linewidth]{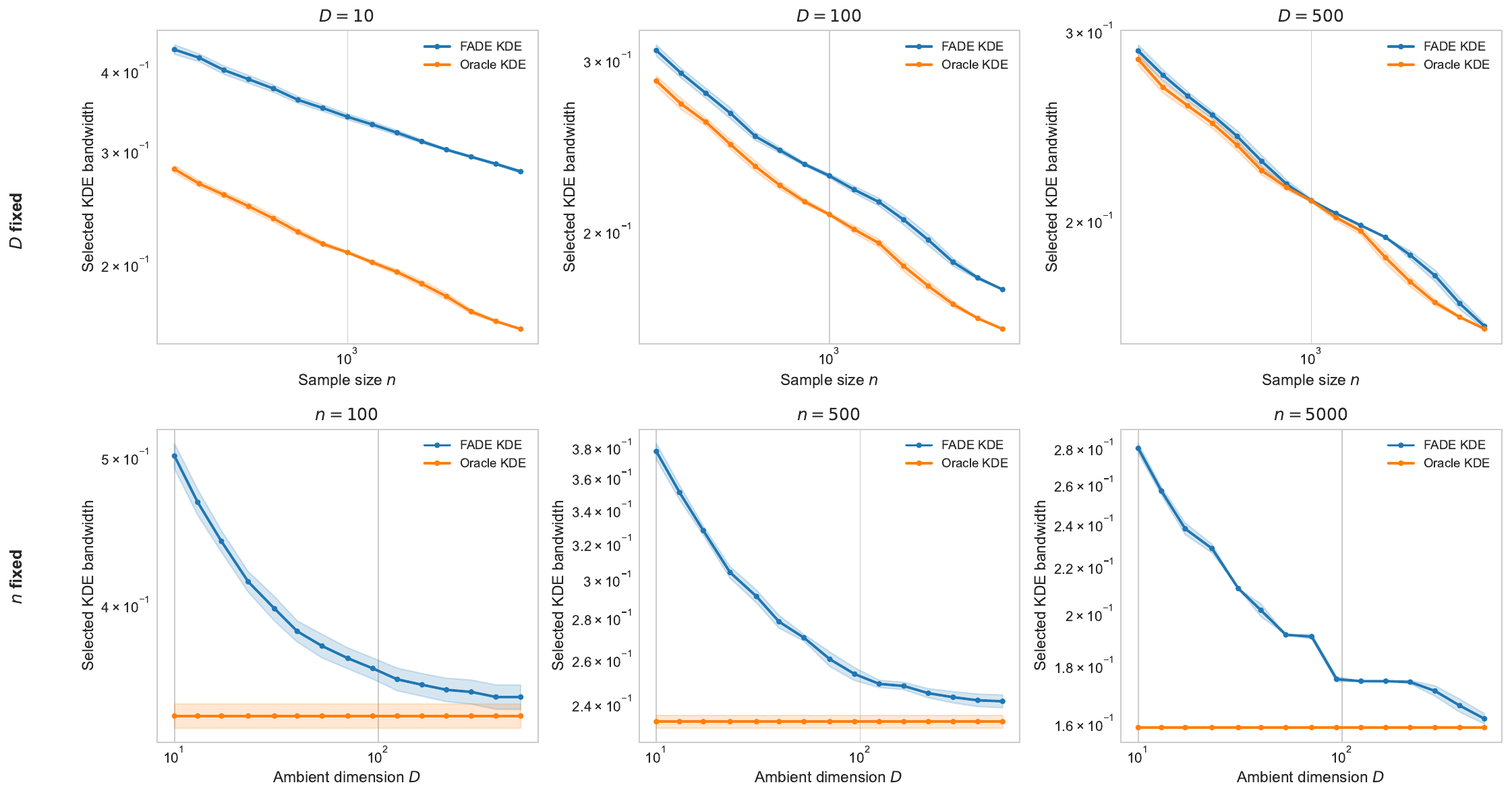}
    \caption{Bandwidths chosen by oracle KDE and FADE KDE for latent density estimation simulations of Section \ref{subsec: simulations latent density}. Despite cross validation being done on $n\times d$ estimated score matrix, the chosen bandwidth responds to $D$, in a manner similar to that required for the optimal bandwidth as discussed in Section \ref{subsec: latent density optimized rate}.}
    \label{fig:placeholder}
\end{figure}

\subsubsection{Additional simulation studies}

We perform another experiment in $d=1$. For each $(n,D)$ setting as in the simulation in the preceding subsection, we compare oracle KDE with FADE KDE. The fixed underlying true $g_0$ is a Gaussian mixture model in $\bbR$ with 10 components, while the other parts remain identical to the previous simulation study. Instead of a nested setting, we generate the loadings and noise errors independently across all the replicates and across the different $n,D$ settings (which is why there is some random fluctuation with large standard errors in the low $n=100$ case). The results are shown in Figure  \ref{fig: simulation latent density d1}, and are very similar to the oracle transition phenomenon observed in Section \ref{subsec: simulations latent density} (for example, compare with Figure \ref{fig: latent density estimation simulation}). The experiment does not exhibit an obvious deterioration of FADE in $d=1$, suggesting that the gap in the available upper bound may be conservative.

\begin{figure}
    \centering
    \includegraphics[width=1.0\linewidth]{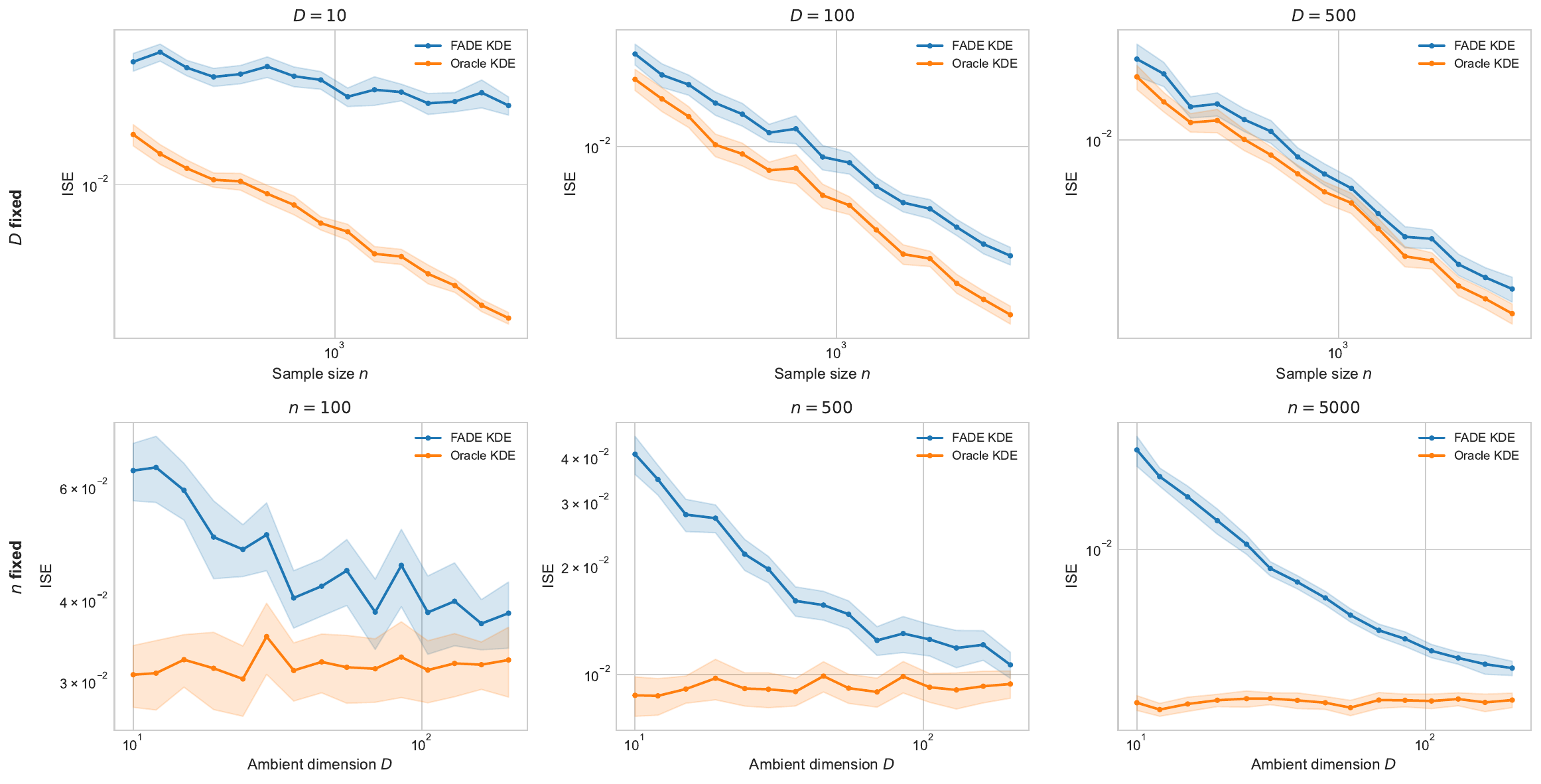}
    \caption{Simulation results evaluating latent density estimation in $d=1$}
    \label{fig: simulation latent density d1}
\end{figure}

\subsection{Observation density estimation simulations}

\subsubsection{Details about Section \ref{subsec: simulations obs density}}
\label{app: obs density simulation details}

We provide the complete construction and fitting details for the observation-density experiment reported in Section \ref{subsec: simulations obs density}. For the ground truth $g_0$, we use a probability density in $\bbR^d$ ($d=3$) obtained as a Gaussian convolution of a probability measure supported on two curves 
$$c_1(\theta)=
  \begin{pmatrix}
  \cos\theta\\
  \sin\theta\\
  \tau\cos(3\theta)
  \end{pmatrix}, \quad c_2(\theta)=
  \begin{pmatrix}
  (1-\cos\theta)\\
  -\sin\theta-0.5\\
  -\tau\cos(3\theta)
  \end{pmatrix}$$
with $\tau=0.65$ and $\theta\sim \text{Unif}(0,1)$ (equal weights on both parts), with a centered Gaussian with standard deviation $0.06$ added to make it a density. The population mean and covariance were then computed by approximating the curve integrals via 1000 equally weighted midpoint quadrature points on each curve, and thereafter, standardizing the distribution so that $\bbE_{g_0}[\eta]=0$ and $\Cov_{g_0}(\eta)=I_d$. Figure \ref{fig: demonstration} in the Introduction visualizes a two-dimensional projection of this density $g_0$, on the first two coordinates. 

\medskip
\noindent
\textbf{Observation model.}
For each
$n\in\{100,500,5000\}$ and $D\in\{10,100,500\}$, we generate observations from
$$Y_i=\Lambda_0\eta_i+\epsilon_i,
\qquad
\epsilon_i\sim N_D(0,\Sigma_0).$$
We use \(\Sigma_0=I_D\) and generate the loading rows according to
$\lambda_{0j}\sim N_d(0, I_d)$ for $j=1,\dots, D$. Within each replication, all methods are fitted to the same training sample and evaluated on the same independent test sample. The experiment is repeated independently $30$ times.

\medskip
\noindent
\textbf{FADE and oracle estimators.}
FADE obtains spectral estimates
\(\widehat\eta_1,\ldots,\widehat\eta_n\),
\(\widehat\Lambda\), and \(\widehat\Sigma\), and fits an isotropic Gaussian KDE
\(\widehat g_{\rm FADE}\) to the estimated scores. Its induced observation-density estimator is
\[
\widehat p_{\mathrm{FADE}}(y)
=
\int
\phi_D(y\mid\widehat\Lambda z,\widehat\Sigma)
\widehat g_{\mathrm{FADE}}(z)dz.
\]
Because the latent KDE uses a Gaussian kernel, this integral is available as a finite Gaussian mixture.

The oracle procedure observes the true scores and knows
\(\Lambda_0\) and \(\Sigma_0\). It fits a Gaussian KDE
\(\widehat g_{\mathrm{oracle}}\) to
\(\eta_1,\ldots,\eta_n\) and uses
\[
\widehat p_{\mathrm{oracle}}(y)
=
\int
\phi_D(y\mid\Lambda_0z,\Sigma_0)
\widehat g_{\mathrm{oracle}}(z)dz.
\]
It is included only as an unattainable benchmark for the error arising from estimation of the latent distribution.

For both KDE procedures, bandwidths are selected by five-fold held-out log likelihood using the bandwidth grid described in Section \ref{app: latent density simulation details}. The factor model is fitted before bandwidth cross-validation and is not re-estimated within individual folds.

\medskip
\noindent
\textbf{Joint latent Gaussian-mixture model.}
J-GMM($K$) models the latent density as
\[
g(z)
=
\sum_{k=1}^K
w_k\phi_d(z\mid\mu_k,\Gamma_k).
\]
After integrating out the latent variable, the observation density is
\[
p_{\mathrm{J\text{-}GMM}(K)}(y)
=
\sum_{k=1}^K
w_k
\phi_D
\left(
y\mid
\Lambda\mu_k, \,
\Lambda\Gamma_k\Lambda^\top+\Sigma
\right).
\]
Unlike FADE, this method estimates the latent-mixture parameters, common loading matrix, and diagonal noise covariance jointly. We take \(K=10\) as the number of components in the latent density GMM model. The algorithm is initialized using the FADE loading and noise-covariance estimates and a ten-component Gaussian mixture fitted to the FADE scores. We use three initial mixture fits and retain the initialization with the largest likelihood. The joint EM algorithm is then run for at most \(200\) iterations.

\medskip
\noindent
\textbf{Mixture of factor analyzers.}
MFA(\(K\)) uses the observation-space mixture
\[
p_{\mathrm{MFA}(K)}(y)
=
\sum_{k=1}^K
\pi_k
\phi_D
\left(
y\mid
\nu_k, \,
\Lambda_k\Lambda_k^\top+\Psi_k
\right),
\]
where each \(\Lambda_k\in\mathbb R^{D\times d}\) and each \(\Psi_k\) is diagonal. Unlike J-GMM, MFA allows every component to have its own loading space and idiosyncratic covariance. We use \(K=10\) and component rank \(d=3\). The EM algorithm is initialized by PCA followed by \(K\)-means clustering, uses three independent restarts, and is run for at most \(200\) iterations. The fit with the largest training likelihood is retained.

\medskip
\noindent
\textbf{Standard Gaussian factor analysis.}
The standard factor-analysis benchmark assumes
\(\eta_i\sim N_d(0,I_d)\),
so that
\[
Y_i\sim N_D
\left(
0,\Lambda\Lambda^\top+\Sigma
\right).
\]
This method estimates a single rank-\(d\) loading matrix and diagonal noise covariance but cannot represent multimodality or curved latent structure.

\medskip
\noindent
\textbf{Evaluation metric.}
Each fitted estimator is evaluated on an independent test sample
\[
Y_1^{\mathrm{test}},\ldots,Y_{N_{\mathrm{test}}}^{\mathrm{test}}
\stackrel{\mathrm{iid}}{\sim}p_0,
\qquad
N_{\mathrm{test}}=5000.
\]
We report the per-coordinate excess held-out log loss
\[
\widehat L(\widehat p)
=
\frac{1}{DN_{\mathrm{test}}}
\sum_{m=1}^{N_{\mathrm{test}}}
\left[
\log p_0(Y_m^{\mathrm{test}})
-
\log\widehat p(Y_m^{\mathrm{test}})
\right].
\]
Conditional on \(\widehat p\), its population expectation is
\[
\frac{1}{D}
\operatorname{KL}(p_0\Vert\widehat p).
\]
The normalization by \(D\) places predictive discrepancies at different ambient dimensions on a comparable per-coordinate scale. All mixture likelihoods are evaluated using log-sum-exp calculations for numerical stability. 

\end{appendix}
\end{document}